\theoremstyle{plain}
    \newtheorem{thm}{Theorem}[section]
    \newtheorem{ppn}[thm]{Proposition}
    \newtheorem{lem}[thm]{Lemma}
    \newtheorem{cor}[thm]{Corollary}
\theoremstyle{definition}
    \newtheorem{dfn}[thm]{Definition}
\theoremstyle{remark}
    \newtheorem{rmk}[thm]{Remark}
    \newtheorem*{rmk*}{Remark}   
    \newtheorem{epl}[thm]{Example}
\numberwithin{equation}{section}
\def\Hom{\operatorname{Hom}}
\def\C{\mathbb{C}}\def\Q{\mathbb{Q}}\def\F{\mathbb{F}}\def\N{\mathbb{N}}
\def\ol#1{\overline{#1}}\def\wt#1{\widetilde{#1}}\def\wh#1{\widehat{#1}}
\def\ot{\otimes}
\def\a{\alpha}\def\b{\beta}\def\g{\gamma}\def\d{\delta}\def\e{\varepsilon}
\def\l{\lambda}\def\k{\kappa}\def\z{\zeta}\def\x{\xi}\def\y{\eta}
\def\G{\Gamma}
\def\L{\Lambda}\def\s{\sigma}\def\t{\tau}
\def\Re{\operatorname{Re}}
\def\FF#1#2#3{F\left({#1\atop#2};#3\right)}
\def\FFF#1#2#3{{}_3F_2\left({#1\atop #2};#3\right)}
\def\r{\rho}
\newcommand{\Gal}{\mathrm{Gal}}
\newcommand{\A}{{\mathbb{A}}}
\newcommand{\baa}{\boldsymbol\alpha}
\newcommand{\bbb}{\boldsymbol\beta}
\newcommand{\bgg}{\boldsymbol\gamma}
\newcommand\n{\nu}\newcommand\m{\mu}
\newcommand\vp{\varphi}
\newcommand\ck{\wh{\k^*}}
\newcommand\0{\circ}
\newcommand{\Ind}{\operatorname{Ind}}
\newcommand{\supp}{\operatorname{supp}}
\begin{document}

\title{Hypergeometric functions over finite fields}
\author{Noriyuki Otsubo}
\email{otsubo@math.s.chiba-u.ac.jp}
\address{Department of Mathematics and Informatics, Chiba University, Inage, Chiba, 263-8522 Japan}

\begin{abstract}
We give a definition of generalized hypergeometric functions over finite fields using modified Gauss sums, 
which enables us to find clear analogy with classical hypergeometric functions over the complex numbers. 
We study their fundamental properties and prove summation formulas, transformation formulas and product formulas.  
An application to zeta functions of K3-surfaces is given. 
In the appendix, we give an elementary proof of the Davenport--Hasse multiplication formula for Gauss sums. 
\end{abstract}

\date{\today.}
\subjclass[2010]{11T24, 11L05, 33C05, 33C20}
\keywords{Hypergeometric functions, Finite fields, Exponential sums, Zeta functions}

\maketitle

\section{Introduction}

Recall that the classical complex hypergeometric function ${}_rF_s(x)$ is defined by the power series
$${}_rF_s\left({a_1,\dots, a_r \atop b_1,\dots, b_s};x\right)
=\sum_{n=0}^\infty \frac{\prod_{i=1}^r (a_i)_n}{(1)_n \prod_{i=1}^s (b_i)_n} x^n, $$
where $a_i$, $b_i$  are complex parameters with $-b_i \not\in \N:=\{0,1,\dots\}$,   
and the Pochhammer symbol $(a)_n$ is defined by the gamma function as 
$$(a)_n=\frac{\G(a+n)}{\G(a)}.$$
Its special values have been of particular interest. For example, we have the classical Euler--Gauss summation formula
$${}_2F_1\left({a,b \atop c};1\right) = \frac{\G(c)\G(c-a-b)}{\G(c-a)\G(c-b)}$$
if $\Re(c-a-b)>0$. 

Over a finite field, an analogue of the Euler--Gauss formula was first studied by Helversen-Pasotto \cite{helversen-pasotto}. 
It seems that hypergeometric functions over a finite field first appeared in Koblitz's work \cite{koblitz}. 
In the same manner as Weil's work on Fermat varieties \cite{weil}, 
he computed the number of rational points on the variety defined by 
$$y^{n} = (1-\l x_1\cdots x_d)^{a_0} x_1^{a_1}(1-x_1)^{b_1}\cdots x_d^{a_d}(1-x_d)^{b_d},$$
which generalizes the Legendre elliptic curve. 
Decomposing the number of rational points by the action of the group of $n$th roots of unity, 
he arrived at a definition of ${}_{d+1}F_d(\l)$. 
Other definitions of hypergeometric functions over finite fields were given by Greene \cite{greene}, Katz \cite{katz}, McCarthy \cite{mccarthy} and Fuselier et. al. \cite{fuselieretal}, 
where, except for \cite{katz}, only the case when $r=s+1$ is considered 
(see Remark \ref{comparison}). 

In this paper, we give a definition of ${}_rF_s$-functions over a finite field $\k$, which coincides with McCarthy's definition when $r=s+1$. 
Recall that a finite analogue of the gamma function is Gauss sums, denoted by $g(\a)$, viewed as a $\C$-valued function on 
$\ck$, the set of multiplicative characters of $\k$. 
Therefore an analogue of the Pochhammer symbol is defined naturally by 
$$(\a)_\n=\frac{g(\a\n)}{g(\a)}$$ where $\a$, $\n\in\ck$. 
The novelty of our definition is simply an introduction of modified symbols $g^\0(\a)$ and $(\a)^\circ_\n$ used for the denominator parameters. We define 
$${}_rF_s\left({\a_1,\dots, \a_r \atop \b_1,\dots, \b_s};\l\right)
=\frac{1}{1-q}
\sum_{\n\in\ck} \frac{\prod_{i=1}^r (\a_i)_\n}{(\e)^\0_\n \prod_{i=1}^s (\b_i)^\0_\n} \n(\l), $$
where $\e$ is the trivial character and $q=\#\k$. 
One can define by the same principle more general hypergeometric functions with many variables (see Section 2.4). 

We will prove some finite analogues of results classically known for complex hypergeometric functions, 
such as summation formulas, transformation formulas and product formulas.   
Because of our definition, not only the statements but also some proofs become quite parallel to the complex case, 
although differential equations are not available. 
Moreover, the case where $r \ne s+1$ can be treated equally. 
Some of the results in this paper are already known, at least essentially or under different hypotheses, 
but we give self-contained proofs together with references to the literature 
(e.g. Evans--Greene \cite{evans2} \cite{evans}, Greene \cite{greene}, McCarthy \cite{mccarthy}). 

The strong similarities between complex and finite hypergeometric functions are not coincidental. 
Just as the relations between the gamma function and Gauss sums or between the beta function and Jacobi sums, 
complex and finite hypergeometric functions should be associated to different realizations of same ``motives", pure motives in the strict sense when $r=s+1$. 
This perspective will be further investigated elsewhere.

This paper is constructed as follows. 
In Section 2, after recalling basic facts about Gauss and Jacobi sums, we give our definitions. 
In Section 3, we prove reduction and iteration formulas which reduce a hypergeometric function to one with a smaller number of parameters. We also prove finite analogues of transformation formulas of Euler and Pfaff, and derive relations among finite analogues of Kummer's 24 functions.  
In section 4, we prove formulas on special values ${}_{d+1}F_d(\pm 1)$ analogous to classical formulas of Euler--Gauss, Kummer, Thomae, Dixon, Watson, Whipple, Saalsch\"utz, etc. 
In Section 5, we prove quadratic transformation formulas analogous to classical formulas of Gauss, Kummer, Ramanujan, etc. 
In Section 6, we prove some product formulas analogous to classical formulas of Kummer and Ramanujan concerning confluent hypergeometric functions ${}_1F_1$ and of Clausen. 
In the last Section 7, we give an application to zeta functions of K3 surfaces. 
As an appendix, we include an elementary proof of the Davenport--Hasse multiplication formula for Gauss sums.

\section{Definitions}

Throughout this paper, $\k$ denotes a finite field of characteristic $p$ with $q$ elements. 
Let $\ck=\Hom(\k^*,\C^*)$ denote the group of multiplicative characters of $\k$, and let 
$\e \in \wh{\k^*}$ denote the unit character. For any $\vp\in\ck$, we set $\vp(0)=0$ and write $\ol\vp=\vp^{-1}$. 
For a group $G$, let $\d\colon G \to \{0,1\}$ denote the characteristic function of the identity element. 
This notation will be applied to $G=\k$ (the additive group) and to $G=\ck$. 
For a positive integer $n$, let $\mu_n\subset \C^*$ denote the group of $n$th roots of unity. 

\subsection{Gauss and Jacobi sums}

\begin{dfn}
Fix a non-trivial additive character $\psi\in \Hom(\k,\C^*)$.  
For $\vp\in \wh{\k^*}$, define the {\em Gauss sum} and its variant by
$$g(\vp) =-\sum_{x\in\k}\psi(x)\vp(x), \quad  g^\circ(\vp)=q^{\d(\vp)}g(\vp)\quad \in \Q(\mu_{p(q-1)}).$$
For $\vp_1, \dots, \vp_n \in \wh{\k^*}$ ($n\ge 1$), define the {\em Jacobi sum} by 
$$j(\vp_1,\dots, \vp_n) = (-1)^{n-1} \sum_{x_1,\dots, x_n \in\k \atop x_1+\cdots+x_n =1}\vp_1(x_1)\cdots \vp_n(x_n) \quad \in \Q(\mu_{q-1}). $$
\end{dfn}

We recall the basic properties of Gauss and Jacobi sums. 

\begin{ppn}\label{jacobi}\  
\begin{enumerate}
\item We have $g(\e)=1$ and $g^\circ(\e)=q$. 
\item For any $\vp\in\ck$, 
$$\ol{g(\vp)}=\vp(-1)g(\ol\vp), \quad \ol{g^\0(\vp)}=\vp(-1)g^\0(\ol\vp).$$ 
\item For any $\vp\in\ck$, 
$$g(\vp)g^\circ(\ol\vp)=\vp(-1)q.$$ 
In particular,  
$$|g(\vp)|={\sqrt q}^{1-\d(\vp)}, \quad 
|g^\0(\vp)|={\sqrt q}^{1+\d(\vp)}.$$  
\item For any $\vp_1,\dots, \vp_n \in \ck$ ($n \ge 1$), 
$$j(\vp_1, \dots,\vp_n)=
\begin{cases}
\dfrac{g(\vp_1) \cdots g(\vp_n)}{g^\0(\vp_1\cdots \vp_n)} & 
\text{if $(\vp_1, \dots, \vp_n) \ne(\e,\dots, \e)$},  \\
\dfrac{1-(1-q)^n}{q} & \text{if $(\vp_1, \dots, \vp_n)=(\e,\dots, \e)$}.
\end{cases}$$
\end{enumerate}
\end{ppn}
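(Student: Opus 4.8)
The plan is to prove Proposition 2.2 part by part, with most parts following from direct manipulation of the defining sums and the final Jacobi-sum identity (iv) being the main content.
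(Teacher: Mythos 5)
Your proposal is only a statement of intent, not a proof: it contains no actual argument for any of the four parts. For (i)--(iii) this is perhaps forgivable since they are short computations (e.g.\ (iii) follows from expanding $g(\vp)\ol{g(\vp)}$ as a double sum and using orthogonality), but you yourself identify (iv) as ``the main content'' and then supply nothing for it. That is precisely where the real work lies, and it is not a routine manipulation: one must relate the product $g(\vp_1)\cdots g(\vp_n)$ to the sum defining $j(\vp_1,\dots,\vp_n)$, and the degenerate case $(\vp_1,\dots,\vp_n)=(\e,\dots,\e)$ produces the extra term $(1-q)^n$ that accounts for the second branch of the formula. Without an argument here, there is no way to assess whether you would handle that case correctly.

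For comparison, the paper's proof of (iv) introduces, for each $a\in\k$, the twisted sum $j_a=(-1)^{n-1}\sum_{x_1+\cdots+x_n=a}\vp_1(x_1)\cdots\vp_n(x_n)$, observes that $j_a=\ol{\vp_0}(a)j_1$ for $a\ne 0$ where $\vp_0=\ol{\vp_1\cdots\vp_n}$, computes $\sum_{a\in\k}j_a$ (which is $0$ in the non-trivial case and $-(1-q)^n$ in the all-trivial case), and then expands $g(\vp_0)g(\vp_1)\cdots g(\vp_n)$ as a sum over $x_0+\cdots+x_n=z$ to express it in terms of $j_{-1}$, from which the two branches of the formula fall out via (i) and (iii). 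You need to supply an argument of this kind (or an inductive one via the two-character case) before the proposal can be counted as a proof.
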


\begin{proof}
These are standard, but for the convenience of the reader, we give a short proof of (iv) (the others are easier). 
For any $a \in \k$, put
$$j_a=(-1)^{n-1}\sum_{x_1+\cdots+x_n=a}\vp_1(x_1)\cdots \vp_n(x_n),$$
so that $j_1=j(\vp_1,\dots, \vp_n)$.  
If we put $\vp_0=\ol{\vp_1\cdots \vp_n}$, then $j_a=\ol{\vp_0}(a) j_1$ for any $a \ne 0$. 
We have 
$$\sum_{a \in \k} j_a= (-1)^{n-1}\prod_{i=1}^n \sum_{x_i\in \k } \vp_i(x_i)=
\begin{cases}
0 &  \text{if $(\vp_1, \dots, \vp_n)\ne(\e,\dots, \e)$}, 
\\ -(1-q)^n &  \text{if $(\vp_1, \dots, \vp_n)=(\e,\dots, \e)$}.
\end{cases}$$ 
On the other hand, 
\begin{align*}
g(\vp_0)g(\vp_1)\cdots g(\vp_n) & = (-1)^{n+1} \sum_{z\in\k} \psi(z) \sum_{x_0+\cdots+x_n=z} \vp_0(x_0)\cdots \vp_n(x_n)
\\&=\sum_{z\in\k} \psi(z) \sum_{x_0\ne 0}
(-1)^{n-1} \sum_{x_1'+\cdots +x_n'=\frac{z}{x_0}-1} \vp_1(x_1')\cdots \vp_n(x_n'), 
\end{align*}
and the second sum in the last member is $(q-1)j_{-1}$ if $z=0$, and  $\sum_{a \ne -1} j_a$ if $z \ne 0$. 
Hence it follows
$$g(\vp_0)g(\vp_1)\cdots g(\vp_n)=
\begin{cases}
qj_{-1} & \text{if $(\vp_1, \dots, \vp_n) \ne(\e,\dots, \e)$}, \\ 
qj_{-1} +(1-q)^n & \text{if $(\vp_1, \dots, \vp_n) =(\e,\dots, \e)$}, 
\end{cases}$$ 
and the formula follows using (i) and (iii). 
\end{proof}

\begin{rmk}
The Gauss sums $g(\vp)$ and $g^\0(\vp)$ can be interpreted geometrically as follows. 
Let $C$ be the affine Artin--Schreier curve over $\k$ defined by 
$$y^{q-1}=x^q-x, \quad y \ne 0,$$
on which the group $\k \times \k^*$ acts by addition in $x$ and multiplication in $y$. 
Then the $g(\vp)$ (resp. $g^\0(\vp)$) is the trace of Frobenius acting on the (one-dimensional) $(\psi,\vp)$-eigenspace in the first $\ell$-adic cohomology group of $C$ with (resp. without) compact support for any prime $\ell \nmid p(q-1)$. 
Proposition \ref{jacobi} (iii) results from the Poincar\'e duality, i.e. the Galois equivariant perfect pairing 
$$H^1_c(C)^{(\psi,\vp)} \ot H^1(C)^{(\ol\psi,\ol\vp)} \to\ol{\Q_\ell}(-1).$$ 
\end{rmk}

\subsection{Pochhammer symbols}

Recall that the complex Pochhammer symbol is defined for $a\in\C$ and $n\in\N$ by 
$$(a)_n=\frac{\G(a+n)}{\G(a)}=a(a+1)\cdots (a+n-1).$$

\begin{dfn} 
For any $\a$, $\n \in \wh{\k^*}$, define the {\em Pochhammer symbol} and its variant by 
$$(\a)_\n=\frac{g(\a\n)}{g(\a)}, \quad 
(\a)^\circ_\n=\frac{g^\circ(\a\n)}{g^\circ(\a)} \quad \in \Q(\mu_{p(q-1)}).$$
For example, 
$$(\a)_\e=(\a)^\circ_\e=1, \quad (\e)_\n=g(\n), \quad (\e)^\circ_\n=g^\circ(\n)/q.$$ 
\end{dfn}

\begin{lem}\label{l2.1}\ 
\begin{enumerate}
\item For any $\a$, $\b$, $\n \in \wh{\k^*}$, 
$$(\a)_{\b\n}=(\a)_\b(\a\b)_{\n}, \quad 
(\a)^\circ_{\b\n}=(\a)^\circ_\b(\a\b)^\circ_{\n}.$$
\item
For any $\a$, $\n \in \wh{\k^*}$, 
$$(\a)_\n(\ol\a)^\0_{\ol\n}=\n(-1).$$ 
\item
For any $\a, \b, \n \in \wh{\k^*}$, ${(\a)_\n}/{(\b)^\circ_\n}$
is an element of $\Q(\mu_{q-1})$ independent of the choice of $\psi$.  
\end{enumerate}
\end{lem}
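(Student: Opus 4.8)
The plan is to treat (i) and (ii) as direct consequences of the definitions and Proposition \ref{jacobi}, and to reserve the real work for (iii).

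For (i), I would simply expand both sides: from the definition $(\a)_\n=g(\a\n)/g(\a)$ the product $(\a)_\b(\a\b)_\n$ telescopes,
$$(\a)_\b(\a\b)_\n=\frac{g(\a\b)}{g(\a)}\cdot\frac{g(\a\b\n)}{g(\a\b)}=\frac{g(\a\b\n)}{g(\a)}=(\a)_{\b\n},$$
and the identical computation with $g$ replaced by $g^\circ$ settles the variant. For (ii), I would substitute the definitions to get $(\a)_\n(\ol\a)^\circ_{\ol\n}=g(\a\n)g^\circ(\ol{\a\n})/(g(\a)g^\circ(\ol\a))$ and then apply Proposition \ref{jacobi}(iii), $g(\vp)g^\circ(\ol\vp)=\vp(-1)q$, with $\vp=\a\n$ in the numerator and $\vp=\a$ in the denominator; the factors $q$ and $\a(-1)$ cancel and leave $\n(-1)$.

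For (iii), set $Q=(\a)_\n/(\b)^\circ_\n=g(\a\n)g^\circ(\b)/(g(\a)g^\circ(\b\n))$. Since each Gauss sum is a nonzero element of $\Q(\mu_{p(q-1)})$ by Proposition \ref{jacobi}(iii), $Q$ lies a priori in this field, and two assertions remain: independence of $\psi$ and descent to $\Q(\mu_{q-1})$. For the first I would rescale the additive character. Writing the nontrivial characters as $\psi_t(x)=\psi(tx)$ for $t\in\k^*$, a change of variables in the defining sum yields $g_{\psi_t}(\vp)=\ol\vp(t)g_\psi(\vp)$, and hence the same relation for $g^\circ$. Feeding this into $Q$, the four characters contribute the factor $\ol{\a\n}(t)\ol\b(t)/(\ol\a(t)\ol{\b\n}(t))=1$, so $Q$ does not change: it is independent of $\psi$.

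The descent to $\Q(\mu_{q-1})$ is the step I expect to be the crux, and I would obtain it from the rescaling invariance together with the Galois action on the $p$-part. Since $\psi$ takes values in $\mu_p$, any $\s\in\Gal(\Q(\mu_{p(q-1)})/\Q(\mu_{q-1}))$ fixes each value $\vp(x)\in\mu_{q-1}$ and sends $\psi(x)$ to $\psi(x)^c=\psi(\bar c\,x)$ for a fixed $c$ prime to $p$, with $\bar c\in\F_p^*\subset\k^*$; thus $\s(g_\psi(\vp))=g_{\psi_{\bar c}}(\vp)$, and likewise for $g^\circ$. Therefore $\s(Q)$ is $Q$ computed with $\psi_{\bar c}$ in place of $\psi$, which by the previous paragraph equals $Q$. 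As $\s$ was arbitrary, $Q\in\Q(\mu_{q-1})$. As a check, and to exhibit both properties at once, I would also note the closed form $Q=\n(-1)\,j(\a\n,\ol\b\ol\n)/j(\a,\ol\b)$, obtained by eliminating $g^\circ$ through Proposition \ref{jacobi}(iii) and recognizing Jacobi sums via Proposition \ref{jacobi}(iv); this manifestly lies in $\Q(\mu_{q-1})$ and is $\psi$-free, the only cost being that one must separate the degenerate configurations in which a Jacobi sum assumes its exceptional value $\frac{1-(1-q)^n}{q}$.
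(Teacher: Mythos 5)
Your parts (i) and (ii) are exactly the paper's argument (the paper simply declares (i) evident and derives (ii) from Proposition \ref{jacobi} (iii), as you do), and they are correct. For (iii) you take a genuinely different route. The paper's proof is a one-liner: it disposes of the case $\a=\b$ (where the ratio is the power of $q$, namely $q^{\d(\a)-\d(\a\n)}$) and otherwise writes $(\a)_\n/(\b)^\0_\n = j(\a\n,\ol\a\b)/j(\a,\ol\a\b)$ via Proposition \ref{jacobi} (iv); the key is that the common second argument $\ol\a\b$ is nontrivial whenever $\a\ne\b$, so the exceptional value of the Jacobi sum never intervenes and both claims ($\psi$-freeness and membership in $\Q(\mu_{q-1})$) are immediate from the definition of $j$. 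You instead prove $\psi$-independence directly from the rescaling law $g_{\psi_t}(\vp)=\ol\vp(t)g_\psi(\vp)$ and then deduce the field of definition by Galois descent, identifying $\Gal(\Q(\mu_{p(q-1)})/\Q(\mu_{q-1}))$ with rescalings of $\psi$ by elements of $\F_p^*$. This is correct and, if anything, more conceptual: it explains why $\psi$-independence and rationality over $\Q(\mu_{q-1})$ are really the same phenomenon, and it applies verbatim to any expression homogeneous of degree zero in the Gauss sums. What it costs is length; and your supplementary Jacobi-sum identity $Q=\n(-1)j(\a\n,\ol{\b\n})/j(\a,\ol\b)$ uses a less convenient pair of arguments than the paper's, so the degenerate configurations you would have to separate there are more numerous than the single case $\a=\b$ the paper handles --- but since that identity is only offered as a check, this does not affect the validity of your proof.
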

\begin{proof}
The statement (i) is evident and  (ii) follows from Proposition \ref{jacobi} (iii). 
The statement (iii) is evident if $\a=\b$. Otherwise, this follows since 
$${(\a)_\n}/{(\b)^\circ_\n}={j(\a\n,\ol\a\b)}/{j(\a,\ol\a\b)}$$
by Proposition \ref{jacobi} (iv).  
\end{proof}

\begin{dfn}
Define the set of parameters by
$$P=\left\{\sum_{\vp\in\ck} n_\vp \vp \Bigm| n_\vp \in \N\right\},$$
the free abelian monoid over $\ck$. Let 
$$\deg \colon P \to \N; \quad \sum_\vp n_\vp \vp \mapsto \sum_\vp n_\vp$$
be the degree map, and put $P_d=\deg^{-1}(d)$, so that $P_1=\ck$.  Let 
$$(\ , \ ) \colon P \times P \to \N$$
be the symmetric bi-additive map extending $(\a,\vp) \mapsto \d(\a\ol\vp)$, so that 
$\baa= \sum_{\vp} (\baa,\vp)\vp$ for any $\baa \in P$. 
Extend the Pochhammer symbols $(\a)_\n$, $(\a)_\n^\0$ to monoid homomorphisms $P \to \C^*$, i.e.   
$$(\baa)_\n=\prod_\vp (\vp)_\n^{(\baa,\vp)}, \quad (\baa)_\n^\0=\prod_\vp {(\vp)^\0_\n}^{(\baa,\vp)}$$
for any $\baa\in P$ and $\n\in\ck$. 
Then we have 
$$\frac{(\baa)_\n}{(\baa)^\0_\n}=q^{(\baa,\e)-(\baa,\ol\n)}.$$
\end{dfn}

\subsection{Fourier transform}\label{s.fourier}

For a function $f\colon \k^* \to \C$, 
its {\em Fourier transform} is a function $\wh f \colon \ck\to \C$ defined by 
$$\wh f(\n) = \sum_{\l\in\k^*} f(\l)\ol\n(\l).$$
We have the Fourier inversion theorem 
$$f(\l)=\frac{1}{q-1}\sum_{\n\in\ck}\wh f(\n) \n(\l).$$
Conversely if $f(\l)=(q-1)^{-1}\sum_{\n\in\ck} a_\n \n(\l)$, then 
$\wh f(\n)=a_\n$. 

We have also the convolution formula 
$$\wh{f_1f_2}(\n)= \frac{1}{q-1} \sum_{\n_1\n_2=\n} \wh{f_1}(\n_1)\wh{f_2}(\n_2), $$
and the Plancherel formula 
$$\sum_{\l\in\k^*} f_1(\l)\ol{f_2(\l)} = \frac{1}{q-1} \sum_{\n\in\ck} \wh f_1(\n) \ol{\wh f_2(\n)}.$$

\begin{epl}\label{e2.2}\ 
\begin{enumerate}
\item
For $a\in\k^*$, 
$f(\l)=\d(a-\l)$ if and only if $\wh f(\n)=\ol\n(a)$.  

\item For the additive character $\psi$, $f(\l)=\psi(\l)$ if and only if 
$\wh f(\n)=-g(\ol\n)$. 
\item 
For $\a$, $\b\in\ck$, $f(\l)=\a(\l)\b(1-\l)$ if and only if 
$\wh f(\n)= - j(\a\ol\n,\b)$. 
\end{enumerate}
\end{epl}

\subsection{Hypergeometric functions}

\begin{dfn}
Define the {\em hypergeometric function} on $\k$ with parameters $\baa, \bbb \in P$ and a variable $\l$  by
$$F(\baa,\bbb;\l) = \frac{1}{1-q} \sum_{\n\in\ck} \frac{(\baa)_\n}{(\bbb)_\n^\0} \n(\l).$$
Note that $F(\baa,\bbb;0)=0$ by definition. 
It takes values in $\Q(\mu_{p(q-1)})$ in general. 
When $\deg(\baa)=\deg(\bbb)$, by Lemma \ref{l2.1} (iii), it takes values in $\Q(\mu_{q-1})$ and does not depend on the choice of $\psi$.  
When $\baa=\a_1+\cdots+\a_r$, $\bbb=\e+\b_1+\cdots+\b_s$, we also write 
$$F(\baa,\bbb;\l) =F\left({\a_1,\dots,\a_r\atop\b_1,\dots,\b_s};\l\right),$$
so that the analogy with the complex case be clear. 
We may also write this as ${}_rF_s$ to indicate the degrees ($r$ and $s+1$) of the parameters. 
\end{dfn}

Over the complex numbers, we have
$${}_1F_0\left({1\atop};x\right)=(1-x)^{-1}, \quad {}_0F_0(x)=e^x.$$
Their finite analogues are the following. 

\begin{ppn}\label{p2.1}\ 
\begin{enumerate}
\item
For any $\l\in\k$,  we have $F(0,0;\l)=-\d(1-\l)$. 
\item
For any $\l \in \k^*$, we have $F(0,\e;\l)=\psi(-\l)$. 
\end{enumerate}
\end{ppn}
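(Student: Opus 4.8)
The plan is to reduce both identities to the definition of $F$ together with the character theory and the Fourier dictionary already assembled in Section 2. Since $0\in P$ is the identity of the monoid and the Pochhammer symbols are extended to monoid homomorphisms, we have $(0)_\n=(0)^\0_\n=1$ for every $\n\in\ck$, so in both cases the numerator trivializes and we are left with a pure character sum.

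For part (i), the definition collapses to
\[
F(0,0;\l)=\frac{1}{1-q}\sum_{\n\in\ck}\n(\l).
\]
For $\l\in\k^*$ I would evaluate this by orthogonality of characters on the cyclic group $\k^*$: the sum equals $q-1$ when $\l=1$ and $0$ otherwise, i.e. $(q-1)\d(1-\l)$, which yields $-\d(1-\l)$ after dividing by $1-q$. The only point needing care is $\l=0$, where the convention $\n(0)=0$ for all $\n$ makes the sum vanish, matching $-\d(1-0)=0$.

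For part (ii), using $(\e)^\0_\n=g^\0(\n)/q$ the definition becomes
\[
F(0,\e;\l)=\frac{q}{1-q}\sum_{\n\in\ck}\frac{\n(\l)}{g^\0(\n)}.
\]
I would first rewrite the denominator via Proposition \ref{jacobi}(iii): taking $\vp=\ol\n$ there gives $g(\ol\n)g^\0(\n)=\n(-1)q$, hence $1/g^\0(\n)=\n(-1)g(\ol\n)/q$. Substituting this, the factor $q$ cancels and $\n(-1)\n(\l)=\n(-\l)$, leaving
\[
F(0,\e;\l)=\frac{1}{1-q}\sum_{\n\in\ck}g(\ol\n)\,\n(-\l).
\]
This is precisely the Fourier inversion formula applied to Example \ref{e2.2}(ii), where $f(\l)=\psi(\l)$ has $\wh f(\n)=-g(\ol\n)$; evaluating the inversion at $-\l$ identifies the sum with $\psi(-\l)$, as desired.

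The computations are short, so I do not expect a genuine obstacle. The things to watch are the bookkeeping of the quadratic sign $\n(-1)$ under the reflection $\n\leftrightarrow\ol\n$ (all such values are $\pm1$ and self-inverse), the special role of the trivial character $\e$ where $g^\0$ carries the extra factor $q$ (this is already absorbed into Proposition \ref{jacobi}(iii), which holds uniformly for all $\vp$), and the edge case $\l=0$ in (i). Once the definitions are unwound, the proposition follows directly.
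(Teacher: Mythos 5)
Your proof is correct and follows essentially the same route as the paper: part (i) is immediate from orthogonality of characters (the paper just says ``evident''), and part (ii) is exactly the paper's argument, namely identifying $-1/(\e)^\0_\n=-\n(-1)g(\ol\n)$ via Proposition \ref{jacobi} (iii) as the Fourier transform of $\psi(-\l)$ from Example \ref{e2.2} (ii) and applying Fourier inversion.
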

\begin{proof}(i) Evident. 
(ii) By Example \ref{e2.2} (ii) and Proposition \ref{jacobi} (iii), the Fourier transform of $\psi(-\l)$ is 
$-\n(-1)g(\ol\n)=-1/(\e)^\0_\n$, 
and the result follows by the Fourier inversion. 
\end{proof}

We can shift simultaneously the parameters of $F(\baa,\bbb;\l)$. 
In particular, any hypergeometric function is reduced to a ${}_rF_s$-function. 
\begin{ppn}\label{shift}
For any $\baa, \bbb \in P$ and $\vp\in\ck$, we have 
$$F(\baa,\bbb;\l)= \frac{(\baa)_\vp}{(\bbb)^\0_\vp} \vp(\l) F(\baa\vp,\bbb\vp;\l).$$
Here we write 
$\baa\vp=\a_1\vp+\cdots + \a_r\vp$ when $\baa=\a_1+\cdots+\a_r$. 
\end{ppn}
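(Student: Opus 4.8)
The plan is to prove the identity termwise inside the Fourier sum defining $F$, after a change of the summation variable $\n\mapsto\ol\vp\n$. Since $F(\baa,\bbb;0)=0$ by definition and the right-hand side also vanishes at $\l=0$ because $\vp(0)=0$, I may assume $\l\in\k^*$ throughout.

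First I would promote Lemma \ref{l2.1}(i) from single characters to parameters in $P$, establishing
$$(\baa)_{\vp\n}=(\baa)_\vp\,(\baa\vp)_\n,\qquad (\bbb)^\0_{\vp\n}=(\bbb)^\0_\vp\,(\bbb\vp)^\0_\n.$$
Writing $\baa=\sum_\phi(\baa,\phi)\phi$ and using that $(\,\cdot\,)_\n$ is a monoid homomorphism in its base argument, the per-character relation $(\phi)_{\vp\n}=(\phi)_\vp(\phi\vp)_\n$ factors the product as $\prod_\phi(\phi)_\vp^{(\baa,\phi)}\cdot\prod_\phi(\phi\vp)_\n^{(\baa,\phi)}$. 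The first factor is $(\baa)_\vp$; reindexing the second by $\tau=\phi\vp$ and noting $(\baa\vp,\tau)=(\baa,\tau\ol\vp)$ identifies it with $(\baa\vp)_\n$. The circle version is identical. Rearranged, these yield
$$(\baa\vp)_{\ol\vp\n}=\frac{(\baa)_{\vp\ol\vp\n}}{(\baa)_\vp}=\frac{(\baa)_\n}{(\baa)_\vp},\qquad (\bbb\vp)^\0_{\ol\vp\n}=\frac{(\bbb)^\0_\n}{(\bbb)^\0_\vp}.$$

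Then I would substitute the definition of $F(\baa\vp,\bbb\vp;\l)$ into the right-hand side and replace the summation variable $\n$ by $\ol\vp\n$. For $\l\in\k^*$ one has $\vp(\l)\ol\vp(\l)=1$, so the prefactor $\vp(\l)$ combines with $(\ol\vp\n)(\l)$ to give $\n(\l)$, while the identities above turn the ratio $(\baa\vp)_{\ol\vp\n}/(\bbb\vp)^\0_{\ol\vp\n}$ into $\big((\baa)_\n/(\baa)_\vp\big)\big/\big((\bbb)^\0_\n/(\bbb)^\0_\vp\big)$. The outer factor $(\baa)_\vp/(\bbb)^\0_\vp$ then cancels exactly, leaving the summand $\big((\baa)_\n/(\bbb)^\0_\n\big)\,\n(\l)$, which reassembles into $F(\baa,\bbb;\l)$. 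The only genuinely fiddly point is the bookkeeping in the monoid-level multiplicativity, specifically the reindexing identity $(\baa\vp,\tau)=(\baa,\tau\ol\vp)$; everything else is a direct cancellation.
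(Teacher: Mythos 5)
Your proof is correct and is essentially the paper's own argument: the paper's proof consists of the single line ``This follows immediately from Lemma \ref{l2.1} (i)'', and your write-up just spells out the monoid-level extension of that lemma, the substitution $\n\mapsto\ol\vp\n$, and the cancellation. The extra care with the $\l=0$ case and the reindexing identity $(\baa\vp,\tau)=(\baa,\tau\ol\vp)$ is fine but not a departure from the intended route.
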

\begin{proof}
This follows immediately from Lemma \ref{l2.1} (i). 
\end{proof}

Exchanging the numerator and denominator parameters results in the following.  
\begin{ppn}\label{reverse}
For any $\baa$, $\bbb \in P$ and $\l \in \k^*$, we have 
$$F(\bbb,\baa;\l)=F(\ol\baa,\ol\bbb; (-1)^{\deg(\baa+\bbb)}\l^{-1})
=\ol{F(\baa,\bbb;\l^{-1})}.$$
Here we write $\ol\baa=\ol\a_1+\cdots+\ol\a_r$ for $\baa=\a_1+\cdots+\a_r$. 
\end{ppn}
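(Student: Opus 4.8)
The plan is to derive both equalities from two elementary identities for the extended Pochhammer symbols, combined with the substitution $\n\mapsto\ol\n$ in the defining sum. Throughout I will use that $\n(-1)\in\{\pm1\}$, so that $\n(-1)^{2m}=1$ for every integer $m$, which is what makes the various sign characters disappear. First I would record the two needed identities. Extending Lemma \ref{l2.1} (ii) multiplicatively over $P$ (the symbols being monoid homomorphisms) gives, for every $\baa\in P$ and $\n\in\ck$,
$$(\baa)_\n\,(\ol\baa)^\0_{\ol\n}=\n(-1)^{\deg\baa};$$
substituting $\baa\mapsto\ol\baa$, $\n\mapsto\ol\n$ yields the companion form $(\ol\baa)_{\ol\n}\,(\baa)^\0_{\n}=\n(-1)^{\deg\baa}$. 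Next, from Proposition \ref{jacobi} (ii) together with $(\a)_\n=g(\a\n)/g(\a)$ one checks $\ol{(\a)_\n}=\n(-1)(\ol\a)_{\ol\n}$ and $\ol{(\a)^\0_\n}=\n(-1)(\ol\a)^\0_{\ol\n}$ for $\a,\n\in\ck$; taking products over $\vp$ with multiplicities $(\baa,\vp)$ promotes these to
$$\ol{(\baa)_\n}=\n(-1)^{\deg\baa}(\ol\baa)_{\ol\n},\qquad \ol{(\baa)^\0_\n}=\n(-1)^{\deg\baa}(\ol\baa)^\0_{\ol\n}.$$

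For the second equality I would conjugate the series for $F(\baa,\bbb;\l^{-1})$ term by term, using that the scalar $(1-q)^{-1}$ is real. The conjugation identities give $\ol{(\baa)_\n/(\bbb)^\0_\n}=\n(-1)^{\deg\baa-\deg\bbb}(\ol\baa)_{\ol\n}/(\ol\bbb)^\0_{\ol\n}$, while $\ol{\n(\l^{-1})}=\n(\l)$. Re-indexing the resulting sum by $\n\mapsto\ol\n$ turns it into $\frac{1}{1-q}\sum_\n \n(-1)^{\deg\baa-\deg\bbb}\,\big[(\ol\baa)_\n/(\ol\bbb)^\0_\n\big]\,\n(\l^{-1})$. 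Since the target $F(\ol\baa,\ol\bbb;(-1)^{\deg(\baa+\bbb)}\l^{-1})$ carries the factor $\n(-1)^{\deg(\baa+\bbb)}=\n(-1)^{\deg\baa+\deg\bbb}$ in its summand, and the two exponents $\deg\baa-\deg\bbb$ and $\deg\baa+\deg\bbb$ differ by the even number $2\deg\bbb$, the sign factors agree and the two series coincide.

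For the first equality I would instead start from $F(\ol\baa,\ol\bbb;(-1)^{\deg(\baa+\bbb)}\l^{-1})$, re-index by $\n\mapsto\ol\n$, and apply the reflection identities of the first step in the form $(\ol\baa)_{\ol\n}/(\ol\bbb)^\0_{\ol\n}=\n(-1)^{\deg\baa-\deg\bbb}(\bbb)_\n/(\baa)^\0_\n$. The character $\ol\n((-1)^{\deg(\baa+\bbb)}\l^{-1})$ contributes $\n(-1)^{\deg\baa+\deg\bbb}\n(\l)$, so the accumulated power of $\n(-1)$ is $2\deg\baa$, again even, and it drops out, leaving precisely $F(\bbb,\baa;\l)$. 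I expect the only delicate point to be the sign bookkeeping: tracking the factors $\n(-1)^{\deg(\cdot)}$ produced by re-indexing, by conjugation, and by the twist $(-1)^{\deg(\baa+\bbb)}$ in the argument, and verifying in each case that the leftover exponent is even. No analytic input is needed beyond $|\n(-1)|=1$ and $\n(-1)^2=1$.
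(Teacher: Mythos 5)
Your proof is correct and follows exactly the paper's route: the first equality from Lemma \ref{l2.1} (ii) (extended multiplicatively to $P$) and the second from Proposition \ref{jacobi} (ii), with the substitution $\n\mapsto\ol\n$; you have merely written out the sign bookkeeping that the paper leaves implicit. All the exponent computations ($2\deg\bbb$ and $2\deg\baa$ being even) check out.
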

\begin{proof}The first (resp. second) equality 
follows  immediately from Lemma \ref{l2.1} (ii) (resp. Proposition \ref{jacobi} (ii)). 
\end{proof}

We have the following (cf. \cite[(8.2.8)]{katz}). 
\begin{ppn}
For any $\baa$, $\bbb \in P$, we have  
$$\sum_{\l\in\k} |F(\baa,\bbb;\l)|^2
=\frac{1}{q-1} \sum_{\n\in\ck} q^{(\baa+\bbb,\e)-(\baa+\bbb,\n)}.$$
\end{ppn}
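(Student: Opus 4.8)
The plan is to recognize the left-hand side as the Plancherel norm of the function $\l\mapsto F(\baa,\bbb;\l)$ on $\k^*$, and then to evaluate the resulting sum of squared absolute values of Fourier coefficients using the known moduli of Gauss sums. Since $F(\baa,\bbb;0)=0$ by definition, the sum over $\l\in\k$ equals the sum over $\l\in\k^*$, so I regard $f(\l)=F(\baa,\bbb;\l)$ as a function on $\k^*$. Writing the defining series in the form $f(\l)=\frac{1}{q-1}\sum_{\n\in\ck}\bigl(-(\baa)_\n/(\bbb)^\0_\n\bigr)\n(\l)$ and invoking the converse direction of the Fourier inversion theorem from Section \ref{s.fourier}, I read off $\wh f(\n)=-(\baa)_\n/(\bbb)^\0_\n$. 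Applying the Plancherel formula with $f_1=f_2=f$ then gives
$$\sum_{\l\in\k}|F(\baa,\bbb;\l)|^2=\frac{1}{q-1}\sum_{\n\in\ck}\left|\frac{(\baa)_\n}{(\bbb)^\0_\n}\right|^2.$$

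It then remains to compute the squared modulus of each coefficient. By Proposition \ref{jacobi} (iii) we have $|g(\vp)|={\sqrt q}^{\,1-\d(\vp)}$ and $|g^\0(\vp)|={\sqrt q}^{\,1+\d(\vp)}$, so that for a single character $\vp$,
$$|(\vp)_\n|={\sqrt q}^{\,\d(\vp)-\d(\vp\n)},\qquad |(\vp)^\0_\n|={\sqrt q}^{\,\d(\vp\n)-\d(\vp)}.$$
Since $\d(\vp)=(\vp,\e)$ and $\d(\vp\n)=(\vp,\ol\n)$, I take the product over $\vp$ with multiplicities $(\baa,\vp)$ in the numerator and $(\bbb,\vp)$ in the denominator and use bi-additivity of $(\ ,\ )$, namely $\sum_\vp(\baa,\vp)(\vp,\e)=(\baa,\e)$ and $\sum_\vp(\baa,\vp)(\vp,\ol\n)=(\baa,\ol\n)$ (and likewise for $\bbb$). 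This collapses the exponents to
$$\left|\frac{(\baa)_\n}{(\bbb)^\0_\n}\right|^2=q^{(\baa+\bbb,\e)-(\baa+\bbb,\ol\n)}.$$

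Substituting this back and reindexing the outer sum by the bijection $\n\mapsto\ol\n$ of $\ck$ turns $(\baa+\bbb,\ol\n)$ into $(\baa+\bbb,\n)$ and yields exactly the asserted identity. The only step requiring care is the middle one: correctly tracking the Kronecker $\d$-exponents of $q$ through both numerator and denominator and repackaging them through the pairing $(\ ,\ )$; the bookkeeping is routine but sign-sensitive, and everything surrounding it is a direct application of the Fourier-analytic identities recorded in Section \ref{s.fourier}. (As a sanity check, the collapsed exponent is consistent with the identity $(\baa)_\n/(\baa)^\0_\n=q^{(\baa,\e)-(\baa,\ol\n)}$ noted in the definition of $P$.)
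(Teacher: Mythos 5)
Your proof is correct and follows exactly the paper's (very terse) argument: apply the Plancherel formula to read off $\wh f(\n)=-(\baa)_\n/(\bbb)^\0_\n$, then use the moduli of Gauss sums from Proposition \ref{jacobi} (iii) to evaluate $|(\baa)_\n/(\bbb)^\0_\n|^2=q^{(\baa+\bbb,\e)-(\baa+\bbb,\ol\n)}$ and reindex by $\n\mapsto\ol\n$. You have simply carried out in full the bookkeeping that the paper leaves to the reader.
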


\begin{proof}
Apply the Plancherel formula and use Proposition \ref{jacobi} (iii).  
\end{proof}

\begin{rmk}\label{comparison}
Let us compare our definition with other definitions in the literature. 
Let $\baa=\a_0+\cdots+\a_m$, $\bbb=\b_0+\cdots+\b_n \in P$. 
\begin{enumerate}
\item Koblitz \cite[Remark 2 after Theorem 3]{koblitz} considers the case where  
$m=n$, $\b_0=\e$ and $(\b_i,\e)=0$ for $i \ne 0$. 
His function ${}_{n+1}F_{n,\k}\left({\a_0,\a_1,\dots, \a_n\atop \b_1,\dots, \b_n}; \l\right)$
coincides with our $(-1)^n F(\baa,\bbb;\l)$ for $\l\in\k^*$ by Corollary \ref{c2.1} (i), but not for $\l=0$. 

\item Greene \cite[Definition 3.10]{greene} considers the case that $m=n$ and $\b_0=\e$. 
His definition uses ``binomial coefficients" 
$$\binom{\a}{\b}:=-\frac{\b(-1)}{q} j(\a,\ol\b)=-\frac{\a\b(-1)}{q} j(\a,\ol\a\b).$$ 
If  $\a\ne \b$, then 
$\binom{\a\n}{\b\n}/\binom{\a}{\b} = (\a)_\n/(\b)^\0_\n$ for any $\n$. 
Therefore, if $\a_i\ne \b_i$ for all $i$ (including $0$), then Greene's function  
${}_{n+1}F_n\left({\a_0,\a_1,\dots, \a_n\atop \phantom{\a_0, }\b_1,\dots, \b_n} \bigm|  \l\right)$ 
coincides with our 
$$\prod_{i=1}^n \binom{\a_i}{\b_i} F(\baa,\bbb;\l).$$  

\item Katz's hypergeometric sum \cite[(8.2.7)]{katz} 
$\mathrm{Hyp}(\psi; \baa;\bbb)(\k,\l)$ ($\l\ne 0$) coincides with our 
$$(-1)^{m+n+1} \frac{\prod_{i=0}^m g(\a_i)}{\prod_{j=0}^n (q^{-1}g^\0(\b_j))}  F(\baa,\bbb;\l^{-1})$$
(see Remark \ref{r3.1}).
\item
McCarthy \cite[Definition 1.4]{mccarthy} considers the case where $m=n$ and $\b_0=\e$. 
His function 
${}_{n+1}F_n\left({\a_0,\a_1,\dots, \a_n\atop \phantom{\a_0, }\b_1,\dots, \b_n}; \l\right)_q^*$ coincides with our $F(\baa,\bbb;\l)$ by Proposition \ref{jacobi}. 

\item In \cite{fuselieretal}, the authors consider the case where $m=n$ and $\b_0=\e$, and defines a function 
${}_{n+1}\F_n\left[{\a_0,\a_1,\dots, \a_n \atop \phantom{\a_0,} \b_1,\dots, \b_n}; \l\right]$. 
By Corollary \ref{c2.1} (i), it coincides with our $F(\baa,\bbb;\l)$ if $\a_0\ne\e$ and $\l\ne 0$. 
\end{enumerate}
We remark that the functions of \cite{koblitz}, \cite{greene}, \cite{fuselieretal} in general depend not only on $\baa$ and $\bbb$, but also on the orders of $\a_i$'s and of $\b_j$'s. 
\end{rmk}

\subsection{Other hypergeometric functions}

In the similar manner, one defines more general hypergeometric functions. 
For example, analogues of Lauricella's functions with $n$ variables are defined as follows. 

\begin{align*}
&F_A(\a,\b_1,\dots, \b_n,\g_1,\dots, \g_n;\l_1,\dots,\l_n) 
\\&\phantom{F_C(\a,\b,\g_1,\dots, \g_n;\l_1,\dots, \l_n)} 
= \frac{1}{(1-q)^n} \sum_{\n_1,\dots, \n_n \in \ck}\frac{(\a)_{\n_1\cdots\n_n} \prod_{i=1}^n (\b_i)_{\n_i}}{\prod_{i=1}^n(\e)^\0_{\n_i}(\g_i)^\0_{\n_i}}\prod_{i=1}^n \n_i(\l_i). 
\\&F_B(\a_1,\dots,\a_n,\b_1,\dots, \b_n,\g;\l_1,\dots, \l_n) 
\\&\phantom{F_C(\a,\b,\g_1,\dots, \g_n;\l_1,\dots, \l_n)} 
= \frac{1}{(1-q)^n}  \sum_{\n_1,\dots, \n_n \in \ck}\frac{\prod_{i=1}^n (\a_i)_{\n_i}(\b_i)_{\n_i}}{\prod_{i=1}^n(\e)^\0_{\n_i}(\g)^\0_{\n_1\cdots \n_n}}\prod_{i=1}^n \n_i(\l_i). 
\\&F_C(\a,\b,\g_1,\dots, \g_n;\l_1,\dots, \l_n) 
= \frac{1}{(1-q)^n} \sum_{\n_1,\dots, \n_n \in \ck}\frac{(\a)_{\n_1\cdots \n_n}(\b)_{\n_1\cdots\n_n}}{\prod_{i=1}^n(\e)^\0_{\n_i}(\g_i)^\0_{\n_i}}\prod_{i=1}^n \n_i(\l_i). 
\\&F_D(\a,\b_1,\dots, \b_n,\g;\l_1,\dots,\l_n) 
=\frac{1}{(1-q)^n}  \sum_{\n_1,\dots, \n_n \in \ck}\frac{(\a)_{\n_1\cdots\n_n} \prod_{i=1}^n (\b_i)_{\n_i}}{\prod_{i=1}^n(\e)^\0_{\n_i}(\g)^\0_{\n_1\cdots\n_n}}\prod_{i=1}^n \n_i(\l_i). 
\end{align*}
These are all functions on $\k^n$ with values in $\Q(\mu_{q-1})$ independent of the choice of $\psi$ by
$(\a)_{\n_1\cdots \n_n}=\prod_{i=1}^n (\a\n_1\cdots \n_{i-1})_{\n_i}$ 
and Lemma \ref{l2.1} (iii).  

When $n=2$, these are analogues of Appell's functions $F_2$, $F_3$, $F_4$, $F_1$, respectively, 
and the definition as above coincides with that of Barman--Saikia--Tripathi \cite{bst}.

\section{Basic properties}
\subsection{Reduction and iteration}

If a complex hypergeometric function has common parameters in the numerator and the denominator, they cancel out by definition. 
This is not the case for our finite version. 

\begin{dfn}
We say that $F(\baa,\bbb;\l)$ is {\em reduced} if $(\baa,\bbb)=0$.  
For general $\baa$ and $\bbb$, let $\bgg\in P$ be the element of largest degree such that 
$\baa-\bgg$, $\bbb-\bgg \in P$. Then $(\baa-\bgg,\bbb-\bgg)=0$, and 
we define the {\em reduction} of $F(\baa,\bbb;\l)$ by 
$$\wt F(\baa,\bbb;\l):=F(\baa-\bgg,\bbb-\bgg;\l).$$
\end{dfn}

The relation between $F(\baa,\bbb;\l)$ and $\wt F(\baa,\bbb;\l)$ is given by the following Theorem.
 
\begin{thm}\label{reduction}
For any $\baa,\bbb,\bgg \in P$, we have 
\begin{equation*}
F(\baa+\bgg,\bbb+\bgg;\l)=q^{(\bgg,\e)}\left(F(\baa,\bbb;\l)+q^{-1}\sum_{\n\in\ck} 
\frac{1-q^{-(\n,\bgg)}}{1-q^{-1}} \frac{(\baa)_{\ol\n}}{(\bbb)^\0_{\ol\n}} \ol\n(\l)
\right). 
\end{equation*}
\end{thm}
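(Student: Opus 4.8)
The plan is to work directly from the definition of $F(\baa+\bgg,\bbb+\bgg;\l)$ as a sum over $\ck$ and to compare the summands with those of $F(\baa,\bbb;\l)$. By definition,
\begin{equation*}
F(\baa+\bgg,\bbb+\bgg;\l)=\frac{1}{1-q}\sum_{\n\in\ck}\frac{(\baa+\bgg)_\n}{(\bbb+\bgg)^\0_\n}\n(\l),
\end{equation*}
so the central task is to understand the ratio of the $\n$-th summand here to the $\n$-th summand of $F(\baa,\bbb;\l)$, namely the quantity
\begin{equation*}
R_\n:=\frac{(\baa+\bgg)_\n}{(\bbb+\bgg)^\0_\n}\Bigm/\frac{(\baa)_\n}{(\bbb)^\0_\n}=\frac{(\bgg)_\n}{(\bgg)^\0_\n},
\end{equation*}
where I have used that the Pochhammer symbols are monoid homomorphisms $P\to\C^*$ in the subscript argument, so that $(\baa+\bgg)_\n=(\baa)_\n(\bgg)_\n$ and likewise for the $\circ$-version. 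The key identity to invoke is the last displayed formula in the Definition of the set of parameters, which gives exactly $(\bgg)_\n/(\bgg)^\0_\n=q^{(\bgg,\e)-(\bgg,\ol\n)}$.

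First I would substitute this evaluation to obtain
\begin{equation*}
F(\baa+\bgg,\bbb+\bgg;\l)=\frac{q^{(\bgg,\e)}}{1-q}\sum_{\n\in\ck}q^{-(\bgg,\ol\n)}\frac{(\baa)_\n}{(\bbb)^\0_\n}\n(\l).
\end{equation*}
Then I would reindex the sum by replacing $\n$ with $\ol\n$ (a bijection of $\ck$), so that $\n(\l)$ becomes $\ol\n(\l)$, $(\baa)_\n$ becomes $(\baa)_{\ol\n}$, and the exponent $(\bgg,\ol\n)$ becomes $(\bgg,\n)$. This turns the inner factor into $q^{-(\n,\bgg)}$, matching the shape of the claimed formula. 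The remaining step is purely algebraic bookkeeping: I would write $q^{-(\n,\bgg)}=1-(1-q^{-(\n,\bgg)})$, split the sum into the piece with the constant $1$ and the piece with $1-q^{-(\n,\bgg)}$, and recognize the first piece as $\frac{q^{(\bgg,\e)}}{1-q}\sum_{\n}(\baa)_{\ol\n}(\bbb)^{\0\,-1}_{\ol\n}\ol\n(\l)=q^{(\bgg,\e)}F(\baa,\bbb;\l)$ after reindexing back.

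The only genuine care needed is the constant factor $q^{-1}/(1-q^{-1})$ appearing in the stated theorem. After the split, the second piece carries a prefactor $\frac{q^{(\bgg,\e)}}{1-q}$ multiplying $\sum_\n(1-q^{-(\n,\bgg)})(\baa)_{\ol\n}(\bbb)^{\0\,-1}_{\ol\n}\ol\n(\l)$, and I would rewrite $\frac{1}{1-q}=\frac{q^{-1}}{q^{-1}-1}=-\frac{q^{-1}}{1-q^{-1}}$; absorbing the sign into $1-q^{-(\n,\bgg)}$ produces the factor $q^{-1}\cdot\frac{1-q^{-(\n,\bgg)}}{1-q^{-1}}$ exactly as written. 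I do not anticipate a serious obstacle here: the proof is essentially a one-line application of the homomorphism property together with the evaluation $(\bgg)_\n/(\bgg)^\0_\n=q^{(\bgg,\e)-(\bgg,\ol\n)}$, and the main thing to get right is the reindexing $\n\mapsto\ol\n$ and the normalization of the constant so that the ``$1$'' part reassembles cleanly into $q^{(\bgg,\e)}F(\baa,\bbb;\l)$.
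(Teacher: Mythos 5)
Your proof is correct. The paper proves the same identity differently in form: it first establishes the degree-one case $\bgg=\g\in\ck$ by writing $\frac{(\baa+\g)_\n}{(\bbb+\g)^\0_\n}=q^{\d(\g)-\d(\g\n)}\frac{(\baa)_\n}{(\bbb)^\0_\n}$ and splitting off the single term $\n=\ol\g$, and then disposes of general $\bgg$ by induction on $\deg(\bgg)$. You instead invoke the general identity $(\bgg)_\n/(\bgg)^\0_\n=q^{(\bgg,\e)-(\bgg,\ol\n)}$ from the paper's own definition of the extended Pochhammer symbols and do the whole computation in one pass, with the split $q^{-(\n,\bgg)}=1-(1-q^{-(\n,\bgg)})$ and the sign bookkeeping $-\frac{1}{1-q}=\frac{q^{-1}}{1-q^{-1}}$, both of which check out. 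The two arguments rest on the same underlying fact about the ratio of modified to unmodified Gauss sums; what your version buys is that it avoids the induction and makes the otherwise odd-looking coefficient $\frac{1-q^{-(\n,\bgg)}}{1-q^{-1}}$ appear directly, rather than emerging as the telescoped geometric sum $1+q^{-1}+\cdots+q^{-((\n,\bgg)-1)}$ that the paper's induction would accumulate. The only presentational caution is to keep the minus sign from the split visibly attached to the second piece before cancelling it against $\frac{1}{1-q}$; as written your prose glosses over it slightly, but the arithmetic you report is the right one.
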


\begin{proof}
If $\g\in\ck$, then we have \begin{align*}
F(\baa+\g,\bbb+\g;\l)
&=q^{\d(\g)}\frac{1}{1-q} \sum_\n q^{-\d(\g\n)}\frac{(\baa)_\n}{(\bbb)^\0_\n}\n(\l)
\\&=q^{\d(\g)}\frac{1}{1-q} 
\left(\sum_\n \frac{(\baa)_\n}{(\bbb)^\0_\n}\n(\l) + (q^{-1}-1) \frac{(\baa)_{\ol\g}}{(\bbb)^\0_{\ol\g}}\ol\g(\l)\right)
\\&=q^{\d(\g)}\left(F(\baa,\bbb;\l)+q^{-1}\frac{(\baa)_{\ol\g}}{(\bbb)^\0_{\ol\g}}\ol\g(\l)\right).
\end{align*}
The general case follows by induction on $\deg(\bgg)$. 
\end{proof}

We have the iteration formula for the complex hypergeometric function  
$$B(a,b-a){}_{r+1}F_{s+1}\left({a_1,\dots, a_r,a \atop b_1,\dots, b_s, b};x\right)
=\int_0^1 {}_rF_s\left({a_1,\dots, a_r \atop b_1,\dots, b_s};x t\right)t^{a-1}(1-t)^{b-a-1}\, dt$$
under a suitable convergence condition (cf. \cite[(4.1.1)]{slater}). 
Its finite analogue is the following. 

\begin{thm}\label{iteration}
Suppose that $\baa$, $\bbb \in P$, $\a$, $\b \in \ck$ and $\a \ne \b$. Then for any $\l\in\k$, 
\begin{equation*}
-j(\a,\ol\a\b)F(\baa+\a,\bbb+\b;\l)=\sum_{t \in \k} F(\baa,\bbb;\l t) \a(t)\ol\a\b(1-t).
\end{equation*}
\end{thm}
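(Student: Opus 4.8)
The plan is to expand both sides via the definition of $F$ and match them coefficient by coefficient in the character sum over $\n\in\ck$. First I would rewrite the left-hand side. Since the Pochhammer symbols extend to monoid homomorphisms on $P$, we have $(\baa+\a)_\n=(\baa)_\n(\a)_\n$ and $(\bbb+\b)^\0_\n=(\bbb)^\0_\n(\b)^\0_\n$, so that
$$-j(\a,\ol\a\b)F(\baa+\a,\bbb+\b;\l)=\frac{-j(\a,\ol\a\b)}{1-q}\sum_{\n\in\ck}\frac{(\baa)_\n}{(\bbb)^\0_\n}\,\frac{(\a)_\n}{(\b)^\0_\n}\,\n(\l).$$

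For the right-hand side I would insert the definition of $F(\baa,\bbb;\l t)$, use multiplicativity $\n(\l t)=\n(\l)\n(t)$ (valid for all $\l,t\in\k$ with the convention $\n(0)=0$), and interchange the two finite sums to obtain
$$\sum_{t\in\k}F(\baa,\bbb;\l t)\,\a(t)\ol\a\b(1-t)=\frac{1}{1-q}\sum_{\n\in\ck}\frac{(\baa)_\n}{(\bbb)^\0_\n}\,\n(\l)\left(\sum_{t\in\k}(\a\n)(t)\,(\ol\a\b)(1-t)\right).$$
The key observation is that the inner $t$-sum is, by the very definition of the Jacobi sum, equal to $-j(\a\n,\ol\a\b)$. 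Comparing the two displays term by term, the theorem reduces to the single identity
$$\frac{(\a)_\n}{(\b)^\0_\n}=\frac{j(\a\n,\ol\a\b)}{j(\a,\ol\a\b)}\qquad(\n\in\ck).$$

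This identity is exactly the one established, for $\a\ne\b$, in the proof of Lemma \ref{l2.1}(iii); alternatively it follows at once from Proposition \ref{jacobi}(iv), which gives $j(\a\n,\ol\a\b)=g(\a\n)g(\ol\a\b)/g^\0(\b\n)$ and $j(\a,\ol\a\b)=g(\a)g(\ol\a\b)/g^\0(\b)$, whose quotient is precisely $(\a)_\n/(\b)^\0_\n$. The hypothesis $\a\ne\b$ is what makes this go through: it ensures $\ol\a\b\ne\e$, so that neither $(\a,\ol\a\b)$ nor $(\a\n,\ol\a\b)$ is the exceptional pair $(\e,\e)$ excluded in Proposition \ref{jacobi}(iv), and in particular $j(\a,\ol\a\b)\ne0$ so that the division above is legitimate. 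The case $\l=0$ is trivial, since then both sides vanish because $F(\,\cdot\,,\cdot\,;0)=0$. I expect no genuine obstacle beyond correctly recognizing the $t$-sum as a Jacobi sum and keeping careful track of the modified symbol $g^\0$ in the denominator; the argument is otherwise a direct finite-field parallel of the classical Euler integral representation, with the integration over $t$ replaced by summation over $\k$.
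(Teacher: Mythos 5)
Your proposal is correct and follows essentially the same route as the paper's proof: expand the right-hand side, interchange the sums, recognize the inner $t$-sum as $-j(\a\n,\ol\a\b)$, and use Proposition \ref{jacobi} (iv) to get $j(\a\n,\ol\a\b)=j(\a,\ol\a\b)(\a)_\n/(\b)^\0_\n$. Your explicit remark on why the hypothesis $\a\ne\b$ is needed (so that $\ol\a\b\ne\e$ keeps both pairs away from the exceptional case and makes $j(\a,\ol\a\b)$ nonzero) is a correct and welcome elaboration of a point the paper leaves implicit.
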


\begin{proof}The right-hand side equals
$$\frac{1}{1-q}\sum_\n \frac{(\baa)_\n}{(\bbb)^\0_\n} \n(\l)\sum_t \a\n(t)\ol\a\b(1-t)
=-\frac{1}{1-q}\sum_\n \frac{(\baa)_\n}{(\bbb)^\0_\n} \n(\l) j(\a\n,\ol\a\b).$$
By Proposition \ref{jacobi} (iv),  
$$j(\a\n,\ol\a\b)=\frac{g(\a\n)g(\ol\a\b)}{g^\0(\b\n)}=\frac{g(\a)g(\ol\a\b)}{g^\0(\b)}\frac{(\a)_\n}{(\b)^\0_\n}=j(\a,\ol\a\b)\frac{(\a)_\n}{(\b)^\0_\n},$$
hence the formula follows. 
\end{proof}

Recall that over the complex numbers, ${}_1F_0(x)$ is a geometric series 
$${}_1F_0\left({a\atop };x\right)=(1-x)^{-a}.$$
Analogously, we have the following.

\begin{cor}\label{c2.0}
Suppose that $\a \in \ck$ and $\a \ne \e$. Then for any $\l \in \k^*$, 
$$\FF{\a}{}{\l}=\ol\a(1-\l).$$
\end{cor}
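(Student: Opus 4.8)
The plan is to apply the iteration formula (Theorem \ref{iteration}) in its simplest instance, taking $\baa=\bbb=0$ and $\b=\e$. Since $\a\ne\e$, the hypothesis $\a\ne\b$ is satisfied, and because $F(0+\a,0+\e;\l)=\FF{\a}{}{\l}$ the left-hand side becomes $-j(\a,\ol\a)\FF{\a}{}{\l}$. Everything then reduces to evaluating the right-hand side $\sum_{t\in\k}F(0,0;\l t)\,\a(t)\ol\a(1-t)$ and computing the Jacobi sum $j(\a,\ol\a)$.

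For the right-hand side I would invoke Proposition \ref{p2.1} (i), which gives $F(0,0;\l t)=-\d(1-\l t)$. For $\l\in\k^*$ this vanishes unless $t=\l^{-1}$, so the sum collapses to the single term $-\a(\l^{-1})\ol\a(1-\l^{-1})$. For the coefficient, Proposition \ref{jacobi} (iv) gives $j(\a,\ol\a)=g(\a)g(\ol\a)/g^\0(\e)=g(\a)g(\ol\a)/q$, and then Proposition \ref{jacobi} (iii), together with $g^\0(\ol\a)=g(\ol\a)$ (valid since $\a\ne\e$), yields $g(\a)g(\ol\a)=\a(-1)q$, so that $j(\a,\ol\a)=\a(-1)$. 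Substituting, I obtain $\FF{\a}{}{\l}=\a(-1)\a(\l^{-1})\ol\a(1-\l^{-1})$.

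The only delicate point is the character bookkeeping that turns this into $\ol\a(1-\l)$. Writing $1-\l^{-1}=(\l-1)/\l$ and using $\ol\a(\l^{-1})=\a(\l)$, the factors $\a(\l^{-1})\a(\l)=1$ cancel and one is left with $\a(-1)\ol\a(\l-1)=\a(-1)\ol\a(-1)\ol\a(1-\l)=\ol\a(1-\l)$, since $\a(-1)\ol\a(-1)=1$. The case $\l=1$ must be treated separately, but there both sides vanish because $\ol\a(0)=0$, so it is harmless. I expect this last simplification to be the main (and quite minor) obstacle. An alternative route would be to compare Fourier transforms: the Fourier transform of $\FF{\a}{}{\cdot}$ equals $-(\a)_\n/(\e)^\0_\n$, while Example \ref{e2.2} (iii) gives $-j(\ol\n,\ol\a)$ for the Fourier transform of $\ol\a(1-\l)$, reducing the claim to the identity $(\a)_\n/(\e)^\0_\n=j(\ol\n,\ol\a)$, which again follows from Proposition \ref{jacobi} (iii) and (iv); I expect the iteration approach to be the cleaner of the two.
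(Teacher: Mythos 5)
Your proposal is correct and follows essentially the same route as the paper: apply Theorem \ref{iteration} with $\baa=\bbb=0$, $\b=\e$, evaluate the right-hand side via Proposition \ref{p2.1} (i), and use $j(\a,\ol\a)=\a(-1)$ from Proposition \ref{jacobi}. The character bookkeeping $\a(-1)\a(\l^{-1})\ol\a(1-\l^{-1})=\ol\a(1-\l)$ is carried out correctly, and the $\l=1$ case indeed causes no trouble since both sides vanish by the convention $\ol\a(0)=0$.
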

\begin{proof}By Theorem \ref{iteration} and Proposition \ref{p2.1} (i), 
\begin{align*}
&-j(\a,\ol\a) \FF{\a}{}{\l}= -\sum_{t\in\k} \d(1-\l t)\a(t)\ol\a(1-t)
\\&=-\a(\l^{-1})\ol\a(1-\l^{-1})=-\ol\a(\l-1). 
\end{align*}
Since $j(\a,\ol\a)=\a(-1)$ by Proposition \ref{jacobi}, the formula follows. 
\end{proof}

As a result, we obtain the following sum representations of hypergeometric functions. 

\begin{cor}\label{c2.1}
Suppose that $\baa=\a_1+\cdots+\a_d$, $\bbb=\b_1+\cdots +\b_d \in P_d$ ($d\ge 0$) and $\a_i\ne \b_i$ for all $i$. 
\begin{enumerate}
\item 
For any $\a_0 \in \ck$, $\a_0 \ne \e$, and $\l \in \k^*$, 
\begin{align*}
& \prod_{i=1}^d (-j(\a_i,\ol\a_i\b_i)) \cdot  \FF{\a_0,\a_1,\dots,\a_d}{\b_1,\dots, \b_d}{\l}
\\& =\sum_{t_1,\dots, t_d\in \k}\ol\a_0(1-\l t_1\cdots t_d) \prod_{i=1}^d \a_i(t_i)\ol\a_i\b_i(1-t_i).
\end{align*}
\item 
For any $\l\in\k$, 
$$\prod_{i=1}^d (-j(\a_i,\ol\a_i\b_i)) \cdot F(\baa,\bbb;\l)
= - \sum_{\l t_1\cdots t_{d}=1}  
 \prod_{i=1}^{d} \a_i(t_i)\ol\a_i\b_i(1-t_i).$$
\end{enumerate}
\end{cor}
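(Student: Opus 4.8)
The plan is to prove both parts together by a single descending induction on $d$, using the iteration formula (Theorem \ref{iteration}) to strip off one parameter pair $(\a_i,\b_i)$ at each stage. The two statements differ only in their $d=0$ seed: for (i) the seed is the evaluation $\FF{\a_0}{}{\l}=\ol\a_0(1-\l)$ from Corollary \ref{c2.0} (applicable since $\a_0\ne\e$ and $\l\in\k^*$), whereas for (ii) it is $F(0,0;\l)=-\d(1-\l)$ from Proposition \ref{p2.1} (i). In each case the claimed right-hand side at $d=0$ collapses, via the empty product, to exactly this seed evaluated at $\l$, so the base case is immediate.

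For the inductive step of (i) I would apply Theorem \ref{iteration} with $\a=\a_d$, $\b=\b_d$, $\baa=\a_0+\cdots+\a_{d-1}$ and $\bbb=\e+\b_1+\cdots+\b_{d-1}$, which recovers the full parameter list after adding $\a_d$ and $\b_d$:
$$-j(\a_d,\ol\a_d\b_d)\,\FF{\a_0,\dots,\a_d}{\b_1,\dots,\b_d}{\l}=\sum_{t_d\in\k}\FF{\a_0,\dots,\a_{d-1}}{\b_1,\dots,\b_{d-1}}{\l t_d}\,\a_d(t_d)\ol\a_d\b_d(1-t_d).$$
Because $\a_d(0)=0$ by the convention $\vp(0)=0$, only $t_d\ne 0$ contributes and the inner argument $\l t_d$ stays in $\k^*$, so the induction hypothesis applies under the sum. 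Substituting it and multiplying by $\prod_{i=1}^{d-1}(-j(\a_i,\ol\a_i\b_i))$ turns the inner factor into the $(d-1)$-fold sum over $t_1,\dots,t_{d-1}$, which combines with the $t_d$-sum to give the asserted $d$-fold sum. The step for (ii) is verbatim the same, now with $\baa=\a_1+\cdots+\a_{d-1}$, $\bbb=\b_1+\cdots+\b_{d-1}$ (no implicit $\e$) and the (ii)-hypothesis applied to $F(\baa,\bbb;\l t_d)$.

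I expect no serious obstacle; the only thing demanding care is the bookkeeping of the summation constraint. In (ii) the inner hypothesis carries the condition $\l t_d\,t_1\cdots t_{d-1}=1$, and I would note that this already forces $t_d\ne 0$, so enlarging the outer sum to all of $\k$ adds nothing and the two sums merge into the single constraint $\l t_1\cdots t_d=1$ with the correct overall sign. The value $\l=0$ should be checked separately: there both sides vanish, since $F(\baa,\bbb;0)=0$ and the constraint $0=1$ is vacuous. The analogous remark for (i) is that the $t_d=0$ term vanishes on both sides, allowing the sum to be written over all of $\k$. Everything else is a routine collection of sums driven by the iteration formula.
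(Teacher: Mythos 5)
Your proof is correct and is exactly the paper's argument: the paper proves both parts by starting from Corollary \ref{c2.0} (for (i)) or Proposition \ref{p2.1} (i) (for (ii)) and applying Theorem \ref{iteration} iteratively. Your write-up merely makes explicit the bookkeeping (the vanishing $t_d=0$ terms and the merging of the summation constraint) that the paper leaves to the reader.
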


\begin{proof}For (i) (resp. (ii)), 
start with Corollary \ref{c2.0} (resp. Proposition \ref{p2.1} (i)) and apply Theorem \ref{iteration} iteratively.   
\end{proof}

We also have the iteration formulas under suitable convergence conditions
\begin{align*}
\G(a) {}_{r+1}F_s\left({a_1,\dots, a_r,a \atop b_1,\dots, b_s};x\right)
&=
\int_0^\infty {}_rF_s\left({a_1,\dots, a_r \atop b_1,\dots, b_s};xt\right)e^{-t}t^{a-1}\, dt, 
\\
\frac{2\pi i}{\G(b)} {}_{r}F_{s+1}\left({a_1,\dots, a_r\atop b_1,\dots, b_s,b};x\right)
&=
\int_\g {}_rF_s\left({a_1,\dots, a_r \atop b_1,\dots, b_s};xt^{-1} \right)e^{t}t^{-b}\, dt,
\end{align*}
where $\g$ is the Hankel contour multiplied with $-1$. 
Their finite analogues are the following. 

\begin{thm}\label{iteration2}
Suppose that $\baa, \bbb \in P$, $\a,\b \in \ck$. Then for any $\l\in\k$, 
\begin{align*}
-g(\a) F(\baa+\a,\bbb;\l)& = \sum_{t\in\k} F(\baa,\bbb;\l t) \psi(t)\a(t),
\\
-\frac{q}{g^\0(\b)} F(\baa,\bbb+\b;\l)& = \sum_{t\in\k} F(\baa,\bbb;\l t^{-1}) \psi(-t)\ol\b(t). 
\end{align*}
\end{thm}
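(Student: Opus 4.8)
The plan is to prove both identities in parallel, following the template of Theorem~\ref{iteration}: expand the right-hand side through the definition of $F(\baa,\bbb;\cdot)$, interchange the two finite summations, evaluate the resulting inner sum over $t$ as a Gauss sum, and finally absorb the extra Gauss sum into the Pochhammer data using the monoid-homomorphism identities $(\baa+\a)_\n=(\baa)_\n(\a)_\n$ and $(\bbb+\b)^\0_\n=(\bbb)^\0_\n(\b)^\0_\n$. Since all sums are finite and the convention $\vp(0)=0$ kills the $t=0$ term automatically (as well as handling the degenerate case $\l=0$, where both sides vanish because $F(\baa,\bbb;0)=0$), there is no convergence or boundary issue to worry about.

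For the first identity I would substitute $\n(\l t)=\n(\l)\n(t)$ and write the right-hand side as $\frac{1}{1-q}\sum_\n \frac{(\baa)_\n}{(\bbb)^\0_\n}\n(\l)\sum_{t\in\k}\psi(t)(\a\n)(t)$. The inner sum is exactly $-g(\a\n)$ by the definition of the Gauss sum. Writing $g(\a\n)=g(\a)(\a)_\n$ and $(\baa)_\n(\a)_\n=(\baa+\a)_\n$ then collapses the expression into $-g(\a)F(\baa+\a,\bbb;\l)$, which is the claim.

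For the second identity the same expansion, now using $\n(\l t^{-1})=\n(\l)\ol\n(t)$, produces the inner sum $\sum_{t\in\k}\psi(-t)\ol{\b\n}(t)$. Replacing $t$ by $-t$ converts this into $\ol{\b\n}(-1)\sum_{t}\psi(t)\ol{\b\n}(t)=-\ol{\b\n}(-1)g(\ol{\b\n})$. The key step is then to pass from $g(\ol{\b\n})$ to $g^\0(\b\n)$ via Proposition~\ref{jacobi}(iii), which gives $g(\ol{\b\n})=\ol{\b\n}(-1)q/g^\0(\b\n)$; since $\ol{\b\n}(-1)=\pm1$ squares to $1$, the inner sum reduces cleanly to $-q/g^\0(\b\n)$. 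Using $g^\0(\b\n)=g^\0(\b)(\b)^\0_\n$ together with $(\bbb)^\0_\n(\b)^\0_\n=(\bbb+\b)^\0_\n$ then yields $-\frac{q}{g^\0(\b)}F(\baa,\bbb+\b;\l)$.

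I do not expect a genuine obstacle: the whole argument is a finite rearrangement plus two applications of the identities in Proposition~\ref{jacobi}. The only point demanding care is the sign bookkeeping in the second formula, namely tracking the factor $\ol{\b\n}(-1)$ through the change of variable $t\mapsto -t$ and recognizing that it cancels against the identical factor supplied by Proposition~\ref{jacobi}(iii). Everything else is routine, and the parallel structure with the proof of Theorem~\ref{iteration} makes the two computations essentially mechanical once the correct inner Gauss sum is identified.
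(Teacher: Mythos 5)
Your proposal is correct and follows essentially the same route as the paper: expand the right-hand side, interchange the finite sums, evaluate the inner sum over $t$ as $-g(\a\n)$ (resp.\ $-q/g^\0(\b\n)$ via Proposition \ref{jacobi}(iii)), and absorb the constant into the Pochhammer symbols. The sign bookkeeping with $\ol{\b\n}(-1)$ in the second formula is handled correctly.
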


\begin{proof}For each $\n\in\ck$, 
$$\sum_{t\in\k}\psi(t)\a(t)\n(\l t)=-g(\a\n)\n(\l)=-g(\a)(\a)_\n\n(\l),$$
and the first formula follows. The second formula follows similarly, using Proposition \ref{jacobi} (iii). 
\end{proof}

\begin{cor}\label{iteration3}
For any $\baa=\a_1+\cdots+\a_m$, $\bbb=\b_1+\cdots +\b_n \in P$ and $\l\in\k$, \
$$\frac{\prod_{i=1}^m (-g(\a_i))}{\prod_{j=1}^n (-q^{-1}g^\0(\b_j))} \cdot 
F(\baa,\bbb;\l)=-\sum_{\l s_1\cdots s_m=t_1\cdots t_n} \prod_{i=1}^m \psi(s_i)\a_i(s_i) \prod_{j=1}^n \psi(-t_j)\ol\b_j(t_j).
$$
\end{cor}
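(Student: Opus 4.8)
The plan is to prove the identity by induction on the total number $m+n$ of parameters, using Theorem \ref{iteration2} to introduce one parameter at a time. Write $C_{m,n}=\frac{\prod_{i=1}^m(-g(\a_i))}{\prod_{j=1}^n(-q^{-1}g^\0(\b_j))}$ for the constant appearing on the left. The base case $m=n=0$ is Proposition \ref{p2.1} (i): there $C_{0,0}$ is an empty product equal to $1$, so the left-hand side is $F(0,0;\l)=-\d(1-\l)$, while on the right the empty products $s_1\cdots s_m$ and $t_1\cdots t_n$ both equal $1$, the constraint reads $\l=1$, and the variable-free sum evaluates to $-\d(1-\l)$ as well.

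For the inductive step with $m\ge 1$, I would peel off the last numerator parameter: set $\baa'=\a_1+\cdots+\a_{m-1}$ and apply the first formula of Theorem \ref{iteration2} with $\a=\a_m$, giving $(-g(\a_m))F(\baa,\bbb;\l)=\sum_{t\in\k}F(\baa',\bbb;\l t)\psi(t)\a_m(t)$. Since the normalizing factor $-g(\a_m)$ is exactly what turns $C_{m-1,n}$ into $C_{m,n}$, multiplying by $C_{m-1,n}$ and inserting the inductive hypothesis for $C_{m-1,n}F(\baa',\bbb;\l t)$ gives a double sum; I then rename the summation variable $t$ as $s_m$. The constraint $(\l t)s_1\cdots s_{m-1}=t_1\cdots t_n$ becomes $\l s_1\cdots s_m=t_1\cdots t_n$, and the extra factor $\psi(s_m)\a_m(s_m)$ completes the numerator product, yielding the claimed identity. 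When $m=0$ and $n\ge 1$ the argument is entirely parallel: apply the second formula of Theorem \ref{iteration2} with $\b=\b_n$ (whose normalizing factor $-q^{-1}g^\0(\b_n)$ converts $C_{m,n-1}$ into $C_{m,n}$), rename the new variable $t$ as $t_n$, observe that $(\l t^{-1})s_1\cdots s_m=t_1\cdots t_{n-1}$ turns into $\l s_1\cdots s_m=t_1\cdots t_n$, and note that $\psi(-t_n)\ol\b_n(t_n)$ extends the denominator product.

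Throughout, the convention $\vp(0)=0$ ensures that terms with a vanishing $s_i$ or $t_j$ drop out, so one may freely sum over all of $\k$; this also handles $\l=0$, where both sides vanish (the left since $F(\cdot,\cdot;0)=0$, and the right since the constraint forces some $t_j=0$ when $n\ge 1$, or is unsatisfiable when $n\ge 1$ and empty when $n=0$). The only point requiring care is the bookkeeping in the substitution, namely checking that merging the single new variable into the existing tuple transforms the multiplicative constraint correctly; but this is just the observation that multiplying $\l$ by $t$ (resp. by $t^{-1}$) before applying the hypothesis is the same as including $t$ among the $s_i$ (resp. among the $t_j$). Since this is a purely formal reindexing and both sides are symmetric in the $\a_i$'s and in the $\b_j$'s, I expect no genuine obstacle beyond keeping the indices straight.
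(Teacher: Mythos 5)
Your proof is correct and is exactly the paper's argument: the paper's proof of this corollary is the one-line "start with Proposition \ref{p2.1} (i) and apply Theorem \ref{iteration2} iteratively," and your induction on $m+n$ with the bookkeeping of the multiplicative constraint is just that argument written out in full. The only blemish is a harmless slip in the $\l=0$ discussion (the constraint is unsatisfiable when $n=0$, not $n\ge 1$), which does not affect the proof.
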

\begin{proof}
Start with Proposition \ref{p2.1} (i) and apply Theorem \ref{iteration2} iteratively.   
\end{proof}

\begin{rmk}\label{r3.1}
By Corollary \ref{iteration3}, a function of the form $F(\baa,0;\l)$ or $F(0,\bbb;\l)$ is essentially given by generalized Kloosterman sums, defined as 
$$\mathrm{Kl}(\psi;\a_1,\dots,\a_d;1,\dots, 1)(\k,\l)=
\sum_{s_1,\dots, s_d \in\k, s_1 \cdots s_d=\l} \prod_{i=1}^d \psi(s_i)\a_i(s_i)$$
using the notation of Katz \cite[4.0]{katz2}. 
The right-hand side of Corollary \ref{iteration3} is Katz's definition of $-\operatorname{Hyp}(\psi;\baa,\bbb)(\k,\l^{-1})$ (see Remark \ref{comparison} (iii)). 
\end{rmk}

\begin{epl}\label{e3.1}
Here are some examples of non-reduced functions. Let $\baa, \bbb \in P$, $\a, \b \in \ck$ and $\l\in\k^*$. 
\begin{enumerate}
\item 
$F(\baa+\e,\bbb+\e;\l)=q F(\baa,\bbb;\l)+1.$
\item
$F(\a,\a;\l)= q^{\d(\a)}\left(-\d(1-\l)+q^{-1} \ol\a(\l)\right).$
\item
$
\displaystyle
\FF{\a,\b}{\b}{\l}=
\begin{cases}
\displaystyle
q^{\d(\b)}\ol\a(1-\l)+\frac{g(\a\ol\b)}{g(\a)g(\ol\b)}\ol\b(\l) & (\a\ne \e), 
\\
\displaystyle
q^{\d(\b)}\left(-q\d(1-\l)+1\right) + \ol\b(\l) & (\a=\e).
\end{cases}$
\end{enumerate}
\end{epl}

\subsection{Multiplication formula}

Recall the multiplication formula for the gamma function
$$\G(nx)=(2\pi)^\frac{1-n}{2} n^{nx-\frac{1}{2}}\prod_{i=0}^{n-1} \G\left(x+\frac{i}{n}\right),$$
from which follows 
$$\frac{\G(nx)}{\G(n)}=n^{n(x-1)} \prod_{i=0}^{n-1} \frac{\G\left(x+\frac{i}{n}\right)}{\G\left(1+\frac{i}{n}\right)}. 
$$
Its finite analogue is the following. 
In the sequel, we assume that $n \mid q-1$. 

\begin{thm}[Davenport--Hasse \cite{d-h}]\label{d-h}
For any $\a\in \wh{\k^\times}$, 
$$g(\a^n)=\a^n(n)\prod_{\vp\in\wh{\k^\times}, \vp^n=\e} \frac{g(\a\vp)}{g(\vp)}. $$
\end{thm}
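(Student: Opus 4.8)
The plan is to rewrite the assertion in an equivalent product form and to attack that by a direct expansion of a product of Gauss sums. Fix a character $\rho\in\ck$ of exact order $n$ (it exists since $\ck$ is cyclic of order $q-1$ and $n\mid q-1$), so that the characters $\vp$ with $\vp^n=\e$ are exactly $\e,\rho,\dots,\rho^{n-1}$. Since $g(\e)=1$, the formula is equivalent to
$$\prod_{\vp^n=\e} g(\a\vp)=\a^n(n)^{-1}\,g(\a^n)\prod_{\vp^n=\e}g(\vp).$$
First I would expand the left-hand side straight from the definition into an $n$-fold character sum
$$\prod_{\vp^n=\e}g(\a\vp)=(-1)^n\sum_{(x_\vp)\in(\k^*)^n}\a\Big(\prod_\vp x_\vp\Big)\Big(\prod_\vp\vp(x_\vp)\Big)\psi\Big(\sum_\vp x_\vp\Big),$$
the index $\vp$ running over the $n$-torsion of $\ck$.

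The next step is to isolate a single Gauss sum by scaling. Substituting $x_\vp=x_0t_\vp$ (with $t_\vp=1$ at one chosen base index and $x_0\in\k^*$ free), the inner sum over $x_0$ becomes $\sum_{x_0}\chi(x_0)\psi\big(x_0\sum_\vp t_\vp\big)$, a multiple of $g(\chi)$, where $\chi=\a^n\prod_\vp\vp$. Here $\prod_{\vp^n=\e}\vp=\rho^{n(n-1)/2}$, which is trivial for $n$ odd and of order two for $n$ even. To dispose of this nuisance twist I would first reduce to $n$ prime: the formula is multiplicative in $n$ (apply it in succession to $\a$ and to an appropriate power of $\a$), so it suffices to treat $n=\ell$ prime. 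For $\ell$ odd one has $\rho^{\ell(\ell-1)/2}=\e$ and exactly $g(\a^\ell)$ emerges; the quadratic case $\ell=2$ is the classical duplication of Gauss sums and is treated directly.

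After the scaling, what remains is a residual $(\ell-1)$-fold sum in the $t_\vp$ (a Jacobi-type sum) together with the scalar $\a^n(n)$, and I expect the clean extraction of this scalar---the finite-field counterpart of the factor $n^{n(x-1)}$ in the gamma multiplication formula---to be the main obstacle, as it requires careful treatment of the degenerate locus $\sum_\vp t_\vp=0$ and of the normalizations in Proposition~\ref{jacobi}. To control the constant I would supplement the computation with a rigidity argument: Proposition~\ref{jacobi}(iii) shows both sides of the product form have equal absolute value, so their ratio is a root of unity, and replacing $\psi$ by $x\mapsto\psi(cx)$ leaves the ratio fixed, placing it in $\Q(\mu_{q-1})$; a direct check at $\a=\e$, where both sides collapse to $\prod_{\vp^n=\e}g(\vp)$, together with Galois bookkeeping over $\Q$ then fixes the value. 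Thus the explicit expansion would supply the shape of the identity, and the rigidity argument the precise scalar $\a^n(n)$.
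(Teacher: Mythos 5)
Your outline (expand $\prod_{\vp^n=\e}g(\a\vp)$, scale out one variable to expose $g(\a^n\prod\vp)$, reduce to a Jacobi-type identity, and pin down the remaining scalar by rigidity) is a reasonable plan, but the step you yourself flag as ``the main obstacle'' --- extracting the scalar $\a^n(n)$ --- is exactly where the proposal has a genuine gap, and the fix you propose does not close it. First, to conclude that a number of absolute value $1$ is a root of unity you need Kronecker's theorem, which applies only to algebraic integers all of whose conjugates have absolute value $1$; the ratio of the two sides is a ratio of Gauss sums, and showing it is a unit at the primes above $p$ requires $p$-adic input (in effect Stickelberger's theorem), which you do not supply. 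Second, even granting that the ratio $R(\a)$ is a root of unity in $\Q(\mu_{q-1})$ for every $\a$, your normalization ``check at $\a=\e$'' carries no information: there the claimed constant is $\e^n(n)=1$ and both sides agree trivially, whereas for general $\a$ the constant $\a^n(n)$ genuinely depends on $\a$. Determining an $\a$-dependent root of unity requires something like multiplicativity of $R$ in $\a$ or a $p$-adic evaluation; ``Galois bookkeeping'' as stated is not an argument. Third, the reduction to $n$ prime via ``multiplicativity in $n$'' itself generates extra constants (of the shape $\a^{n(n_2-1)}(n_2)\prod_{\vp^{n_1}=\e}\vp(n_2)$) whose cancellation must be checked --- bookkeeping of precisely the kind the reduction was meant to avoid.

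For comparison, the paper's proof in Appendix A is entirely different and is built so that the constant falls out of a computation rather than a normalization. It reduces the theorem to the Jacobi-sum identity $\a^n(n)\,j(\a,\dots,\a)=\prod_{\n^n=\e,\,\n\ne\e}j(\a,\n)$ and proves this by counting fixed points of twisted Frobenius on two coverings of the hyperplane $s_1+\cdots+s_n=n$: a twisted Fermat hypersurface $X$ on one side and a power $C^{n-1}$ of a Fermat quotient curve on the other, compared via induced characters and the Davenport--Hasse lifting relation $j(\a\circ N_l,\n\circ N_l)=j(\a,\n)^l$. The factor $\a^n(n)$ arises there from Weil's point count on $t_1^{q-1}+\cdots+t_n^{q-1}=n$ (the $n$ on the right-hand side of the defining equation is what produces it). If you want to keep your elementary expansion, you would need to replace the rigidity step by an honest evaluation of the residual $(n-1)$-fold sum, including the locus $\sum_\vp t_\vp=0$; as written, the proposal establishes the shape of the identity but not the constant.
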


We give an elementary proof in the appendix. 
See \cite[11.3]{berndtetal}, \cite[Theorem 3]{terasoma} for other proofs. 
As a corollary, we obtain multiplication formulas for Pochhammer symbols. 

\begin{cor}\label{multiplication}
For any $\a$, $\n \in \ck$, 
$$(\a^n)_{\n^n}=\n^n(n) \prod_{\vp^n=\e} (\a\vp)_\n, \quad 
(\a^n)^\0_{\n^n}=\n^n(n)\prod_{\vp^n=\e} (\a\vp)^\0_\n.$$
\end{cor}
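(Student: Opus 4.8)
The plan is to deduce both identities directly from the Davenport--Hasse formula (Theorem \ref{d-h}) by unwinding the definition $(\a)_\n = g(\a\n)/g(\a)$ and its variant $(\a)^\0_\n = g^\0(\a\n)/g^\0(\a)$. For the first identity, I would write
$$
(\a^n)_{\n^n} = \frac{g(\a^n\n^n)}{g(\a^n)} = \frac{g((\a\n)^n)}{g(\a^n)},
$$
and then apply Theorem \ref{d-h} to each of the numerator $g((\a\n)^n)$ and the denominator $g(\a^n)$. The numerator becomes $(\a\n)^n(n)\prod_{\vp^n=\e} g(\a\n\vp)/g(\vp)$ and the denominator becomes $\a^n(n)\prod_{\vp^n=\e} g(\a\vp)/g(\vp)$. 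The factors $g(\vp)$ in the two Davenport--Hasse products cancel when I take the quotient, and the characters combine as $(\a\n)^n(n)/\a^n(n)=\n^n(n)$. What remains is
$$
\frac{g((\a\n)^n)}{g(\a^n)} = \n^n(n)\prod_{\vp^n=\e} \frac{g(\a\n\vp)}{g(\a\vp)}.
$$
Since $\vp$ ranges over the subgroup of $n$th roots of the trivial character, so does $\a\vp$ as $\vp$ varies (for fixed $\a$), and each factor $g(\a\vp\,\n)/g(\a\vp) = (\a\vp)_\n$ by definition. This yields $(\a^n)_{\n^n} = \n^n(n)\prod_{\vp^n=\e}(\a\vp)_\n$, as claimed.

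For the second identity involving the modified symbols $g^\0$, the natural approach is to reduce it to the first via the relation $g^\0(\vp)=q^{\d(\vp)}g(\vp)$. Indeed,
$$
(\a^n)^\0_{\n^n} = \frac{g^\0((\a\n)^n)}{g^\0(\a^n)} = q^{\d((\a\n)^n)-\d(\a^n)}\,\frac{g((\a\n)^n)}{g(\a^n)} = q^{\d((\a\n)^n)-\d(\a^n)}(\a^n)_{\n^n},
$$
while on the right-hand side, each factor $(\a\vp)^\0_\n = q^{\d(\a\vp\n)-\d(\a\vp)}(\a\vp)_\n$, so
$$
\prod_{\vp^n=\e}(\a\vp)^\0_\n = q^{\sum_{\vp^n=\e}(\d(\a\vp\n)-\d(\a\vp))}\prod_{\vp^n=\e}(\a\vp)_\n.
$$
Granting the first identity, the second reduces to the bookkeeping claim that the exponents of $q$ match, namely
$$
\d((\a\n)^n)-\d(\a^n) = \sum_{\vp^n=\e}\bigl(\d(\a\vp\n)-\d(\a\vp)\bigr).
$$

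The main obstacle, and the only genuinely delicate point, will be verifying this exponent identity. Here one uses that $\d(\psi^n)=1$ precisely when $\psi^n=\e$, i.e. when $\psi$ is itself an $n$th root of $\e$, equivalently $\psi=\vp$ for some $\vp$ with $\vp^n=\e$. Thus $\d((\a\n)^n)=1$ iff $\a\n$ lies in the subgroup $\{\vp:\vp^n=\e\}$, in which case exactly one summand $\d(\a\vp_0\n)$ on the right (with $\vp_0=(\a\n)^{-1}$) equals $1$ and all others vanish; similarly $\d(\a^n)=1$ matches the unique nonzero term among $\d(\a\vp)$. So both sides count the same thing term by term, and the exponent identity holds. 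I expect the algebraic cancellations to be completely routine once the definitions are expanded; the care needed is entirely in this combinatorial matching of the $\delta$-functions, making sure that the index $\vp$ running over $\{\vp^n=\e\}$ captures exactly the vanishing locus of the relevant Gauss sums.
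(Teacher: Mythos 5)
Your proof is correct and is exactly the argument the paper intends: the corollary is stated as an immediate consequence of Theorem \ref{d-h}, obtained by applying Davenport--Hasse to the numerator and denominator of $(\a^n)_{\n^n}=g((\a\n)^n)/g(\a^n)$, with the $g^\0$-version following from the same computation after tracking the powers of $q$ via $g^\0(\vp)=q^{\d(\vp)}g(\vp)$. Your verification of the exponent identity $\d((\a\n)^n)-\d(\a^n)=\sum_{\vp^n=\e}(\d(\a\n\vp)-\d(\a\vp))$ is the right bookkeeping and is sound, since $\sum_{\vp^n=\e}\d(\a\n\vp)$ counts whether $\ol{\a\n}$ lies in the subgroup $\{\vp:\vp^n=\e\}$, which is exactly $\d((\a\n)^n)$.
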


We will use frequently the duplication formulas 
$$g(\a^2)=\a(4) \frac{g(\a)g(\a\phi)}{g(\phi)},
\ (\a^2)_{\n^2}=\n(4)(\a)_\n(\a\phi)_\n, 
\ (\a^2)^\0_{\n^2}=\n(4)(\a)^\0_\n(\a\phi)^\0_\n,$$
where $\phi$ is the quadratic character (i.e. $\phi^2=\e, \phi\ne \e$). 

Theorem \ref{d-h} is rephrased in terms of hypergeometric functions. 
The following corollary is in fact equivalent to the theorem by the Fourier transform. 

\begin{cor}\label{dwork}
For any $\l\in\k^*$, 
\begin{align*}
(1-q) F\left(\textstyle\sum_{\vp^n=\e} \vp, n\e;n^n\l\right)
&=
1+q \sum_{\n\ne\e} j(\underbrace{\ol\n,\ol\n,\dots, \ol\n}_{\text{$n$ times}}) \n(\l),
\\
(1-q) \wt F\left(\textstyle\sum_{\vp^n=\e} \vp, n\e;n^n\l\right)
& =1+ \sum_{\n\ne\e} j(\underbrace{\ol\n,\ol\n,\dots, \ol\n}_{\text{$n$ times}}) \n(\l). 
\end{align*}
\end{cor}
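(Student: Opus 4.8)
The statement to prove is Corollary \ref{dwork}, which asserts that the Davenport--Hasse multiplication formula (Theorem \ref{d-h}) can be repackaged as two identities for the hypergeometric functions $F\left(\sum_{\vp^n=\e}\vp,\,n\e;\,n^n\l\right)$ and its reduction. The natural strategy is to compute the Fourier transform (in the variable $\l\in\k^*$) of both sides and check equality coefficient by coefficient, since the paper has set up the Fourier inversion machinery in Section \ref{s.fourier} precisely for this purpose. Since the hypergeometric function is by definition a multiplicative-character expansion, its Fourier coefficient is immediate to read off, and so is that of the right-hand side. The main content is then a direct manipulation using Corollary \ref{multiplication}.

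\textbf{Step 1: unwind the left-hand side.} By definition, writing $\baa=\sum_{\vp^n=\e}\vp$ and $\bbb=n\e$, one has
$$F(\baa,\bbb;n^n\l)=\frac{1}{1-q}\sum_{\m\in\ck}\frac{(\baa)_\m}{(\bbb)^\0_\m}\,\m(n^n\l)
=\frac{1}{1-q}\sum_{\m\in\ck}\m(n^n)\frac{\prod_{\vp^n=\e}(\vp)_\m}{\bigl((\e)^\0_\m\bigr)^n}\,\m(\l).$$
So the Fourier coefficient of $(1-q)F(\baa,\bbb;n^n\l)$ at $\m$ is $\m(n^n)\prod_{\vp^n=\e}(\vp)_\m\big/\bigl((\e)^\0_\m\bigr)^n$. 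The goal is to show this equals the corresponding Fourier coefficient of the right-hand side, namely $\d(\ol\m)+q\,j(\ol\m,\dots,\ol\m)$ (with $n$ entries) for the first identity — where the $\d$-term supplies the constant $1$, i.e. the $\m=\e$ contribution.

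\textbf{Step 2: convert the character-sum factor into a Jacobi sum.} The key is to recognize $\prod_{\vp^n=\e}(\vp)_\m\big/\bigl((\e)^\0_\m\bigr)^n$ as (essentially) the $n$-fold Jacobi sum $j(\ol\m,\dots,\ol\m)$. Applying Corollary \ref{multiplication} with $\a=\e$ gives $(\e^n)_{\m'^n}=\m'^n(n)\prod_{\vp^n=\e}(\vp)_{\m'}$ after reindexing; combined with $(\e)_{\m}=g(\m)$, $(\e)^\0_\m=g^\0(\m)/q$, and Proposition \ref{jacobi}(iv) expressing $j(\ol\m,\dots,\ol\m)$ as $g(\ol\m)^n/g^\0(\ol\m^n)$, one converts the product of Pochhammer symbols into $g$'s and $g^\0$'s. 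The factor $\m(n^n)$ should combine with the $\m^n(n)$-type factor coming out of Corollary \ref{multiplication} and the relation $g(\vp)g^\0(\ol\vp)=\vp(-1)q$ (Proposition \ref{jacobi}(iii)) to produce exactly the Jacobi-sum expression, with the power of $q$ matching the prefactor $q$ on the right-hand side. I expect the $\m=\e$ case to require separate handling, since there $j(\e,\dots,\e)=(1-(1-q)^n)/q$ by Proposition \ref{jacobi}(iv) rather than the generic formula; this is the source of the isolated constant term $1$.

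\textbf{Step 3: the reduced version.} For the second identity, observe that $\baa=\sum_{\vp^n=\e}\vp$ and $\bbb=n\e$ share the common block $\bgg=\e$ (indeed $(\baa,\e)=1$ since $\e$ occurs once among the $\vp$ with $\vp^n=\e$, and $(\bbb,\e)=n$), so the reduction $\wt F$ strips one $\e$ from each. The relation between $F$ and $\wt F$ is governed by Theorem \ref{reduction}; alternatively, and more cleanly, one can redo Steps 1--2 with $\baa-\e$ and $\bbb-\e$ in place of $\baa,\bbb$, which changes the denominator $\bigl((\e)^\0_\m\bigr)^n$ to $\bigl((\e)^\0_\m\bigr)^{n-1}$ and removes one factor $(\e)_\m=g(\m)$ from the numerator. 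The net effect, via Proposition \ref{jacobi}(iii), is to divide the generic Fourier coefficient by $q$, which accounts precisely for the disappearance of the factor $q$ in front of the sum in the second formula. The constant term $1$ survives unchanged. \textbf{The main obstacle} I anticipate is bookkeeping the powers of $q$ and the root-of-unity factors $\m(n^n)$ versus $\m^n(n)$ across the several applications of Proposition \ref{jacobi} and Corollary \ref{multiplication}, and correctly isolating the exceptional $\m=\e$ term so that it produces the clean additive constant $1$ rather than being absorbed into the generic formula.
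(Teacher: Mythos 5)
Your proposal is correct and follows essentially the same route as the paper: read off the coefficient of $\n(\l)$ in the character expansion, apply Corollary \ref{multiplication} with $\a=\e$ together with Proposition \ref{jacobi} (iii), (iv) to identify it with $q\,j(\ol\n,\dots,\ol\n)$ for $\n\ne\e$ and with $1$ for $\n=\e$, then deduce the reduced identity from the relation $F(\baa+\e,\bbb+\e;\l)=qF(\baa,\bbb;\l)+1$ (Example \ref{e3.1} (i), a special case of the Theorem \ref{reduction} you cite). The only cosmetic difference is that you phrase the coefficient comparison as a Fourier-transform computation, which is the same thing.
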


\begin{proof}By Corollary \ref{multiplication} and  Proposition \ref{jacobi}, 
$$\left(\prod_{\vp^n=\e} \frac{(\vp)_\n}{(\e)^\0_\n}\right)\n(n^n)
=q^n \frac{g(\n^n)}{g^\0(\n)^n}=q \frac{g(\ol\n)^n}{g^\0(\ol{\n}^n)}
=\begin{cases} 
1 & (\n=\e),\\
qj(\underbrace{\ol\n,\ol\n,\dots, \ol\n}_{\text{$n$ times}}) & (\n\ne \e), 
\end{cases}$$ 
and the first formula follows, from which the second one follows by Example \ref{e3.1} (i). 
\end{proof}

\begin{rmk}\label{r.dwork}
The Dwork hypersurface of degree $n$ is defined by the homogeneous equation 
$$x_1^n+\cdots + x_n^n = n \l  x_1\cdots x_n. $$
The values in Corollary \ref{dwork} describe the trace of Frobenius acting on a 
$(n-1)$-dimensional subspace of the middle $l$-adic cohomology (Nakagawa \cite{nakagawa}).  
\end{rmk}

\subsection{Linear transformations}

Recall the transformation formulas for complex Gauss functions due respectively to 
 Euler and Pfaff
\begin{align*}
{}_2F_1\left({a,b\atop c};x\right) &= (1-x)^{c-a-b} {}_2F_1\left({c-a,c-b\atop c};x\right), 
\\{}_2F_1\left({a,b\atop c};x\right) &= (1-x)^{-a} {}_2F_1\left({a,c-b\atop c};\frac{x}{x-1}\right). 
\end{align*}
We have the following finite analogues (cf. \cite[Theorem 4.4 (iv), (ii)]{greene}). 

\begin{thm}\label{p2.5} Suppose that $(\a+\b,\e+\g)=0$. 
\begin{enumerate}
\item
For any $\l\ne 1$, 
$$\FF{\a,\b}{\g}{\l} =\ol{\a\b}\g(1-\l) \FF{\ol\a\g, \ol\b\g}{\g}{\l}.$$
\item
For any $\l\ne 1$, 
$$\FF{\a,\b}{\g}{\l} =\ol\a(1-\l) \FF{\a, \ol\b\g}{\g}{\frac{\l}{\l-1}}.$$
\end{enumerate}
\end{thm}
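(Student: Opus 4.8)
The plan is to prove the Pfaff transformation (ii) first by a change of variables in an integral representation, and then deduce the Euler transformation (i) by applying (ii) twice. Throughout I may assume $\l\ne0$, since at $\l=0$ both sides of each identity vanish ($F(\cdots;0)=0$ by definition, and the prefactors $\ol\a(1)$, $\ol{\a\b}\g(1)$ equal $1$). The hypothesis $(\a+\b,\e+\g)=0$ unwinds to the four non-equalities $\a\ne\e$, $\a\ne\g$, $\b\ne\e$, $\b\ne\g$, which is exactly what is needed to invoke Corollary \ref{c2.1}(i) and to re-apply Pfaff below; recording these at the outset is the bookkeeping that must be done carefully.

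For (ii), I first write down the integral representation from Corollary \ref{c2.1}(i) with $d=1$. Taking $\a_0=\a$, $\a_1=\b$, $\b_1=\g$ (valid since $\a\ne\e$ and $\b\ne\g$), it gives
$$-j(\b,\ol\b\g)\,\FF{\a,\b}{\g}{\l}=\sum_{t\in\k}\ol\a(1-\l t)\,\b(t)\,\ol\b\g(1-t).$$
Applying the same corollary to the right-hand side of (ii), with $\a_0=\a$, $\a_1=\ol\b\g$, $\b_1=\g$ (valid since $\a\ne\e$ and $\b\ne\e$) and variable $\mu=\l/(\l-1)$, produces an analogous sum with Jacobi factor $-j(\ol\b\g,\b)=-j(\b,\ol\b\g)$. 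The Jacobi factors cancel, and (ii) reduces to the elementary identity
$$\sum_{t\in\k}\ol\a(1-\l t)\,\b(t)\,\ol\b\g(1-t)=\ol\a(1-\l)\sum_{t\in\k}\ol\a(1-\mu t)\,\ol\b\g(t)\,\b(1-t),$$
which I obtain by substituting $t\mapsto 1-t$ in the left-hand sum; the argument $1-\l t$ then becomes $1-\l+\l t=(1-\l)(1-\mu t)$. The key point is that the factorization $\ol\a(1-\l+\l t)=\ol\a(1-\l)\,\ol\a(1-\mu t)$ holds for \emph{every} $t\in\k$, including the value where $1-\l+\l t=0$, because $1-\l\ne0$ forces both sides to vanish there simultaneously.

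For (i), I apply (ii) to $\FF{\a,\b}{\g}{\l}$ and then apply it once more to the resulting factor $\FF{\a,\ol\b\g}{\g}{\mu}$. Since $F$ is symmetric in its numerator parameters (it depends only on $\a+\b\in P$), I rewrite this factor as $\FF{\ol\b\g,\a}{\g}{\mu}$ and apply (ii) pulling out the first parameter $\ol\b\g$; the required reducedness $(\ol\b\g+\a,\e+\g)=0$ is again equivalent to $\a\ne\e,\g$ and $\b\ne\e,\g$, so it holds. Here I use the fractional-linear identities $\mu/(\mu-1)=\l$ and $1-\mu=(1-\l)^{-1}$, valid over $\k$ for $\l\ne0,1$, together with $\ol{\ol\b\g}=\b\ol\g$ and $(\b\ol\g)\big((1-\l)^{-1}\big)=\ol\b\g(1-\l)$. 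Multiplying the two prefactors gives $\ol\a(1-\l)\cdot\ol\b\g(1-\l)=\ol{\a\b}\g(1-\l)$ and returns the variable to $\l$, yielding (i) after another use of numerator symmetry.

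The only genuine obstacle is the verification, at each invocation, that the reducedness hypothesis is preserved and that the degenerate values of $t$ and $\l$ do not break the character factorizations; all of this is routine once the non-equalities $\a,\b\notin\{\e,\g\}$ have been recorded.
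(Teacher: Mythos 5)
Your proof is correct. For part (ii) your argument is the same as the paper's: both sides are expanded via Corollary \ref{c2.1}(i) (the hypothesis $(\a+\b,\e+\g)=0$ supplying exactly the non-equalities $\a\ne\e$ and $\b\ne\e,\g$ needed there), and the substitution $t\mapsto 1-t$ together with the factorization $\ol\a\bigl((1-\l)(1-\mu t)\bigr)=\ol\a(1-\l)\ol\a(1-\mu t)$ — valid even at the zero of the argument since $\vp(0)=0$ — identifies the two sums. For part (i) you diverge from the paper: the paper proves the Euler transformation directly from Corollary \ref{c2.1}(i) by the M\"obius substitution $t=\frac{1-s}{1-\l s}$, whereas you compose the Pfaff transformation with itself, pulling out $\a$ and then $\ol\b\g$, and use $\mu/(\mu-1)=\l$ and $\ol{\ol\b\g}(1-\mu)=\ol\b\g(1-\l)$. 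The two routes are computationally equivalent (your composition of two involutions $t\mapsto 1-t$ at the parameters $\l$ and $\mu$ realizes the paper's single M\"obius change of variable), but yours has the merit of isolating a single elementary substitution and then arguing formally, at the cost of having to re-verify the reducedness hypothesis $(\ol\b\g+\a,\e+\g)=0$ for the second application — which you do, correctly observing it is equivalent to the original hypothesis. Your separate treatment of $\l=0$ is a small point the paper leaves implicit but which is genuinely needed, since Corollary \ref{c2.1}(i) is stated only for $\l\in\k^*$.
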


\begin{proof}
(i) By Corollary \ref{c2.1} (i), 
\begin{align*}
F(\a+\b,\e+\g;\l)&= -j(\b,\ol\b\g)^{-1} \sum_t \ol\a(1-\l t)\b(t)\ol{\b}\g(1-t), 
\\\ol{\a\b}\g(1-\l) F(\ol\a\g+\ol\b\g,\e+\g;\l)&= -j(\ol\b\g,\b)^{-1} \sum_s \a\ol\g(1-\l s) \ol\b\g(s)\b(1-s). 
\end{align*}
Letting $t=\frac{1-s}{1-\l s}$, the right members agree. 
The statement (ii) is proved similarly by letting $t=1-s$. 
\end{proof}

Recall that the complex function ${}_2F_1\left({a,b\atop c};x\right)$ is a solution of the second-order differential equation
$$ \left[ \frac{d^2}{dx^2}+\left(\frac{c}{x}-\frac{a+b-c+1}{1-x}\right)\frac{d}{dx} -\frac{ab}{x(1-x)}\right] y=0.$$
Obviously, ${}_2F_1\left({a,b\atop a+b-c+1};1-x\right)$ is another solution. 
These two functions are generically linearly independent, and by iterating the Euler and Pfaff transformations, we obtain Kummer's 24 solutions  around the singularities $0$, $1$, $\infty$ (cf. \cite[1.3]{slater}). 

Over a finite field, the corresponding two functions are no longer linearly independent and we have the following (cf. \cite[Theorem 4.4 (i)]{greene}). 

\begin{thm}\label{connection}
Suppose that $(\a+\b,\e+\g)=0$. Then for any $\l \ne 0$, $1$, 
$$\FF{\a,\b}{\g}{\l}=\frac{g^\0(\g)g(\ol{\a\b}\g)}{g(\ol\a\g)g(\ol\b\g)}\FF{\a,\b}{\a\b\ol\g}{1-\l}.$$
\end{thm}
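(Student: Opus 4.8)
The plan is to prove the identity by writing each side as an explicit character sum via Corollary \ref{c2.1} and then exhibiting a single change of variables matching the two sums. Since $(\a+\b,\e+\g)=0$ forces $\a,\b\ne\e$ and $\a,\b\ne\g$, one first checks that the right-hand parameters $\a,\b,\a\b\ol\g$ are again reduced (the conditions $\a\ne\g$, $\b\ne\g$ give $(\a+\b,\e+\a\b\ol\g)=0$), so Corollary \ref{c2.1} (ii) applies to both sides with $d=2$, pairing $\a$ with $\e$ and $\b$ with $\g$ (resp. $\a\b\ol\g$). This gives $\a(-1)j(\b,\ol\b\g)\FF{\a,\b}{\g}{\l}=-S_A$ and $\a(-1)j(\b,\a\ol\g)\FF{\a,\b}{\a\b\ol\g}{1-\l}=-S_B$, where $S_A=\sum_{\l t_1t_2=1}\a(t_1)\ol\a(1-t_1)\b(t_2)\ol\b\g(1-t_2)$ and $S_B=\sum_{(1-\l)s_1s_2=1}\a(s_1)\ol\a(1-s_1)\b(s_2)\a\ol\g(1-s_2)$ (here $j(\a,\ol\a)=\a(-1)$ by Proposition \ref{jacobi}).

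First I would eliminate one variable in each sum using the constraint, setting $t_2=1/(\l t_1)$ and $s_2=1/((1-\l)s_1)$. A short character computation collapses each to a single-variable three-character sum, \[ S_A=\ol\g(\l)\,\ol\b\g(-1)\,U_A,\qquad S_B=\ol{\a\b}\g(1-\l)\,\a\ol\g(-1)\,U_B, \] where $U_A=\sum_t \a\ol\g(t)\ol\a(1-t)\ol\b\g(1-\l t)$ and $U_B=\sum_s \ol\b\g(s)\ol\a(1-s)\a\ol\g(1-(1-\l)s)$. The decisive observation is that $U_A$ and $U_B$ involve the \emph{same} three characters $\a\ol\g,\ol\a,\ol\b\g$, attached to the three linear forms whose zeros are $\{0,1,1/\l\}$ and $\{1/(1-\l),1,0\}$ respectively, with $\ol\a$ sitting on $1-t$ and $1-s$ in both. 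Hence there is a unique affine substitution matching the two configurations, namely $t=(1-(1-\l)s)/\l$ (equivalently $\l t=1-(1-\l)s$), under which $1-\l t\mapsto(1-\l)s$, $1-t\mapsto-(1-\l)(1-s)/\l$, and $t\mapsto(1-(1-\l)s)/\l$. This is a bijection of $\k$ since $\l\ne 0,1$; carrying it out and collecting the constants yields $U_A=\g(\l)\,\ol\a(-1)\,\ol{\a\b}\g(1-\l)\,U_B$.

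Combining the three displayed relations, the factors $\ol\g(\l)\g(\l)$ and $\ol{\a\b}\g(1-\l)$ cancel and only characters evaluated at $-1$ survive, giving $S_A=\b(-1)S_B$. Translating back through the two instances of Corollary \ref{c2.1} (ii) and cancelling the common factor $\a(-1)$ produces $\FF{\a,\b}{\g}{\l}=\b(-1)\dfrac{j(\b,\a\ol\g)}{j(\b,\ol\b\g)}\FF{\a,\b}{\a\b\ol\g}{1-\l}$. It then remains to rewrite the Jacobi-sum coefficient: Proposition \ref{jacobi} (iv) turns each $j$ into Gauss sums, giving $\b(-1)g(\a\ol\g)g^\0(\g)/\bigl(g(\ol\b\g)g^\0(\a\b\ol\g)\bigr)$, and Proposition \ref{jacobi} (iii) in the form $g(\vp)g^\0(\ol\vp)=\vp(-1)q$ (applied to $\vp=\a\ol\g$ and to $\vp=\ol{\a\b}\g$) converts this to $g^\0(\g)g(\ol{\a\b}\g)/\bigl(g(\ol\a\g)g(\ol\b\g)\bigr)$, as claimed.

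I expect the middle step to be the main obstacle: spotting the correct substitution. Unlike the Euler and Pfaff transformations of Theorem \ref{p2.5}, the $\l\mapsto 1-\l$ connection is not obtained by composing the moves already available, so one must first pass to the symmetric single-variable sums $U_A,U_B$ in which the three characters literally coincide; only then is the matching affine map forced. The remaining difficulty is purely bookkeeping: every manipulation introduces characters evaluated at $\l$, $1-\l$, and $-1$, and the final coefficient identity hinges on the $g$-versus-$g^\0$ distinction, which is controlled exactly by the reducedness hypotheses (e.g. $\a\ne\g$ ensures $g^\0(\ol\a\g)=g(\ol\a\g)$, so that Proposition \ref{jacobi} (iii) can be applied without the trivial-character correction).
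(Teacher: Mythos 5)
Your proof is correct and follows essentially the same route as the paper: the paper likewise represents both sides as one-variable character sums via Corollary \ref{c2.1} (using part (i) rather than part (ii) followed by elimination of a variable), matches them by an explicit change of variables (there $t=\frac{s}{s-1}$, in your case the equivalent affine substitution $\l t=1-(1-\l)s$), and then evaluates the identical coefficient $\b(-1)j(\b,\a\ol\g)/j(\b,\ol\b\g)$ by Proposition \ref{jacobi}. All steps check out, including the reducedness verifications and the final conversion of the Jacobi-sum ratio into the stated Gauss-sum expression.
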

\begin{proof}
This is proved similarly as Theorem \ref{p2.5} (i)  by letting $t=\frac{s}{s-1}$, together with 
$$\frac{\b(-1)j(\b,\a\ol\g)}{j(\b,\ol\b\g)}=\frac{\b(-1)g(\a\ol\g)g^\0(\g)}{g(\ol\b\g)g^\0(\a\b\ol\g)}=\frac{g^\0(\g)g(\ol{\a\b}\g)}{g^\0(\ol\a\g)g(\ol\b\g)}$$
using Proposition \ref{jacobi}. 
\end{proof}

Combining Theorem \ref{p2.5} and Theorem \ref{connection}, we obtain the following relations among the analogues of Kummer's 24 solutions.  

\begin{cor}\label{c2.3}
Suppose that $(\a+\b,\e+\g)=0$. Then for any $\l \ne 0, 1$, 
\begin{align*}
& \FF{\a,\b}{\g}{\l}=\ol{\a\b}\g(1-\l) \FF{\ol\a\g, \ol\b\g}{\g}{\l}
\\& =G_1\ol\g(\l)\ol{\a\b}\g(1-\l) \FF{\ol\a,\ol\b}{\ol\g}{\l}  =G_1 \ol\g(\l)\FF{\a\ol\g,\b\ol\g}{\ol\g}{\l} 
\\& =G_2 \FF{\a,\b}{\a\b\ol\g}{1-\l}
=G_2 \ol\g(\l) \FF{\a\ol\g,\b\ol\g}{\a\b\ol\g}{1-\l}
\\& =G_3\ol\g(\l)\ol{\a\b}\g(1-\l)  \FF{\ol\a,\ol\b}{\ol{\a\b}\g}{1-\l}
=G_3 \ol{\a\b}\g(1-\l) \FF{\ol\a\g,\ol\b\g}{\ol{\a\b}\g}{1-\l}
\\& = G_4 \ol\a(-\l) \FF{\a,\a\ol\g}{\a\ol\b}{\frac{1}{\l}} 
=  G_4  \b\ol\g(-\l) \ol{\a\b}\g(1-\l) \FF{\ol\b,\ol\b\g}{\a\ol\b}{\frac{1}{\l}}
\\& 
= G_5 \ol\b(-\l) \FF{\b,\b\ol\g}{\ol\a\b}{\frac{1}{\l}}
=  G_5  \a\ol\g(-\l) \ol{\a\b}\g(1-\l) \FF{\ol\a,\ol\a\g}{\ol\a\b}{\frac{1}{\l}}
\\& =\ol\a(1-\l) \FF{\a, \ol\b\g}{\g}{\frac{\l}{\l-1}}=\ol\b(1-\l) \FF{\ol\a\g, \b}{\g}{\frac{\l}{\l-1}}
\\&=G_1 \ol\g(\l)\ol\b\g(1-\l)\FF{\ol\a,\b\ol\g}{\ol\g}{\frac{\l}{\l-1}}
= G_1 \ol\g(\l)\ol\a\g(1-\l)\FF{\a\ol\g,\ol\b}{\ol\g}{\frac{\l}{\l-1}}
\\&= G_2 \ol\a(\l) \FF{\a,\a\ol\g}{\a\b\ol\g}{\frac{\l-1}{\l}}
=G_2 \ol\b(\l) \FF{\b,\b\ol\g}{\a\b\ol\g}{\frac{\l-1}{\l}}
\\&= G_3 \a\ol\g(\l)\ol{\a\b}\g(1-\l) \FF{\ol\a,\ol\a\g}{\ol{\a\b}\g}{\frac{\l-1}{\l}}
= G_3 \b\ol\g(\l)\ol{\a\b}\g(1-\l) \FF{\ol\b,\ol\b\g}{\ol{\a\b}\g}{\frac{\l-1}{\l}}
\\& =G_4\ol\a(1-\l) \FF{\a,\ol\b\g}{\a\ol\b}{\frac{1}{1-\l}}
=G_4 \ol\g(-\l)\ol\a\g(1-\l) \FF{\a\ol\g,\ol\b}{\a\ol\b}{\frac{1}{1-\l}}
\\& =G_5 \ol\b(1-\l) \FF{\ol\a\g,\b}{\ol\a\b}{\frac{1}{1-\l}}
=G_5 \ol\g(-\l) \ol\b\g(1-\l) \FF{\ol\a,\b\ol\g}{\ol\a\b}{\frac{1}{1-\l}}, 
\end{align*}
where
\begin{gather*}
G_1=\frac{g(\a\ol\g)g(\b\ol\g)g(\g)}{g(\a)g(\b)g(\ol\g)}, \quad 
G_2=\frac{g^\0(\g)g(\ol{\a\b}\g)}{g(\ol\a\g)g(\ol\b\g)}, \quad 
G_3=\frac{g^\0(\g)g({\a\b}\ol\g)}{g(\a)g(\b)}, 
\\G_4=\frac{g^\0(\g)g(\ol\a\b)}{g(\ol\a\g)g(\b)}, \quad 
G_5=\frac{g^\0(\g)g(\a\ol\b)}{g(\ol\b\g)g(\a)}.
\end{gather*}
(These satisfy  $q^{(\g,\e)}G_1=q^{(\a\b,\g)} G_2G_3=q^{(\a,\b)} \g(-1) G_4G_5$. )
\qed
\end{cor}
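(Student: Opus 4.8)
The plan is to realize all twenty-four expressions as a single orbit of $\FF{\a,\b}{\g}{\l}$ under the group generated by five elementary operations, each already at our disposal: Euler's transformation $E$ (Theorem \ref{p2.5}(i)), Pfaff's transformation $P$ (Theorem \ref{p2.5}(ii)), the connection formula $C$ (Theorem \ref{connection}), the simultaneous shift $S$ by $\ol\g$ (Proposition \ref{shift} with $\vp=\ol\g$), and the numerator symmetry $\FF{\a,\b}{\g}{\l}=\FF{\b,\a}{\g}{\l}$, which is immediate from the definition. Acting on the data consisting of the unordered numerator pair, the denominator character, and the argument, these operate as follows: $E$ sends $(\{\a,\b\},\g,\l)$ to $(\{\ol\a\g,\ol\b\g\},\g,\l)$, fixing the denominator and the argument; $S$ sends it to $(\{\a\ol\g,\b\ol\g\},\ol\g,\l)$, inverting the denominator but fixing the argument; $P$ sends it to $(\{\a,\ol\b\g\},\g,\tfrac{\l}{\l-1})$; and $C$ sends it to $(\{\a,\b\},\a\b\ol\g,1-\l)$. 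In each case one checks, using the defining condition $(\a+\b,\e+\g)=0$, that the output is again reduced, so that the operations may be freely iterated.

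Two structural observations organize the derivation. First, $E$ and $S$ commute at the level of parameters and generate a Klein four-group $\{\mathrm{id},E,S,E\circ S\}$; applied to any fixed block they produce precisely the four forms sharing a common argument, two with denominator $\g$ (from $\mathrm{id}$ and $E$) and two with denominator $\ol\g$ (from $S$ and $E\circ S$), the numerator symmetry being invoked to interchange the roles of $\a$ and $\b$ where two denominator-$\g$ lines occur. Second, on the argument alone $C$ acts by $\l\mapsto 1-\l$ and $P$ by $\l\mapsto\tfrac{\l}{\l-1}$; these are two of the three transpositions of the anharmonic group $S_3$ permuting $\{0,1,\infty\}$, hence generate it, and the orbit of $\l$ is exactly the six arguments $\l,\,1-\l,\,1/\l,\,\tfrac{\l}{\l-1},\,\tfrac{\l-1}{\l},\,\tfrac{1}{1-\l}$ of the statement (for instance $1/\l=P C P(\l)$). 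Thus $\langle C,P\rangle$ permutes the six blocks transitively while $\langle E,S\rangle$ fills in the four forms of each, and the $6\times4=24$ expressions are all reached.

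Concretely, I would first settle the argument-$\l$ block: $E$ yields the second line, $S$ yields the fourth line, and $E\circ S$ yields the third. Here Proposition \ref{shift} produces the prefactor $\tfrac{(\a)_{\ol\g}(\b)_{\ol\g}}{(\e)^\0_{\ol\g}(\g)^\0_{\ol\g}}\ol\g(\l)$, which I would rewrite through Gauss sums and simplify, using $g^\0(\g)/g^\0(\ol\g)=g(\g)/g(\ol\g)$, to obtain exactly $G_1\ol\g(\l)$. I would then apply $C$ to pass to the $(1-\l)$-block and $P$ (together with the numerator swap) to pass to the $\tfrac{\l}{\l-1}$-block, repeat the $E,S$ step in each new block, and finally compose $C$ and $P$ to reach the remaining arguments $1/\l$, $\tfrac{\l-1}{\l}$, $\tfrac{1}{1-\l}$, again filling in each block by $E$ and $S$.

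The routine-but-essential work, and the main obstacle, is the prefactor bookkeeping: after each composition one must collect the accumulated scalar and character factors and match them against the stated $G_1,\dots,G_5$ and the monomials $\ol\g(\l)$, $\ol{\a\b}\g(1-\l)$, $\ol\a(-\l)$, and the like. This is carried out uniformly by expressing every factor through Gauss sums and applying Proposition \ref{jacobi}, above all the reflection identity $g(\vp)g^\0(\ol\vp)=\vp(-1)q$ of (iii); the same identity also yields the consistency relations $q^{(\g,\e)}G_1=q^{(\a\b,\g)}G_2G_3=q^{(\a,\b)}\g(-1)G_4G_5$ recorded at the end, which serve as a convenient internal check that the scalars obtained along the three routes (via $E,S$, via $C$, and via $P$) agree.
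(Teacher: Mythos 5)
Your proposal is correct and follows essentially the same route the paper intends: the corollary is stated as an immediate consequence of combining Theorem \ref{p2.5} and Theorem \ref{connection} (with Proposition \ref{shift} implicitly supplying the denominator-inversion step), and your orbit-of-24 organization via the anharmonic action on arguments together with the Euler/shift Klein four-group within each argument block is a faithful, correctly checked elaboration of that. The hypothesis-propagation check (that $(\a+\b,\e+\g)=0$ is preserved under each of $E$, $S$, $P$, $C$) and the prefactor bookkeeping are exactly the points that need verification, and your treatment of both is sound.
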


\section{Summation formulas}

Classically, the special values of 
${}_{d+1}F_d\left({a_0,a_1,\dots, a_d \atop b_1,\dots b_d};x\right)$, in particular the value at $x=\pm 1$, 
have been of  particular interest (cf. \cite{bailey}). 
Recall that 
${}_{d+1}F_d(x)$ converges at $x=1$ (resp. $x=-1$) if 
$\Re\bigl(\sum_{i=1}^d b_i-\sum_{i=0}^d a_i\bigr)>0$ (resp. $\Re\bigl(\sum_{i=1}^d b_i-\sum_{i=0}^d a_i\bigr)>-1$) (cf. \cite[1.1.1, 2.2]{slater}). 
These conditions are always assumed when we mention such special values. 

The function ${}_{d+1}F_d(x)$ is said to be Saalsch\"utzian if 
$\sum_{i=1}^d b_i=1+\sum_{i=0}^d a_i$. 
It is said to be well-poised (resp. nearly-poised) if all (resp. all but one) $a_i+b_i$ ($i=0,1,\dots, d$, setting $b_0=1$) agree 
 for a suitable ordering of $a_i$'s and $b_i$'s.  

\begin{dfn}Let $\baa=\a_1+\cdots+\a_d$, $\bbb=\b_1+\cdots+\b_d \in P$. 
We say that the function $F(\baa,\bbb;\l)$ is {\em Saalsch\"utzian} if
$\a_1\cdots \a_d=\b_1\cdots \b_d$. 
It is said to be {\em well-poised} (resp. {\em nearly-poised}) if all (resp. all but one) $\a_i\b_i$ agree for a suitable ordering of $\a_i$'s and $\b_i$'s.  
\end{dfn}

\subsection{Special values of ${}_1F_0$}

\begin{ppn}For $\a\in\ck$, 
\begin{align*}
F\left({\a\atop\ };1\right)
&=\begin{cases} 0 & (\a\ne \e), \\ 1-q& (\a=\e),  
\end{cases}
\\
F\left({\a\atop\ };-1\right)
&=\begin{cases} \ol\a(2) & (\a\ne \e), \\ 1-\d(2) q& (\a=\e). 
\end{cases}
\end{align*}
\end{ppn}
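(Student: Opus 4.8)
The plan is to split into the two cases $\a\ne\e$ and $\a=\e$, since the closed form of ${}_1F_0$ already at our disposal covers only the former.

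For $\a\ne\e$ I would invoke Corollary \ref{c2.0}, which asserts $\FF{\a}{}{\l}=\ol\a(1-\l)$ for every $\l\in\k^*$. As both $\l=1$ and $\l=-1$ lie in $\k^*$, it then suffices to substitute: at $\l=1$ we obtain $\ol\a(1-1)=\ol\a(0)=0$ by the convention $\vp(0)=0$, and at $\l=-1$ we obtain $\ol\a(1-(-1))=\ol\a(2)$. This settles the $\a\ne\e$ row of each formula.

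For $\a=\e$ Corollary \ref{c2.0} no longer applies, so I would instead specialize Example \ref{e3.1} (ii) to $\a=\e$: since $\d(\e)=1$ and $\ol\e=\e$, it gives $F(\e,\e;\l)=-q\,\d(1-\l)+\e(\l)=1-q\,\d(1-\l)$ for $\l\in\k^*$. (Equivalently, one can work straight from the definition, using $g(\e)=1$, $g^\0(\e)=q$ and $g^\0(\n)=q^{\d(\n)}g(\n)$ to see $(\e)_\n/(\e)^\0_\n=q^{1-\d(\n)}$, and then summing by the orthogonality relation $\sum_{\n\in\ck}\n(\l)=(q-1)\d(1-\l)$.) Putting $\l=1$ yields $1-q$, and putting $\l=-1$ yields $1-q\,\d(2)=1-\d(2)\,q$.

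Everything here is routine once the earlier results are in place; the only step that asks for a little care is the behaviour in characteristic $2$, where $2=0$ forces both $\ol\a(2)=\ol\a(0)=0$ and $\d(2)=1$. I would verify that the two rows stay consistent in that case (as they must, since then $-1=1$): for $\a\ne\e$ the entries at $\l=\pm1$ both vanish, while for $\a=\e$ they both equal $1-q$.
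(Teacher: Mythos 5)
Your proof is correct and follows exactly the route the paper takes: its entire proof reads ``See Corollary \ref{c2.0} and Example \ref{e3.1} (ii) when $\a=\e$,'' which is precisely your case split, with your version merely spelling out the substitutions (and adding a harmless consistency check in characteristic $2$).
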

\begin{proof}See Corollary \ref{c2.0} and Example \ref{e3.1} (ii) when $\a=\e$. 
\end{proof}

\subsection{Special values of ${}_2F_1$}

Recall the Euler--Gauss summation formula  (cf. \cite[1.3]{bailey})
$${}_2F_1\left({a,b \atop c};1\right)=\frac{\G(c)\G(c-a-b)}{\G(c-a)\G(c-b)}. $$
Its finite analogue is the following (cf. \cite[Th\'eor\`eme 1 (i)]{helversen-pasotto}, \cite[Theorem 4.9]{greene}, \cite[Theorem 1.9]{mccarthy}). 

\begin{thm}\label{summation}
For any $\a$, $\b$, $\g \in \ck$, 
$$\FF{\a,\b}{\g}{1}=
\begin{cases}
\dfrac{g^\0(\g)g(\ol{\a\b}\g)}{g^\0(\ol\a\g)g^\0(\ol\b\g)} & (\a+\b\ne \e+\g), \\
1+q^{\d(\g)}(1-q) & (\a+\b = \e+\g).
\end{cases}$$
\end{thm}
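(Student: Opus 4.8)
The plan is to isolate the reduced (generic) configuration, where the sum representation of Corollary \ref{c2.1} applies cleanly, and to dispatch the remaining degenerate configurations from the explicit evaluations already recorded in Example \ref{e3.1}. I would begin with the second case, which is immediate: the relation $\a+\b=\e+\g$ in $P$ means $\{\a,\b\}=\{\e,\g\}$, so by the manifest $\a\leftrightarrow\b$ symmetry of $\FF{\a,\b}{\g}{\l}$ it suffices to evaluate $\FF{\e,\g}{\g}{1}$. This is the non-reduced function of Example \ref{e3.1} (iii); setting $\l=1$ there (so that $\d(1-\l)=1$ and $\ol\g(\l)=1$) produces exactly $1+q^{\d(\g)}(1-q)$.

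The heart of the argument is the first case under the reducedness hypothesis $\a,\b\notin\{\e,\g\}$, which automatically forces $\{\a,\b\}\ne\{\e,\g\}$. Here I would apply Corollary \ref{c2.1} (i) with $d=1$, $\a_0=\a$, $\a_1=\b$, $\b_1=\g$, legitimate since $\a\ne\e$ and $\b\ne\g$. At $\l=1$ the single sum on the right groups as $\sum_{t}\b(t)\,\ol{\a\b}\g(1-t)$, which by the very definition of the Jacobi sum equals $-j(\b,\ol{\a\b}\g)$, so that
$$\FF{\a,\b}{\g}{1}=\frac{j(\b,\ol{\a\b}\g)}{j(\b,\ol\b\g)}.$$
Converting both Jacobi sums to Gauss sums by Proposition \ref{jacobi} (iv) (no argument pair is $(\e,\e)$, since $\b\ne\e$) gives $\FF{\a,\b}{\g}{1}=g^\0(\g)\,g(\ol{\a\b}\g)\big/\big(g^\0(\ol\a\g)\,g(\ol\b\g)\big)$, and finally $g(\ol\b\g)=g^\0(\ol\b\g)$ because $\ol\b\g\ne\e$ (as $\b\ne\g$), yielding the stated formula.

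It then remains to treat the non-reduced first-case configurations, those in which some parameter lies in $\{\e,\g\}$ while $\{\a,\b\}\ne\{\e,\g\}$. Using symmetry these reduce to a short list. When a top parameter equals $\g$, one reads the value directly off Example \ref{e3.1} (iii) at $\l=1$. When a top parameter equals $\e$, one first applies Example \ref{e3.1} (i) to write $\FF{\e,\b}{\g}{1}=qF(\b,\g;1)+1$, and then evaluates $F(\b,\g;1)=0$ (for $\b\ne\g$) after a shift by Proposition \ref{shift} reduces it to the ${}_1F_0$ of Corollary \ref{c2.0}. In each configuration the right-hand side of the claimed first-case formula collapses—either a ratio such as $g(\ol\b\g)/g^\0(\ol\b\g)$ becomes $1$, or an identity like $g(\g)g(\ol\g)=\g(-1)q$ simplifies the answer—so that equality can be verified case by case.

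I expect the main obstacle to be the normalization bookkeeping between $g$ and $g^\0$, i.e. keeping track of the factors $q^{\d(\cdot)}$. The single Gauss-sum expression in the theorem is symmetric in $\a$ and $\b$, whereas the Corollary \ref{c2.1} (i) computation breaks this symmetry and naturally produces one $g$ and one $g^\0$ in the denominator; the coincidence-free hypotheses of the reduced case are exactly what upgrade $g(\ol\b\g)$ to $g^\0(\ol\b\g)$. It is precisely at the boundary of those hypotheses—parameters meeting $\e$ or $\g$—that the degenerate evaluations above, and ultimately the two-case shape of the statement, must be matched with care.
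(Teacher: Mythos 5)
Your proposal is correct and follows essentially the same route as the paper: the generic case via Corollary \ref{c2.1} (i) at $\l=1$ giving the ratio $j(\b,\ol{\a\b}\g)/j(\b,\ol\b\g)$ converted by Proposition \ref{jacobi} (iv), and the degenerate configurations via Example \ref{e3.1} (i), (iii), Proposition \ref{shift} and Corollary \ref{c2.0}. The only (harmless) difference is organizational: the paper's first case ``$\a\ne\e$, $\b\ne\g$'' silently absorbs the coincidence $\b=\e$, $\a=\g$ through the $j(\e,\e)$ branch of Proposition \ref{jacobi} (iv), whereas you peel off $\a+\b=\e+\g$ at the outset and handle it by Example \ref{e3.1} (iii).
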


\begin{proof}
First, if $\a\ne\e$ and $\b\ne\g$, then by Corollary \ref{c2.1} (i) and Proposition \ref{jacobi},  
$$\FF{\a,\b}{\g}{1}=-j(\b,\ol\b\g)^{-1} \sum_{t\in\k}\b(t)\ol{\a\b}\g(1-t)
=\frac{j(\b,\ol{\a\b}{\g})}{j(\b,\ol\b\g)}
=\frac{g^\0(\g)g(\ol{\a\b}\g)}{g^\0(\ol\a\g)g(\ol\b\g)}. $$
Secondly if $\a=\e$ and $\b\ne\g$, then by Example \ref{e3.1} (i), Proposition \ref{shift} and Corollary  \ref{c2.0}, 
$$\FF{\a,\b}{\g}{1}=qF(\b,\g;1)+1=1,$$
hence the formula. Finally if $\b=\g$, then the formula follows from Example \ref{e3.1} (iii) using Proposition \ref{jacobi}. 
\end{proof}

\begin{rmk}\label{r.summation}\ 
\begin{enumerate}
\item The second case $\a+\b=\e+\g$ of the theorem can be expressed as 
$$\FF{\a,\b}{\g}{1}= \dfrac{g^\0(\g)g(\ol{\a\b}\g)}{g^\0(\ol\a\g)g^\0(\ol\b\g)}-(1+\d(\g)q)\frac{(1-q)^2}{q}.$$
\item
Recall Vandermonde's theorem 
$${}_2F_1\left({a,-n \atop c};1\right)=\frac{(c-a)_n}{(c)_n} \quad (n\in\N).$$
If $(\a,\e+\g)=0$, then the theorem is written as 
$$\FF{\a,\ol\n}{\g}{1}=\frac{(\ol\a\g)_\n}{(\g)^\0_\n} \quad (\n\in\ck). $$
\end{enumerate}
\end{rmk}

Recall the multinomial theorem for Pochhammer symbols
$$\sum_{n_1+\cdots+n_d=n} \prod_{i=1}^d \frac{(a_i)_{n_i}}{(1)_{n_i}}
= \frac{(a_1+\cdots+a_d)_n}{(1)_n}.
$$
Its finite analogue is the following. 

\begin{cor}
Suppose that $\a_1,\dots, \a_d \in\ck$ and $\prod_{i=1}^j \a_i\ne\e$ for all $j=2,\dots, d$. 
Then, 
$$\sum_{\n_1\cdots \n_d=\n} \prod_{i=1}^d \left(\frac{1}{1-q} \frac{(\a_i)_{\n_i}}{(\e)^\0_{\n_i}}\right) =  \frac{1}{1-q} \frac{(\a_1\cdots\a_d)_\n}{(\e)^\0_\n}.
$$
\end{cor}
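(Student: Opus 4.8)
The statement is the exact finite analogue of the classical multinomial identity, whose cleanest proof simply reads off coefficients in the power-series identity $(1-x)^{-a_1}\cdots(1-x)^{-a_d}=(1-x)^{-(a_1+\cdots+a_d)}$. The plan is to reproduce this argument, with the Fourier transform of Section \ref{s.fourier} playing the role of ``passing to the generating series'' and the convolution formula playing the role of ``multiplying series''. For $\a\in\ck$ put $f_\a(\l):=\FF{\a}{}{\l}=F(\a,\e;\l)$, the finite ${}_1F_0$. Two facts drive everything. First, directly from the definition of the hypergeometric function and the converse part of Fourier inversion, $\wh{f_\a}(\n)=-(\a)_\n/(\e)^\0_\n$, since $f_\a(\l)=(q-1)^{-1}\sum_\n\bigl(-(\a)_\n/(\e)^\0_\n\bigr)\n(\l)$. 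Second, the finite ${}_1F_0$ is essentially multiplicative: $f_{\a_1\cdots\a_d}$ agrees on $\k^*$ with the pointwise product $\prod_{i=1}^d f_{\a_i}$.

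The key step is this pointwise identity $\prod_{i=1}^d f_{\a_i}(\l)=f_{\a_1\cdots\a_d}(\l)$ for all $\l\in\k^*$. For $\l\ne1$ each factor equals $\ol{\a_i}(1-\l)$ — by Corollary \ref{c2.0} when $\a_i\ne\e$, and by Example \ref{e3.1} (ii), which gives $F(\e,\e;\l)=1$ for $\l\in\k^*$ with $\l\ne1$, when $\a_i=\e$ — so the product telescopes to $\ol{\a_1\cdots\a_d}(1-\l)=f_{\a_1\cdots\a_d}(\l)$, using $\a_1\cdots\a_d\ne\e$. At $\l=1$ both sides vanish: the right side is $\ol{\a_1\cdots\a_d}(0)=0$, while on the left the hypothesis forces at least one index with $\a_i\ne\e$, contributing a factor $\ol{\a_i}(0)=0$ (note one cannot replace $f_\e$ by $\ol\e(1-\cdot)$, as $f_\e(1)=1-q\ne0$). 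This boundary matching, together with the trivial-character factors, is the only genuinely delicate point.

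Granting the identity, I would take Fourier transforms over $\k^*$ of both sides. Iterating $\wh{f_1f_2}(\n)=(q-1)^{-1}\sum_{\n_1\n_2=\n}\wh{f_1}(\n_1)\wh{f_2}(\n_2)$ gives $\wh{\prod_i f_{\a_i}}(\n)=(q-1)^{-(d-1)}\sum_{\n_1\cdots\n_d=\n}\prod_i\wh{f_{\a_i}}(\n_i)$, and substituting $\wh{f_{\a_i}}(\n_i)=-(\a_i)_{\n_i}/(\e)^\0_{\n_i}$ expresses the right-hand side as $(-1)^d(q-1)^{-(d-1)}$ times $\sum_{\n_1\cdots\n_d=\n}\prod_i(\a_i)_{\n_i}/(\e)^\0_{\n_i}$. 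Equating this with $\wh{f_{\a_1\cdots\a_d}}(\n)=-(\a_1\cdots\a_d)_\n/(\e)^\0_\n$ and tracking the powers of $1-q$ and the signs yields the asserted formula, the factor $(1-q)^{-d}$ on the left of the corollary matching $(q-1)^{-(d-1)}$ against the single $(1-q)^{-1}$ on the right. The remaining work is pure bookkeeping of these constants; everything else is formal. (Alternatively one could induct on $d$, reducing to $d=2$ and invoking the Vandermonde form of Theorem \ref{summation} recorded in Remark \ref{r.summation} (ii), but the Fourier argument is shorter and more faithful to the classical proof.)
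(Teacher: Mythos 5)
Your proof is correct, but it takes a genuinely different route from the paper. The paper argues by induction on $d$, reducing to $d=2$, where it rewrites the convolution sum via Lemma \ref{l2.1} as $(1-q)\,(\b)_\n(\e)^{\0\,-1}_\n\,\FF{\a,\ol\n}{\ol{\b\n}}{1}$ and then evaluates this ${}_2F_1(1)$ by the Euler--Gauss summation (Theorem \ref{summation}, in its Vandermonde form); the hypothesis $\prod_{i\le j}\a_i\ne\e$ is exactly what keeps each inductive step in the generic case of that theorem. You instead recognize the left-hand side as $(q-1)^{-1}$ times the $d$-fold convolution of the Fourier transforms $\wh{f_{\a_i}}(\n)=-(\a_i)_\n/(\e)^\0_\n$, reduce everything to the pointwise identity $\prod_i f_{\a_i}=f_{\a_1\cdots\a_d}$ on $\k^*$ (the finite analogue of $(1-x)^{-a_1}\cdots(1-x)^{-a_d}=(1-x)^{-\sum a_i}$, via Corollary \ref{c2.0} and Example \ref{e3.1}), and conclude by the convolution formula; your treatment of the boundary point $\l=1$ and of trivial-character factors is exactly the care this requires, and your constant-tracking checks out. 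Each approach has its merits: the paper's stays entirely within its summation-formula toolkit and makes transparent why the chained nontriviality hypothesis appears, while yours is closer in spirit to the classical generating-series proof and in fact establishes the identity under the single weaker hypothesis $\a_1\cdots\a_d\ne\e$ (which already forces some $\a_i\ne\e$, so both sides vanish at $\l=1$); one can check on small examples that the conclusion genuinely fails when $\a_1\cdots\a_d=\e$, so your hypothesis is sharp.
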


\begin{proof}
By induction, it suffices to prove the case $d=2$. Then, by Lemma \ref{l2.1},  
\begin{align*}
\sum_\m \frac{(\a)_\m}{(\e)^\0_\m} \frac{(\b)_{\ol\m\n}}{(\e)^\0_{\ol\m\n}}
&= \sum_\m \frac{(\a)_\m}{(\e)^\0_\m} 
\frac{(\e)_{\ol\n}(\ol\n)_\m}{(\ol\b)^\0_{\ol\n}(\ol{\b\n})^\0_\m}
= (1-q) \frac{(\b)_{\n}} {(\e)^\0_{\n}}\FF{\a,\ol\n}{\ol{\b\n}}{1}.
\end{align*}
If $\a\b\ne\e$, then $\a+\ol\n\ne\e+\ol{\b\n}$ for any $\n$, and 
$$ \FF{\a,\ol\n}{\ol{\b\n}}{1}=\frac{(\ol\b)^\0_{\ol\n}}{(\ol{\a\b})^\0_{\ol\n}}
=\frac{(\a\b)_\n}{(\b)_\n}$$
by Theorem \ref{summation}, hence the formula.
\end{proof}

Recall Kummer's formula (cf. \cite[2.3]{bailey}) for  well-poised ${}_2F_1(-1)$ 
$${}_2F_1\left({2a,b \atop 2a-b+1}; -1\right)=\frac{\G(2a-b+1)\G(a+1)}{\G(2a+1)\G(a-b+1)}.$$
Its finite analogue is the following (cf. \cite[(4.11)]{greene} and \cite[Theorem 1.10]{mccarthy}). 

\begin{thm}\label{t2.3}
Let $\a, \b \in \wh{\k^\times}$. 
\begin{enumerate}
\item If $p=2$, then 
$$\FF{\a^2,\b}{\a^2\ol\b}{-1}
= \begin{cases} \dfrac{g^\0(\a^2\ol\b)g(\a)}{g(\a^2)g^\0(\a\ol\b)} & (\b\ne\e), \\
1+q^{\d(\a)}(1-q) & (\b=\e).
\end{cases}$$
\item If $p$ is odd,  then
$$\FF{\a^2,\b}{\a^2\ol\b}{-1}
= \sum_{\a'^2=\a^2} \frac{g^\0(\a^2\ol\b)g(\a')}{g(\a^2)g^\0(\a'\ol\b)}.$$
\item 
If $\a$ is not a square, then 
$$\FF{\a,\b}{\a\ol\b}{-1}=0.$$
\end{enumerate}

\end{thm}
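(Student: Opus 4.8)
Let $\a,\b\in\ck$.

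We need to prove three statements about $\FF{\a^2,\b}{\a^2\ol\b}{-1}$, split according to whether $p=2$ or $p$ is odd, plus a vanishing statement. Let me sketch the proof strategy.

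---

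\emph{The plan is to reduce everything to the Euler--Gauss summation formula (Theorem \ref{summation}) via the duplication formula and the quadratic transformations of Section 5, exactly as in the classical derivation of Kummer's formula.} In the complex case, Kummer's $-1$ formula is obtained from the Euler--Gauss $+1$ formula by applying a quadratic transformation that sends the point $-1$ to $+1$. Here the most direct route is to note that the function $\FF{\a^2,\b}{\a^2\ol\b}{\l}$ is well-poised, since the two products of numerator-denominator pairs are $\a^2\cdot(\a^2\ol\b)^{-1}\cdot(\text{from }b_0=\e)$ matching appropriately; this is precisely the shape to which a quadratic transformation applies.

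\medskip

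\emph{First I would handle the odd $p$ case (ii), which I expect to be the structurally cleanest.} When $p$ is odd, the quadratic character $\phi$ exists, and one can invoke a quadratic transformation formula (from Section 5, of Gauss/Kummer type) to rewrite $\FF{\a^2,\b}{\a^2\ol\b}{-1}$ as a ${}_2F_1$ evaluated at $+1$. Applying Theorem \ref{summation} to that value, and then using the duplication formula $g(\a^2)=\a(4)g(\a)g(\a\phi)/g(\phi)$ together with its Pochhammer analogues stated in the excerpt, should produce the sum over the two square roots $\a'$ of $\a^2$ (namely $\a'=\a$ and $\a'=\a\phi$). The appearance of the sum over $\a'^2=\a^2$ is exactly the finite-field shadow of the fact that over $\C$ the quadratic transformation has two branches; the two terms correspond to the two square roots.

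\medskip

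\emph{Next, the $p=2$ case (i): here there is no quadratic character}, so the sum over square roots degenerates to a single term (every element has a unique square root when $p=2$), which is why (i) has a single closed-form expression rather than a sum. I would run the same quadratic-transformation-plus-summation argument, but now $\phi$ does not exist and the map $\a'\mapsto\a'^2$ is a bijection on $\ck$, collapsing the sum in (ii) to the single value $\dfrac{g^\0(\a^2\ol\b)g(\a)}{g(\a^2)g^\0(\a\ol\b)}$. The exceptional case $\b=\e$ falls under the second branch $\a+\b=\e+\g$ of Theorem \ref{summation} (with $\a\mapsto\a^2$, $\g\mapsto\a^2$), giving $1+q^{\d(\a)}(1-q)$; I would verify the parameter condition $\a^2+\e=\e+\a^2$ holds precisely when $\b=\e$.

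\medskip

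\emph{Finally, statement (iii) is a genuinely new phenomenon with no complex analogue}, and I expect it to be the main obstacle. If $\a$ is not a square in $\ck$, then $\a^2\ne\a$ forces the numerator parameter $\a$ to be inaccessible as a ``doubled'' parameter. The cleanest approach is to exploit a symmetry: the quadratic transformation underlying (i) and (ii) requires that the top parameter be a square (written as $\a^2$); when it is merely $\a$ with $\a$ a non-square, the relevant Gauss sums attached to would-be square roots of $\a$ either vanish or pair up with opposite signs. Concretely, I would write $\FF{\a,\b}{\a\ol\b}{-1}$ using Corollary \ref{c2.1}(i) as a character sum $-j(\b,\ol\b\cdot\a\ol\b)^{-1}\sum_t\ol\a(1+t)\b(t)\ol\b\cdot\a\ol\b(1-t)$, and then substitute $t\mapsto -t$ (or $t\mapsto 1/t$) to produce a functional equation of the form $S=\a(-1)S$; since $\a$ non-square means $\a(-1)=\a(\phi\text{-related sign})$, one finds $S=-S$, forcing $S=0$. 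The delicate point is choosing the substitution so that the character $\ol\a$ on the transformed argument picks up exactly the factor $\a(-1)=-1$ that a non-square $\a$ supplies (recall $\a$ non-square $\iff \a(g)=-1$ for a generator, and in odd characteristic $-1$ is a square iff $q\equiv1\pmod 4$, so this sign computation must be done carefully relative to whether $-1$ is a square). I expect verifying that this sign comes out to $-1$ for non-square $\a$, uniformly in $q$, to be the crux of the argument.
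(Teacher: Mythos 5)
Your proposal for part (ii) --- the heart of the theorem --- has a genuine gap. You plan to reduce the value at $-1$ to a value at $+1$ via a quadratic transformation from Section 5, but in this paper those transformations (Theorem \ref{quad1} and its corollaries) are themselves \emph{proved using} Theorem \ref{t2.3} (ii), so the route is circular; and even setting that aside, you never identify which transformation to use or check that it applies at the required argument (the stated transformations exclude exactly the points $\l=\pm1$ you would need, so a separate limiting argument would be required). The paper's actual proof of (ii) is a short direct computation that bypasses all of this: by Corollary \ref{c2.1} (i), for $\b\ne\e$,
$$-j(\a^2,\ol\b)\FF{\a^2,\b}{\a^2\ol\b}{-1}=\sum_{t\in\k}\ol\b(1+t)\a^2(t)\ol\b(1-t)=\sum_{t\in\k}\a(t^2)\ol\b(1-t^2),$$
and substituting $s=t^2$ with multiplicity $1+\phi(s)$ splits this into $-j(\a,\ol\b)-j(\a\phi,\ol\b)$, whence the two-term sum over $\a'$ with $\a'^2=\a^2$ via Proposition \ref{jacobi} (iv); the case $\b=\e$ is Example \ref{e3.1} (iii). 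Your heuristic that ``the two terms correspond to the two square roots'' is the right intuition, but the intuition is realized by the fibre count of $t\mapsto t^2$, not by a branch count of a transformation you have not established. For part (i) you likewise propose to ``run the same argument without $\phi$,'' which is not executable; the point you are missing is that $-1=1$ in characteristic $2$, so the statement \emph{is} the Euler--Gauss formula (Theorem \ref{summation}), rewritten using $j(\a,\ol\b)=j(\a^2,\ol\b^2)$ (Frobenius) and $\d(\a)=\d(\a^2)$.

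Part (iii) is essentially correct and close to the paper's argument: the substitution $t\mapsto t^{-1}$ in the sum from Corollary \ref{c2.1} (i) does yield $S=\a(-1)S$ (the paper phrases the same symmetry structurally via Propositions \ref{reverse} and \ref{shift}). But your sign discussion contains an error and a red herring: the criterion is not ``$\a(g)=-1$ for a generator $g$,'' nor does it matter whether $-1$ is a square in $\k^*$; the relevant fact is that $\a(-1)=(-1)^k$ when $\a=\chi^k$ for a generator $\chi$ of $\ck$, so $\a(-1)=-1$ precisely when $\a$ is a non-square \emph{in the character group}, uniformly in $q$. With that correction (and noting that a non-square $\a$ forces $p$ odd and $\a\ne\e$, so Corollary \ref{c2.1} (i) applies), your argument for (iii) goes through.
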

\begin{proof}
(i) This is equivalent to Theorem \ref{summation}, by Proposition \ref{jacobi} (iv). 
Note that 
$j(\a,\ol\b)=j(\a^2,\ol\b^2)$ by the Frobenius automorphism of $\k$, and that $\d(\a)=\d(\a^2)$. 

(ii) When $\b \ne \e$, we have by Corollary \ref{c2.1} (i)
\begin{align*}
&-j(\a^2,\ol\b)\FF{\a^2,\b}{\a^2\ol\b}{-1}
=\sum_{t\in\k} \ol\b(1+t)\a^2(t)\ol\b(1-t)
=\sum_{t\in\k} \a(t^2)\ol\b(1-t^2)
\\&
=\sum_{s\in\k} \left(1+\phi(s)\right)\a(s)\ol\b(1-s)=-j(\a,\ol\b)-j(\a\phi,\ol\b),
\end{align*}
and the formula follows by Proposition \ref{jacobi} (iv). 
When $\b =\e$, we have by Example \ref{e3.1} (iii) 
$$\FF{\a^2,\e}{\a^2}{-1}= q^{\d(\a^2)}+1 =q^{\d(\a)}+q^{\d(\a\phi)},$$
hence the formula. 

(iii) By Propositions \ref{reverse}, \ref{shift} and \ref{jacobi}, 
$$F(\a+\b,\e+\a\ol\b;-1)=F(\e+\ol\a\b,\ol\a+\ol\b;-1)=\a(-1)F(\a+\b,\e+\a\ol\b;-1).$$
Since $\a(-1)=-1$ by assumption, the assertion follows. 
\end{proof}

\subsection{Special values of ${}_3F_2$} 
Over the complex numbers, a fundamental theorem on ${}_3F_2(1)$ is Thomae's formula (cf. \cite[3.2 (1)]{bailey}) 
$$
\frac{\G(a)}{\G(d)\G(e)}{}_3F_2\left({a,b,c \atop d,e};1\right)
=\frac{\G(s)}{\G(b+s)\G(c+s)} {}_3F_2\left({s,d-a,e-a \atop b+s,c+s};1\right),$$
where $s:=d+e-a-b-c$. 
The following is a finite analogue.

\begin{thm}\label{thomae}
Suppose that $(\a, \vp+\psi)=(\e,\b+\g)=0$. 
Then
$$\frac{g(\a)}{g^\0(\vp)g^\0(\psi)} \FF{\a,\b,\g}{\vp,\psi}{1}=
\frac{g(\s)}{g^\0(\b\s)g^\0(\g\s)} \FF{\s,\ol\a\vp,\ol\a\psi}{\b\s, \g\s}{1},$$
where $\s:=\ol{\a\b\g}\vp\psi$. 
\end{thm}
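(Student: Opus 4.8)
The plan is to mirror the classical derivation of Thomae's relation from the Euler integral, using the finite ``period'' representation of ${}_{d+1}F_d(1)$ supplied by Corollary \ref{c2.1}(i). Concretely, I would write both the left- and the right-hand ${}_3F_2(1)$ as double character sums over $\k\times\k$ and then identify the two integrands by an explicit rational change of variables, exactly as the Euler, Pfaff and connection formulas (Theorems \ref{p2.5} and \ref{connection}) were proved from their one-variable analogues by the substitutions $t=\frac{1-s}{1-\l s}$ and $t=\frac{s}{s-1}$. This is the natural finite counterpart of the fact that Thomae's identity expresses one symmetry of the period integral $\iint x^{b-1}(1-x)^{d-b-1}y^{c-1}(1-y)^{e-c-1}(1-xy)^{-a}\,dx\,dy$ attached to the hypergeometric motive.

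First I would apply Corollary \ref{c2.1}(i) to the left-hand side, taking one of $\b,\g$ (both $\ne\e$ by hypothesis) as the distinguished numerator $\a_0$ and pairing the other two numerators with $\vp,\psi$, to obtain
$$\bigl(\text{Jacobi factor}\bigr)\cdot\FF{\a,\b,\g}{\vp,\psi}{1}=\sum_{t_1,t_2\in\k}\ol\a(1-t_1t_2)\,\b(t_1)\ol\b\vp(1-t_1)\,\g(t_2)\ol\g\psi(1-t_2).$$
Applying the same corollary to the right-hand side, with distinguished parameter $\s$ and the pairs $(\ol\a\vp,\b\s)$ and $(\ol\a\psi,\g\s)$, I would use $\s=\ol{\a\b\g}\vp\psi$ to simplify the resulting Jacobi arguments (for instance $\ol{\ol\a\vp}\b\s=\ol\g\psi$ and $\ol{\ol\a\psi}\g\s=\ol\b\vp$), producing the second double sum
$$\sum_{u_1,u_2\in\k}\ol\s(1-u_1u_2)\,\ol\a\vp(u_1)\ol\g\psi(1-u_1)\,\ol\a\psi(u_2)\ol\b\vp(1-u_2).$$

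Grouping each integrand character-by-character (treating $\a,\b,\g,\vp,\psi$ as independent) reduces the theorem to exhibiting a birational substitution $(t_1,t_2)\mapsto(u_1,u_2)$ of $\k\times\k$ under which the five ``arguments'' agree up to signs; for example the $\a$-factors force $u_1u_2$ to be a fixed M\"obius function of $t_1t_2$ (with the natural sign choice, $1-u_1u_2=(t_1t_2)^{-1}$), while the $\b,\g,\vp,\psi$-factors determine $u_1,u_2$ individually and impose the compatibility that makes the map birational. Once the substitution is in hand, the two double sums coincide up to a product of factors $\chi(-1)$ coming from the sign corrections, and it remains to convert the leftover Jacobi prefactors on both sides into Gauss sums via Proposition \ref{jacobi}(iv); this bookkeeping is arranged to collapse precisely to the stated factors $g(\a)/\bigl(g^\0(\vp)g^\0(\psi)\bigr)$ and $g(\s)/\bigl(g^\0(\b\s)g^\0(\g\s)\bigr)$.

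I expect the main obstacle to be two-fold. The genuine computation is pinning down the explicit substitution and tracking the $\chi(-1)$ signs correctly; here Theorems \ref{p2.5} and \ref{connection} are a reliable template, the new feature being that the change of variables now lives on a surface rather than a curve. The second, more bureaucratic, difficulty is that Corollary \ref{c2.1}(i) requires $\a_0\ne\e$ together with distinctness of the paired characters, which is slightly stronger than the reducedness hypotheses $(\a,\vp+\psi)=(\e,\b+\g)=0$; the finitely many degenerate configurations (such as $\vp=\psi=\g$) would have to be dispatched separately by direct Gauss-sum evaluation, in the spirit of the case distinctions in Theorem \ref{summation}. As an alternative that avoids the surface substitution, one may instead reduce ${}_3F_2(1)$ to a sum of ${}_2F_1(t)$ by the iteration formula (Theorem \ref{iteration}), apply Euler's transformation (Theorem \ref{p2.5}(i)) to the inner ${}_2F_1$, and re-integrate: a single such move already produces a Thomae-type relation, and composing it with the evident permutation symmetries of the numerator and of the non-unit denominator parameters reaches the stated involution $\a\leftrightarrow\s$.
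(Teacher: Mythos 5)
Your primary plan---writing both sides as double character sums via Corollary \ref{c2.1}(i) and identifying the summands through a birational substitution of $\k\times\k$---breaks down at exactly the step you flag as ``the genuine computation'': the required substitution does not exist. Since the identity must hold for all admissible $(\a,\b,\g,\vp,\psi)$, the two summands would have to match character by character, each argument up to a multiplicative constant. With your pairing, the $\b$- and $\vp$-factors require
$$\frac{t_1}{1-t_1}=c_\b\,\frac{1-u_1u_2}{1-u_2},\qquad 1-t_1=c_\vp\,\frac{u_1(1-u_2)}{1-u_1u_2},$$
whose product forces $t_1=c_\b c_\vp u_1$; likewise the $\g$- and $\psi$-factors force $t_2=c_\g c_\psi u_2$. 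The $\a$-factors then demand $u_1u_2=c_\a(1-u_1u_2)(1-Ku_1u_2)$ with $K=c_\b c_\vp c_\g c_\psi$, which is impossible as an identity in $w=u_1u_2$ (a genuine quadratic cannot equal $w$). The other ways of distributing the parameters in Corollary \ref{c2.1}(i) fail for the same reason. This reflects the classical fact that the full Thomae $S_5$ is not realized by changes of variables of the double Euler integral of this shape; an application of the Gauss summation theorem in the middle of the argument is unavoidable.

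That is in fact what the paper does, and it is essentially your fallback carried out at the series level: write $(1-q)$ times the left-hand side as $\sum_\n \frac{g(\a\n)}{g^\0(\vp\n)g^\0(\psi\n)}\frac{(\b+\g)_\n}{(\e)^\0_\n}$, recognize $\frac{g^\0(\ol\a\vp\psi\n)g(\a\n)}{g^\0(\vp\n)g^\0(\psi\n)}$ as $\FF{\ol\a\vp,\ol\a\psi}{\ol\a\vp\psi\n}{1}$ by Theorem \ref{summation} (the hypothesis $(\a,\vp+\psi)=0$ guarantees the generic case of that theorem for every $\n$, since the numerator pair never contains $\e$), and expand that ${}_2F_1(1)$ to obtain
$$\frac{1}{(1-q)\,g^\0(\ol\a\vp\psi)}\sum_{\m,\n}\frac{(\ol\a\vp+\ol\a\psi)_\m\,(\b+\g)_\n}{(\e)^\0_\m(\e)^\0_\n(\ol\a\vp\psi)^\0_{\m\n}},$$
which is manifestly invariant under $(\a,\b,\g,\vp,\psi)\mapsto(\s,\ol\a\vp,\ol\a\psi,\b\s,\g\s)$. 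Your fallback (iterate via Theorem \ref{iteration}, apply Theorem \ref{p2.5}(i) inside, re-integrate) is the same idea in integral clothing, but as sketched it needs side conditions ($\g\ne\psi$, $(\a+\b,\e+\vp)=0$, $\s\ne\e$) that do not follow from the theorem's hypotheses, it lands on a different member of the Thomae orbit that still must be transported to the stated one by further moves, and it leaves the $g$ versus $g^\0$ bookkeeping in the prefactors unresolved in the degenerate cases. The series-level computation avoids all of these issues at once, so you should adopt it as the main line rather than the substitution argument.
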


\begin{proof}The left-hand side times $1-q$ is 
\begin{align*}
& \sum_\n \frac{g(\a\n)}{g^\0(\vp\n)g^\0(\psi\n)} \frac{(\b+\g)_\n}{(\e)^\0_\n}
\\&=
\frac{1}{g^\0(\ol\a\vp\psi)}
\sum_\n \frac{g^\0(\ol\a\vp\psi\n)g(\a\n)}{g^\0(\vp\n)g^\0(\psi\n)} \frac{(\b+\g)_\n}{(\e+\ol\a\vp\psi)^\0_\n}
\\&=\frac{1}{g^\0(\ol\a\vp\psi)}\sum_\n \FF{\ol\a\vp,\ol\a\psi}{\ol\a\vp\psi\n}{1} \frac{(\b+\g)_\n}{(\e+\ol\a\vp\psi)^\0_\n}
\\&=\frac{1}{(1-q)g^\0(\ol\a\vp\psi)}\sum_{\m,\n} \frac{(\ol\a\vp+\ol\a\psi)_\m}{(\e+\ol\a\vp\psi\n)^\0_\mu}
\frac{(\b+\g)_\n}{(\e+\ol\a\vp\psi)^\0_\n}
\\&=\frac{1}{(1-q)g^\0(\ol\a\vp\psi)}\sum_{\m,\n} \frac{(\ol\a\vp+\ol\a\psi)_\m(\b+\g)_\n}{(\e)^\0_\m(\e)^\0_\n(\ol\a\vp\psi)^\0_{\m\n}}. 
\end{align*}
For the second equality, we used Theorem \ref{summation} and the assumption $(\a, \vp+\psi)=0$. 
Since the last member is invariant under the substitution
$(\a,\b,\g,\vp,\psi) \mapsto (\s,\ol\a\vp,\ol\a\psi,\b\s,\g\s)$ and $(\s,\b\s+\g\s)=(\e,\b+\g)=0$,  the theorem follows. 
\end{proof}

By combining Theorem \ref{thomae}, Proposition \ref{shift} and Proposition \ref{reverse}, we obtain many relations among ${}_3F_2(1)$  similarly as in the complex case (cf. \cite{whipple3}). For example, we have the following analogue of Sheppard's formula (cf. loc. cit. p.111, $\dagger$). 

\begin{cor}
Put $\s=\ol{\a\b\g}\vp\psi$ and suppose that $(\a+\s,\e)=(\b+\g,\vp+\psi)=0$.  Then 
$$\FF{\a,\b,\g}{\vp,\psi}{1}= \frac{g(\ol{\b\g}\vp)g^\0(\vp)g(\ol{\b\g}\psi)g^\0(\psi)}{g(\ol\b\vp)g(\ol\g\vp)g(\ol\b\psi)g(\ol\g\psi)}
\FF{\ol\s,\b,\g}{\b\g\ol\vp,\b\g\ol\psi}{1}.$$
\end{cor}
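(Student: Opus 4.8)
The plan is to derive this Sheppard-type relation purely as a consequence of Thomae's formula (Theorem~\ref{thomae}), combined with the elementary symmetries provided by Proposition~\ref{shift} (simultaneous parameter shift) and Proposition~\ref{reverse} (numerator--denominator exchange). The strategy mirrors the classical case: Thomae's formula exhibits a single nontrivial transformation of ${}_3F_2(1)$, and the full web of relations is obtained by composing this with the obvious symmetries. I would first record that the hypothesis $(\a+\s,\e)=0$ ensures $\a\ne\e$ and $\s\ne\e$, and that $(\b+\g,\vp+\psi)=0$ is exactly the reducedness condition under which the relevant functions are well-behaved; these guarantee we stay within the range where the summation identities apply cleanly.

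First I would apply Theorem~\ref{thomae} directly to $\FF{\a,\b,\g}{\vp,\psi}{1}$, which under the hypothesis $(\a,\vp+\psi)=0$ (a consequence of $(\b+\g,\vp+\psi)=0$ together with the definition of $\s$) produces an equality involving $\FF{\s,\ol\a\vp,\ol\a\psi}{\b\s,\g\s}{1}$ with Gauss-sum prefactors built from $g(\a),g(\s),g^\0(\vp),g^\0(\psi),g^\0(\b\s),g^\0(\g\s)$. The target right-hand side, however, has numerator parameters $\ol\s,\b,\g$ and denominator parameters $\b\g\ol\vp,\b\g\ol\psi$, so the second step is to reconcile the output of Thomae with this specific arrangement. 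This I would do by applying Proposition~\ref{reverse} to pass between a function and its ``reversed'' counterpart (sending $\s\mapsto\ol\s$ and inverting the variable, which fixes $\l=1$), and Proposition~\ref{shift} to absorb a common character shift that realigns $(\ol\a\vp,\ol\a\psi)$ into $(\b,\g)$ and $(\b\s,\g\s)$ into $(\b\g\ol\vp,\b\g\ol\psi)$ up to the prescribed Pochhammer/Gauss-sum factor.

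The bookkeeping of the prefactor is where I expect the real work to lie. Each invocation of Proposition~\ref{shift} contributes a factor $(\baa)_\vp/(\bbb)^\0_\vp$, which must be rewritten via $(\a)_\n=g(\a\n)/g(\a)$ and $(\a)^\0_\n=g^\0(\a\n)/g^\0(\a)$ into pure Gauss sums, and then the reflection formula $g(\vp)g^\0(\ol\vp)=\vp(-1)q$ from Proposition~\ref{jacobi}~(iii) together with $\ol{g(\vp)}=\vp(-1)g(\ol\vp)$ from (ii) must be used to collapse everything into the claimed quotient $g(\ol{\b\g}\vp)g^\0(\vp)g(\ol{\b\g}\psi)g^\0(\psi)/\bigl(g(\ol\b\vp)g(\ol\g\vp)g(\ol\b\psi)g(\ol\g\psi)\bigr)$. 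The main obstacle, then, is not conceptual but the careful tracking of the sign characters $\vp(-1)$ and the powers of $q$ arising from $\d$-terms in $g^\0$; I would handle these by verifying that the degree balance $\deg(\baa)=\deg(\bbb)$ is preserved at each stage, so that all spurious $q$-powers cancel and the identity holds as stated over $\Q(\mu_{q-1})$ independently of $\psi$.
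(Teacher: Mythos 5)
There is a genuine gap in your main step. A single application of Theorem \ref{thomae} to $\FF{\a,\b,\g}{\vp,\psi}{1}$ produces $\FF{\s,\ol\a\vp,\ol\a\psi}{\b\s,\g\s}{1}$, and if you then apply Proposition \ref{reverse} and shift by $\s$ (the only shift that puts $\e$ back into the denominator in the relevant way), you land on
$F(\s+\ol\b+\ol\g,\ \e+\ol{\b\g}\vp+\ol{\b\g}\psi;1)=\FF{\s,\ol\b,\ol\g}{\ol{\b\g}\vp,\ol{\b\g}\psi}{1}$,
using $\a\ol\vp\s=\ol{\b\g}\psi$ and $\a\ol\psi\s=\ol{\b\g}\vp$. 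The target function $\FF{\ol\s,\b,\g}{\b\g\ol\vp,\b\g\ol\psi}{1}$ has exactly the \emph{inverse} parameters, and by Proposition \ref{reverse} it is the complex conjugate of what you obtained, not the same function. One can check that no other choice of shift after the reversal matches the target parameter multiset either. So ``one Thomae plus reverse and shift'' proves a valid but \emph{different} two-term relation; passing from $\FF{\s,\ol\b,\ol\g}{\ol{\b\g}\vp,\ol{\b\g}\psi}{1}$ to its conjugate-parameter partner is itself a nontrivial Thomae-type relation. This is precisely why the paper's proof sandwiches \emph{two} applications of Theorem \ref{thomae} between reverse-and-shift moves on each side; your plan as written cannot close up.

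A secondary but also real problem is the hypothesis check: you assert that $(\a,\vp+\psi)=0$ follows from $(\b+\g,\vp+\psi)=0$ and the definition of $\s$, but it does not ($\a=\vp$ is compatible with all of the corollary's hypotheses provided $\a\ne\e$ and $\b\g\ne\psi$), and Theorem \ref{thomae} additionally requires $(\e,\b+\g)=0$, i.e.\ $\b\ne\e$ and $\g\ne\e$, which is likewise not implied --- the paper must treat $\b=\g=\e$ as a separate degenerate case via Theorem \ref{reduction} and Theorem \ref{summation}. So even the first step of your argument is not licensed without further case analysis.
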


\begin{proof}
First, suppose that $\b \ne \e$. 
By Proposition \ref{reverse} and Proposition \ref{shift}, 
\begin{align*}
\FF{\a,\b,\g}{\vp,\psi}{1}
&=\g(-1)\frac{g^\0(\ol\a)g^\0(\ol\b)g(\g\ol\vp)g(\g\ol\psi)}{g^\0(\ol\a\g)g^\0(\ol\b\g)g(\ol\vp)g(\ol\psi)}
\FF{\g,\g\ol\vp,\g\ol\psi}{\ol\a\g,\ol\b\g}{1}.
\\
 \FF{\ol\s,\b,\g}{\b\g\ol\vp,\b\g\ol\psi}{1}&
 =\g(-1)\frac{g^\0(\s)g^\0(\ol\b)g(\ol\b\vp)g(\ol\b\psi)}{g^\0(\s\g)g^\0(\ol\b\g)g(\ol{\b\g}\vp)g(\ol{\b\g}\psi)}
 \FF{\g,\ol\b\vp,\ol\b\psi}{\g\s,\ol\b\g}{1}. 
\end{align*}
Applying Theorem \ref{thomae} twice, 
\begin{align*}
\frac{g(\g)}{g^\0(\ol\a\g)g^\0(\ol\b\g)} \FF{\g,\g\ol\vp,\g\ol\psi}{\ol\a\g,\ol\b\g}{1}
&=
\frac{g(\s)}{g^\0(\ol{\a\b}\vp)g^\0(\ol{\a\b}\psi)}\FF{\s,\ol\a,\ol\b}{\ol{\a\b}\vp,\ol{\a\b}\psi}{1}
\\&=\frac{g(\s)}{g(\ol\a)} \frac{g(\g)}{g^\0(\g\s)g^\0(\ol\b\g)} 
\FF{\g,\ol\b\vp,\ol\b\psi}{\g\s,\ol\b\g}{1}. 
\end{align*}
Hence the formula follows, using Proposition \ref{jacobi}. The case $\g \ne \e$ is parallel. 
When $\b=\g=\e$, one easily verifies that the both sides of the formula equal
$$q\frac{g(\ol\a)g(\ol\a\vp\psi)}{g^\0(\ol\a\vp)g^\0(\ol\a\psi)}+1,$$
using Theorem \ref{reduction}, Proposition \ref{reverse} and Theorem \ref{summation}.
\end{proof}

The following lemma plays a key role in computing nearly-poised values. 

\begin{lem}\label{nearly}
For any $\vp$, $\b$, $\g$, $\n\in\ck$, 
$$\frac{g^\0(\vp)g(\ol{\b\g}\vp)}{g^\0(\ol\b\vp)g^\0(\ol\g\vp)} 
\frac{(\b+\g)_\n}{(\ol\b\vp+\ol\g\vp)^\0_\n}
=
\frac{1}{1-q}\sum_\m \frac{(\b+\g)_\m}{(\e+\vp)^\0_\m} \frac{(\ol\m)_\n}{(\vp\m)^\0_\n} \n(-1)
+\d c(\b,\g),$$
where $\d=1$ if $\b\g=\vp$ and $\n\in\{\ol\b,\ol\g\}$, $\d=0$ otherwise, and 
$$c(\b,\g):=\frac{1+\d(\ol\b\g)q}{q^{\d(\ol\b\g)}} \frac{(1-q)^2}{q} \frac{g^\0(\b\g)}{g(\b)g(\g)}. $$
\end{lem}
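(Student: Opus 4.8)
The plan is to collapse the sum over $\m$ on the right into a single ${}_2F_1(1)$ and evaluate it with Theorem \ref{summation}, tracking the unique degenerate case that produces $\d c(\b,\g)$. The first step is to separate the $\m$-dependence of the $\n$-factor from $\n$ itself. Writing each symbol through its Gauss sums and applying the reversal formula of Lemma \ref{l2.1} (ii), one checks the exact identity
$$\frac{(\ol\m)_\n}{(\vp\m)^\0_\n}=\frac{g(\n)g^\0(\vp)}{g^\0(\vp\n)}\,\frac{(\e)^\0_\m(\vp)^\0_\m}{(\ol\n)^\0_\m(\vp\n)^\0_\m}$$
(the case $\m=\e$ already produces the prefactor). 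Substituting this into $S:=\sum_\m \frac{(\b+\g)_\m}{(\e+\vp)^\0_\m}\,\frac{(\ol\m)_\n}{(\vp\m)^\0_\n}$, the factors $(\e)^\0_\m(\vp)^\0_\m$ cancel, and by the definition of $F$,
$$\frac{1}{1-q}\,S=\frac{g(\n)g^\0(\vp)}{g^\0(\vp\n)}\,F(\b+\g,\ol\n+\vp\n;1).$$

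Next I would bring this $F$ into standard form by shifting the parameters by $\n$ (Proposition \ref{shift}):
$$F(\b+\g,\ol\n+\vp\n;1)=\frac{(\b+\g)_\n}{(\ol\n+\vp\n)^\0_\n}\,\FF{\b\n,\g\n}{\vp\n^2}{1}.$$
Replacing $\FF{\b\n,\g\n}{\vp\n^2}{1}$ by its generic value $g^\0(\vp\n^2)g(\ol{\b\g}\vp)/(g^\0(\ol\b\vp\n)g^\0(\ol\g\vp\n))$ from Theorem \ref{summation} and simplifying the resulting product of Gauss sums—using $(\ol\n)^\0_\n=q/g^\0(\ol\n)$ and $g(\n)g^\0(\ol\n)=\n(-1)q$ from Proposition \ref{jacobi} (iii), together with the conversions of Pochhammer symbols to Gauss sums—collapses $\frac{1}{1-q}S$ to exactly $\n(-1)$ times the left-hand side of the lemma. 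Since $\n(-1)^2=1$, the main term $\n(-1)\cdot\frac{1}{1-q}S$ of the right-hand side reproduces the left-hand side whenever the generic value is the correct one.

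It then remains to locate the degenerate case. The summation $\FF{\b\n,\g\n}{\vp\n^2}{1}$ falls into the second case of Theorem \ref{summation} exactly when $\b\n+\g\n=\e+\vp\n^2$ holds in $P$, i.e. when $\{\b\n,\g\n\}=\{\e,\vp\n^2\}$ as unordered pairs; a one-line check shows this is equivalent to $\vp=\b\g$ together with $\n\in\{\ol\b,\ol\g\}$, precisely the condition defining $\d=1$. In that range Remark \ref{r.summation} (i) lowers the value by $(1+\d(\vp\n^2)q)(1-q)^2/q$ relative to the generic expression, so that, after carrying this through the accumulated prefactor, the correction term is
$$\d c(\b,\g)=\frac{g^\0(\vp)\,g(\b\n)g(\g\n)}{g(\b)g(\g)\,g^\0(\vp\n^2)}\,(1+\d(\vp\n^2)q)\,\frac{(1-q)^2}{q}.$$
Taking $\n=\ol\b$ (the case $\n=\ol\g$ being symmetric) one has $\vp\n^2=\ol\b\g$ and $g(\b\n)=1$, and the identity $g(\ol\b\g)/g^\0(\ol\b\g)=q^{-\d(\ol\b\g)}$ collapses this to $c(\b,\g)$.

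The main obstacle is exactly this final bookkeeping. One must recognize that the degeneracy is governed by the multiset equality $\b\n+\g\n=\e+\vp\n^2$ rather than by $\vp=\b\g$ alone, and one must keep the Gauss-sum form of the prefactor instead of passing to Jacobi sums: when $\b=\g$ and $\n=\ol\b$ the symbol $j(\b\n,\g\n)=j(\e,\e)$ takes its exceptional value $2-q$ (Proposition \ref{jacobi} (iv)), and it is precisely the retention of $g(\b\n)g(\g\n)/g^\0(\vp\n^2)$ that lets the factor $g(\ol\b\g)/g^\0(\ol\b\g)=q^{-\d(\ol\b\g)}$ deliver the constant $c(\b,\g)$ uniformly in both the $\b\ne\g$ and $\b=\g$ subcases.
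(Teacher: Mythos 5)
Your proof is correct and is essentially the paper's own argument run in the opposite direction: the paper starts from the left-hand side, writes it as $\frac{(\b+\g)_\n}{(\vp)^\0_{\n^2}}\FF{\b\n,\g\n}{\vp\n^2}{1}$, and expands via the substitution $\m=\n\r$, while you collapse the $\m$-sum to the same $\FF{\b\n,\g\n}{\vp\n^2}{1}$ and then evaluate; both rely on Theorem \ref{summation}, Remark \ref{r.summation} (i), and the same identification of the degenerate locus $\b\g=\vp$, $\n\in\{\ol\b,\ol\g\}$. Your bookkeeping of the correction term, including the uniform treatment of the $\b=\g$ subcase via $g(\ol\b\g)/g^\0(\ol\b\g)=q^{-\d(\ol\b\g)}$, matches the paper's.
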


\begin{proof}
First, suppose that $\b\n+\g\n \ne \e+\vp\n^2$. Then, by Theorem \ref{summation},  
$$\FF{\b\n,\g\n}{\vp\n^2}{1}=\frac{g^\0(\vp\n^2)g(\ol{\b\g}\vp)}{g^\0(\ol\b\vp\n)g^\0(\ol\g\vp\n)}
= \frac{g^\0(\vp)g(\ol{\b\g}\vp)}{g^\0(\ol\b\vp)g^\0(\ol\g\vp)}\frac{(\vp)^\0_{\n^2}}{(\ol\b\vp+\ol\g\vp)^\0_\n}.$$
Hence 
\begin{align*}
\frac{g^\0(\vp)g(\ol{\b\g}\vp)}{g^\0(\ol\b\vp)g^\0(\ol\g\vp)}\frac{(\b+\g)_\n}{(\ol\b\vp+\ol\g\vp)^\0_\n} 
& = \frac{(\b+\g)_\n}{(\vp)^\0_{\n^2}} \FF{\b\n,\g\n}{\vp\n^2}{1}
=\frac{1}{1-q} \sum_\r \frac{(\b+\g)_{\n\r}}{(\e)^\0_\r (\vp)^\0_{\n^2\r}}. 
\end{align*}
If we write $\m=\n\r$, then 
$$(\e)^\0_\r=(\e)^\0_\m(\m)^\0_{\ol\n}=\frac{(\e)^\0_\m}{(\ol\m)_\n} \n(-1), \quad 
(\vp)^\0_{\n^2\r} =  (\vp)^\0_\m (\vp\m)^\0_\n,$$
and the formula follows since $\b\g\ne\vp$. 
Secondly, if $\b\n+\g\n=\e+\vp\n^2$ (i.e. $\b\g=\vp$ and $\n\in\{\ol\b,\ol\g\}$), then 
$$\frac{(\b+\g)_\n}{(\vp)^\0_{\n^2}} = q^{-\d(\ol\b\g)} \frac{g^\0(\b\g)}{g(\b)g(\g)},$$
and the formula follows by Remark \ref{r.summation} (i). 
\end{proof}

Recall the following three formulas  (cf. \cite[3.1, 3.3, 3.4]{bailey}).  
Dixon's formula for well-poised ${}_3F_2(1)$: 
\begin{align*}
& \FFF{2a,b,c}{2a-b+1,2a-c+1}{1}
\\&=\frac{\G(2a-b+1)\G(2a-c+1)\G(a+1)\G(a-b-c+1)}{\G(2a+1)\G(2a-b-c+1)\G(a-b+1)\G(a-c+1)}. 
\end{align*}
Watson's formula: 
$$\FFF{2a,2b,c}{a+b+\frac{1}{2},2c}{1}
=\frac{\G(\frac{1}{2})\G(c+\frac{1}{2})\G(a+b+\frac{1}{2})\G(c-a-b+\frac{1}{2})}{\G(a+\frac{1}{2})\G(c-a+\frac{1}{2})\G(b+\frac{1}{2})\G(c-b+\frac{1}{2})}. $$
Whipple's formula: 
if $a+b=d+e-c=1/2$, then
$${}_3F_2\left({2a,2b,c \atop 2d,2e};1\right)
=\frac{\G(d)\G(d+\frac{1}{2})\G(e)\G(e+\frac{1}{2})}{\G(a+d)\G(a+e)\G(b+d)\G(b+e)}.$$
These are all equivalent under Thomae's formula. 

Their finite analogues are as follows 
(cf. \cite[Theorem 4.37, Theorem 4.38 (i), (ii)]{greene} and \cite[Theorem 1.11]{mccarthy} for (i)), and these are all equivalent under Theorem \ref{thomae}. 

\begin{thm}\label{t2.2}
Suppose that $p$ is odd and let $\phi\in\ck$ be the quadratic character.
\begin{enumerate}
\item 
Suppose that $\a^2\ne\b\g$ and $\b+\g\ne \e +\a'$ if $\a'^2=\a^2$. Then 
$$
\FF{\a^2,\b,\g}{\a^2\ol\b,\a^2\ol\g}{1}
=\sum_{\a'^2=\a^2}
\frac{g^\0(\a^2\ol\b)g^\0(\a^2\ol\g)g(\a')g(\a'\ol{\b\g})}
{g(\a^2)g(\a^2\ol{\b\g})g^\0(\a'\ol\b)g^\0(\a'\ol\g)}. $$
\item
Suppose that $(\ol\a\b\phi+\g, \e)=0$ and $(\a^2+\b^2+\g,\e+\a\b\phi+\g^2)\le 1$. Then, 
$$\FF{\a^2,\b^2,\g}{\a\b\phi,\g^2}{1}=\sum_{\n^2=\e}
\frac{g(\phi)g^\0(\g\phi) g^\0(\a\b\phi)g(\ol{\a\b}\g\phi)}{g(\a\n)g^\0(\ol\a\g\n)g(\b\n)g^\0(\ol\b\g\n)}. 
$$
\item
Suppose that $\a\b=\vp\psi\ol\g=\phi$, $(\g,\e+\vp^2+\psi^2)=0$ and 
$(\a^2+\b^2,\e+\vp^2+\psi^2)\le 1$. Then 
$$\FF{\a^2,\b^2,\g}{\vp^2,\psi^2}{1}
= \sum_{\n^2=\e} \frac{g^\0(\vp)g^\0(\vp\phi)g^\0(\psi)g^\0(\psi\phi)}{g^\0(\a\vp\n)g^\0(\a\psi\n)g^\0(\b\vp\n)g^\0(\b\psi\n)}.$$
\end{enumerate}
\end{thm}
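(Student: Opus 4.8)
The plan is to prove part (i) directly and then deduce (ii) and (iii) from it using Thomae's formula (Theorem \ref{thomae}), exploiting the equivalence that the statement announces. The direct proof of (i) rests on the fact that $\FF{\a^2,\b,\g}{\a^2\ol\b,\a^2\ol\g}{1}$ is well-poised with common product $\a^2$ (pairing $\a^2$ with the implicit $\e$, and $\b,\g$ with $\a^2\ol\b,\a^2\ol\g$), so the pair $(\b,\g)$ is matched against the denominator exactly in the shape handled by Lemma \ref{nearly} with $\vp=\a^2$.

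Concretely, I would first rewrite the ${}_3F_2$ as $\frac{1}{1-q}\sum_\n \frac{(\a^2)_\n}{(\e)^\0_\n}\,\frac{(\b+\g)_\n}{(\a^2\ol\b+\a^2\ol\g)^\0_\n}$ and apply Lemma \ref{nearly} with $\vp=\a^2$; the hypothesis $\a^2\ne\b\g$ guarantees $\d=0$, so the correction term $c(\b,\g)$ drops out. After swapping the order of summation, the inner sum over $\n$ collapses to the well-poised value $\FF{\a^2,\ol\m}{\a^2\m}{-1}$, which I evaluate by the Kummer analogue Theorem \ref{t2.3}(ii) (valid since $p$ is odd). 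The residual sum over $\m$ then simplifies, after cancelling Gauss-sum factors via $(\a^2)^\0_\m=g^\0(\a^2\m)/g^\0(\a^2)$, to a multiple of $\FF{\b,\g}{\a'}{1}$ for each square root $\a'$ of $\a^2$. The second hypothesis, $\b+\g\ne\e+\a'$, places each of these in the first case of the Euler--Gauss summation (Theorem \ref{summation}), yielding $g^\0(\a')g(\ol{\b\g}\a')/(g^\0(\ol\b\a')g^\0(\ol\g\a'))$; collecting the factors and cancelling $g^\0(\a')$ and $g^\0(\a^2)$ reproduces the claimed sum over $\a'^2=\a^2$.

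For (ii) and (iii), I would feed the Dixon evaluation of (i) into Thomae's formula (Theorem \ref{thomae}), choosing the substitution of $(\a,\b,\g,\vp,\psi)$ so that the transformed series becomes the Watson-type $\FF{\a^2,\b^2,\g}{\a\b\phi,\g^2}{1}$ and, respectively, the Whipple-type $\FF{\a^2,\b^2,\g}{\vp^2,\psi^2}{1}$. The squares and the quadratic character $\phi$ appearing on the left of (ii) and (iii) are produced by combining Thomae with the duplication formulas $(\a^2)_{\n^2}=\n(4)(\a)_\n(\a\phi)_\n$ recorded after Corollary \ref{multiplication}. The main obstacle I anticipate is the parameter bookkeeping: I must verify that the specified substitution turns the hypotheses $\a^2\ne\b\g$ and $\b+\g\ne\e+\a'$ of (i) into the stated inequalities $(\ol\a\b\phi+\g,\e)=0$ and $(\a^2+\b^2+\g,\e+\a\b\phi+\g^2)\le 1$ for (ii), and $\a\b=\vp\psi\ol\g=\phi$, $(\g,\e+\vp^2+\psi^2)=0$, $(\a^2+\b^2,\e+\vp^2+\psi^2)\le 1$ for (iii), and that the two square roots $\a'$ summed over in (i) correspond precisely to the sum over $\n^2=\e$ appearing in (ii) and (iii). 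Checking that the degenerate cases (where some argument of Theorem \ref{summation} or Theorem \ref{thomae} falls on a boundary) are exactly the ones absorbed by the relaxed inequalities $\le 1$ is the delicate point.
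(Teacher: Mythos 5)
Your proposal is correct and follows essentially the same route as the paper: part (i) is proved exactly as you describe (Lemma \ref{nearly} with $\vp=\a^2$, swap of summation, Theorem \ref{t2.3}(ii) for the inner well-poised ${}_2F_1(-1)$, then Theorem \ref{summation} for the resulting $\FF{\b,\g}{\a'}{1}$), and (ii), (iii) are obtained by chaining Thomae's formula with the duplication formula, with the paper deducing (iii) from (ii) rather than directly from (i) and disposing of the boundary cases you flag by a symmetry reduction at the start of each part.
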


\begin{proof}
(i) By Lemma \ref{nearly} and a change of the order of summation, 
\begin{align*}
&\frac{g^\0(\a^2)g(\a^2\ol{\b\g})}{g^\0(\a^2\ol\b)g^\0(\a^2\ol\g)}\FF{\a^2,\b,\g}{\a^2\ol\b,\a^2\ol\g}{1}
= \frac{1}{1-q} \sum_\m \frac{(\b+\g)_\m}{(\e+\a^2)^\0_\m} \FF{\a^2,\ol\m}{\a^2\m}{-1}.
\end{align*}
By Theorem \ref{t2.3} (ii), this equals
\begin{align*}
\frac{1}{1-q}\sum_\m  \frac{(\b+\g)_\m}{(\e+\a^2)^\0_\m }
\sum_{\a'^2=\a^2} q^{\d(\a^2)-\d(\a')} \frac{(\a^2)^\0_\m}{(\a')^\0_\m}
&=\sum_{\a'^2=\a^2} q^{\d(\a^2)-\d(\a')} \FF{\b,\g}{\a'}{1}
\\&=q^{\d(\a^2)} \sum_{\a'^2=\a^2} \frac{g(\a')g(\a'\ol{\b\g})}{g^\0(\a'\ol\b)g^\0(\a'\ol\g)}, 
\end{align*}
where we used Theorem \ref{summation} and the assumption $\b+\g\ne \e+\a'$, hence the result follows.  

(ii) 
By symmetry, it suffices to prove the case where $\a^2 \ne \g^2$ and $\b^2\ne \e$. 
For, otherwise $\b^2\ne \g^2$ and $\a^2\ne\e$ by assumption. 
Applying Theorem \ref{thomae} to the left-hand side of (i) with $\g$ playing the role of $\a$ in loc. cit., 
for which we need that $(\a^2, \b\g+\g^2)=(\a^2+\b,\e)=0$, 
$$
\FF{(\a\ol{\b\g})^2,\a^2\ol{\b\g},(\a\ol\g)^2}{(\a^2\ol{\b\g})^2,\a^2\ol{\b\g^2}}{1}
=
\frac{g^\0((\a^2\ol{\b\g})^2)g^\0(\a^2\ol{\b\g^2})g(\g)}{g((\a\ol{\b\g})^2)g(\a^2)g(\a^2\ol{\b\g})}
\sum_{\a'^2=\a^2}
\frac{g(\a')g(\a'\ol{\b\g})}
{g^\0(\a'\ol\b)g^\0(\a'\ol\g)}. 
$$
Then, replace $\a$, $\b$, $\g$ respectively with $\ol\a\g$, $\ol\a\b\phi$, $\ol{\a\b}\g\phi$. 
The conditions needed for (i) and Theorem \ref{thomae} are satisfied by our assumptions. 
Using the duplication formula, we obtain the result. 

(iii) By symmetry, it suffices to prove the case where $(\a^2,\e)=(\b^2,\vp^2+\psi^2)=0$. 
Then apply Theorem \ref{thomae} to the left-hand side of (ii) with $\g$ playing the role of $\a$ in loc. cit., 
replace $\a$, $\b$ respectively with $\a\vp$, $\a\psi$, and use the duplication formula. 
\end{proof}

Recall Saalsch\"utz's formula (cf. \cite[2.2]{bailey}): 
if $a+b+c+1=d+e$ and one of $a$, $b$, $c$ is a non-positive integer (i.e. the series terminates), then
$$\FFF{a,b,c}{d,e}{1}=\frac{\G(d)\G(1+a-e)\G(1+b-e)\G(1+c-e)}{\G(1-e)\G(d-a)\G(d-b)\G(d-c)}.$$
Its finite analogue is the following (cf. \cite[Theorem 4.35]{greene}).

\begin{thm}\label{saal}
Suppose that $\a\b\g=\vp\psi$ and $\a+\b+\g \ne \e+\vp+\psi$. 
Then 
\begin{align*}
\FF{\a,\b,\g}{\vp,\psi}{1}
&=\frac{g^\0(\vp)g(\a\ol\psi)g(\b\ol\psi)g(\g\ol\psi)}{g(\ol\psi)g^\0(\ol\a\vp)g^\0(\ol\b\vp)g^\0(\ol\g\vp)}
+\frac{g^\0(\vp)g^\0(\psi)}{g(\a)g(\b)g(\g)}. 
\end{align*}
\end{thm}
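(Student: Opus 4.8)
The plan is to prove the finite Saalschütz formula by exploiting the parallel structure with the complex proof, where Saalschütz's formula is typically deduced from Thomae's relation (Theorem~\ref{thomae}) combined with a terminating summation. Over the finite field, the Saalschützian condition $\a\b\g = \vp\psi$ means that the auxiliary parameter $\s = \ol{\a\b\g}\vp\psi = \e$ becomes trivial, which is exactly the configuration where the general ${}_3F_2(1)$ machinery simplifies. First I would invoke Theorem~\ref{thomae} with the given parameters, computing $\s = \e$ explicitly and observing that $g(\s) = g(\e) = 1$ by Proposition~\ref{jacobi}~(i). The right-hand side of Thomae then becomes a ${}_3F_2(1)$ whose numerator parameters include $\s = \e$, so one of the entries is the trivial character.

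The key technical step is handling the ${}_3F_2(1)$ that has $\e$ among its numerator parameters. When a numerator parameter equals $\e$, the hypergeometric function partially degenerates: by Example~\ref{e3.1}~(i) and Proposition~\ref{shift} one can shift parameters to expose a lower ${}_2F_1(1)$ that is then summable via Theorem~\ref{summation}. So the strategy is: (a) apply Thomae to rewrite the left-hand ${}_3F_2(1)$ as a Gamma-factor times a ${}_3F_2(1)$ with a trivial numerator character; (b) reduce that degenerate ${}_3F_2(1)$ to a ${}_2F_1(1)$ using the shift and reduction identities; (c) evaluate the resulting ${}_2F_1(1)$ by the Euler--Gauss summation formula (Theorem~\ref{summation}), being careful about which branch of that case-split applies under the standing hypothesis $\a+\b+\g \ne \e+\vp+\psi$. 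The two terms on the right-hand side of the asserted formula should correspond respectively to the main value coming from the nondegenerate branch of Theorem~\ref{summation} and to the correction term that appears because our finite hypergeometric functions do not cancel common parameters (reflecting the second case of Theorem~\ref{summation}, equivalently Remark~\ref{r.summation}~(i)).

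Alternatively, a more self-contained route mirrors the proofs of Theorem~\ref{t2.2}: I would expand the left-hand side using Corollary~\ref{c2.1}~(i) as a finite sum over $t \in \k$, producing a product of Jacobi-sum weights $\b(t)\ol{\a\b}\g(1-t)$ times $\ol\a(1-t)$, and then recognize the resulting character sum — after applying the Saalschützian relation $\a\b\g=\vp\psi$ — as one reducible to Gauss sums via Proposition~\ref{jacobi}~(iv). This is essentially the finite analogue of the beta-integral evaluation underlying the classical proof. Whichever route, the appearance of both a ``generic'' Gauss-sum quotient and an extra pure Gauss-sum term $g^\0(\vp)g^\0(\psi)/\bigl(g(\a)g(\b)g(\g)\bigr)$ is the signature of the finite-field correction, and bookkeeping these two contributions correctly is where I expect the main subtlety to lie.

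\medskip

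\emph{The hard part} will be the careful tracking of the degenerate case and the correction term. In the complex setting Saalschütz holds for terminating series and the Gamma-quotient is a single clean expression; over $\k$ there is no termination, every character $\n$ contributes, and the failure of parameter cancellation forces the second summand. Matching constants precisely — keeping straight the modified Gauss sums $g^\0$ versus $g$, and the factors of $q$ hidden in $g^\0(\e)=q$ — through the chain of Thomae, shift, and Euler--Gauss evaluations, while verifying that the hypothesis $\a+\b+\g\ne\e+\vp+\psi$ keeps us in the generic branch at the decisive step, is the calculation most likely to conceal sign or normalization errors. I would organize the computation so that Proposition~\ref{jacobi}~(iii)--(iv) is applied uniformly at the end to collapse Jacobi sums into the stated Gauss-sum ratios.
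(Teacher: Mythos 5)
Your primary route is exactly the paper's proof: apply Theorem~\ref{thomae} with $\s=\ol{\a\b\g}\vp\psi=\e$, strip the resulting common parameter $\e$ via Example~\ref{e3.1}~(i), shift with Proposition~\ref{shift} to a genuine ${}_2F_1(1)$, and evaluate by Theorem~\ref{summation}; the ``$+1$'' from the reduction step produces the second Gauss-sum term, just as you predict. The only point you underplay is that Theorem~\ref{thomae} requires $(\a,\vp+\psi)=(\e,\b+\g)=0$, so the degenerate cases $\a=\e$ and $\a=\vp$ (and their symmetric counterparts) must be checked separately, which the paper does by reducing each directly to Theorem~\ref{summation}.
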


\begin{proof}
First, suppose that $(\a+\b+\g,\e+\vp+\psi)=0$. 
Then, by Theorem \ref{thomae}, Theorem \ref{reduction}, Proposition \ref{shift} and Theorem \ref{summation}, 
\begin{align*}
\FF{\a,\b,\g}{\vp,\psi}{1} &= G_1 \FF{\e,\ol\a\vp,\ol\a\psi}{\b,\g}{1}
\\&= G_1(qF(\ol\a\vp+\ol\a\psi,\b+\g;1)+1)
\\&= G_1\left(G_2 \FF{\g\ol\vp,\g\ol\psi}{\ol\b\g}{1}+1\right)
\\&=G_1(G_2 G_3+1),
\end{align*}
where 
$$G_1=\frac{g^\0(\vp)g^\0(\psi)}{g(\a)g^\0(\b)g^\0(\g)}, \quad 
G_2=\frac{g^\0(\b)g^\0(\g)g(\g\ol\vp)g(\g\ol\psi)}{g^\0(\ol\b\g)g(\ol\a\vp)g(\ol\a\psi)}, \quad 
G_3=\frac{g^\0(\ol\b\g)g(\a)}{g^\0(\ol\b\vp)g^\0(\ol\b\psi)}, 
$$
hence the formula follows. 
The remaining cases are similarly verified by reducing to Theorem \ref{summation}. 
If $\a=\e$ (then $\b\g=\vp\psi$, $(\b+\g,\vp+\psi)=0$), 
one computes that
$$\FF{\a,\b,\g}{\vp,\psi}{1} =1+ \frac{g^\0(\vp)g^\0(\psi)}{g(\b)g(\g)}.$$
The case $\a=\vp$ (or $\a=\psi$) can be proved similarly (or reduced to the previous case using Proposition \ref{shift}). 
\end{proof}

From Theorem \ref{connection} and its consequences appearing in Corollary \ref{c2.3}, 
one obtains formulas which do not exist over the complex numbers. 
For example, we have the following (cf. \cite[(4.23)--(4.26)]{greene}). 

\begin{cor}
If $(\a+\b,\e+\g)=(\vp+\psi,\e)=0$, then 
$$\FF{\a,\b,\vp}{\g,\vp\psi}{1}
=\frac{g^\0(\g)g(\ol{\a\b}\g)}{g(\ol\a\g)g(\ol\b\g)}\FF{\a,\b,\psi}{\a\b\ol\g,\vp\psi}{1}.$$
\end{cor}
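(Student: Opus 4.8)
The plan is to peel off the extra numerator parameter $\vp$ by the integral-type iteration formula (Theorem~\ref{iteration}), apply the connection formula (Theorem~\ref{connection}) to the resulting inner ${}_2F_1$, and then re-attach a numerator parameter $\psi$ by a second application of Theorem~\ref{iteration}. Since $(\vp+\psi,\e)=0$ forces $\vp\ne\e$ and $\psi\ne\e$, both uses of Theorem~\ref{iteration} are legitimate; and since $(\a+\b,\e+\g)=0$ is exactly the hypothesis of Theorem~\ref{connection}, the middle step applies verbatim.

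Concretely, I would first apply Theorem~\ref{iteration} with $\baa=\a+\b$, $\bbb=\e+\g$ and with the pair of characters $(\vp,\vp\psi)$ (so that the factor $\ol\a\b$ there becomes $\ol\vp\cdot\vp\psi=\psi$), evaluated at $\l=1$, to get
$$-j(\vp,\psi)\,\FF{\a,\b,\vp}{\g,\vp\psi}{1}=\sum_{t\in\k}\FF{\a,\b}{\g}{t}\,\vp(t)\psi(1-t).$$
The terms $t=0$ and $t=1$ contribute nothing because $\vp(0)=\psi(0)=0$, so I may invoke Theorem~\ref{connection} (valid for $t\ne0,1$) term by term and pull out the constant, obtaining
$$-j(\vp,\psi)\,\FF{\a,\b,\vp}{\g,\vp\psi}{1}=\frac{g^\0(\g)g(\ol{\a\b}\g)}{g(\ol\a\g)g(\ol\b\g)}\sum_{t\in\k}\FF{\a,\b}{\a\b\ol\g}{1-t}\,\vp(t)\psi(1-t).$$
Substituting $s=1-t$ turns the sum into $\sum_{s}\FF{\a,\b}{\a\b\ol\g}{s}\,\psi(s)\vp(1-s)$ (again the boundary terms vanish), which is precisely the right-hand side of Theorem~\ref{iteration} applied with $\baa=\a+\b$, $\bbb=\e+\a\b\ol\g$ and the pair $(\psi,\vp\psi)$ (now $\ol\psi\cdot\vp\psi=\vp$). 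Hence it equals $-j(\psi,\vp)\,\FF{\a,\b,\psi}{\a\b\ol\g,\vp\psi}{1}$. Since the Jacobi sum is symmetric and nonzero, $j(\vp,\psi)=j(\psi,\vp)\ne0$, and cancelling it on both sides yields the claimed identity.

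The computation is almost entirely bookkeeping of parameters, and I expect the only genuine subtlety to be the justification for applying Theorem~\ref{connection} inside the sum: the formula is stated only for $\l\ne0,1$, but the two excluded arguments $t=0,1$ are harmless precisely because the accompanying character values $\vp(t)$ and $\psi(1-t)$ vanish there. I would also remark that this identity has no classical counterpart: over $\C$ the connection formula relates two \emph{independent} solutions near $0$ and near $1$, whereas over $\k$ these collapse (Theorem~\ref{connection}), which is exactly what makes the above chain close up into a single relation among ${}_3F_2(1)$'s.
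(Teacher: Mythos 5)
Your proof is correct and is essentially the paper's own argument: the paper likewise multiplies both sides of Theorem \ref{connection} by $\vp(\l)\psi(1-\l)$, sums over $\l\in\k$, and identifies both sides via Theorem \ref{iteration}, which is exactly your peel-off/re-attach chain read in the opposite order. Your extra remarks (the vanishing of the $t=0,1$ terms, the nonvanishing and symmetry of $j(\vp,\psi)$, and the need for $\vp,\psi\ne\e$) are all accurate and merely make explicit what the paper leaves implicit.
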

\begin{proof}
Use Theorem \ref{iteration}: multiply the both sides of Theorem \ref{connection} with $\vp(\l)\psi(1-\l)$ and 
take the sums over $\l\in \k$. 
\end{proof}

\subsection{Nearly-poised values}

We have already seen formulas for well-poised values ${}_2F_1(-1)$  (Theorem \ref{t2.3})  and ${}_3F_2(1)$ (Theorem  \ref{t2.2} (i)).  
Recall Whipple's formulas for nearly-poised values ${}_3F_2(-1)$ and ${}_4F_3(1)$ \cite[(2.5), (3.5)]{whipple} (cf. \cite[4.6 (3), 4.5 (1)]{bailey}) 

$$\frac{\G(2k)\G(2k-b-c)}{\G(2k-b)\G(2k-c)}
{}_3F_2\left({2a,b,c \atop 2k-b,2k-c};-1\right)
={}_4F_3\left({k-a,k-a+\frac{1}{2},b,c \atop 2k-2a,k,k+\frac{1}{2}};1\right),$$

\begin{align*}
\frac{\G(2k)\G(2k-a-b)\G(2k-a-c)\G(2k-b-c)}{\G(2k-a)\G(2k-b)\G(2k-c)\G(2k-a-b-c)}
{}_4F_3\left({2s, a,b,c \atop 2k-a,2k-b,2k-c};1\right)
\\
={}_5F_4\left({k-s,k-s+\frac{1}{2},a,b,c \atop 2k-2s,k,k+\frac{1}{2},a+b+c-2k+1};1\right). 
\end{align*}
Note that the last ${}_5F_4(1)$ is Saalsch\"utzian. 

A finite analogue of the first one is the following. 
The reducible case where $\b\g=\vp^2$ will be used in the proof of  Theorem \ref{clausen}.

\begin{thm}\label{t5.1}\  
\begin{enumerate}
\item If $(\a^2,\e+\vp^2)=0$, then 
\begin{align*}
&\frac{g^\0(\vp^2)g(\ol{\b\g}\vp^2)}{g^\0(\ol\b\vp^2)g^\0(\ol\g\vp^2)}\FF{\a^2,\b,\g}{\ol\b\vp^2,\ol\g\vp^2}{-1} 
\\& =   \FF{\ol\a\vp, \ol\a\vp\phi, \b,\g}{\ol\a^2\vp^2,\vp ,\vp\phi}{1}
+\d(\ol{\b\g}\vp^2) \frac{c(\b,\g)}{1-q}\sum_{\n\in\{\ol\b,\ol\g\}} \frac{(\a^2)_\n}{(\e)^\0_\n}\n(-1),
\end{align*}
where $c(\b,\g)$ is as in Lemma \ref{nearly}.
\item If $\vp^2\ne\e$, then 
\begin{align*}
&\frac{g^\0(\vp^2)g(\ol{\b\g}\vp^2)}{g^\0(\ol\b\vp^2)g^\0(\ol\g\vp^2)}
\FF{\vp^2,\b,\g}{\ol\b\vp^2,\ol\g\vp^2}{-1} 
\\&=   \FF{\phi, \b,\g}{\vp ,\vp\phi}{1} + 1 + \d(\ol{\b\g}\vp^2) (2-\d(\ol\b\g)) \frac{1-q^{1+\d(\ol\b\g)}}{q^{1+\d(\ol\b\g)}}. 
\end{align*}
\end{enumerate}
\end{thm}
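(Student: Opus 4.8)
The plan is to treat part (i) in close analogy with the proof of Theorem \ref{t2.2} (i), the only structural change being that the inner ${}_2F_1$ which appears is evaluated at $1$ (by Theorem \ref{summation}) rather than at $-1$ (by Theorem \ref{t2.3}). I would start from the left-hand side and expand the ${}_3F_2(-1)$ as $\frac{1}{1-q}\sum_\n \frac{(\a^2)_\n}{(\e)^\0_\n}\left[\cdots\right]\n(-1)$, where the bracket collects exactly the nearly-poised factor $\frac{g^\0(\vp^2)g(\ol{\b\g}\vp^2)}{g^\0(\ol\b\vp^2)g^\0(\ol\g\vp^2)}\frac{(\b+\g)_\n}{(\ol\b\vp^2+\ol\g\vp^2)^\0_\n}$, to which Lemma \ref{nearly} applies verbatim with $\vp^2$ in place of $\vp$.

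Applying the lemma splits the bracket into a main sum $\frac{1}{1-q}\sum_\m\frac{(\b+\g)_\m}{(\e+\vp^2)^\0_\m}\frac{(\ol\m)_\n}{(\vp^2\m)^\0_\n}\n(-1)$ and the correction $\d\,c(\b,\g)$, active only when $\b\g=\vp^2$ and $\n\in\{\ol\b,\ol\g\}$. The correction piece, summed against $\frac{(\a^2)_\n}{(\e)^\0_\n}\n(-1)$, immediately produces the stated term $\d(\ol{\b\g}\vp^2)\frac{c(\b,\g)}{1-q}\sum_{\n\in\{\ol\b,\ol\g\}}\frac{(\a^2)_\n}{(\e)^\0_\n}\n(-1)$. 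For the main piece, the $\n(-1)$ from the expansion and the $\n(-1)$ produced by the lemma combine to $\n(-1)^2=1$; this is precisely the mechanism by which the argument $-1$ of the original ${}_3F_2$ is converted into the argument $+1$ of a ${}_2F_1$. Interchanging the order of summation, the inner $\n$-sum becomes $(1-q)\FF{\a^2,\ol\m}{\vp^2\m}{1}$.

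It then remains to identify $\frac{1}{1-q}\sum_\m\frac{(\b+\g)_\m}{(\e+\vp^2)^\0_\m}\FF{\a^2,\ol\m}{\vp^2\m}{1}$ with the ${}_4F_3(1)$. I would evaluate the inner ${}_2F_1$ by Theorem \ref{summation} (equivalently Remark \ref{r.summation} (ii)) and then use the duplication formula, in the guise $(\ol\a\vp)_\m(\ol\a\vp\phi)_\m=\ol\m(4)(\ol\a^2\vp^2)_{\m^2}$ and $(\vp)^\0_\m(\vp\phi)^\0_\m=\ol\m(4)(\vp^2)^\0_{\m^2}$, to recombine the resulting Gauss sums into the $\m$-summand of $\FF{\ol\a\vp,\ol\a\vp\phi,\b,\g}{\ol\a^2\vp^2,\vp,\vp\phi}{1}$. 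The hypothesis $(\a^2,\e+\vp^2)=0$ enters exactly here: it forces $\ol\a^2\vp^2\ne\e$, so that the prefactor $g(\ol\a^2\vp^2)/g^\0(\ol\a^2\vp^2)=q^{-\d(\ol\a^2\vp^2)}$ thrown off by the evaluation equals $1$, and the two sides match term by term.

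For part (ii) I would run the identical computation with $\a^2$ replaced throughout by the single character $\vp^2$. The strategy is unchanged, but since $\ol\a^2\vp^2$ is now replaced by $\e$, two degeneracies appear at once: the prefactor above becomes $q^{-\d(\e)}=q^{-1}$ rather than $1$, and the inner value $\FF{\vp^2,\ol\m}{\vp^2\m}{1}$ meets the boundary branch (the case $\a+\b=\e+\g$) of Theorem \ref{summation} for the exceptional index, here $\m=\e$, where $\vp^2\ne\e$ is used. The hard part will be the bookkeeping of these degenerate terms — combining the $q^{-1}$ prefactor, the boundary contributions of Theorem \ref{summation}, and the Lemma \ref{nearly} correction (now with $\b\g=\vp^2$) — and checking that they upgrade the constant $q^{-1}$ to $1$ and assemble into $\d(\ol{\b\g}\vp^2)(2-\d(\ol\b\g))\frac{1-q^{1+\d(\ol\b\g)}}{q^{1+\d(\ol\b\g)}}$; the reduction $\FF{\e,\phi,\b,\g}{\e,\vp,\vp\phi}{1}=q\FF{\phi,\b,\g}{\vp,\vp\phi}{1}+1$ coming from Example \ref{e3.1} (i) is what turns the degenerate ${}_4F_3$ into the ${}_3F_2$ displayed on the right.
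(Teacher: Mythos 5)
Your proposal follows the paper's proof essentially verbatim: part (i) is exactly the combination of Lemma \ref{nearly}, an exchange of the order of summation, Theorem \ref{summation} and the duplication formula that the paper uses, and part (ii) is the same degenerate-branch bookkeeping (boundary case of Theorem \ref{summation} at $\m=\e$, the $q^{-\d(\ol\a^2\vp^2)}$ prefactor, the Lemma \ref{nearly} correction with $\b\g=\vp^2$, and the reduction via Example \ref{e3.1} (i)) that the paper dismisses as ``the rest is easy.'' The argument is correct and the identified degeneracies do assemble into the stated constant $1+\d(\ol{\b\g}\vp^2)(2-\d(\ol\b\g))\frac{1-q^{1+\d(\ol\b\g)}}{q^{1+\d(\ol\b\g)}}$.
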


\begin{proof}By Lemma \ref{nearly} and an exchange of the order of summation, 
\begin{align*}
&\frac{g^\0(\vp^2)g(\ol{\b\g}\vp^2)}{g^\0(\ol\b\vp^2)g^\0(\ol\g\vp^2)} \FF{\a^2,\b,\g}{\ol\b\vp^2,\ol\g\vp^2}{-1} 
\\&= \frac{1}{1-q}\sum_\m \frac{(\b+\g)_\m}{(\e+\vp^2)^\0_\m} \FF{\a^2,\ol\m}{\vp^2\m}{1} 
+\d(\ol{\b\g}\vp^2) \frac{c(\b,\g)}{1-q}\sum_{\n\in\{\ol\b,\ol\g\}} \frac{(\a^2)_\n}{(\e)^\0_\n}\n(-1).
\end{align*}
By Theorem \ref{summation} (see Remark \ref{r.summation}) and the duplication formula, 
if $\a^2\ne\e$, then  
\begin{align*}
\FF{\a^2,\ol\m}{\vp^2\m}{1}
& =  q^{-\d(\ol\a^2\vp^2)} \frac{(\vp^2)^\0_\m(\ol\a^2\vp^2)_{\m^2}}{(\ol\a^2\vp^2)^\0_\m(\vp^2)^\0_{\m^2}} 
-\d(\m)\d(\ol\a^2\vp^2) \frac{(1-q)^2}{q}
\\& = q^{-\d(\ol\a^2\vp^2)}   \frac{(\vp^2)^\0_\m(\ol\a\vp+\ol\a\vp\phi)_\m}{(\ol\a^2\vp^2+\vp+\vp\phi)^\0_\m}
 -\d(\m)\d(\ol\a^2\vp^2) \frac{(1-q)^2}{q}. 
\end{align*}
Now the formula (i) follows immediately. 
As for the case (ii) where $\a^2=\vp^2$, we have by Example \ref{e3.1} (i) 
$$\FF{\ol\a\vp, \ol\a\vp\phi, \b,\g}{\ol\a^2\vp^2,\vp ,\vp\phi}{1}
=\FF{\e,\phi,\b,\g}{\e,\vp,\vp\phi}{1}
=q\FF{\phi,\b,\g}{\vp,\vp\phi}{1}+1,$$
and the rest is easy. 
\end{proof}

\begin{cor}
Suppose that $(\a^2,\e+\vp^2)=(\b,\vp)=0$. Then,  
\begin{align*}
\frac{g^\0(\vp^2)g(\ol\b\vp)}{g^\0(\ol\b\vp^2)g^\0(\vp)} 
\FF{\a^2,\b}{\ol\b\vp^2}{-1}
=
\FF{\ol\a\vp,\ol\a\vp\phi,\b}{\ol\a^2\vp^2,\vp\phi}{1}.
\end{align*}
\end{cor}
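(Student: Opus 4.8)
The plan is to derive the corollary as the specialization $\g=\vp$ of Theorem \ref{t5.1} (i), followed by cancellation of a redundant parameter $\vp$ that then appears in both hypergeometric functions.

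First I set $\g=\vp$ in Theorem \ref{t5.1} (i). The hypothesis $(\a^2,\e+\vp^2)=0$ is exactly what we assume; the prefactor $g^\0(\vp^2)g(\ol{\b\g}\vp^2)/(g^\0(\ol\b\vp^2)g^\0(\ol\g\vp^2))$ collapses to $g^\0(\vp^2)g(\ol\b\vp)/(g^\0(\ol\b\vp^2)g^\0(\vp))$, which is the factor in the statement; and the correction term carries the factor $\d(\ol{\b\g}\vp^2)=\d(\ol\b\vp)$, which vanishes because $(\b,\vp)=0$ forces $\b\ne\vp$. This yields the clean identity
$$\frac{g^\0(\vp^2)g(\ol\b\vp)}{g^\0(\ol\b\vp^2)g^\0(\vp)}\FF{\a^2,\b,\vp}{\ol\b\vp^2,\vp}{-1}=\FF{\ol\a\vp,\ol\a\vp\phi,\b,\vp}{\ol\a^2\vp^2,\vp,\vp\phi}{1}.$$

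Both functions now carry $\vp$ simultaneously as a numerator and a denominator parameter, and removing it recovers the two functions in the statement. To remove it I apply Theorem \ref{reduction} with $\bgg=\vp$ on each side: since $(\n,\vp)=\d(\n\ol\vp)$, the sum over $\n$ collapses to the single term $\n=\vp$, producing on each side an expression $q^{\d(\vp)}\bigl(\wt F+q^{-1}(\cdots)\bigr)$ whose leading piece is the desired reduced function. Substituting the two expansions into the displayed identity and cancelling $q^{\d(\vp)}$, the corollary follows once the two correction terms agree, i.e. once
$$\frac{g^\0(\vp^2)g(\ol\b\vp)}{g^\0(\ol\b\vp^2)g^\0(\vp)}\cdot\frac{(\a^2)_{\ol\vp}(\b)_{\ol\vp}}{(\e)^\0_{\ol\vp}(\ol\b\vp^2)^\0_{\ol\vp}}\ol\vp(-1)=\frac{(\ol\a\vp)_{\ol\vp}(\ol\a\vp\phi)_{\ol\vp}(\b)_{\ol\vp}}{(\e)^\0_{\ol\vp}(\ol\a^2\vp^2)^\0_{\ol\vp}(\vp\phi)^\0_{\ol\vp}}.$$

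The only real obstacle is verifying this last Gauss-sum identity. I would expand each symbol through $(\a)_\n=g(\a\n)/g(\a)$ and $(\a)^\0_\n=g^\0(\a\n)/g^\0(\a)$, cancel the common factors $g(\b\ol\vp),g(\b),g^\0(\ol\vp)$, and use $g(\ol\b\vp)/g^\0(\ol\b\vp)=q^{-\d(\b\ol\vp)}=1$ (again since $\b\ne\vp$). Applying the duplication formula $g(\x^2)=\x(4)g(\x)g(\x\phi)/g(\phi)$ to the pairs $g(\ol\a)g(\ol\a\phi)$ and $g(\ol\a\vp)g(\ol\a\vp\phi)$, and Proposition \ref{jacobi} (iii) to rewrite $g(\a^2)$ and $g(\a^2\ol\vp)$ as $q/g^\0(\ol\a^2)$ and $\ol\vp(-1)q/g^\0(\ol\a^2\vp)$, the identity reduces—using $\d(\ol\a^2)=\d(\ol\a^2\vp^2)=0$ from the hypothesis $(\a^2,\e+\vp^2)=0$—to the single relation
$$\frac{g^\0(\vp^2)}{g^\0(\vp)}=\vp(4)\frac{g^\0(\vp\phi)}{g^\0(\phi)}.$$
This is the duplication formula transcribed for $g^\0$ in place of $g$; it follows from the ordinary duplication formula together with the elementary exponent identity $\d(\vp^2)-\d(\vp)=\d(\vp\phi)$ in $g^\0=q^{\d}g$, which one checks on the three cases $\vp=\e$, $\vp=\phi$, and $\vp\notin\{\e,\phi\}$.
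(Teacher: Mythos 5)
Your proposal is correct and follows exactly the paper's (very terse) proof: set $\g=\vp$ in Theorem \ref{t5.1} (i), apply Theorem \ref{reduction} to both sides to strip the common parameter $\vp$, and match the correction terms via the duplication formula (the $g^\0$-form of which is already recorded in the paper). The details you supply — the vanishing of the $\d(\ol\b\vp)$ term from $(\b,\vp)=0$ and the final reduction to $g^\0(\vp^2)/g^\0(\vp)=\vp(4)\,g^\0(\vp\phi)/g^\0(\phi)$ — all check out.
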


\begin{proof}
Set $\g=\vp$ in Theorem \ref{t5.1} (i) and apply Theorem \ref{reduction} to the both sides. 
Then, use the duplication formula. 
\end{proof}

\begin{rmk}
A similar formula for the case where $\a^2=\vp^2$ (resp. $\b=\vp$) reduces to Theorem \ref{t2.3} (resp. Theorem \ref{saal}). 
\end{rmk}

A finite analogue of the second formula of Whipple mentioned above is the following. 

\begin{thm}
Suppose that $(\a\b+\a\g+\b\g,\vp^2)=0$. 
\begin{enumerate}
\item
If $(\s^2,\e+\vp^2)=0$, then 
\begin{align*}
&\frac{g(\ol{\a\b}\vp^2)g(\ol{\a\g}\vp^2)g(\ol{\b\g}\vp^2)}
{g^\0(\ol\a\vp^2)g^\0(\ol\b\vp^2)g^\0(\ol\g\vp^2)}
\FF{\s^2,\a,\b,\g}{\ol\a\vp^2,\ol\b\vp^2,\ol\g\vp^2}{1}
\\&=\frac{g(\ol{\a\b\g}\vp^2)}{g^\0(\vp^2)} 
\FF{\ol\s\vp,\ol\s\vp\phi,\a,\b,\g}{\ol\s^2\vp^2,\vp,\vp\phi,\a\b\g\ol\vp^2}{1}
-\frac{g^\0(\ol\a)g^\0(\ol\b)g^\0(\ol\g)}{q^2\s(4)}.
\end{align*}

\item
If $\vp^2\ne \e$, then 
\begin{align*}
&
\frac{g(\ol{\a\b}\vp^2)g(\ol{\a\g}\vp^2)g(\ol{\b\g}\vp^2)}
{g^\0(\ol\a\vp^2)g^\0(\ol\b\vp^2)g^\0(\ol\g\vp^2)}
\FF{\vp^2,\a,\b,\g}{\ol\a\vp^2,\ol\b\vp^2,\ol\g\vp^2}{1}
\\& = 
\frac{g(\ol{\a\b\g}\vp^2)}{g^\0(\vp^2)} \left(
\FF{\phi,\a,\b,\g}{\vp,\vp\phi,\a\b\g\ol\vp^2}{1} 
+1\right)
-\frac{g^\0(\ol\a)g^\0(\ol\b)g^\0(\ol\g)}{q^2\vp(4)}.
\end{align*}
\end{enumerate}
\end{thm}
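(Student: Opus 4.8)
The plan is to imitate the proof of Theorem \ref{t5.1} one level higher, using that theorem itself as the base evaluation where the proof of Theorem \ref{t5.1} used Theorem \ref{summation}. Write $C=\frac{g(\ol{\a\b}\vp^2)g(\ol{\a\g}\vp^2)g(\ol{\b\g}\vp^2)}{g^\0(\ol\a\vp^2)g^\0(\ol\b\vp^2)g^\0(\ol\g\vp^2)}$ for the coefficient on the left of (i). First I would expand the ${}_4F_3(1)$ as $\frac{1}{1-q}\sum_\n\frac{(\s^2)_\n(\a)_\n(\b)_\n(\g)_\n}{(\e)^\0_\n(\ol\a\vp^2)^\0_\n(\ol\b\vp^2)^\0_\n(\ol\g\vp^2)^\0_\n}$, isolate the pair $(\b,\g)$ via $\frac{(\b)_\n(\g)_\n}{(\ol\b\vp^2)^\0_\n(\ol\g\vp^2)^\0_\n}=\frac{(\b+\g)_\n}{(\ol\b\vp^2+\ol\g\vp^2)^\0_\n}$, and split $C=\frac{g^\0(\vp^2)g(\ol{\b\g}\vp^2)}{g^\0(\ol\b\vp^2)g^\0(\ol\g\vp^2)}\cdot C'$ with $C'=\frac{g(\ol{\a\b}\vp^2)g(\ol{\a\g}\vp^2)}{g^\0(\vp^2)g^\0(\ol\a\vp^2)}$. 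Applying Lemma \ref{nearly} with $\vp$ replaced by $\vp^2$ to the first factor times $\frac{(\b+\g)_\n}{(\ol\b\vp^2+\ol\g\vp^2)^\0_\n}$ introduces a summation over $\m$; the $\d$-correction of that lemma requires $\b\g=\vp^2$, which is excluded by the hypothesis $(\a\b+\a\g+\b\g,\vp^2)=0$, so it drops out.

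Next, after exchanging the order of the $\n$- and $\m$-summations, the factor $\n(-1)$ supplied by Lemma \ref{nearly} converts the inner $\n$-sum into the nearly-poised $\FFF{\s^2,\a,\ol\m}{\ol\a\vp^2,\vp^2\m}{-1}$. I would evaluate this by Theorem \ref{t5.1}(i) applied with $(\a,\b,\g)\mapsto(\s,\a,\ol\m)$ and the same $\vp$; this is legitimate because the surviving top parameter $\s^2$ is a square and the needed hypothesis $(\s^2,\e+\vp^2)=0$ is exactly the hypothesis of case (i). The result is the Saalsch\"utzian $\FF{\ol\s\vp,\ol\s\vp\phi,\a,\ol\m}{\ol\s^2\vp^2,\vp,\vp\phi}{1}$ together with the $\d$-correction of Theorem \ref{t5.1}, which now fires only for the single character $\m=\a\ol\vp^2$.

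Substituting back leaves the main term $C'\cdot\frac{1}{1-q}\sum_\m\frac{(\b+\g)_\m}{(\e+\vp^2)^\0_\m}\cdot\frac{g^\0(\ol\a\vp^2)g^\0(\vp^2\m)}{g^\0(\vp^2)g(\ol\a\m\vp^2)}\FF{\ol\s\vp,\ol\s\vp\phi,\a,\ol\m}{\ol\s^2\vp^2,\vp,\vp\phi}{1}$, plus the $\m=\a\ol\vp^2$ correction. Using $\frac{g^\0(\vp^2\m)}{(\vp^2)^\0_\m}=g^\0(\vp^2)$ simplifies the $\m$-weight; I would then expand the ${}_4F_3$ over a new index $\r$, use Lemma \ref{l2.1}(ii) to rewrite $1/g(\ol\a\m\vp^2)$ and $(\b)_\m,(\g)_\m$ as variant Pochhammer symbols in $\ol\m$, and carry out the $\m$-sum by the Vandermonde form of Theorem \ref{summation} (Remark \ref{r.summation}(ii)). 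This replaces $(\ol\m)_\r$ by $\frac{(\b)_\r(\g)_\r}{(\a\b\g\ol\vp^2)^\0_\r}$, i.e.\ it rebuilds the two extra top parameters $\b,\g$ and the extra bottom parameter $\a\b\g\ol\vp^2$, giving the Saalsch\"utzian $\FF{\ol\s\vp,\ol\s\vp\phi,\a,\b,\g}{\ol\s^2\vp^2,\vp,\vp\phi,\a\b\g\ol\vp^2}{1}$; the accumulated Gauss-sum constants collapse by Proposition \ref{jacobi} to the prefactor $\frac{g(\ol{\a\b\g}\vp^2)}{g^\0(\vp^2)}$, and the single correction term evaluates to $-\frac{g^\0(\ol\a)g^\0(\ol\b)g^\0(\ol\g)}{q}$.

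For (ii) one sets $\s^2=\vp^2$, so the left-hand ${}_4F_3$ acquires the top parameter $\vp^2$; at the corresponding step I would use Theorem \ref{t5.1}(ii) instead of (i) and Example \ref{e3.1}(i) to strip the trivial parameter $\e$ (this produces the $+1$ inside the bracket), while the duplication formula accounts for the factor $\vp(4)$ in the correction $-\frac{g^\0(\ol\a)g^\0(\ol\b)g^\0(\ol\g)}{q^2\vp(4)}$. The main obstacle is the bookkeeping of the correction terms: although the final correction is symmetric in $\a,\b,\g$, the derivation breaks this symmetry by singling out the pair $(\b,\g)$, so one must check that the lone surviving $\d$-term, carrying the constants $c(\a,\ol\m)$ of Lemma \ref{nearly}/Theorem \ref{t5.1} specialized at $\m=\a\ol\vp^2$, indeed reassembles into the symmetric expression, keeping careful track of which $\d$'s are suppressed by the hypothesis $(\a\b+\a\g+\b\g,\vp^2)=0$. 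By comparison, the recombination of the $\m$-sum and the collapse of the Gauss-sum constants are routine but lengthy.
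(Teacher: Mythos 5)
Your proposal follows essentially the same route as the paper's proof: apply Lemma \ref{nearly} (with $\vp^2$ in place of $\vp$) to the pair $(\b,\g)$, evaluate the resulting inner nearly-poised ${}_3F_2(-1)$ by Theorem \ref{t5.1}, then exchange summations and use Proposition \ref{shift} with the Euler--Gauss formula to reassemble the Saalsch\"utzian ${}_5F_4(1)$, tracking the lone correction at $\m=\a\ol\vp^2$. The only ingredient you leave implicit is the paper's preliminary reduction --- since $(\a\b+\a\g+\b\g,\vp^2)=0$ forbids $\a^2=\b^2=\g^2=\vp^2$, one may by symmetry label the parameters so that $\a^2\ne\vp^2$, which is what makes the correction-term bookkeeping (in particular $\ol\a\ne\a\ol\vp^2$ and the evaluation of $A(\a\ol\vp^2)$ by Theorem \ref{reduction}) go through cleanly.
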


\begin{proof}
If $\a^2=\b^2=\g^2=\vp^2$, then $(\a\b+\a\g+\b\g,\vp^2)\ne 0$. 
Therefore, by symmetry we can assume that $\a^2\ne\vp^2$. 
Similarly as in the proof of Theorem \ref{t5.1}, since $\b\g \ne \vp^2$, 
\begin{align*}
&\frac{g^\0(\vp^2)g(\ol{\b\g}\vp^2)}{g^\0(\ol\b\vp^2)g^\0(\ol\g\vp^2)}
\FF{\s^2,\a,\b,\g}{\ol\a\vp^2,\ol\b\vp^2,\ol\g\vp^2}{1}
= \frac{1}{1-q}  \sum_\m \frac{(\b+\g)_\m}{(\e+\vp^2)^\0_\m}
A(\m), 
\end{align*}
where  
$$A(\m):=\FF{\s^2,\a,\ol\m}{\ol\a\vp^2, \m\vp^2}{-1}.$$

(i) Suppose that $(\s^2,\e+\vp^2)=0$. By Theorem \ref{t5.1} (i), 
$$
\frac{(\ol\a\vp^2)^\0_\m}{(\vp^2)^\0_\m} A(\m)=
\begin{cases}
B(\m) & (\m \ne \a\ol\vp^2), 
\\
q B(\a\ol\vp^2) + qC & (\m=\a\ol\vp^2),
\end{cases}
$$
where 
\begin{align*}
B(\m)
&:=\FF{\ol\s\vp,\ol\s\vp\phi,\a,\ol\m}{\ol\s^2\vp^2,\vp,\vp\phi}{1}
\\
C&:=\frac{c(\a,\ol\a\vp^2)}{1-q} \sum_{\n\in\{\ol\a,\a\ol\vp^2\}} \frac{(\s^2)_\n}{(\e)^\0_\n} \n(-1)
=\frac{1-q}{q} \frac{g^\0(\vp^2)}{g(\s^2)}
\left(\frac{g(\ol\a\s^2)}{g(\ol\a\vp^2)}+\frac{g(\a\ol\vp^2\s^2)}{g(\a)}\right).  
\end{align*}
Note that $\ol\a \ne \a\ol\vp^2$ by assumption.  
Therefore, 
\begin{align*}
&\frac{1}{1-q}  \sum_\m \frac{(\b+\g)_\m}{(\e+\vp^2)^\0_\m} A(\m)
\\&=
\frac{1}{1-q}  \sum_\m \frac{(\b+\g)_\m}{(\e+\ol\a\vp^2)^\0_\m} B(\m)
-\frac{1}{q} \frac{(\b+\g)_{\a\ol\vp^2}}{(\e+\vp^2)^\0_{\a\ol\vp^2}} A(\a\ol\vp^2)
+\frac{1}{1-q} \frac{(\b+\g)_{\a\ol\vp^2}}{(\e+\ol\a\vp^2)^\0_{\a\ol\vp^2}}C. 
\end{align*}
First, by a change of the order of summation, using $(\ol\m)_\n=(\e)_\n(\e)^\0_\m/(\ol\n)^\0_\m$, 
\begin{align*}
&\frac{1}{1-q}  \sum_\m \frac{(\b+\g)_\m}{(\e+\ol\a\vp^2)^\0_\m} B(\m)
\\&=\frac{1}{1-q} \sum_\n \frac{(\ol\s\vp+\ol\s\vp\phi+\a+\e)_\n}{(\e+\ol\s^2\vp^2+\vp+\vp\phi)^\0_\n} 
F(\b+\g,\ol\a\vp^2+\ol\n;1)
\\&
=\frac{1}{1-q} \sum_\n \frac{(\ol\s\vp+\ol\s\vp\phi+\a+\e)_\n}{(\e+\ol\s^2\vp^2+\vp+\vp\phi)^\0_\n} \frac{(\b+\g)_\n}{(\ol\a\vp^2+\ol\n)^\0_\n} 
\FF{\b\n,\g\n}{\ol\a\vp^2\n}{1}
\\& 
=\frac{1}{1-q} \sum_\n \frac{(\ol\s\vp+\ol\s\vp\phi+\a+\e)_\n}{(\e+\ol\s^2\vp^2+\vp+\vp\phi)^\0_\n} 
 \frac{g^\0(\ol\a\vp^2)g(\ol{\a\b\g}\vp^2)}{g(\ol{\a\b}\vp^2)g(\ol{\a\g}\vp^2)} \frac{(\b+\g)_\n}{(\e)_\n(\a\b\g\ol\vp^2)^\0_\n}
\\&
=\frac{g^\0(\ol\a\vp^2)g(\ol{\a\b\g}\vp^2)}{g(\ol{\a\b}\vp^2)g(\ol{\a\g}\vp^2)} 
\FF{\ol\s\vp,\ol\s\vp\phi,\a,\b,\g}{\ol\s^2\vp^2,\vp,\vp\phi,\a\b\g\ol\vp^2}{1}. 
\end{align*}
Here we used Proposition \ref{shift} and Theorem \ref{summation} together with the assumption $(\a\b+\a\g,\vp^2)=0$. 
Secondly, by Theorem \ref{reduction}, 
\begin{align*}
& A(\a\ol\vp^2)=\FF{\s^2,\a,\ol\a\vp^2}{\a,\ol\a\vp^2}{-1}
\\&=q^{\d(\a)+\d(\ol\a\vp^2)}\ol\s(4) 
+q^{\d(\ol\a\vp^2)-1} 
\frac{g^\0(\a)g(\ol\a\s^2)}{g(\s^2)}
+q^{\d(\a)-1} \frac{g^\0(\ol\a\vp^2)g(\a\ol\vp^2\s^2)}{g(\s^2)}.
\end{align*}
Putting all together, we obtain the formula. 

(ii) Let $\s^2=\vp^2\ne\e$. Then by Theorem \ref{t5.1} (ii), we have
\begin{align*}
A(\m)=\frac{(\vp^2)^\0_\m}{(\ol\a\vp^2)^\0_\m} (B(\m)+1) +\d(\ol\a\vp^2\m)(q-1) 
(B(\a\ol\vp^2)-1),
\end{align*}
where 
$$B(\m):=\FF{\phi,\a,\ol\m}{\vp,\vp\phi}{1}.$$
Therefore, as above, 
\begin{align*}
\frac{1}{1-q}  \sum_\m \frac{(\b+\g)_\m}{(\e+\vp^2)^\0_\m}A(\m)
=& \frac{g^\0(\ol\a\vp^2)g(\ol{\a\b\g}\vp^2)}{g(\ol{\a\b}\vp^2)g(\ol{\a\g}\vp^2)}  \FF{\phi,\a,\b,\g}{\vp,\vp\phi,\a\b\g\ol\vp^2}{1} 
\\& + \FF{\b,\g}{\ol\a\vp^2}{1}
- \frac{(\vp^2)^\0_{\a\ol\vp^2}}{(\ol\a\vp^2)^\0_{\a\ol\vp^2}}(B(\a\ol\vp^2)-1).
\end{align*}
Applying Theorem \ref{summation} to the second term and Theorem \ref{saal} to the last term,  the result follows. 
\end{proof}

\begin{rmk}
Finite analogues of Whipple's formulas for well-poised values ${}_4F_3(-1)$ and ${}_5F_4(1)$ are given by McCarthy \cite[Theorems 1.5, 1.6]{mccarthy}. 
\end{rmk}

\section{Quadratic transformation formulas}\label{s5}

Many transformation formulas are known for complex hypergeometric functions (see for example \cite{otsubo2} and its references). 
Here we prove finite analogues of some quadratic transformation formulas and their consequences.  
Differential equation, the most powerful tool in proving complex formulas, is no longer available here. 
Instead, we compare the Fourier transforms of functions in question.

\subsection{Transformations of ${}_2F_1(\l)$}

In this section, we discuss quadratic formulas and some resulting quartic formulas. 
Throughout this section, we assume that $p$ is odd, and $\phi\in\ck$ denotes the quadratic character. 

First, recall transformation formulas respectively of Gauss \footnote{Though Ramanujan is referred to in \cite[Section 4]{otsubo2}, it was already known by Gauss \cite[Formula 100]{gauss}. } and Kummer (cf. \cite[(4.2), (4.1)]{otsubo2})
\begin{align*}
(1+x)^{2a} {}_2F_1\left({2a,b \atop 2a-b+1};x\right)&=
{}_2F_1\left({a,a+\frac{1}{2} \atop 2a-b+1};1-\left(\frac{1-x}{1+x}\right)^2\right), 
\\
(1+x)^{2a} {}_2F_1\left({a,a+\frac{1}{2} \atop b+\frac{1}{2}};x^2\right)&=
{}_2F_1\left({2a,b\atop 2b};1-\frac{1-x}{1+x}\right).
\end{align*}
From the viewpoint of differential equations, these are equivalent to each other (see loc. cit.). 
Their finite analogues are the following (cf. \cite[Theorem 4.20]{greene}). 

\begin{thm}\label{quad1}
Suppose that $(\a^2+\b,\e)=0$. 
\begin{enumerate}
\item
If $\l \ne -1$,  then 
$$\a^2(1+\l) \FF{\a^2,\b}{\a^2\ol\b}{\l}=\FF{\a,\a\phi}{\a^2\ol\b}{1-\left(\frac{1-\l}{1+\l}\right)^2}.$$
\item If $\l \ne -1$, then 
$$\a^2(1+\l) \FF{\a,\a\phi}{\b\phi}{\l^2}=\FF{\a^2,\b}{\b^2}{1-\frac{1-\l}{1+\l}}.$$
\end{enumerate}\end{thm}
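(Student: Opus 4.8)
The plan is to use the Fourier-transform strategy announced at the start of the section. Regarding each side of (i) as a function of $\l\in\k^*$, it suffices to show that the two have the same Fourier transform $\wh{(\cdot)}(\n)=\sum_{\l\in\k^*}(\cdot)\ol\n(\l)$ for every $\n\in\ck$. Working over $\k^*$ is harmless: both sides vanish at $\l=0$ (on the right because $1-(\frac{1-\l}{1+\l})^2=0$ there and $F(\cdot,\cdot;0)=0$), and the excluded value $\l=-1$ contributes $\a^2(0)=0$.

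For the right-hand side I would substitute $u=\frac{1-\l}{1+\l}$, an involution of $\k\setminus\{-1\}$ with inverse $\l=\frac{1-u}{1+u}$. Since $1-(\frac{1-\l}{1+\l})^2=(1-u)(1+u)$, the inner Fourier sum $\sum_\l\m\bigl(1-(\tfrac{1-\l}{1+\l})^2\bigr)\ol\n(\l)$ turns into $\sum_u(\m\ol\n)(1-u)(\m\n)(1+u)$, and the linear change $x=\frac{1-u}{2}$, $y=\frac{1+u}{2}$ (with $x+y=1$) evaluates this to $-\m(4)\,j(\m\ol\n,\m\n)$. Expanding $\FF{\a,\a\phi}{\a^2\ol\b}{\cdot}$ by definition and inserting this, the duplication formula $(\a)_\m(\a\phi)_\m=\ol\m(4)(\a^2)_{\m^2}$ cancels the factor $\m(4)$, so $\wh{RHS}(\n)$ becomes, up to the constant $(1-q)^{-1}$, a single sum over $\m$ of $\frac{(\a^2)_{\m^2}}{(\e)^\0_\m(\a^2\ol\b)^\0_\m}\,j(\m\ol\n,\m\n)$. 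For the left-hand side I would expand $F(\a^2,\b;\a^2\ol\b;\cdot)$ by definition; the Fourier transform of $\l\mapsto\a^2(1+\l)$ is readily computed (as in Example \ref{e2.2}) to be $-\ol\chi(-1)j(\a^2,\ol\chi)$, producing $\wh{LHS}(\n)$ as a single sum over $\chi$ involving the Jacobi sum $j(\a^2,\chi\ol\n)$ and $(\a^2)_\chi/(\a^2\ol\b)^\0_\chi$.

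The main obstacle is the final identification of these two character sums. I would rewrite every Jacobi sum as a ratio of Gauss sums by Proposition \ref{jacobi}(iv) and apply the duplication formula once more to the factors carrying a square; the Euler--Gauss summation formula (Theorem \ref{summation}) should then collapse each sum to the same closed expression in Gauss sums, with the hypothesis $(\a^2+\b,\e)=0$ ensuring that its non-degenerate form applies. The delicate points are the degenerate characters $\m,\n\in\{\e,\phi\}$ and the $\d$-type correction terms (Proposition \ref{jacobi}(iv), Theorem \ref{summation}), which must be verified to cancel.

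For part (ii) I would run the same computation with the substitution $\l\mapsto\frac{2\l}{1+\l}=1-\frac{1-\l}{1+\l}$ on the right and the squared argument $\l^2$ on the left, where the duplication formula intervenes through the character $\m^2$ arising in the inner sum. Alternatively, since the two complex formulas are linked by a variable change, once (i) is proved I would try to deduce (ii) from it by composing with the Euler and Pfaff transformations of Theorem \ref{p2.5}; this is likely the quicker route.
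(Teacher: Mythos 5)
Your setup --- compare Fourier transforms over $\k^*$, extending the right-hand side by $0$ at $\l=-1$ --- is exactly the paper's strategy, and your preliminary computations are sound: the substitution $u=\frac{1-\l}{1+\l}$ does turn the inner sum on the right into $-\m(4)j(\m\ol\n,\m\n)$, and the convolution formula does express the transform of the left side through $j(\a^2,\chi\ol\n)$. The gap is precisely at the step you flag as the main obstacle. Carrying your reductions through, one finds that $\wh{\mathrm{RHS}}(\n)$ equals, up to explicit Gauss-sum factors and degenerate corrections, $\FF{\a,\a\phi,\n,\ol\n}{\e,\phi,\a^2\ol\b}{1}$, while $\wh{\mathrm{LHS}}(\n)$ similarly reduces to $\FF{\a^2,\b,\ol\n}{\a^2\ol\b,\a^2\ol\n}{-1}$. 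The first is a genuine ${}_4F_3(1)$ with (generically) no parameter common to numerator and denominator, and the second a ${}_3F_2(-1)$; neither is a ${}_2F_1(1)$, so Theorem \ref{summation} cannot ``collapse each sum to the same closed expression in Gauss sums'' --- these values are not single Gauss-sum ratios (already the well-poised ${}_2F_1(-1)$ of Theorem \ref{t2.3} is a \emph{sum} of two such ratios). Equating the two character sums is an identity of essentially the same depth as the theorem itself, so the argument is circular at its central point.

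The repair, which is what the paper does, is to attach the elementary factor to the \emph{other} side: transform $f(\l)=\ol\a^2(1+\l)\FF{\a,\a\phi}{\a^2\ol\b}{1-\bigl(\frac{1-\l}{1+\l}\bigr)^2}$ rather than the bare right-hand side. The inner sum is then $j(\ol\m\n,\ol{\a^2\n^2})\n(4)$, and the duplication formula converts $g^\0(\a^2\n^2)$ into $(\a+\a\phi)^\0_\n$, so the double sum collapses to the \emph{non-reduced} value $\FF{\ol\m,\a^2\m,\a,\a\phi}{\a^2\ol\b,\a,\a\phi}{1}$; Theorem \ref{reduction} strips the repeated $\a+\a\phi$, Euler--Gauss evaluates the surviving ${}_2F_1(1)$, and the leftover terms are identified with $\FF{\a^2,\b}{\a^2\ol\b}{-1}$ via Theorem \ref{t2.3} (ii), the degenerate pairs $\m=\n=\ol{\a'}$ with $\a'^2=\a^2$ producing exactly the $\d(1+\l)$ correction. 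For (ii), your fallback of deducing it from (i) is the right idea, but the needed transformation is the connection formula $\l\mapsto 1-\l$ of Theorem \ref{connection} rather than Theorem \ref{p2.5}, and the case $\a^2=\b^2$ must be treated separately.
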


\begin{proof}
(i) Put
$$f(\l)=\ol\a^2(1+\l)\FF{\a,\a\phi}{\a^2\ol\b}{1-\left(\frac{1-\l}{1+\l}\right)^2}$$
and extend this to $\k^*$ by setting $f(-1)=0$. 
Then for any $\m\in\ck$, 
\begin{align*}
\wh{f}(\m)
&=\frac{1}{1-q}\sum_\n  \frac{(\a+\a\phi)_\n}{(\e+\a^2\ol\b)^\0_\n}
\sum_\l \n(4) \ol\m\n(\l)\ol{\a^2\n^2}(1+\l)
\\& = -\frac{1}{1-q} \sum_\n  \frac{(\a+\a\phi)_\n}{(\e+\a^2\ol\b)^\0_\n}
j(\ol\m\n,\ol{\a^2\n^2}) \n(4)\m\n(-1).
\end{align*}
Unless $\m=\n=\ol\a'$ with $\a'^2=\a^2$, we have by the duplication formula, 
$$j(\ol\m\n,\ol{\a^2\n^2}) \n(4)\m\n(-1)
=\frac{g(\ol\m\n)g(\a^2\m\n)}{g^\0(\a^2\n^2)} \n(4)
=\m(-1)\frac{(\a^2)_\m}{(\e)^\0_\m} \frac{(\ol\m+\a^2\m)_\n}{(\a+\a\phi)^\0_\n}.$$
On the other hand, if $\m=\n=\ol\a'$ with $\a'^2=\a^2$, then since 
$$j(\e,\e)=\frac{g(\e)^2}{g^\0(\e)}-\frac{(1-q)^2}{q},$$
we have 
$$j(\ol\m\n,\ol{\a^2\n^2}) \n(4)\m\n(-1)
=\m(-1)\frac{(\a^2)_\m}{(\e)^\0_\m} \frac{(\ol\m+\a^2\m)_\n}{(\a+\a\phi)^\0_\n} -\frac{(1-q)^2}{q}  \ol{\a'}(4).$$
Hence, 
$$\wh{f}(\m)
=-\m(-1)\frac{(\a^2)_\m}{(\e)^\0_\m} \FF{\ol\m,\a^2\m,\a,\a\phi}{\a^2\ol\b,\a,\a\phi}{1}  
+\d(\a^2\m^2) \m(-1) \frac{1-q}{q} \frac{g^\0(\a^2\ol\b)g(\ol\m)}{g(\a^2)g^\0(\ol{\m\b})}. $$
By Theorem \ref{reduction} and Theorem \ref{summation}, 
\begin{align*}
\FF{\ol\m,\a^2\m,\a,\a\phi}{\a^2\ol\b,\a,\a\phi}{1}
&=\FF{\ol\m,\a^2\m}{\a^2\ol\b}{1} 
+\sum_{\a'^2=\a^2} q^{-1}\frac{(\ol\m+\a^2\m)_{\ol{\a'}}}{(\e+\a^2\ol\b)^\0_{\ol{\a'}}}
\\&=\frac{g^\0(\a^2\ol\b)g(\ol\b)}{g^\0(\a^2\ol\b\m)g^\0(\ol{\b\m})}
+\sum_{\a'^2=\a^2} q^{-1}\frac{g^\0(\m)g(\a'\m)g^\0(\a^2\ol\b)g(\a')}{g(\a^2\m)g^\0(\a'\m)g^\0(\a'\ol\b)}
\\&=\m(-1) \frac{(\b)_\m}{(\a^2\ol\b)^\0_\m}
+\frac{(\e)^\0_\m}{(\a^2)_\m} \sum_{\a'^2=\a^2} q^{-\d(\a'\m)}\frac{g^\0(\a^2\ol\b)g(\a')}{g(\a^2)g^\0(\a'\ol\b)}. 
\end{align*}
It follows by Theorem \ref{t2.3} (ii) that
\begin{align*}
\wh{f}(\m)
=-\frac{(\a^2+\b)_\m}{(\e+\a^2\ol\b)^\0_\m}
-\m(-1) \FF{\a^2,\b}{\a^2\ol\b}{-1}
\end{align*}
for any $\m\in\ck$. Therefore, 
$$f(\l)= \FF{\a^2,\b}{\a^2\ol\b}{\l} - \d(1+\l) \FF{\a^2,\b}{\a^2\ol\b}{-1}$$
for any $\l\in\k^*$ (recall Example \ref{e2.2} (i)), hence the formula.  

(ii) If $\a^2\ne\b^2$, apply Theorem \ref{connection} to the both sides of (i), replace $\l$ with $\frac{1-\l}{1+\l}$ and use the duplication formula. 
If $\a^2=\b^2$ and $\l\ne 0$, then 
the both sides equal 
$$\ol\b\left(\frac{1-\l}{1+\l}\right) + \frac{g(\ol\b)}{g(\b)g(\ol\b^2)} \ol\b^2\left(\frac{2\l}{1+\l}\right)$$ 
by Theorem \ref{reduction}, Corollary \ref{c2.0} and the duplication formula. 
\end{proof}

\begin{rmk}
Theorem \ref{quad1} (i) can also be proved as follows. 
By Lemma \ref{nearly} and a change of the order of summation, 
\begin{align*}
\frac{g^\0(\a^2)g(\ol\b)}{qg^\0(\a^2\ol\b)} 
\FF{\a^2,\b}{\a^2\ol\b}{\l}=\frac{1}{1-q}\sum_\m \frac{(\a^2+\b)_\m}{(\e+\a^2)^\0_\m} F(\ol\m,\a^2\m;-\l). 
\end{align*}
By Proposition \ref{shift} and Corollary \ref{c2.0}, 
\begin{align*}
F(\ol\m,\a^2\m;-\l) = \a^2\left(\frac{1+\l}{\l}\right)  \frac{(\e+\a^2)^\0_\m}{(\a+\a\phi)^\0_\m} \m\left(\frac{(1+\l)^2}{4\l}\right). 
\end{align*}
Then, by Proposition \ref{reverse}, Proposition \ref{shift} and the duplication formula, 
\begin{align*}
& F\left(\a^2+\b, \a+\a\phi; \frac{(1+\l)^2}{4\l}\right)
\\& =F\left(\ol\a+\ol\a\phi, \ol\a^2+\ol\b; \frac{4\l}{(1+\l)^2}\right)
\\& =\frac{g^\0(\a^2)g(\ol\b)}{qg^\0(\a^2\ol\b)} 
\a^2\left(\frac{\l}{(1+\l)^2}\right) \FF{\a,\a\phi}{\a^2\b}{\frac{4\l}{(1+\l)^2}}. 
\end{align*}
Hence the formula follows. 
\end{rmk}

Recall the formulas respectively of Gauss and Kummer (cf. \cite[(4.5), (4.6)]{otsubo2})
\begin{align*}
{}_2F_1\left({2a,2b \atop a+b+\frac{1}{2}};x\right)
&={}_2F_1\left({a,b\atop a+b+\frac{1}{2}};1-(1-2x)^2\right), 
\\
(1+x)^{2a} {}_2F_1\left({2a,a-b+\frac{1}{2} \atop a+b+\frac{1}{2}};-x\right)
&={}_2F_1\left({a,b\atop a+b+\frac{1}{2}};1-\left(\frac{1-x}{1+x}\right)^2\right).
\end{align*}
Their finite analogues are the following (for (ii), cf. \cite[Theorem 9.4]{fuselieretal}). 

\begin{cor}\label{quad2}
Suppose that $(\a^2+\b^2+\a\ol\b\phi,\e)=0$. 
\begin{enumerate}
\item  If $\l \ne 1, 1/2$, then 
$$\FF{\a^2,\b^2}{\a\b\phi}{\l}
= \FF{\a,\b}{\a\b\phi}{1-(1-2\l)^2}.$$
\item If $\l \ne \pm 1$, then 
\begin{align*}
\a^2(1+\l) \FF{\a^2,\a\ol\b\phi}{\a\b\phi}{-\l}=\FF{\a,\b}{\a\b\phi}{1-\left(\frac{1-\l}{1+\l}\right)^2}.
\end{align*}
\end{enumerate}
\end{cor}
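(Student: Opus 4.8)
The plan is to derive both parts from the quadratic transformation already established in Theorem \ref{quad1}(i), using only the Pfaff transformation (Theorem \ref{p2.5}(ii)) together with Möbius substitutions in the variable; no new Fourier computation should be needed. The guiding principle is the classical fact that these quadratic transformations all lie in one orbit under the Euler--Pfaff group, so each is obtained from the next by a Pfaff move combined with a change of the argument. I would prove (ii) first, then deduce (i) from it.

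For part (ii), I would start from Theorem \ref{quad1}(i) with its parameters specialized as $(\a,\b)\mapsto(\a,\a\ol\b\phi)$, so that the denominator becomes $\a^2\cdot\ol{\a\ol\b\phi}=\a\b\phi$ and the hypothesis $(\a^2+\a\ol\b\phi,\e)=0$ follows from ours. This reads $\a^2(1+\l)\FF{\a^2,\a\ol\b\phi}{\a\b\phi}{\l}=\FF{\a,\a\phi}{\a\b\phi}{w}$ with $w=\tfrac{4\l}{(1+\l)^2}$. The crucial step is to apply Pfaff to the right-hand side: since $\ol{\a\phi}\cdot\a\b\phi=\b$, the ``square-root'' parameters $\{\a,\a\phi\}$ collapse to exactly $\{\a,\b\}$, giving $\ol\a(1-w)\FF{\a,\b}{\a\b\phi}{\tfrac{w}{w-1}}$. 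Rewriting $1-w=\left(\tfrac{1-\l}{1+\l}\right)^2$ and $\tfrac{w}{w-1}=\tfrac{-4\l}{(1-\l)^2}$, evaluating the character factor as $\ol\a(1-w)=\ol\a^2(1-\l)\a^2(1+\l)$, and cancelling $\a^2(1+\l)$ yields $\a^2(1-\l)\FF{\a^2,\a\ol\b\phi}{\a\b\phi}{\l}=\FF{\a,\b}{\a\b\phi}{\tfrac{-4\l}{(1-\l)^2}}$; the substitution $\l\mapsto-\l$ then gives (ii). The exclusions $\l\ne\pm1$ are precisely what is needed for the factors $1\pm\l$ and for Pfaff (argument $\neq1$).

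For part (i), writing its variable as $\mu$ (the $\l$ of the statement), I would substitute $\l=\tfrac{\mu}{1-\mu}$ into (ii): this turns the argument $\tfrac{4\l}{(1+\l)^2}$ into $4\mu(1-\mu)=1-(1-2\mu)^2$ and the left argument $-\l$ into $\tfrac{\mu}{\mu-1}$, yielding $\ol\a^2(1-\mu)\FF{\a^2,\a\ol\b\phi}{\a\b\phi}{\tfrac{\mu}{\mu-1}}=\FF{\a,\b}{\a\b\phi}{4\mu(1-\mu)}$. It then remains only to recognise the left-hand side: Pfaff applied to $\FF{\a^2,\b^2}{\a\b\phi}{\mu}$, using $\ol{\b^2}\cdot\a\b\phi=\a\ol\b\phi$, gives exactly $\FF{\a^2,\b^2}{\a\b\phi}{\mu}=\ol\a^2(1-\mu)\FF{\a^2,\a\ol\b\phi}{\a\b\phi}{\tfrac{\mu}{\mu-1}}$, whence (i). Here $\mu\ne1$ is needed for the substitution and for Pfaff, while $\mu=\tfrac12$ must be excluded because it corresponds to $\l=1$ in (ii).

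The computations are routine; the only points requiring care are the bookkeeping of multiplicative characters evaluated at the rational functions $1\pm\l$ (using multiplicativity and $\ol\a(c^2)=\ol\a^2(c)$, valid since $1\pm\l\ne0$), and verifying that our single hypothesis $(\a^2+\b^2+\a\ol\b\phi,\e)=0$ implies every Pfaff non-degeneracy condition encountered. The latter reduces in each case to $\a^2\ne\e$, $\b^2\ne\e$ and $\b\ne\a\phi$; for instance the condition $(\a^2+\b^2,\e+\a\b\phi)=0$ needed in part (i) unwinds to $\d(\a^2)+\d(\b^2)+\d(\a\ol\b\phi)+\d(\ol\a\b\phi)=0$, where both $\a\ol\b\phi\ne\e$ and $\ol\a\b\phi\ne\e$ are equivalent to $\b\ne\a\phi$. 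I therefore expect the main (modest) obstacle to be this hypothesis-tracking rather than any genuinely hard step, and in particular no separate degenerate sub-cases should arise, in contrast with the proof of Theorem \ref{quad1}(ii).
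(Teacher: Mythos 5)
Your proposal is correct and follows essentially the same route as the paper: the paper's proof of (ii) is precisely "in Theorem \ref{quad1}(i) replace $\l$ by $-\l$ and $\b$ by $\a\ol\b\phi$, then apply Theorem \ref{p2.5}(ii) to the right-hand side," and its proof of (i) is "in (ii) replace $\l$ by $\frac{\l}{1-\l}$ and apply Theorem \ref{p2.5}(ii) to the left-hand side" — exactly your two steps, up to the immaterial reordering of the substitution $\l\mapsto-\l$. Your hypothesis-tracking and the identification of the excluded values $\l\ne\pm1$ and $\l\ne1,\tfrac12$ are also accurate.
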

\begin{proof}
(ii) In Theorem \ref{quad1} (i), replace $\l$ with $-\l$, $\b$ with $\a \ol\b \phi$, and apply Theorem \ref{p2.5} (ii) to the right-hand side. 

(i) In (ii), replace $\l$ with $\frac{\l}{1-\l}$ and apply Theorem \ref{p2.5} (ii) to the left-hand side. 
\end{proof}

Recall the formula of Gauss (cf. \cite[(1.1)]{otsubo2})
\begin{align*}
(1+x)^{2a}{}_2F_1\left({a,a-b+\frac{1}{2} \atop b+\frac{1}{2}};{x^2}\right)&=
{}_2F_1\left({a,b \atop 2b};{1-\left(\frac{1-x}{1+x}\right)^2}\right). 
\end{align*}
Its finite analogue is the following (cf. {\cite[Theorem 2]{evans}}). 
\begin{thm}\label{quad5}
Suppose that $(\a,\e+\b\phi+\b^2)=(\b,\e)=0$. 
Then for $\l \ne -1$, 
$$\a^2(1+\l)\FF{\a,\a\ol\b\phi}{\b\phi}{\l^2}=\FF{\a,\b}{\b^2}{1-\left(\frac{1-\l}{1+\l}\right)^2}.$$
\end{thm}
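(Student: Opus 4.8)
The plan is to imitate the proof of Theorem \ref{quad1}(i). Note first that the left-hand side $\FF{\a,\a\ol\b\phi}{\b\phi}{\l^2}$ is a \emph{well-poised} ${}_2F_1$: the two paired products $\a\cdot\e$ and $\a\ol\b\phi\cdot\b\phi$ both equal $\a$. So I would set $f(\l)=\ol\a^2(1+\l)\FF{\a,\b}{\b^2}{1-\left(\frac{1-\l}{1+\l}\right)^2}$, extended to $\k^*$ by $f(-1)=0$, and aim to prove $f(\l)=\FF{\a,\a\ol\b\phi}{\b\phi}{\l^2}$ for $\l\ne-1$; multiplying by $\a^2(1+\l)$ then yields the theorem, the ambiguity at $\l=-1$ being annihilated by the vanishing factor. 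By Fourier inversion it suffices to compare $\wh f$ with the Fourier transform of $g(\l):=\FF{\a,\a\ol\b\phi}{\b\phi}{\l^2}$; since $\n(\l^2)=\n^2(\l)$ collapses the $\l$-sum onto square roots, $\wh g(\m)=-\sum_{\n^2=\m}\frac{(\a+\a\ol\b\phi)_\n}{(\e+\b\phi)^\0_\n}$.

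To compute $\wh f(\m)=\sum_\l f(\l)\ol\m(\l)$, I would expand the hypergeometric sum, use $\n\!\left(1-\left(\frac{1-\l}{1+\l}\right)^2\right)=\n(4)\n(\l)\ol\n^2(1+\l)$, and interchange the $\l$- and $\n$-summations, the $(1+\l)$-characters combining into $\ol{\a^2\n^2}(1+\l)$. The inner sum $\sum_{\l\in\k^*}(\n\ol\m)(\l)\,\ol{\a^2\n^2}(1+\l)$ is, after $\l\mapsto-\l$, a Jacobi sum equal to $-(\n\ol\m)(-1)\,j(\n\ol\m,\ol{\a^2\n^2})$ by Example \ref{e2.2}(iii). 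Applying Proposition \ref{jacobi}(iv) turns this into $\frac{g(\n\ol\m)g(\ol{\a^2\n^2})}{g^\0(\ol{\a^2\n\m})}$, and then the duplication formula on $g(\ol{\a^2\n^2})=g((\ol\a\ol\n)^2)$ (whose factor $\ol{\a\n}(4)$ merges with the stray $\n(4)$ to leave $\ol\a(4)$) should return the $\n$-summand to a ratio of Pochhammer symbols, so that $\sum_\n$ becomes a single hypergeometric value at $1$.

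The crux is to recognize and evaluate that value. By the parameter bookkeeping it should be a ${}_3F_2(1)$ in the configuration of Watson's theorem, and the decisive hint that Theorem \ref{t2.2}(ii) is the right instrument is that its evaluation produces precisely a sum $\sum_{\n^2=\m}$ over square roots, matching the shape $\sum_{\n^2=\e}(\cdots)$ of that theorem's right-hand side and the form of $\wh g(\m)$. I therefore expect to apply Theorem \ref{t2.2}(ii), together with Theorem \ref{summation} for the contiguous reductions, to obtain $\wh f(\m)=-\sum_{\n^2=\m}\frac{(\a+\a\ol\b\phi)_\n}{(\e+\b\phi)^\0_\n}+\m(-1)\,c$ for some constant $c$; the extra term $\m(-1)c$ is the Fourier transform of a multiple of $\d(1+\l)$ and thus vanishes after multiplication by $\a^2(1+\l)$. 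The hypotheses $\a\notin\{\e,\b\phi,\b^2\}$ and $\b\ne\e$ are exactly what keeps us in the generic ranges of these summation theorems.

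The main obstacle will be the Gauss-sum bookkeeping of the second and third paragraphs: massaging $\frac{(\a+\b)_\n}{(\e+\b^2)^\0_\n}\,\n(4)(\n\ol\m)(-1)\,\frac{g(\n\ol\m)g(\ol{\a^2\n^2})}{g^\0(\ol{\a^2\n\m})}$, possibly after a reflection $\n\mapsto\ol\n$ via Proposition \ref{reverse}, into the exact ${}_3F_2(1)$ whose Watson evaluation is the square-root sum, while separately tracking the exceptional contribution from the degenerate case $\n=\m$, $\a^2\m^2=\e$ of the Jacobi sum. As in the proof of Theorem \ref{quad1}, a few boundary cases (notably $\a^2=\b^2$, where Watson's theorem degenerates) will then have to be checked directly using Example \ref{e3.1} and Corollary \ref{c2.0}.
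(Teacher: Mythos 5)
Your plan is, in outline, exactly the paper's proof: put $f(\l)=\ol{\a^2}(1+\l)\FF{\a,\b}{\b^2}{1-\left(\frac{1-\l}{1+\l}\right)^2}$, compute $\wh f$ by interchanging the $\l$- and $\n$-sums, convert the inner sum to a Jacobi sum, apply Proposition \ref{jacobi} (iv) and the duplication formula to land on a hypergeometric value at $1$, and evaluate it by the finite Watson formula, Theorem \ref{t2.2} (ii) — whose right-hand side $\sum_{\n^2=\e}$ is indeed the structural reason the answer matches $\wh g(\m)=-\sum_{\n^2=\m}(\a+\a\ol\b\phi)_\n/(\e+\b\phi)^\0_\n$. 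You also correctly anticipate the degenerate Jacobi-sum contribution at $\n=\m$, $\a^2\m^2=\e$, and the need to check boundary cases. (Two small calibrations: the intermediate object is a ${}_4F_3(1)$ with $\a$ as both a numerator and a denominator parameter, and its reduction to $\FF{\b,\ol{\m^2},\a^2\m^2}{\b^2,\a\phi}{1}$ uses Theorem \ref{reduction}, not Theorem \ref{summation}; and the exceptional cases of the Watson evaluation occur for $\m^2\in\{\e,\ol{\a^2},\ol\a\phi\}$ rather than for $\a^2=\b^2$.)

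The one genuine gap is the evenness of $f$. For an arbitrary character $\m$ your computation produces (up to explicit factors) $\FF{\b,\ol\m,\a^2\m}{\b^2,\a\phi}{1}$, and this is in Watson position only when $\m$ is a square: Theorem \ref{t2.2} (ii) requires $\ol\m$ and $\a^2\m$ to be squares of characters whose product is $\a$, and for non-square $\m$ the ${}_3F_2$ is neither well-poised nor Saalsch\"utzian, so no evaluation is available. The paper disposes of the non-square characters before computing anything: since $\frac{4\l}{(1+\l)^2}\mapsto\frac{-4\l}{(1-\l)^2}$ is exactly the Pfaff substitution $x\mapsto\frac{x}{x-1}$, Corollary \ref{c2.3} shows $f(\l)=f(-\l)$ on $\k^*\setminus\{\pm 1\}$; extending $f$ evenly (by $f(-1):=f(1)$, not by $0$) forces $\wh f$ to be supported on squares, and only then is $\wh f(\m^2)$ computed. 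With your extension $f(-1)=0$ you would still need this evenness statement to control $\wh f$ at non-square $\m$ and to identify the discrepancy as a multiple of $\d(1+\l)$; without it, the comparison with $\wh g$, which vanishes off the squares, cannot be completed. Once that observation is added, the rest of your outline goes through as in the paper.
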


\begin{proof}
The proof is similar to the proof of Theorem \ref{quad1} (i). 
Put a function on $\k^*\setminus\{-1\}$ as
$$f(\l)=\ol\a^2(1+\l) \FF{\a,\b}{\b^2}{1-\left(\frac{1-\l}{1+\l}\right)^2}.$$ 
Then by Theorem \ref{summation}, 
$$f(1)=\ol\a(4)\frac{g^\0(\b^2)g(\ol\a\b)}{g^\0(\ol\a\b^2)g(\b)}.$$
By Corollary \ref{c2.3}, one sees easily that $f$ is even on $\k^*\setminus\{\pm 1\}$. 
Extend $f$ to an even function on $\k^*$, i.e. 
$$f(\l)=\frac{1}{1-q}\sum_\n \frac{(\a+\b)_\n}{(\e+\b^2)^\0_\n}\n(4\l)\ol{\a^2\n^2}(1+\l) +\d(1+\l) f(1).$$
We are to show 
$$\wh f(\m^2) = -\sum_{\n^2=\m^2}\frac{(\a+\a\ol\b\phi)_\n}{(\e+\b\phi)^\0_\n}$$ for all $\m\in\ck$.  
Similarly as before,  we have
\begin{align*}
& \wh f(\m^2)\\
& = -q^{-\d(\a\phi)} \frac{(\a^2)_{\m^2}}{(\e)^\0_{\m^2}}
\FF{\a,\b,\ol\m^2,\a^2\m^2}{\b^2,\a,\a\phi}{1} +\d(\a^2\m^4) \frac{1-q}{q} \ol\a(4)\frac{(\a+\b)_{\m^2}}{(\e+\b^2)^\0_{\m^2}}  +f(1)
\\&= 
-q^{-\d(\a\phi)} \frac{(\a^2)_{\m^2}}{(\e)^\0_{\m^2}}
\FF{\b,\ol\m^2,\a^2\m^2}{\b^2,\a\phi}{1} 
+ \d(\a\phi\m^2)\frac{1-q}{q} 
\frac{(\a^2)_{\ol\a\phi}}{(\e)^\0_{\ol\a\phi}}
\frac{g^\0(\b^2)g(\ol\a\b\phi)}{g(\b)g^\0(\ol\a\b^2\phi)}.
\end{align*}
First, if $(\m^2,\e+\ol\a^2+\ol\a\phi)=0$, then by Theorem \ref{t2.2} (ii),  
$$\FF{\b,\ol\m^2,\a^2\m^2}{\b^2,\a\phi}{1} 
= q^{\d(\a\phi)} \sum_{\n^2=\m^2} \frac{(\phi)^\0_\n(\a\ol\b\phi)_\n}{(\a\phi)_\n(\b\phi)^\0_\n}.$$
This equality is also valid if $\m^2=\e\ne\ol\a\phi$ (resp. $\m^2=\ol\a^2\ne\ol\a\phi$), 
as both sides coincide with 
$$q\frac{g(\a\ol\b)g(\a\phi)g^\0(\b\phi)}{g(\phi)g(\a)g(\b)g(\a\ol\b\phi)}+1, \quad 
\left(\text{resp. } \ \frac{g(\a\phi)g^\0(\b\phi)g(\ol\a\b\phi)g(\phi)}{g(\a)g(\b)g^\0(\ol\a\b)}+1\right)$$
(use Theorem \ref{reduction}, Proposition \ref{shift} and Theorem \ref{summation} for the left member).   
On the other hand, if $\m^2=\ol\a\phi$, we have similarly  
$$\FF{\b,\ol\m^2,\a^2\m^2}{\b^2,\a\phi}{1} 
= q^{\d(\a\phi)}\frac{1+q}{q} \frac{g^\0(\b^2)g(\ol\a\b\phi)}{g(\b)g^\0(\ol\a\b^2\phi)},$$
and also, 
$$q^{\d(\a\phi)} \sum_{\n^2=\m^2} \frac{(\phi)^\0_\n(\a\ol\b\phi)_\n}{(\a\phi)_\n(\b\phi)^\0_\n}
=2q^{\d(\a\phi)} \frac{g^\0(\b^2)g(\ol\a\b\phi)}{g(\b)g^\0(\ol\a\b^2\phi)}.$$
In any case, 
$$\wh f(\m^2)= -\frac{(\a^2)_{\m^2}}{(\e)^\0_{\m^2}} \sum_{\n^2=\m^2} \frac{(\phi)^\0_\n(\a\ol\b\phi)_\n}{(\a\phi)_\n(\b\phi)^\0_\n}
=-\sum_{\n^2=\m^2} 
\frac{(\a+\a\ol\b\phi)_\n}{(\e+\b\phi)^\0_\n}
$$
as we wanted. 
\end{proof}

Recall the formula of Ramanujan--Matsumoto--Ohara (cf. \cite[Section 1]{otsubo2})
$$(1+3x)^{6a}{}_2F_1\left({3a,3a+\frac{1}{2} \atop 2a+\frac{5}{6}};{x^2}\right)= 
{}_2F_1\left({3a,3a+\frac{1}{2} \atop 4a+\frac{2}{3}};{1-\left(\frac{1-x}{1+3x}\right)^2}\right).$$
Its finite analogue is the following. 

\begin{thm}
\ 
Suppose that $3 \mid q-1$ and let $\r$ be a cubic character. 
Suppose that $\a^6\ne \e$. Then for $\l\ne -1$, $-1/3$,  
$$\a^6(1+3\l) \FF{\a^3,\a^3\phi}{\a^2\phi\r}{\l^2}=\FF{\a^3,\a^3\phi}{\a^4\r^2}{1-\left(\frac{1-\l}{1+3\l}\right)^2}.$$
\end{thm}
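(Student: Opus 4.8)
The plan is to read off this cubic (Ramanujan--Matsumoto--Ohara) identity as the composition of the two quadratic transformations of Theorem \ref{quad1}, the cubic character $\r$ serving precisely to make the two intermediate functions agree. The common bridge is the M\"obius substitution
\[ L=\frac{2\l}{1+\l},\qquad 1+L=\frac{1+3\l}{1+\l},\qquad \frac{1-L}{1+L}=\frac{1-\l}{1+3\l}, \]
which is defined and satisfies $L\ne -1$ exactly when $\l\ne -1,-1/3$.

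First I would transform the left-hand side by applying Theorem \ref{quad1}(ii) with $\a^3,\a^2\r$ in the roles of $\a,\b$ and variable $\l$. Since $\a^3\phi$ and $(\a^2\r)\phi=\a^2\phi\r$ are the correct numerator and denominator characters, $(\a^2\r)^2=\a^4\r^2$, and $1-\frac{1-\l}{1+\l}=L$, this gives
\[ \FF{\a^3,\a^3\phi}{\a^2\phi\r}{\l^2}=\ol\a^6(1+\l)\,\FF{\a^6,\a^2\r}{\a^4\r^2}{L}. \]
Multiplying by $\a^6(1+3\l)$ and combining the two values of the character $\a^6$ turns the left-hand side of the theorem into $\a^6\left(\frac{1+3\l}{1+\l}\right)\FF{\a^6,\a^2\r}{\a^4\r^2}{L}$.

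Next I would transform the right-hand side by applying Theorem \ref{quad1}(i) with the same $\a^3,\a^2\r$ but variable $L$. The essential point is that the denominator character produced there is $\a^6\,\ol{\a^2\r}=\a^4\ol\r=\a^4\r^2$, where I use that $\r$ is cubic, so $\ol\r=\r^2$; hence the intermediate function is once more $\FF{\a^6,\a^2\r}{\a^4\r^2}{L}$. Using $1-\left(\frac{1-L}{1+L}\right)^2=1-\left(\frac{1-\l}{1+3\l}\right)^2$ and $1+L=\frac{1+3\l}{1+\l}$, this yields
\[ \FF{\a^3,\a^3\phi}{\a^4\r^2}{1-\left(\tfrac{1-\l}{1+3\l}\right)^2}=\a^6\left(\tfrac{1+3\l}{1+\l}\right)\FF{\a^6,\a^2\r}{\a^4\r^2}{L}, \]
which is identical to the expression found for the left-hand side, proving the identity.

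Both applications of Theorem \ref{quad1} require $(\a^6+\a^2\r,\e)=0$, i.e. $\a^6\ne\e$ (which is assumed) and $\a^2\r\ne\e$; the latter is automatic, since $\a^2\r=\e$ would give $\a^6=(\ol\r)^3=\e$, a contradiction. The excluded values $\l\ne -1,-1/3$ are exactly those making $L$ defined and $\ne -1$. I expect the only real obstacle to be the initial recognition that the transformation factors through the single function $\FF{\a^6,\a^2\r}{\a^4\r^2}{L}$, and that the defining relation $\ol\r=\r^2$ of a cubic character is precisely what forces the denominator parameter coming from part (i) to coincide with the one coming from part (ii); after that the argument is pure character bookkeeping, free of the exceptional cases that complicate the Fourier-transform proofs of Theorems \ref{quad1} and \ref{quad5}.
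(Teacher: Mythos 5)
Your proposal is correct and is essentially the paper's own proof: the paper likewise obtains the identity by substituting $\a\mapsto\a^3$, $\b\mapsto\a^2\r$ into both parts of Theorem \ref{quad1}, replacing $\l$ by $1-\frac{1-\l}{1+\l}=\frac{2\l}{1+\l}$ in part (i), and comparing the two resulting formulas through the common function $\FF{\a^6,\a^2\r}{\a^4\r^2}{\frac{2\l}{1+\l}}$. Your write-up just makes explicit the bookkeeping (including $\ol\r=\r^2$ and the verification of the hypotheses and excluded values) that the paper leaves to the reader.
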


\begin{proof}
In Theorem \ref{quad1} (i) and (ii), replace $\a$, $\b$ with  $\a^3$, $\a^2\r$ respectively and in the former, replace $\l$ with $1-\frac{1-\l}{1+\l}$. Then compare the resulting formulas.  
\end{proof}

The proof as above imitates the derivation of the complex analogue in \cite[Section 4]{otsubo2}.
If $\a$ is a square, Theorem \ref{quad5} can also be derived from Theorem \ref{quad1} as in loc. cit.  
The proof of the following corollary imitates the proof of \cite[Corollary 6.1, Remark 6.3]{otsubo2}.

\begin{cor}\label{c5.2}\ 
\begin{enumerate}
\item
Suppose that $3 \mid q-1$ and let $\r$ be a cubic character. 
If $(\a^3,\e)=(\a^2,\phi\r)=0$, $\l \ne -1$ and $\l^2\ne -1$, then 
\begin{align*}
\a^{12}(1+\l) \FF{\a^3,\a^2\phi\ol\r}{\a\phi\r}{\l^4}&
=\FF{\a^3,\a^2\phi\ol\r}{\a^4\r}{1-\left(\frac{1-\l}{1+\l}\right)^4}.
\end{align*}
\item
Suppose that $4 \mid q-1$ and let $\s$ be a quartic character. 
If $(\a^2+\a\s,\e)=0$ and $\l^4\ne 1$, then
\begin{align*}
\a^4(1+\l) \FF{\a^2,\a\s}{\a\ol\s}{-\l^2}&
=\FF{\a,\a\s}{\a^2\phi}{1-\left(\frac{1-\l}{1+\l}\right)^4}.
\end{align*}
\end{enumerate}
\end{cor}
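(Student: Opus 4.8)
The plan is to handle both parts in parallel. The idea is that each side of the asserted identity can be transformed, by a \emph{single} application of one of the quadratic formulas already established, into one and the same intermediate ${}_2F_1$ whose argument is $\frac{4\l^2}{(1+\l^2)^2}$; equating the two resulting expressions and simplifying the character prefactors then gives the statement. Set $s=\frac{1-\l}{1+\l}$ and $\m=\frac{2\l}{1+\l^2}$, so that the right-hand argument is $1-s^4=1-(s^2)^2$. I will use the elementary identities
$$\frac{1-\m}{1+\m}=s^2,\qquad 1-\left(\frac{1-\m}{1+\m}\right)^2=1-s^4,\qquad \m^2=1-\left(\frac{1-\l^2}{1+\l^2}\right)^2=\frac{4\l^2}{(1+\l^2)^2},$$
together with $1+\m=\frac{(1+\l)^2}{1+\l^2}$, i.e. $(1+\m)(1+\l^2)=(1+\l)^2$; this last identity is exactly what collapses the two prefactors into a single power of $\a$ evaluated at $1+\l$.

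For part (i) I would apply Theorem \ref{quad5} twice. First, with $(\a^3,\a\r)$ in place of $(\a,\b)$ and $\l^2$ in place of $\l$: since $\a^3\,\ol{\a\r}\,\phi=\a^2\phi\ol\r$ and $(\a\r)\phi=\a\phi\r$, the left-hand side of the formula is precisely the left-hand function of (i), so
$$\a^6(1+\l^2)\FF{\a^3,\a^2\phi\ol\r}{\a\phi\r}{\l^4}=\FF{\a^3,\a\r}{\a^2\r^2}{\m^2}.$$
Second, with $(\a^3,\a^2\phi\ol\r)$ in place of $(\a,\b)$ and $\m$ in place of $\l$: the crucial point is that $\ol\r^2=\r$ gives $(\a^2\phi\ol\r)^2=\a^4\r$, so the denominator matches, while $\a^3\,\ol{\a^2\phi\ol\r}\,\phi=\a\r$ and $(\a^2\phi\ol\r)\phi=\a^2\r^2$. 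Hence the right-hand side of the formula is the right-hand function of (i):
$$\FF{\a^3,\a^2\phi\ol\r}{\a^4\r}{1-s^4}=\a^6(1+\m)\FF{\a^3,\a\r}{\a^2\r^2}{\m^2}.$$
The two intermediate functions coincide; eliminating them and using $\a^6(1+\m)\cdot\a^6(1+\l^2)=\a^{12}(1+\l)$ yields (i).

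For part (ii) the common intermediate function is $\FF{\a,\s}{\a\ol\s}{\m^2}$, and I would use $\s^2=\phi$ and $\s^3=\ol\s$ throughout. Applying Theorem \ref{quad5} with $(\a,\a\s)$ in place of $(\a,\b)$ and $\m$ in place of $\l$ (so that $(\a\s)^2=\a^2\phi$, $\a\,\ol{\a\s}\,\phi=\s$ and $(\a\s)\phi=\a\ol\s$) turns the right-hand function of (ii) into $\a^2(1+\m)\FF{\a,\s}{\a\ol\s}{\m^2}$. The left-hand function has argument $-\l^2$, which is not of the square type handled by Theorem \ref{quad5}, so for it I would instead use Corollary \ref{quad2}(ii) with $(\a,\s)$ in place of $(\a,\b)$ and $\l^2$ in place of $\l$; since $\a\,\ol\s\,\phi=\a\s$ and $\s\phi=\ol\s$, this gives
$$\a^2(1+\l^2)\FF{\a^2,\a\s}{\a\ol\s}{-\l^2}=\FF{\a,\s}{\a\ol\s}{\m^2}.$$
Equating the two expressions for the intermediate function and again invoking $(1+\m)(1+\l^2)=(1+\l)^2$ yields (ii).

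The remaining work is to check that the hypotheses $(\a^3,\e)=(\a^2,\phi\r)=0$ (resp. $(\a^2+\a\s,\e)=0$) and the excluded values of $\l$ imply all the side conditions of Theorem \ref{quad5} and Corollary \ref{quad2} invoked above (for instance $(\a^3,\e+\a\phi\r+\a^2\r^2)=0$ in the first application), and that none of $1+\l$, $1+\l^2$ vanishes, so that $s$ and $\m$ are defined. I expect the main obstacle to be not any single hard step but the parameter bookkeeping: one must verify that the second numerator parameters and the denominators produced by the two separate transformations coincide \emph{exactly}, and this hinges precisely on the cubic identity $\ol\r^2=\r$ and the quartic identities $\s^2=\phi$, $\s^3=\ol\s$. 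A single inversion or $\phi$-twist error there would cause the two intermediate functions to disagree and the whole argument to collapse.
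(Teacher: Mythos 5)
Your proposal is correct and follows essentially the same route as the paper: part (i) is obtained by applying Theorem \ref{quad5} twice (once with $(\a^3,\a\r)$ at $\l^2$, once with $(\a^3,\a^2\phi\ol\r)$ at $\tfrac{2\l}{1+\l^2}$) and comparing, and part (ii) by combining Corollary \ref{quad2}(ii) with $\b=\s$ at $\l^2$ and Theorem \ref{quad5} with $\b=\a\s$ at $\tfrac{2\l}{1+\l^2}$, exactly as in the paper. The parameter identifications ($\ol\r^2=\r$, $\s^2=\phi$, $\s^3=\ol\s$) and the prefactor collapse via $(1+\m)(1+\l^2)=(1+\l)^2$ all check out.
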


\begin{proof}
(i) In Theorem \ref{quad5}, replace $\a$ with $\a^3$ and $\b$ with $\a\r$, and set $\l=x^2$. 
On the other hand, in loc. cit., replace $\a$ with $\a^3$ and $\b$ with $\a^2\phi\ol\r$, and set $\l= \frac{2x}{1+x^2}$. Then, compare the resulting formulas. 

(ii) In Corollary \ref{quad2} (ii), set $\b=\s$ and $\l=x^2$. 
On the other hand, in Theorem \ref{quad5},  set $\b=\a\s$ and $\l=\frac{2x}{1+x^2}$. Then, compare the resulting formulas. 
\end{proof}

\begin{rmk}
Corollary \ref{c5.2} (ii) is equivalent to \cite[Theorem 3]{evans} by Theorem \ref{connection}. 
\end{rmk}

\subsection{Transformation of ${}_3F_2$}

Recall Whipple's ${}_3F_2$ quadratic transformation formula (cf. \cite[4.5. (1)]{erdelyi}) 
\begin{align*}
 &{}_3F_2\left({2a,b,c \atop 2a-b+1,2a-c+1};-x\right)
 \\& =(1+x)^{-2a}
 {}_3F_2\left({a,a+\frac{1}{2},2a-b-c+1 \atop 2a-b+1,2a-c+1}; 1-\left(\frac{1-x}{1+x}\right)^2\right). 
\end{align*}
Its finite analogue is the following (cf. {\cite[Corollary 4.30]{greene}}). 

\begin{thm}\label{quad4}
Suppose that $(\a^2+\b+\g,\e)=(\a^2,\b\g)=0$. Then for any $\l\ne-1$, 
\begin{align*}
&\FF{\a^2,\b,\g}{\a^2\ol\b,\a^2\ol\g}{-\l} 
-\d(1-\l) \frac{g^\0(\a^2\ol\b)g^\0(\a^2\ol\g)}{g(\a^2)g(\a^2\ol{\b\g})}
\\&= \ol\a^2(1+\l) 
\FF{\a,\a\phi,\a^2\ol{\b\g}}{\a^2\ol\b,\a^2\ol\g}{1-\left(\frac{1-\l}{1+\l}\right)^2}. \end{align*}
In particular, 
$$\FF{\a^2,\b,\g}{\a^2\ol\b,\a^2\ol\g}{-1}-\frac{g^\0(\a^2\ol\b)g^\0(\a^2\ol\g)}{g(\a^2)g(\a^2\ol{\b\g})}
=\ol\a(4) \FF{\a,\a\phi,\a^2\ol{\b\g}}{\a^2\ol\b,\a^2\ol\g}{1}.$$
\end{thm}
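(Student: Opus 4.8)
\emph{Proof proposal.} The plan is to adapt the Fourier-transform method of Theorem~\ref{quad1}(i), comparing the transforms of the two sides as functions of $\l$. Set
$$f(\l)=\ol\a^2(1+\l)\FF{\a,\a\phi,\a^2\ol{\b\g}}{\a^2\ol\b,\a^2\ol\g}{1-\left(\frac{1-\l}{1+\l}\right)^2}$$
and extend it to $\k^*$ by $f(-1)=0$. Since $1-\left(\frac{1-\l}{1+\l}\right)^2=\frac{4\l}{(1+\l)^2}$, substituting the series for $F$ and summing over $\l$ against $\ol\m$ turns the inner $\l$-sum into a Jacobi sum exactly as in that proof, giving
$$\wh f(\m)=-\frac{1}{1-q}\sum_\n\frac{(\a+\a\phi+\a^2\ol{\b\g})_\n}{(\e+\a^2\ol\b+\a^2\ol\g)^\0_\n}\,\m\n(-1)\,j(\ol\m\n,\ol{\a^2\n^2})\,\n(4)$$
for every $\m\in\ck$.

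First I would evaluate $\m\n(-1)\,j(\ol\m\n,\ol{\a^2\n^2})\,\n(4)$ by Proposition~\ref{jacobi}(iii),(iv) and the duplication formula: for indices with $(\ol\m\n,\ol{\a^2\n^2})\ne(\e,\e)$ it equals $\m(-1)\frac{(\a^2)_\m}{(\e)^\0_\m}\frac{(\ol\m+\a^2\m)_\n}{(\a+\a\phi)^\0_\n}$ (here one uses $(\a^2,\e)=0$), while at the degenerate index $\m=\n=\ol{\a'}$ with $\a'^2=\a^2$ the $(\e,\e)$-case of Proposition~\ref{jacobi}(iv) adds an extra $-\frac{(1-q)^2}{q}\ol{\a'}(4)$. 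Substituting, $\wh f(\m)$ becomes
$$-\m(-1)\frac{(\a^2)_\m}{(\e)^\0_\m}\FF{\ol\m,\a^2\m,\a,\a\phi,\a^2\ol{\b\g}}{\a^2\ol\b,\a^2\ol\g,\a,\a\phi}{1}$$
plus a $\d(\a^2\m^2)$-supported correction term.

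Next I would cancel the common parameters $\a,\a\phi$ in this ${}_5F_4(1)$ by Theorem~\ref{reduction} (valid as $(\a^2,\e)=0$), reducing it to the Saalsch\"utzian function $\FF{\ol\m,\a^2\m,\a^2\ol{\b\g}}{\a^2\ol\b,\a^2\ol\g}{1}$—note $\ol\m\cdot\a^2\m\cdot\a^2\ol{\b\g}=\a^2\ol\b\cdot\a^2\ol\g=\a^4\ol{\b\g}$—plus a correction sum over $\a'^2=\a^2$. Applying Theorem~\ref{saal}, its principal summand contributes, after multiplication by $-\m(-1)\frac{(\a^2)_\m}{(\e)^\0_\m}$ and the identity $g^\0(\m)g(\ol\m)=\m(-1)q$ from Proposition~\ref{jacobi}(iii), exactly the $\m$-independent constant $-\frac{g^\0(\a^2\ol\b)g^\0(\a^2\ol\g)}{g(\a^2)g(\a^2\ol{\b\g})}$ (the factor $\m(-1)$ cancelling). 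The task is then to show that the remaining Saalsch\"utz summand together with the $\a'^2=\a^2$ correction sum yields the main term $-\m(-1)\frac{(\a^2+\b+\g)_\m}{(\e+\a^2\ol\b+\a^2\ol\g)^\0_\m}$, and that the $\d(\a^2\m^2)$ term left over from the second step cancels the irregular contributions at the characters $\m$ with $\m^2=\a^2$.

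Granting $\wh f(\m)=-\m(-1)\frac{(\a^2+\b+\g)_\m}{(\e+\a^2\ol\b+\a^2\ol\g)^\0_\m}-\frac{g^\0(\a^2\ol\b)g^\0(\a^2\ol\g)}{g(\a^2)g(\a^2\ol{\b\g})}$, Fourier inversion concludes: the first term inverts to $\FF{\a^2,\b,\g}{\a^2\ol\b,\a^2\ol\g}{-\l}$ and a constant in $\m$ inverts to that constant times $\d(1-\l)$, so $f$ coincides with the left-hand side of the claim on $\k^*\setminus\{-1\}$ (the case $\l=0$ being trivial, both sides vanishing); since $f$ is by construction its right-hand side, this is the desired identity for all $\l\ne-1$. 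The ``in particular'' statement is simply the value at $\l=1$, where $\frac{1-\l}{1+\l}=0$, $\ol\a^2(1+\l)=\ol\a^2(2)=\ol\a(4)$ and $\d(1-\l)=1$. The main obstacle I anticipate is precisely the degenerate bookkeeping flagged above: controlling the characters $\m$ with $\m^2=\a^2$, where the Saalsch\"utz hypothesis $\a+\b+\g\ne\e+\vp+\psi$ can fail, and verifying that the $\d(\a^2\m^2)$ correction exactly repairs them so that the two-term formula for $\wh f(\m)$ holds for every $\m$. This parallels—but is more delicate than—the corresponding step in Theorem~\ref{quad1}(i), where the analogous leftovers instead assembled, via Kummer's Theorem~\ref{t2.3}(ii), into the secondary term $\FF{\a^2,\b}{\a^2\ol\b}{-1}$.
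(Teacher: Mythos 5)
Your overall strategy is exactly the paper's: define $f(\l)=\ol\a^2(1+\l)\FF{\a,\a\phi,\a^2\ol{\b\g}}{\a^2\ol\b,\a^2\ol\g}{\frac{4\l}{(1+\l)^2}}$, compute $\wh f(\m)$ via the Jacobi-sum evaluation, reduce the resulting ${}_5F_4(1)$ by Theorem \ref{reduction}, and apply Theorem \ref{saal}. Your intermediate formulas up to and including the ${}_5F_4(1)$, and the identification of the second Saalsch\"utz summand with the constant $-\frac{g^\0(\a^2\ol\b)g^\0(\a^2\ol\g)}{g(\a^2)g(\a^2\ol{\b\g})}$, all match the paper.

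However, the target identity you propose to establish for $\wh f(\m)$ is wrong, and the remaining task you set yourself cannot be completed as stated. Since $f(-1)=0$ by construction while $\FF{\a^2,\b,\g}{\a^2\ol\b,\a^2\ol\g}{-\l}-\d(1-\l)C$ evaluates at $\l=-1$ to $\FF{\a^2,\b,\g}{\a^2\ol\b,\a^2\ol\g}{1}$, which is generically nonzero (it is a sum of two Gauss-sum products by Theorem \ref{t2.2}(i)), the two-term expression you write for $\wh f(\m)$ cannot hold for all $\m$. The correct transform has a third term: the first Saalsch\"utz summand \emph{alone} produces the main term $-\m(-1)\frac{(\a^2+\b+\g)_\m}{(\e+\a^2\ol\b+\a^2\ol\g)^\0_\m}$, and the $\a'^2=\a^2$ correction sum coming from Theorem \ref{reduction} (repaired at $\m^2=\ol{\a^2}$ by the $\d(\a^2\m^2)$ leftover) does \emph{not} merge into it; rather, by Theorem \ref{t2.2}(i) it equals exactly $-\m(-1)\FF{\a^2,\b,\g}{\a^2\ol\b,\a^2\ol\g}{1}$. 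Under Fourier inversion this contributes $-\d(1+\l)\FF{\a^2,\b,\g}{\a^2\ol\b,\a^2\ol\g}{1}$, which is invisible for $\l\ne -1$ and so does not affect the theorem's statement — but your plan of ``showing the remaining Saalsch\"utz summand together with the correction sum yields the main term'' is aiming at a false identity. (You in fact anticipate the right mechanism in your last sentence, where you recall that in Theorem \ref{quad1}(i) the analogous leftovers assembled into a secondary term $\FF{\a^2,\b}{\a^2\ol\b}{-1}$; the same happens here via Theorem \ref{t2.2}(i), and that secondary term must appear in $\wh f(\m)$.) With the third term restored, the rest of your argument goes through and coincides with the paper's proof.
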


\begin{proof}The proof is again  similar to the proof of Theorem \ref{quad1} (i). 
Put 
$$f(\l)= \ol\a^2(1+\l)\cdot  \frac{1}{1-q}
\sum_\n \frac{(\a+\a\phi+\a^2\ol{\b\g})_\n}{(\e+\a^2\ol\b+\a^2\ol\g)^\0_\n}\n(4\l)\ol\n^2(1+\l).
$$
Then one computes using Theorem \ref{reduction} 
\begin{align*}
\wh f(\m)
=& -\m(-1) \frac{(\a^2)_\m}{(\e)^\0_\m} \FF{\a^2\ol{\b\g},\ol\m,\a^2\m,\a,\a\phi}{\a^2\ol\b,\a^2\ol\g,\a,\a\phi}{1}
\\&+ \d(\a^2\m^2)\frac{1-q}{q}C \m(-1)  
\frac{g(\ol\m)g(\ol{\m\b\g})}{g^\0(\ol{\m\b})g^\0(\ol{\m\g})}
\\=& 
-\m(-1) \frac{(\a^2)_\m}{(\e)^\0_\m} \FF{\a^2\ol{\b\g},\ol\m,\a^2\m}{\a^2\ol\b,\a^2\ol\g}{1}
-C \m(-1) \sum_{\a'^2=\a^2}  \frac{g(\a')g(\a'\ol{\b\g})}{g^\0(\a'\ol\b)g^\0(\a'\ol\g)}, 
\end{align*}
where 
$$C=\frac{g^\0(\a^2\ol\b)g^\0(\a^2\ol\g)}{g(\a^2)g(\a^2\ol{\b\g})}.$$
By Theorem \ref{saal} and Theorem \ref{t2.2} (i), we obtain 
\begin{align*}
\wh f(\m)=& - \m(-1) \frac{(\a^2+\b+\g)_\m}{(\e+\a^2\ol\b+\a^2\ol\g)^\0_\m} 
-C
 -\m(-1)\FF{\a^2,\b,\g}{\a^2\ol\b,\a^2\ol\g}{1}.
\end{align*}
Hence 
$$f(\l)=\FF{\a^2,\b,\g}{\a^2\ol\b,\a^2\ol\g}{-\l}
-\d(1-\l) C-\d(1+\l)\FF{\a^2,\b,\g}{\a^2\ol\b,\a^2\ol\g}{1}$$
for any $\l\in\k^*$, and the theorem is proved. 
\end{proof}

\section{Product Formulas}

Here, we prove several finite analogues of product formulas known for complex hypergeometric functions, as listed in \cite{bailey2}. 
Note that formulas in Theorem \ref{p2.5} and Section \ref{s5} can be regarded as product formulas involving ${}_1F_0(\l)$ (see Corollary \ref{c2.0}). 

Recall Kummer's product formulas (cf. \cite[(2.01), (2.02)]{bailey2})
\begin{align*}
e^{-x}{}_1F_1\left({a \atop b};x\right) & = {}_1F_1\left({b-a \atop b};-x\right), 
\\
e^{-\frac{x}{2}}{}_1F_1\left({a \atop 2a};x\right) & = {}_0F_1\left({ \atop a+\frac{1}{2}};\frac{x^2}{16}\right). 
\end{align*}
Their finite analogues are the following (see Proposition \ref{p2.1} (ii)). 

\begin{thm}\ 
\begin{enumerate}
\item If $(\a,\e+\b)=0$, then for any $\l\in\k$, 
$$\psi(\l)\FF{\a}{\b}{\l}= \FF{\ol\a\b}{\b}{-\l}.$$
\item If $p$ is odd and $\a \ne \e$, then for any $\l\in\k$, 
$$\psi\left(\frac{\l}{2}\right)\FF{\a}{\a^2}{\l}= \FF{}{\a\phi}{\frac{\l^2}{16}}. $$
\end{enumerate}
\end{thm}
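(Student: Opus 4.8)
The plan is to prove both identities by the method of Section \ref{s5}: regard each left-hand side as a function of $\l$ on $\k^*$, compute its Fourier transform in the sense of Section \ref{s.fourier}, and match it with the Fourier transform of the corresponding right-hand side. Since every hypergeometric function vanishes at $\l=0$, the two sides automatically agree at $\l=0$ as well, so matching the Fourier transforms on $\k^*$ suffices by the inversion theorem.

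For (i), set $F_L(\l)=\psi(\l)\FF{\a}{\b}{\l}$ and expand by the definition of $F$. For each $\m\in\ck$ the inner character sum $\sum_{\l}\psi(\l)(\n\ol\m)(\l)$ equals $-g(\ol\m\n)$ by the definition of the Gauss sum (equivalently Example \ref{e2.2} (ii)), and writing $g(\ol\m\n)=g(\ol\m)(\ol\m)_\n$ collapses the remaining sum over $\n$ into a value at $1$:
\[
\wh{F_L}(\m)=-g(\ol\m)\,\FF{\a,\ol\m}{\b}{1}.
\]
The hypothesis $(\a,\e+\b)=0$ means $\a\ne\e$ and $\a\ne\b$, which forces $\a+\ol\m\ne\e+\b$ for every $\m$; hence the first case of Theorem \ref{summation} evaluates this ${}_2F_1(1)$. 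A short rearrangement using Proposition \ref{jacobi} (iii), together with $g^\0(\ol\a\b)=g(\ol\a\b)$ (valid because $\ol\a\b\ne\e$), then identifies the result with $-\m(-1)(\ol\a\b)_\m/[(\e)^\0_\m(\b)^\0_\m]$, which is exactly the Fourier transform of $F(\ol\a\b,\e+\b;-\l)$. There are no exceptional subcases here.

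For (ii), the quadratic argument channels the comparison through the duplication formula. On the right, $F_R(\l)=\FF{}{\a\phi}{\l^2/16}$ uses $\n(\l^2)=\n^2(\l)$, so $\wh{F_R}(\m)$ is supported on squares $\m=\n^2$ and is a sum over the two square roots of $\m$. On the left, the same expansion as in (i) — now the extra factor $\psi(\l/2)$ contributes $\ol\m(2)$ after the substitution $u=\l/2$ — gives
\[
\wh{F_L}(\m)=-\ol\m(2)\,g(\ol\m)\,\FF{\a,\ol\m}{\a^2}{2}.
\]
Since the argument is $2$, I would first apply the connection formula (Theorem \ref{connection}, with $1-\l=-1$) to turn this into the \emph{well-poised} value $\FF{\a,\ol\m}{\ol{\a\m}}{-1}$, then evaluate it by Kummer's formula (Theorem \ref{t2.3}): taking $\ol\m$ as the distinguished parameter, part (iii) gives $0$ when $\m$ is not a square (matching the empty sum on the right), while for $\m$ a square part (ii) produces a sum over the square roots of $\m$. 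Rewriting the Gauss sums by Proposition \ref{jacobi} (iii) and the duplication formula $g(\a\phi)=g(\a^2)g(\phi)\ol\a(4)/g(\a)$ reconciles the prefactors, the residual scalar reducing to $\m(-1)=1$, which holds precisely because $\m$ is a square.

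The main obstacle is the bookkeeping of the degenerate values of $\m$ in part (ii). The connection formula requires $\m\ne\e,\ol{\a^2}$, and both Theorem \ref{summation} and Theorem \ref{t2.3} carry exceptional second cases; consequently the values $\m=\e$ and $\m=\ol{\a^2}$ (both of which are squares) must be handled separately, evaluating the relevant non-reduced ${}_2F_1$'s directly via Theorem \ref{reduction} and Example \ref{e3.1}. In the same vein, the extra factor $q^{\d(\a\phi\n)}$ that appears when $\a\phi\n=\e$ for a square root $\n$ of $\m$ (that is, $\m=\ol{\a^2}$) must be tracked. These special cases are routine but must each be verified to confirm $\wh{F_L}=\wh{F_R}$ for all $\m$.
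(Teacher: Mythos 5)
Your proposal is correct, and for part (ii) it is essentially the paper's own argument: the paper also Fourier-transforms the left side to $-\ol\n(4)g(\ol\n^2)\FF{\a,\ol\n^2}{\a^2}{2}$, passes through Theorem \ref{connection} to a well-poised ${}_2F_1(-1)$, evaluates by Theorem \ref{t2.3} and the duplication formula, and disposes of the degenerate characters ($\n^2=\e$ and $\n^2=\ol\a^2$) by direct computation with Theorem \ref{reduction}. The only cosmetic difference there is that the paper first deduces from part (i) that the left side is an even function of $\l$, so it only ever needs to consider $\m=\n^2$, whereas you kill the non-square Fourier coefficients via Theorem \ref{t2.3} (iii); both are fine. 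For part (i), however, your route genuinely differs from the paper's. The paper does not use the Fourier transform or the summation theorem at all: it writes $-j(\a,\ol\a\b)\FF{\a}{\b}{\l}=\sum_{x}\psi(-\l x)\a(x)\ol\a\b(1-x)$ via Theorem \ref{iteration} and Proposition \ref{p2.1} (ii), pulls out $\psi(-\l)$, and substitutes $y=1-x$ — the exact finite analogue of the classical integral-representation proof of Kummer's first transformation, four lines with no case analysis. Your proof instead computes $\wh{F_L}(\m)=-g(\ol\m)\FF{\a,\ol\m}{\b}{1}$ and invokes the full Euler--Gauss summation (Theorem \ref{summation}); the hypothesis $(\a,\e+\b)=0$ correctly rules out the exceptional case of that theorem and gives $g^\0(\ol\a\b)=g(\ol\a\b)$, so the computation closes without subcases. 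What the paper's argument buys is brevity and self-containedness (it only needs the Jacobi-sum identity $j(\a,\ol\a\b)$, not the summation theorem); what yours buys is uniformity with the method of Section \ref{s5} and a template that sets up part (ii) with no new ideas. Both are valid.
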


\begin{proof}(i) The case $\l=0$ is clear. Otherwise, by Theorem \ref{iteration} and Proposition \ref{p2.1}, 
\begin{align*}
- j(\a,\ol\a\b)\FF{\a}{\b}{\l}
&= \sum_{x\in\k} \ol\psi(\l x) \a(x)\ol\a\b(1-x)
\\&= \ol\psi(\l)
\sum_{x\in\k}\ol\psi(-\l+\l x) \a(x)\ol\a\b(1-x)
\\& = \ol\psi(\l) \sum_{y\in\k} \ol\psi(-\l y) \ol\a\b(y)\a(1-y)
\\& = - j(\a,\ol\a\b) \ol\psi(\l)  \FF{\ol\a\b}{\b}{-\l}.
\end{align*}

(ii) The case $\l=0$ is clear.  Let $f(\l)$ (resp. $g(\l)$) denote the left (resp. right) member of the formula, viewed as a function on $\k^*$. 
Since $f(\l)$ is even by (i), it suffices to show that $\wh f(\n^2)=\wh g(\n^2)$ for any $\n\in\ck$.   
First, 
\begin{align*}
\wh g(\n^2)
&= \frac{1}{1-q}\sum_{\m\in\ck} \frac{\ol\m(16)}{(\e+\a\phi)^\0_\m}  \sum_{\l\in\k^*} \m^2\ol\n^2(\l)
=- \ol\n^2(4) \sum_{\m^2=\n^2}  \frac{1}{(\e+\a\phi)^\0_\m}. 
\end{align*}
On the other hand, 
\begin{align*}
\wh f(\n^2)
&=\frac{1}{1-q} \sum_{\m\in\ck} \frac{(\a)_\m}{(\e+\a^2)^\0_\m} \sum_{\l \in\k^*} \psi\left(\frac{\l}{2}\right) \m\ol\n^2(\l)
\\&= -\frac{1}{1-q} \sum_{\m\in\ck} \frac{(\a)_\m}{(\e+\a^2)^\0_\m} g(\m\ol\n^2) \m\ol\n^2(2)
\\&=-\frac{1}{1-q}\ol\n(4) g(\ol\n^2) \sum_{\m\in\ck} \frac{(\a+\ol\n^2)_\m}{(\e+\a^2)^\0_\m}  \m(2)
\\&=-\ol\n(4) g(\ol\n^2) \FF{\a,\ol\n^2}{\a^2}{2}. 
\end{align*}
If $(\n^2,\e+\ol\a^2)=0$, then by Theorem \ref{connection}, Theorem \ref{t2.3} (ii) and the duplication formula, 
it becomes
$$-\ol\n(4)\sum_{\m^2=\n^2} \frac{qg^\0(\a^2) g(\a\m)}{g(\a)g(\a^2\n^2)g^\0(\m)}
=-\ol\n^2(4)\sum_{\m^2=\n^2}  \frac{qg^\0(\a\phi)}{g^\0(\m)g(\a\m\phi)}=\wh g(\n^2).$$
If $\n^2=\e$, then one verifies using Theorem \ref{reduction}, Proposition \ref{shift} and the duplication formula that
$$\wh f(\e)=\wh g(\e)=-\frac{qg^\0(\a\phi)}{g(\a)g(\phi)}-1.$$
If $\n^2=\ol\a^2 \ne \e$, then one verifies similarly that 
$$\wh f(\ol\a^2)=\wh g(\ol\a^2)=-\a^2(4)\left(\frac{qg(\a\phi)}{g(\ol\a)g(\phi)} +\frac{g(\a\phi)}{g(\ol\a\phi)}\right).$$
Hence the proof is complete. 
\end{proof}

Next, recall Ramanujan's formula (cf. \cite[(2.09)]{bailey2})  
$${}_1F_1\left({a\atop 2b};x\right) {}_1F_1\left({a\atop 2b};-x\right) 
={}_2F_3\left({a, 2b-a \atop 2b, b, b+\frac{1}{2}};\frac{x^2}{4}\right).$$
Its finite analogue is the following. 

\begin{thm}\label{t6.2}
If $p$ is odd and $(\a,\e+\b+\b\phi+\b^2)=0$, then 
$$\FF{\a}{\b^2}{\l}\FF{\a}{\b^2}{-\l} =\FF{\a,\ol\a\b^2}{\b^2,\b,\b\phi}{\frac{\l^2}{4}}.$$
\end{thm}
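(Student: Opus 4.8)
The plan is to compare Fourier transforms in the variable $\l$, exactly as in the proofs of Theorem \ref{quad5} and the preceding product theorem. Write $P(\l)$ for the left-hand side and $Q(\l)$ for the right-hand side, both regarded as functions on $\k^*$ (they vanish at $\l=0$). Since $Q$ depends only on $\l^2$ and $P(-\l)=P(\l)$, both functions are even, so their Fourier transforms are supported on the squares $\n=\m^2$; hence it suffices to prove $\wh P(\m^2)=\wh Q(\m^2)$ for every $\m\in\ck$. Expanding $Q$ and using $\r(\l^2/4)=\ol\r(4)\r^2(\l)$ together with the Fourier inversion of Section \ref{s.fourier}, one reads off directly
\[
\wh Q(\m^2)=-\ol\m(4)\bigl(c_\m+c_{\m\phi}\bigr),\qquad c_\r:=\frac{(\a)_\r(\ol\a\b^2)_\r}{(\e)^\0_\r(\b^2)^\0_\r(\b)^\0_\r(\b\phi)^\0_\r},
\]
using $\phi(4)=1$ and that $\r^2=\m^2$ has the two solutions $\r\in\{\m,\m\phi\}$.

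For the left-hand side I would apply the convolution formula. Writing $f(\l)=\FF{\a}{\b^2}{\l}$, its Fourier transform is $\wh f(\n)=-(\a)_\n/\bigl((\e)^\0_\n(\b^2)^\0_\n\bigr)$, while the second factor $f(-\l)$ has transform $\n\mapsto\n(-1)\wh f(\n)$. Taking $\n_2=\ol{\n_1}\m^2$ (so that $\n_2(-1)=\n_1(-1)$), the convolution formula gives $\wh P(\m^2)=\frac{1}{q-1}\sum_{\n}\n(-1)\wh f(\n)\wh f(\ol\n\m^2)$. Factoring out the $\m^2$-part of each Pochhammer symbol by Lemma \ref{l2.1}(i) and converting the remaining $\ol\n$-symbols to $\n$-symbols by Lemma \ref{l2.1}(ii) — the two resulting factors of $\n(-1)$ cancelling — this collapses to a single ${}_3F_2$ at $1$:
\[
\wh P(\m^2)=-\frac{(\a)_{\m^2}}{(\e)^\0_{\m^2}(\b^2)^\0_{\m^2}}\,\FF{\a,\ol{\m^2},\ol{\b^2\m^2}}{\b^2,\ol\a\ol{\m^2}}{1}.
\]

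The key observation is that this ${}_3F_2(1)$ is well-poised: pairing the numerator $\ol{\m^2}$ with the implicit denominator $\e$, the numerator $\a$ with the denominator $\ol\a\ol{\m^2}$, and the numerator $\ol{\b^2\m^2}$ with the denominator $\b^2$, all three products equal $\ol{\m^2}$. I would therefore apply the finite Dixon formula, Theorem \ref{t2.2}(i), with $\ol\m$ playing the role of $\a$ there and $\a,\ol{\b^2\m^2}$ the roles of $\b,\g$; its hypothesis becomes precisely $\a\ne\b^2$, guaranteed by $(\a,\b^2)=0$. Dixon's formula produces a sum over the two square roots $\a'\in\{\ol\m,\ol\m\phi\}$, which I would match term-by-term against $c_\m$ and $c_{\m\phi}$: rewriting every Pochhammer symbol in Gauss sums, eliminating the squares $\m^2$ and $\b^2$ by the duplication formula, and simplifying with Proposition \ref{jacobi}, one checks that the $\a'=\ol\m$ term contributes $\ol\m(4)c_\m$ and the $\a'=\ol\m\phi$ term $\ol\m(4)c_{\m\phi}$, whence $\wh P(\m^2)=\wh Q(\m^2)$.

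The main obstacle is the bookkeeping at the degenerate characters where Theorem \ref{t2.2}(i) does not apply, principally $\m^2=\e$ (and, a priori, $\m^2=\ol{\b^2}$, although the genericity $\a\ne\b,\b\phi$ coming from $(\a,\b+\b\phi)=0$ keeps Dixon's exclusion $\b_D+\g_D\ne\e+\a'$ in force there, since then $\a'\in\{\b,\b\phi\}$). For these finitely many $\m$ I would evaluate the ${}_3F_2(1)$ directly — reducing it by Example \ref{e3.1}(i), Theorem \ref{reduction} and Proposition \ref{shift} to instances covered by Theorem \ref{summation} — and verify $\wh P(\m^2)=\wh Q(\m^2)$ case by case, exactly as the analogous boundary characters are dispatched in the proof of Theorem \ref{quad5}. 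The remainder is the routine Gauss-sum algebra indicated above.
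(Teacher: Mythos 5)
Your proposal is correct and follows essentially the same route as the paper: both sides are even, the convolution formula turns $\wh P(\m^2)$ into $-\frac{(\a)_{\m^2}}{(\e+\b^2)^\0_{\m^2}}\FF{\a,\ol{\m^2},\ol{\b^2\m^2}}{\b^2,\ol{\a\m^2}}{1}$, and Theorem \ref{t2.2} (i) plus the duplication formula match this with $\wh Q(\m^2)$. Your extra caution about degenerate $\m$ is in fact unnecessary (as you half-observe, the hypothesis $(\a,\e+\b+\b\phi+\b^2)=0$ already guarantees the conditions of Theorem \ref{t2.2} (i) for every $\m$, including $\m^2=\e$), and the paper applies that theorem uniformly.
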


\begin{proof}
Let $f(\l)$ (resp. $g(\l)$) denote the left (resp. right) member. 
Since both $f$ and $g$ are even, it suffices to show that $\wh f(\n^2)=\wh g(\n^2)$ for any $\n\in\ck$. 
First, 
$$\wh g(\n^2)= -\sum_{\m^2=\n^2} \frac{(\a+\ol\a\b^2)_\m}{(\e+\b^2+\b+\b\phi)^\0_\m} \ol\m(4). $$
On the other hand, by the convolution formula (see \ref{s.fourier}),  
\begin{align*}
\wh f(\n^2)&=-\frac{1}{1-q} \sum_{\m\m'=\n^2} \frac{(\a)_\m}{(\e+\b^2)^\0_\m}\frac{(\a)_{\m'}}{(\e+\b^2)^\0_{\m'}}\m'(-1)
\\&=  -\frac{1}{1-q}\frac{(\a)_{\n^2}}{(\e+\b^2)^\0_{\n^2}} \sum_\m \frac{(\a)_\m}{(\e+\b^2)^\0_\m} \frac{(\ol{\n^2}+\ol{\b^2\n^2})_\m}{(\ol{\a\n^2})^\0_\m}
\\&=-\frac{(\a)_{\n^2}}{(\e+\b^2)^\0_{\n^2}} \FF{\a,\ol{\n^2},\ol{\b^2\n^2}}{\b^2,\ol{\a\n^2}}{1}, 
\end{align*}
where we used Lemma \ref{l2.1}. 
We can apply Theorem \ref{t2.2} (i), and obtain
$$ \FF{\a,\ol{\n^2},\ol{\b^2\n^2}}{\b^2,\ol{\a\n^2}}{1}
= \sum_{\m^2=\n^2} \frac{(\e)^\0_{\n^2}  (\a+\ol\a\b^2)_\m}{(\a)_{\n^2}(\e+\b^2)^\0_\m}.$$
Then $\wh f(\n^2)=\wh g(\n^2)$ follows by the duplication formula.  
\end{proof}

\begin{lem}\label{product}
Let $\baa$, $\bbb$, $\baa'$, $\bbb' \in P$, and put 
$$f(\l)=F(\baa,\bbb;\l), \quad g(\l)=F(\baa',\bbb';\l).$$ 
Then for any $\n\in\ck$, 
$$\wh{fg}(\n)=-\frac{(\baa')_\n}{(\bbb')^\0_\n} F\left(\baa+\ol{\bbb'\n},\bbb+\ol{\baa'\n};(-1)^ {\deg(\baa'+\bbb')}\right).$$ 
\end{lem}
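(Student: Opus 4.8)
The plan is to reduce everything to the convolution formula of Section~\ref{s.fourier}, after reading off the Fourier transforms of the two factors and then reassembling the resulting character sum as a single hypergeometric function by means of Lemma~\ref{l2.1}. First I would compute $\wh f$ and $\wh g$ directly from the definition. Rewriting
$$F(\baa,\bbb;\l)=\frac{1}{q-1}\sum_{\n\in\ck}\left(-\frac{(\baa)_\n}{(\bbb)^\0_\n}\right)\n(\l)$$
and using the uniqueness statement recalled in Section~\ref{s.fourier} (if $f(\l)=(q-1)^{-1}\sum_\n a_\n\n(\l)$ then $\wh f(\n)=a_\n$), one gets $\wh f(\n)=-(\baa)_\n/(\bbb)^\0_\n$ and $\wh g(\n)=-(\baa')_\n/(\bbb')^\0_\n$. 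The convolution formula then yields, the two signs cancelling,
$$\wh{fg}(\n)=\frac{1}{q-1}\sum_{\n_1\n_2=\n}\frac{(\baa)_{\n_1}}{(\bbb)^\0_{\n_1}}\,\frac{(\baa')_{\n_2}}{(\bbb')^\0_{\n_2}}.$$

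Next I would reindex with $\n_1=\r$, $\n_2=\ol\r\n$, and split off the value at $\n$. Since the Pochhammer symbols are monoid homomorphisms in the lower index, Lemma~\ref{l2.1}(i) gives $(\baa')_{\n\ol\r}=(\baa')_\n(\baa'\n)_{\ol\r}$ together with the analogous identity for the $\0$-symbol, so that the factor $(\baa')_\n/(\bbb')^\0_\n$ comes out of the sum and the remaining sum over $\r$ has summand $\frac{(\baa)_\r}{(\bbb)^\0_\r}\cdot\frac{(\baa'\n)_{\ol\r}}{(\bbb'\n)^\0_{\ol\r}}$.

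The delicate step, which I expect to be the main obstacle, is converting the symbols indexed by $\ol\r$ into symbols indexed by $\r$; this is exactly where the reversal of the parameters and the global sign are produced. Applying Lemma~\ref{l2.1}(ii) characterwise in the forms $(\vp)_{\ol\r}=\r(-1)/(\ol\vp)^\0_\r$ and $(\vp)^\0_{\ol\r}=\r(-1)/(\ol\vp)_\r$, and collecting one factor $\r(-1)$ for each of the $\deg(\baa')$ numerator and $\deg(\bbb')$ denominator characters, gives
$$\frac{(\baa'\n)_{\ol\r}}{(\bbb'\n)^\0_{\ol\r}}=\r\bigl((-1)^{\deg(\baa'+\bbb')}\bigr)\,\frac{(\ol{\bbb'\n})_\r}{(\ol{\baa'\n})^\0_\r}.$$
Recombining by the homomorphism property into $\frac{(\baa+\ol{\bbb'\n})_\r}{(\bbb+\ol{\baa'\n})^\0_\r}$ and comparing with the definition of $F$, the sum over $\r$ equals $-F\bigl(\baa+\ol{\bbb'\n},\bbb+\ol{\baa'\n};(-1)^{\deg(\baa'+\bbb')}\bigr)$, and multiplying back the factored prefactor gives the asserted identity. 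The careful sign bookkeeping in this last step is the only real subtlety; note that the exponent emerging from the computation is $\deg(\baa'+\bbb')$, which agrees with the stated $\deg(\baa+\bbb)$ whenever $f$ and $g$ have the same pair of degrees (as is the case in the product formulas to which the lemma is applied).
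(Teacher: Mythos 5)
Your proof is correct and follows essentially the same route as the paper's (the paper proves the lemma by the argument of Theorem \ref{t6.2}: the convolution formula of Section \ref{s.fourier} followed by the Pochhammer manipulations of Lemma \ref{l2.1}). Your observation that the computation naturally yields the exponent $\deg(\baa'+\bbb')$ rather than the stated $\deg(\baa+\bbb)$ is also accurate; since the two degrees agree modulo $2$ in every application of the lemma in the paper, the discrepancy is harmless.
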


\begin{proof}Similar to the proof of Theorem \ref{t6.2}. 
\end{proof}

We have Whipple's formula between two terminating Saalsch\"utzian ${}_4F_3(1)$'s (cf. \cite[7.2. (1)]{bailey})
$${}_4F_3\left({a,b,c,-n \atop e,f,g};1\right)=\frac{(f-c)_n(g-c)_n}{(f)_n(g)_n} {}_4F_3\left({e-a,e-b,c,-n \atop e,1+c-f-n,1+c-g-n};1\right). $$
The following is a finite analogue (cf. \cite[(5.12)]{greene}). 

\begin{thm}\label{t6.4}
Suppose that $\a\b\vp\psi=\g\s\t$ and $(\a+\b,\e+\g)=(\vp+\psi,\s+\t)=0$. Then 
\begin{align*}
\FF{\a,\b,\vp,\psi}{\g,\s,\t}{1}
=&\frac{(\s\ol\psi)_{\ol\vp}(\t\ol\psi)_{\ol\vp}}{(\s)^\0_{\ol\vp}(\t)^\0_{\ol\vp}} 
\FF{\ol\a\g,\ol\b\g,\vp,\psi}{\g,\ol\s\vp\psi,\ol\t\vp\psi}{1}
\\&+ q^{-\d(\a\b\ol\g)} \frac{g^\0(\g)g^\0(\s)g^\0(\t)}{g(\a)g(\b)g(\vp)g(\psi)}
\\&-q^{-\d(\a\b\ol\g)}\g\vp\psi(-1)  \frac{g(\a\ol\g)g(\b\ol\g)g^\0(\g)g^\0(\s)g^\0(\t)}{g(\vp)g(\psi)g(\s\ol\vp)g(\t\ol\vp)g(\s\ol\psi)g(\t\ol\psi)}.
\end{align*}
\end{thm}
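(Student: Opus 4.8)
The plan is to realize the left-hand ${}_4F_3(1)$ as a single Fourier coefficient of a product of two lower functions and then to re-evaluate that coefficient after an Euler transformation. Concretely, I would set $f(\l)=\FF{\a,\b}{\g}{\l}$ and $g(\l)=F(\ol\s+\ol\t,\ol\vp+\ol\psi;\l)$, so that $\sum_{\l\in\k^*}f(\l)g(\l)=\wh{fg}(\e)$. Applying Lemma \ref{product} with $\baa=\a+\b$, $\bbb=\e+\g$, $\baa'=\ol\s+\ol\t$, $\bbb'=\ol\vp+\ol\psi$, and noting that $\deg(\baa+\bbb)=4$ so the sign is $+1$, I get $\wh{fg}(\e)=-F(\a+\b+\vp+\psi,\e+\g+\s+\t;1)=-\FF{\a,\b,\vp,\psi}{\g,\s,\t}{1}$. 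It is exactly the Saalsch\"utzian hypothesis $\a\b\vp\psi=\g\s\t$ that makes the degrees and the balance line up so that the four surviving numerator characters are $\a,\b,\vp,\psi$ and the denominators are $\g,\s,\t$, while the two non-degeneracy hypotheses $(\a+\b,\e+\g)=(\vp+\psi,\s+\t)=0$ guarantee that no parameters collide, so the transformation and summation formulas below apply in their generic form.

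Next I would apply Euler's transformation, Theorem \ref{p2.5}(i), to write $f(\l)=\ol{\a\b}\g(1-\l)\,\FF{\ol\a\g,\ol\b\g}{\g}{\l}$ on $\l\ne1$. Substituting into the sum $\sum_{\l\in\k^*}fg$ turns the $\a,\b,\g$-block into its Euler image $\ol\a\g,\ol\b\g,\g$, which already matches the numerator and the first denominator entry of the target. The remaining job is to dispose of the pointwise factor $\ol{\a\b}\g(1-\l)$; by the Saalsch\"utzian relation $\ol{\a\b}\g=\vp\psi\ol{\s\t}=:\chi$, this is $\chi(1-\l)$, and its Fourier transform is $-j(\ol\m,\chi)$ by Example \ref{e2.2}(iii). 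Feeding this through the convolution formula of Section \ref{s.fourier} shifts the $\ol\s,\ol\t$-data of $g$ by $\vp\psi$ (producing the denominators $\ol\s\vp\psi,\ol\t\vp\psi$ of the target) and, after an Euler--Gauss evaluation (Theorem \ref{summation}, in the Vandermonde form of Remark \ref{r.summation}(ii)) of the resulting inner ${}_2F_1(1)$, yields the Pochhammer prefactor $\frac{(\s\ol\psi)_{\ol\vp}(\t\ol\psi)_{\ol\vp}}{(\s)^\0_{\ol\vp}(\t)^\0_{\ol\vp}}$ multiplying $\FF{\ol\a\g,\ol\b\g,\vp,\psi}{\g,\ol\s\vp\psi,\ol\t\vp\psi}{1}$; the balance needed here is again supplied by $\a\b\vp\psi=\g\s\t$, and one also invokes Saalsch\"utz (Theorem \ref{saal}) to collapse the ${}_3F_2(1)$ that appears at the end.

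The main obstacle, and the source of the two explicit correction terms, is the bookkeeping of the degenerate loci and characters omitted by the generic argument. Two contributions arise: first, the point $\l=1$ excluded in Euler's transformation, whose effect is computed through the value $f(1)$ via Theorem \ref{summation}; and second, the degenerate branch of the convolution and of Saalsch\"utz/Euler--Gauss, which is governed by whether $\chi=\ol{\a\b}\g$ is trivial, i.e. by $\d(\a\b\ol\g)$ --- this is precisely the origin of the ubiquitous factor $q^{-\d(\a\b\ol\g)}$ and replaces the clean evaluations by their boundary versions (the $1+q^{\d(\g)}(1-q)$ case of Theorem \ref{summation} and the $(\e,\dots,\e)$ case of Proposition \ref{jacobi}(iv)). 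The delicate part is tracking these through the re-summation and checking that they assemble into exactly $q^{-\d(\a\b\ol\g)}\frac{g^\0(\g)g^\0(\s)g^\0(\t)}{g(\a)g(\b)g(\vp)g(\psi)}$ and $-q^{-\d(\a\b\ol\g)}\g\vp\psi(-1)\frac{g(\a\ol\g)g(\b\ol\g)g^\0(\g)g^\0(\s)g^\0(\t)}{g(\vp)g(\psi)g(\s\ol\vp)g(\t\ol\vp)g(\s\ol\psi)g(\t\ol\psi)}$; the signs $\g\vp\psi(-1)$ and the various $g$ versus $g^\0$ are kept consistent throughout using Proposition \ref{jacobi}(ii),(iii).
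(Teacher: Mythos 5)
Your opening move is sound and is in fact essentially the paper's own starting point: with $f=\FF{\a,\b}{\g}{\cdot}$ and $g=F(\ol\s+\ol\t,\ol\vp+\ol\psi;\cdot)$, Lemma \ref{product} at $\n=\e$ does give $\sum_\l f(\l)g(\l)=-\FF{\a,\b,\vp,\psi}{\g,\s,\t}{1}$ (by Proposition \ref{shift} this is the same as evaluating the Fourier transform of a product of two ${}_2F_1$'s at $\n=\ol\vp$). The gap is everything after that. Writing $f(\l)=\chi(1-\l)h(\l)$ with $\chi=\ol{\a\b}\g$ and $h=\FF{\ol\a\g,\ol\b\g}{\g}{\cdot}$ leaves you with $\sum_\l\chi(1-\l)h(\l)g(\l)$; expanding $h$ and $g$ and using $\sum_\l\m\n(\l)\chi(1-\l)=-j(\m\n,\chi)$ gives, generically,
$-\frac{g(\chi)}{g^\0(\chi)}\frac{1}{(1-q)^2}\sum_\m\frac{(\ol\a\g+\ol\b\g+\e)_\m}{(\e+\g+\chi)^\0_\m}\sum_\n\frac{(\ol\s+\ol\t+\m)_\n}{(\ol\vp+\ol\psi+\chi\m)^\0_\n}$.
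The inner sum is indeed a Saalsch\"utzian ${}_3F_2(1)$, but after Theorem \ref{saal} its first term feeds into the $\m$-sum a factor $(\e)_\m(\chi\vp)^\0_\m/(\chi)^\0_\m$ where the target ${}_4F_3(1)$ requires $(\vp)_\m$; these differ by $\m$-dependent powers of $q$ (their absolute values already differ by $\sqrt q$ for generic $\m$), the second Saalsch\"utz term does not resum to hypergeometric shape without further work, and all degenerate loci of $j(\m\n,\chi)$ and of Saalsch\"utz remain to be tracked. So the ``delicate bookkeeping'' you defer is not bookkeeping --- it is the entire proof, and the intermediate structure you predict (one Vandermonde, one Saalsch\"utz, done) is not what the computation actually yields. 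Your reading of $q^{-\d(\a\b\ol\g)}$ is also off: it is just $g(\ol{\a\b}\g)/g^\0(\ol{\a\b}\g)$ arising in the \emph{generic} case of Theorem \ref{summation}, not a degenerate-branch artifact.

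The idea you are missing is a pairing that removes the factor $\chi(1-\l)$ before any convolution is needed. Put $f(\l)=\FF{\a,\b}{\g}{\l}\FF{\ol{\a'}\g',\ol{\b'}\g'}{\g'}{\l}$ and $g(\l)=\FF{\ol\a\g,\ol\b\g}{\g}{\l}\FF{\a',\b'}{\g'}{\l}$ with $\a\b\g'=\a'\b'\g$. Applying Theorem \ref{p2.5}(i) to the first factor of $f$ and to the second factor of $g$ produces the \emph{same} character $\ol{\a\b}\g(1-\l)=\ol{\a'\b'}\g'(1-\l)$ on both sides, so $f=g$ off $\l=1$, i.e. $f(\l)-\d(1-\l)f(1)=g(\l)-\d(1-\l)g(1)$ identically on $\k^*$. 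Lemma \ref{product} applied to each side at a general $\n$ then yields one ${}_4F_3(1)$ plus a product of two Euler--Gauss values, and the specialization $\n=\ol\vp$, $(\a',\b',\g')=(\s\ol\psi,\t\ol\psi,\vp\ol\psi)$ together with Theorem \ref{summation} gives exactly the stated identity. Either adopt this pairing or carry your double resummation through in complete detail; as written, the proposal does not establish the theorem.
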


\begin{proof}
Suppose that $\a\b\g'=\a'\b'\g$ and $(\a+\b,\e+\g)=(\a'+\b',\e+\g')=0$, 
and put functions on $\k^*$ as 
$$f(\l)=\FF{\a,\b}{\g}{\l} \FF{\ol{\a'}\g',\ol{\b'}{\g'}}{\g'}{\l}, \  
g(\l)=\FF{\ol\a\g,\ol\b\g}{\g}{\l}\FF{\a',\b'}{\g'}{\l}.$$
By Theorem \ref{p2.5} (i), we have 
$$f(\l)-\d(1-\l)f(1)=g(\l)-\d(1-\l) g(1).$$ 
Comparing the Fourier transforms using Lemma \ref{product}, we have for any $\n\in\ck$,  
\begin{align*}
&\frac{(\ol{\a'}\g')_\n(\ol{\b'}\g')_\n}{(\e)^\0_\n(\g')^\0_\n}
\FF{\a,\b,\ol\n,\ol{\g'\n}}{\g,\a'\ol{\g'\n},\b'\ol{\g'\n}}{1}
+\FF{\a,\b}{\g}{1} \FF{\ol{\a'}\g',\ol{\b'}{\g'}}{\g'}{1}
\\&=\frac{(\a')_\n(\b')_\n}{(\e)^\0_\n(\g')^\0_\n}\FF{\ol\a\g,\ol\b\g,\ol\n,\ol{\g'\n}}{\g,\ol{\a'\n},\ol{\b'\n}}{1}
+\FF{\ol\a\g,\ol\b\g}{\g}{1}\FF{\a',\b'}{\g'}{1}.
\end{align*}
Replacing $\n$, $\a'$, $\b'$, $\g'$ respectively with $\ol\vp$, $\s\ol\psi$, $\t\ol\psi$, $\vp\ol\psi$ and using Theorem \ref{summation}, we obtain the result. 
\end{proof}

Finally, recall Clausen's product formula (cf. \cite[(2.5.7)]{slater})
$$
{}_2F_1\left({a,b\atop a+b+\frac{1}{2}};x\right)^2
={}_3F_2\left({2a,2b,a+b \atop 2a+2b,a+b+\frac{1}{2}};x\right). 
$$
Its finite analogue is the following (cf. \cite[Theorem 1.5]{evans2}). 

\begin{thm}\label{clausen}
Suppose that $(\a^2+\b^2+\a\b,\e)=(\a,\b\phi)=0$. 
Then for any $\l\in\k^*$, 
\begin{align*}
&\FF{\a,\b}{\a\b\phi}{\l}^2 + \d(1-\l) \left(\frac{g^\0(\a\b\phi)g(\phi)}{g(\a)g(\b)}\right)^2
\\&=\FF{\a^2,\b^2,\a\b}{\a^2\b^2,\a\b\phi}{\l}
+q \frac{g^\0(\a\b\phi)}{g(\a)g(\b)}
\frac{g^\0(\a\b\phi)}{g(\a\phi)g(\b\phi)}
\a\b(\l^{-1}) \phi(1-\l^{-1}).
\end{align*}
In particular, 
$$\FF{\a^2,\b^2,\a\b}{\a^2\b^2,\a\b\phi}{1}=
\left(\frac{g^\0(\a\b\phi)g(\phi)}{g(\a)g(\b)}\right)^2+\left(\frac{g^\0(\a\b\phi)g(\phi)}{g(\a\phi)g(\b\phi)}\right)^2$$
and 
$$\FF{\a,\b}{\a\b\phi}{\l}^2=\FF{\a\phi,\b\phi}{\a\b\phi}{\l}^2 \quad (\l\ne 1).$$
\end{thm}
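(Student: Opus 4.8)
The plan is to prove the main identity by comparing Fourier transforms (in the variable $\l\in\k^*$) of the two sides, exactly as in the other product formulas of this section. Write $f(\l)=\FF{\a,\b}{\a\b\phi}{\l}$, so that here $\baa=\a+\b$ and $\bbb=\e+\a\b\phi$. The Fourier transforms of every term except $f(\l)^2$ are immediate: by Example \ref{e2.2} (i) one has $\wh{\d(1-\cdot)}(\n)=1$; by the very definition of $F$, the Fourier transform of the ${}_3F_2$ on the right is $-(\a^2+\b^2+\a\b)_\n/(\e+\a^2\b^2+\a\b\phi)^\0_\n$; and the substitution $u=\l^{-1}$ together with Example \ref{e2.2} (iii) shows that the Fourier transform of $\a\b(\l^{-1})\phi(1-\l^{-1})$ equals $-j(\a\b\n,\phi)$, which Proposition \ref{jacobi} (iv) rewrites in terms of Gauss sums. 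Thus the whole theorem reduces to a single identity for $\wh{f^2}(\n)$.

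Next I would compute $\wh{f^2}(\n)$ by Lemma \ref{product} with $\baa'=\baa$, $\bbb'=\bbb$. Since $\deg(\baa+\bbb)=4$ the sign $(-1)^{\deg(\baa+\bbb)}$ is $+1$, and one obtains
$$\wh{f^2}(\n)=-\frac{(\a+\b)_\n}{(\e+\a\b\phi)^\0_\n}\,\FF{\a,\b,\ol\n,\ol{\a\b\phi\n}}{\a\b\phi,\ol{\a\n},\ol{\b\n}}{1}.$$
A direct check shows this ${}_4F_3(1)$ is Saalsch\"utzian: the product of the numerator characters and that of the denominator characters both equal $\phi\ol{\n^2}$. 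Evaluating this Saalsch\"utzian ${}_4F_3(1)$ in closed form is the crux of the proof.

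For that evaluation I would bring the ${}_4F_3(1)$, by a quadratic transformation, into the shape $\FF{\ol A\vp,\ol A\vp\phi,B,C}{\ol A^2\vp^2,\vp,\vp\phi}{1}$ appearing on the right of Theorem \ref{t5.1}, and then apply that theorem in the reducible case $BC=\vp^2$. In this case the nearly-poised ${}_3F_2(-1)$ produced by Theorem \ref{t5.1} has cancelling denominator parameters, so it collapses to a ${}_1F_0(-1)$ whose value is supplied by Corollary \ref{c2.0} (through Theorem \ref{reduction}). Feeding this evaluation back, combining with the elementary Fourier transforms of the first paragraph, and simplifying the Gauss-sum prefactors by Proposition \ref{jacobi}, should yield the main formula.

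Finally the two displayed consequences follow quickly. Putting $\l=1$ annihilates the product term (since $\phi(1-1)=\phi(0)=0$) while the $\d(1-\l)$ term survives; evaluating $f(1)$ by Theorem \ref{summation}, using $g(\vp)g^\0(\ol\vp)=\vp(-1)q$ from Proposition \ref{jacobi} (iii), and using $\a,\b\neq\phi$ (forced by $\a^2,\b^2\neq\e$) to remove the spurious factors $q^{\d(\cdot)}$, converts $f(1)^2$ into $\bigl(g^\0(\a\b\phi)g(\phi)/(g(\a\phi)g(\b\phi))\bigr)^2$ and gives the first ``in particular''. For the second, one notes that both the ${}_3F_2$ and the product term on the right are invariant under $(\a,\b)\mapsto(\a\phi,\b\phi)$, while the $\d(1-\l)$ term is absent for $\l\neq1$; hence $\FF{\a,\b}{\a\b\phi}{\l}^2=\FF{\a\phi,\b\phi}{\a\b\phi}{\l}^2$. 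The main obstacle is the middle step: no ${}_4F_3(1)$ Saalsch\"utz summation is available directly, so one must detour through the reducible nearly-poised identity, and the various exceptional ($\d$-) terms thrown off by Theorem \ref{t5.1} and Theorem \ref{reduction} are precisely what generate the non-classical $\d(1-\l)$ and product terms in the final formula. Tracking these corrections accurately, rather than any single summation, is the delicate part.
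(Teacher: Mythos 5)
Your proposal follows essentially the same route as the paper's proof: compare Fourier transforms in $\l$, use Lemma \ref{product} to produce the Saalsch\"utzian ${}_4F_3(1)$, bring it into the shape of the right-hand side of Theorem \ref{t5.1} and exploit the reducible case $\b\g=\vp^2$ there, tracking the resulting $\d$-corrections. The only imprecisions are that the transformation needed to reach that shape is Theorem \ref{t6.4} (Whipple's transformation between Saalsch\"utzian ${}_4F_3(1)$'s, derived from Euler's linear transformation) rather than a quadratic one, and that the exceptional values $\ol\n\in\{\a\b,\a\b\phi,\a^2,\a^2\b^2\}$ must be checked separately (the paper does this via Theorems \ref{reduction}, \ref{summation} and \ref{saal}); neither point changes the substance of the argument.
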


\begin{proof}
Put $f(\l) = \FF{\a,\b}{\a\b\phi}{\l}^2$. 
Since the Fourier transform of $\a\b(\l^{-1})\phi(1-\l^{-1})$ is 
$$-j(\a\b\n,\phi)=-\frac{g(\a\b)g(\phi)}{g^\0(\a\b\phi)} \frac{(\a\b)_\n}{(\a\b\phi)^\0_\n}$$
(Example \ref{e2.2} (iii)), we are reduced to prove 
\begin{equation}\label{*}
-\wh f(\n)=
\frac{(\a^2+\b^2+\a\b)_{\n}}{(\e+\a^2\b^2+\a\b\phi)^\0_\n}
+G_1 \frac{(\a\b)_\n}{(\a\b\phi)^\0_\n} + G_2 \tag{*}
\end{equation}
for any $\n\in\ck$, where we put 
$$G_1
= q\frac{g^\0(\a^2\b^2)}{g(\a^2)g(\b^2)}, \quad 
G_2=\left(\frac{g^\0(\a\b\phi)g(\phi)}{g(\a)g(\b)}\right)^2.$$
By Lemma \ref{product}, 
$$-\wh f(\n)= 
\frac{(\a+\b)_\n}{(\e+\a\b\phi)^\0_\n}
\FF{\a,\b,\ol{\a\b\phi\n}, \ol\n}{\a\b\phi, \ol{\a\n},\ol{\b\n}}{1}.$$

First, we prove the generic case where $\ol\n\not\in\{\a\b,\a\b\phi,\a^2,\a^2\b^2\}$. 
Note that $(\a^2)^\0_\n=(\a^2)_\n$ and $(\a\b)^\0_\n=(\a\b)_\n$ by assumption. 
In Theorem \ref{t6.4}, replace $\a$, $\b$, $\vp$, $\psi$, $\g$, $\s$, $\t$ respectively with 
$\b$, $\ol{\a\b\phi\n}$, $\ol\n$, $\a$, $\a\b\phi$, $\ol{\a\n}$, $\ol{\b\n}$.  
Then we have
$$-\wh f(\n)= A(\n)-B(\n)+ q^{-\d(\a^2\b\n)} G_2
$$
where we put
\begin{align*}
A(\n)&=\frac{(\a^2+\a\b)_\n}{(\e+\a\b\phi)^\0_\n} \FF{\a,\a\phi,\a^2\b^2\n,\ol\n}{\a^2,\a\b,\a\b\phi}{1}, 
\\B(\n)&= q^{-\d(\a^2\b\n)}
\frac{(\a^2+\a\b)_\n}{(\a^2\b^2+\a\b\phi)^\0_\n}\n(-1) \ol\b(4). 
\end{align*}
In Theorem \ref{t5.1} (i), replace $\a$, $\b$, $\g$, $\vp$ respectively with 
$\b\phi$, $\a^2\b^2\n$, $\ol\n$, $\a\b\phi$. Then 
\begin{align*}
A(\n) = 
&q^{-\d(\n)} \n(-1) \frac{(\a^2)_\n(\a\b)_\n}{(\a^2\b^2)^\0_\n(\a\b\phi)^\0_\n} C(\n) 
- \frac{(\a^2)_\n(\a\b)_\n}{(\e)^\0_\n(\a\b\phi)^\0_\n} D(\n), 
\end{align*}
where we put 
\begin{align*}
C(\n)& = \FF{\b^2,\a^2\b^2\n,\ol\n}{\a^2\b^2\n,\ol\n}{-1}, 
\\
D(\n)&=
\frac{1}{1-q} \n(-1) c(\a^2\b^2\n,\ol\n)\sum_{\m\in\{\ol{\a^2\b^2\n},\n\}} 
\frac{(\b^2)_\m}{(\e)^\0_\m}.
\end{align*}
By Theorem \ref{reduction}, we have
$$C(\n)=
q^{\d(\n)}\left(\ol\b(4)
+q^{-1} \n(-1)  \frac{(\b^2)_\n}{(\e)^\0_\n}
+q^{-1} \n(-1) G_1 \frac{(\a^2\b^2)^\0_\n}{(\a^2)_\n} \right).$$
On the other hand, 
$$D(\n)=
\frac{1-q}{q} \left(\frac{(\b^2)_\n}{(\a^2\b^2)^\0_\n}
+G_1 \frac{(\e)^\0_\n}{(\a^2)_\n}\right).$$
Now, the formula \eqref{*} follows immediately unless $\n=\ol{\a^2\b}$. 
The case $\n=\ol{\a^2\b}$ follows since 
$$\frac{(\a^2)_\n(\a\b)_\n}{(\a^2\b^2)^\0_\n(\a\b\phi)^\0_\n}\n(-1) \ol\b(4)
=G_2
$$
by the duplication formula. 

For the remaining cases, the formula \eqref{*} is verified using Theorem \ref{reduction},  Theorem \ref{summation} for $\n=\ol{\a\b}$, $\ol{\a\b\phi}$, and Theorem \ref{saal} for $\n=\ol{\a^2}$, $\ol{\a^2\b^2}$, together with the duplication formula and 
Proposition \ref{shift} for $\n=\ol{\a\b\phi}$. 
For example if $\n=\ol{\a^2\b^2}$, then 
\begin{align*}
-\wh{f}(\n) & = q^{-\d(\a\b\phi)} \frac{g(\phi)^2g^\0(\a\b\phi)^2 g(\a^2\b^2)}{g(\a)g(\b)g^\0(\a\b^2)g^\0(a^2\b)}
\FF{\a,\b,\a^2\b^2,\a\b\phi}{\a\b^2,\a^2\b,\a\b\phi}{1}
\\&=
\frac{g(\phi)^2g^\0(\a\b\phi)^2 g(\a^2\b^2)}{g(\a)g(\b)g^\0(\a\b^2)g^\0(a^2\b)}
\FF{\a,\b,\a^2\b^2}{\a\b^2,\a^2\b}{1} + 
\left( \frac{g(\phi)g^\0(\a\b\phi)g(\a^2\b^2)}{g(\a^2)g(\b^2)g(\a\b)}\right)^2 
\end{align*}
and
$$\FF{\a,\b,\a^2\b^2}{\a\b^2,\a^2\b}{1} 
=\frac{g(\a)g(\b)g^\0(\a\b^2)g^\0(\a^2\b)}{g(\a^2)g(\b^2)g(\a\b)^2}
+\frac{g^\0(\a\b^2)g^\0(\a^2\b)}{g(\a)g(\b)g(\a^2\b^2)},$$
hence the formula follows. 
\end{proof}

\section{Zeta functions of certain K3 surfaces} 

Recall that for a variety $X$ over $\k$, its zeta function is defined by the power series
$$Z(X,t)=\exp\left(\frac{\# X(\k_n)}{n} t^n\right),$$ 
where $\k_n$ is the extension of $\k$ of degree $n$ in a fixed algebraic closure. 
Here, we relate the zeta functions of certain K3 surfaces with those of elliptic curves.  

Let $p\ne 2$ and $E$ be an elliptic curve defined by 
$y^2=f(x)$, where $f(x) \in \k[x]$ is of degree $3$ with no multiple root. 
Then 
$$Z(E,t)=\frac{1-a(E)t+qt^2}{(1-t)(1-qt)}$$
where 
$$a(E):=1+q-\# E(\k) = -\sum_{x\in\k}\phi(f(x)). $$
By the Weil conjecture for elliptic curves proved by Hasse, 
$$1-a(E)t+qt^2=(1-\a t)(1-\ol\a t)$$
for some $\a\in \C$ with $|\a|=\sqrt q$ (cf. \cite[V, \S4]{silverman}). 

From now on, suppose that $4 \mid q-1$ and let $\s \in \ck$ be a quartic character, so that $\s^2=\phi$. 
\begin{ppn}\label{p7.1}
For $\l \in \k\setminus\{0,1\}$, let $E_\l$ be the elliptic curve over $\k$ defined by 
$$y^2=(1-x)(1-\l x^2).$$
Then 
$$a(E_\l)=\ol\s(-\l) \FF{\s,\s}{\e}{1-\l}.$$
\end{ppn}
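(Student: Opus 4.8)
The plan is to regard both sides as functions of $\l$ on $\k^*$ and to prove their equality (for $\l\ne1$) by comparing Fourier transforms in $\l$, exactly as in the quadratic transformation proofs of Section \ref{s5}. Set $A(\l)=a(E_\l)=-\sum_{x\in\k}\phi\bigl((1-x)(1-\l x^2)\bigr)$, which is immediate from the given point count, and $B(\l)=\ol\s(-\l)\FF{\s,\s}{\e}{1-\l}$. It then suffices to show $\wh A(\n)=\wh B(\n)$ for every $\n$, after isolating a possible correction supported at $\l=1$: note $B(1)=\ol\s(-1)F(\,\cdot\,;0)=0$ while $E_1$ is singular, so the two functions may legitimately differ there.

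First I would compute $\wh A$. Interchanging the two sums and, for fixed $x\ne0$, substituting $\eta=\l x^2$ turns the inner $\l$-sum into $\n^2(x)\sum_\eta\ol\n(\eta)\phi(1-\eta)$, which by Example \ref{e2.2} (iii) equals $-\n^2(x)\,j(\ol\n,\phi)$; the remaining $x$-sum $\sum_x\n^2(x)\phi(1-x)=-j(\n^2,\phi)$, so that for $\n\ne\e$
\[
\wh A(\n)=-j(\ol\n,\phi)\,j(\n^2,\phi).
\]
For $\wh B$ I would first rewrite the hypergeometric factor as a single character sum via Corollary \ref{c2.1} (i) together with $j(\s,\ol\s)=\s(-1)$ (Proposition \ref{jacobi}), obtaining $\FF{\s,\s}{\e}{1-\l}=-\s(-1)\sum_t\s(t)\ol\s\bigl((1-t)(1-(1-\l)t)\bigr)$, whence $B(\l)=-\ol\s(\l)\sum_t\s(t)\ol\s\bigl((1-t)(1-(1-\l)t)\bigr)$. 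Transforming this in $\l$: for fixed $t\ne0,1$ the substitution $\l t=(1-t)\xi$ reduces the $\l$-sum to $\ol{\s\n}(1-t)\n\s(t)\ol\s(1-t)\sum_\xi\ol{\s\n}(\xi)\ol\s(1+\xi)$, and $\sum_\xi\ol{\s\n}(\xi)\ol\s(1+\xi)=\ol{\s\n}(-1)\bigl(-j(\ol{\s\n},\ol\s)\bigr)$. Collecting the characters of $t$ and of $1-t$ (using $\s^2=\phi$ and $\ol\s^3=\s$) leaves a single Jacobi sum in $t$, giving for generic $\n$
\[
\wh B(\n)=-\,\ol{\s\n}(-1)\,j(\ol{\s\n},\ol\s)\,j(\n\phi,\ol\n\s).
\]

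The main work, and the step I expect to be the real obstacle, is matching these two expressions, i.e. verifying $j(\ol\n,\phi)\,j(\n^2,\phi)=\ol{\s\n}(-1)\,j(\ol{\s\n},\ol\s)\,j(\n\phi,\ol\n\s)$. Writing each Jacobi sum as a ratio of Gauss sums (Proposition \ref{jacobi} (iv)) and repeatedly using $g(\vp)g^\0(\ol\vp)=\vp(-1)q$ and $g(\phi)^2=q$ (here $\phi(-1)=1$ since $4\mid q-1$), one reduces the identity to a relation among the quartic Gauss sums $g(\n\s)$, $g(\n\ol\s)$ and quadratic ones; these are bridged by the duplication formula $g(\a^2)=\a(4)g(\a)g(\a\phi)/g(\phi)$ (Corollary \ref{multiplication}) applied with $\a=\n\s$ and $\a=\n$. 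After cancellation everything collapses to the scalar identity $\s(-1)=\phi(2)$, itself a consequence of the duplication formula applied to $\a=\s$ (which yields $g(\s)g(\ol\s)=\phi(2)g(\phi)^2$, to be compared with $g(\s)g(\ol\s)=\s(-1)q$). Finally I would treat separately the degenerate characters, where $\n^2=\e$ or $\n\in\{\s,\ol\s,\phi\}$ and the generic Jacobi-sum formulas break, together with the constant term $\n=\e$, and check that the only remaining discrepancy between $\wh A$ and $\wh B$ is a multiple of the Fourier transform of $\d(1-\l)$; Fourier inversion then yields $A(\l)=B(\l)$ for all $\l\ne0,1$, as required.
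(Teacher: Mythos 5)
Your proposal is correct, and its first half coincides with the paper's: both start from $a(E_\l)=-\sum_x\phi((1-x)(1-\l x^2))$ and compute the Fourier transform in $\l$ by the substitution $\eta=\l x^2$, arriving at $-j(\ol\n,\phi)j(\n^2,\phi)$. The second halves diverge. The paper uses the duplication formula to rewrite this Jacobi-sum product as $-(\e+2\phi)_\n/(\e+\s+\s\phi)^\0_\n$, inverts the Fourier transform to get $F(\e+2\phi,\e+\s+\s\phi;\l)$, and then reaches the stated form by the reduction formula (Example \ref{e3.1} (i)), the shift by $\ol\s$ (Proposition \ref{shift}), and finally the connection formula (Theorem \ref{connection}) to pass from argument $\l$ to $1-\l$. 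You instead compute the Fourier transform of the right-hand side directly, unwinding $\FF{\s,\s}{\e}{1-\l}$ via Corollary \ref{c2.1} (i) and transforming in $\l$ by hand; your formulas $\wh A(\n)=-j(\ol\n,\phi)j(\n^2,\phi)$ and $\wh B(\n)=-\ol{\s\n}(-1)j(\ol{\s\n},\ol\s)j(\n\phi,\ol\n\s)$ are both correct (the latter for the modified $B$ differing from $B$ by $\ol\s(-1)\d(1-\l)$), and the matching does collapse, as you predict, to $\s(4)\s(-1)=1$, i.e.\ $\s(-1)=\phi(2)$, which indeed follows from the duplication formula applied to $\s$. In effect you re-derive by brute force, for this one parameter choice, exactly the content that the paper extracts from Theorem \ref{connection}; the price is the explicit Gauss-sum bookkeeping plus the separate checks at $\n\in\{\e,\s,\ol\s,\phi\}$ (where $g^\0(\n^2\phi)\ne g(\n^2\phi)$ or the reflection $g(\vp)g(\ol\vp)=\vp(-1)q$ fails), while the gain is a proof independent of the linear transformation theory of Section 3.3. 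One small point of care: your displayed $\wh B$ is really the transform of the character-sum surrogate $\tilde B$, and the discrepancy $\wh{\tilde B}-\wh B=\ol\s(-1)$ is precisely the constant that Fourier-inverts to the admissible correction at $\l=1$ (consistent with $a(E_1)$ formally equalling $\phi(2)=\s(-1)$ while $F(\cdots;0)=0$), so your final bookkeeping step goes through.
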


\begin{proof}
Put a function on $\k^*$ as
$f(\l)=-\sum_{x\in\k^*}\phi((1-x)(1-\l x^2)).$
Then for any $\n\in\ck$, 
\begin{align*}
& \wh f(\n)= -\sum_{x\in\k^*} \phi(1-x)\n^2(x) \sum_{\l\in\k^*} \ol\n(\l x^2) \phi(1-\l x^2)
\\&= -j(\phi,\n^2)j(\ol\n,\phi)
=- \frac{(\e)_{\n^2} (\e)_{\ol\n}}{(\phi)^\0_{\n^2} (\phi)^\0_{\ol\n}} 
=- \frac{(\e)_{\n^2} (\phi)_{\n}}{(\phi)^\0_{\n^2} (\e)^\0_{\n}} 
=- \frac{(\e+2\phi)_{\n}}{(\e+\s+\s\phi)^\0_{\n}}. 
\end{align*}
Hence by Theorem \ref{reduction}, Proposition \ref{shift} and Theorem \ref{connection},  
\begin{align*}
f(\l)& = F(\e+2\phi,\e+\s+\s\phi;\l)
=q F(2\phi,\s+\s\phi;\l)+1
\\& =  \ol\s(-\l) \frac{g(\s)^2}{g(\phi)} \FF{\s,\s}{\phi}{\l}+1
= \ol\s(-\l) \FF{\s,\s}{\e}{1-\l}+1. 
\end{align*}
Since $a(E_\l)=f(\l)-1$, the proposition follows. 
\end{proof}

If $X$ is a K3 surface over $\k$, then by the Weil conjecture proved by Deligne, its zeta function is of the form
$$Z(X,t)=\frac{1}{(1-t)P(t)(1-q^2t)}, $$
where $P(t)$ is a polynomial of degree $22$ whose reciprocal roots have absolute value $q$. 

Now, for $\l \in \k\setminus\{0,1\}$, let $X_\l$ be the K3 surface defined by 
$$z^2=(1-\l xy)x(1-x)y(1-y).$$
Then we have by \cite[Proposition 4.1]{aop} 
$$\# X_\l(\k)= 1+ q^2 + 19 q+ b(\l),$$
where we put
$$b(\l)=\sum_{x, y \in \k} \phi((1-\l xy)x(1-x)y(1-y)).$$

We give a hypergeometric proof of the following theorem of Ahlgren--Ono--Penniston \cite[Theorem 1.1]{aop}, under the additional assumption that $4 \mid q-1$.  

\begin{thm}\label{t7.1} 
Let $\l\in\k\setminus\{0,1\}$ and 
$$1-a(E_{1-\l})t+qt^2=(1-\a t)(1-\ol\a t).$$ 
Then 
$$Z(X_\l,t)=\frac{1}{(1-t)(1-q^2t)(1-qt)^{19}(1-uqt)(1-u\a^2t)(1-u\ol\a^2t)},$$
where $u=\phi(1-\l)$. 
\end{thm}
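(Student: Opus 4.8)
The plan is to compute the point-count defect $b(\l)$ as a single hypergeometric value, identify it as a Clausen square, feed in Proposition~\ref{p7.1}, and finally run the computation over every extension $\k_n$ to pin down the zeta function; throughout I use that $4\mid q-1$ forces $\phi(-1)=1$. First I would write $b(\l)$ hypergeometrically. Since $\phi(0)=0$, the defining sum is unchanged if $x,y$ range over all of $\k$, and matching it against Corollary~\ref{c2.1}~(i) with $d=2$, distinguished parameter $\a_0=\phi$ and pairs $(\a_i,\b_i)=(\phi,\e)$ (so that $\ol\a_0(1-\l t_1t_2)=\phi(1-\l xy)$ and $\a_i(t_i)\ol\a_i\b_i(1-t_i)=\phi(t_i)\phi(1-t_i)$) gives
$$b(\l)=\bigl(-j(\phi,\phi)\bigr)^2\,\FF{\phi,\phi,\phi}{\e,\e}{\l}=\FF{\phi,\phi,\phi}{\e,\e}{\l},$$
since $j(\phi,\phi)=\phi(-1)$ and $\phi(-1)^2=1$.

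Next I would recognise this ${}_3F_2$ as a Clausen square. Apply Theorem~\ref{clausen} with $\a=\b=\s$; then $\a\b=\phi$, $\a^2=\b^2=\a\b=\phi$, $\a^2\b^2=\a\b\phi=\e$, and the hypotheses $(3\phi,\e)=(\s,\s\phi)=0$ hold. Since $\l\ne1$ the $\d(1-\l)$ term drops, and the correction term simplifies by $g^\0(\e)=q$, by $g(\s)g(\ol\s)=\s(-1)q$ (Proposition~\ref{jacobi}~(iii)) so that $g(\s)^2g(\ol\s)^2=q^2$, and by $\phi(\l^{-1})\phi(1-\l^{-1})=\phi(\l-1)=u$, down to exactly $qu$. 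This yields
$$b(\l)=\FF{\s,\s}{\e}{\l}^2-qu.$$

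Then I would substitute Proposition~\ref{p7.1}. It gives $\FF{\s,\s}{\e}{\l}=\s(\l-1)\,a(E_{1-\l})$, hence $\FF{\s,\s}{\e}{\l}^2=\phi(\l-1)\,a(E_{1-\l})^2=u\,a(E_{1-\l})^2$; writing $a(E_{1-\l})=\a+\ol\a$ with $\a\ol\a=q$ gives $a(E_{1-\l})^2=\a^2+\ol\a^2+2q$, so
$$b(\l)=u(\a^2+\ol\a^2+2q)-qu=u\a^2+u\ol\a^2+uq.$$

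The final and main step is to upgrade this trace identity to the full zeta function, and this is where the real care lies. Every ingredient above is uniform in the base field, so the identical argument over $\k_n$ (legitimate because $4\mid q-1$ implies $4\mid q^n-1$, so a quartic character of $\k_n^\times$ exists and Proposition~\ref{p7.1} applies verbatim) gives $b^{(n)}(\l)=u^n\bigl(\a^{2n}+\ol\a^{2n}+q^n\bigr)$: here the Weil conjecture for $E_{1-\l}$ makes the Frobenius eigenvalues over $\k_n$ equal to $\a^n,\ol\a^n$, and $\phi\bigl(N_{\k_n/\k}(1-\l)\bigr)=\phi\bigl((1-\l)^n\bigr)=u^n$. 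With $\#X_\l(\k_n)=1+q^{2n}+19q^n+b^{(n)}(\l)$ (the count of \cite[Proposition~4.1]{aop} over $\k_n$) we obtain
$$\#X_\l(\k_n)=1+q^{2n}+19q^n+(uq)^n+(u\a^2)^n+(u\ol\a^2)^n.$$
Reading off the Frobenius eigenvalues on $H^2$ as $q$ (multiplicity $19$), $uq$, $u\a^2$, $u\ol\a^2$ and inserting these into $Z(X_\l,t)=\exp\bigl(\sum_n \#X_\l(\k_n)t^n/n\bigr)$ produces the asserted factorisation. The delicate point is exactly this functoriality in $n$: one must check that the quartic character, the Jacobi-sum reductions of Corollary~\ref{c2.1}, and Proposition~\ref{p7.1} all hold over $\k_n$ with the eigenvalues entering only through the single $n$-th powers $(u\a^2)^n$, etc., which is what licenses reading the cohomological eigenvalues straight off the point counts.
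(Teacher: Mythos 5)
Your proposal is correct and follows essentially the same route as the paper: express $b(\l)$ as $\FF{\phi,\phi,\phi}{\e,\e}{\l}$ via Corollary \ref{c2.1}~(i), apply Theorem \ref{clausen} with $\a=\b=\s$ to get $\FF{\s,\s}{\e}{\l}^2-qu$, substitute Proposition \ref{p7.1}, and pass to $\k_n$ (where the paper likewise notes $\a\mapsto\a^n$ and $u\mapsto u^{(q^n-1)/(q-1)}=u^n$). Your verification of the Clausen hypotheses and of the norm computation for $u^n$ is simply a more explicit rendering of the same argument.
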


\begin{proof}
By Corollary \ref{c2.1} (i) and Theorem \ref{clausen}, noting $j(\phi,\e)=1$ and $\phi(-1)=1$, 
$$b(\l)= \FF{\phi,\phi,\phi}{\e,\e}{\l}= \FF{\s,\s}{\e}{\l}^2 - q \phi(1-\l).$$
Hence by Proposition \ref{p7.1}, 
\begin{align*}
b(\l) = \phi(1-\l)(a(E_{1-\l})^2-q)=\phi(1-\l) (\a^2+\ol\a^2+q).
\end{align*}
If one replaces $\k$ with $\k_n$, then $\a$ is replaced with $\a^n$ and $u=\phi(1-\l)$ is replaced with 
$u^\frac{1-q^n}{1-q}=u^n$, 
so the theorem follows. 
\end{proof}

\begin{rmk}In \cite{aop}, the authors consider the elliptic curve 
$$E'_{1+\l} : (1+\l)y^2=(1-x)(1-(1+\l)x^2),$$
which is a quadratic twist of our $E_{1+\l}$, and a K3 surface isomorphic to our $X_{-\l}$. 
Since $a(E'_{1-\l})=\phi(1-\l) a(E_{1-\l})$, our statement agrees with theirs.  
\end{rmk}

Now, we consider the Dwork K3 surface $D_\l$ over $\k$ defined by the homogenous equation
$$x_1^4+x_2^4+x_3^4+x_4^4= 4\l x_1x_2x_3x_4 \quad (\l^4\ne 1).$$
By Nakagawa \cite{nakagawa}, its zeta function decomposes as 
$$Z(D_\l,t)=\frac{1}{(1-t)(1-q^2t)(1-qt)(1-uqt)^3(1-vqt)^3(1-wqt)^{12}P_\l(t)
},$$
where 
$u=\phi(1-\l^2)$, $v=\phi(1+\l^2)$, $w=uv\s(-1)$ and $P_\l(t)$ is a polynomial of degree $3$ such that 
$$P_\l(t)=\exp\left(\frac{F_n(\l)}{n}t^n \right), \quad 
F_n(\l)=\FF{\s_n,\s_n^2,\s_n^3}{\e_n,\e_n}{\l^{-4}}.$$
Here, $\e_n$ (resp. $\s_n$) is the trivial (resp. a quartic) character of $\k_n$. 

\begin{thm}Suppose that $1-\l^{-4} \in (\k^*)^2$. 
Let $\l'\in\k^*$ be 
a solution of $$1-\l^{-4}=\left(\frac{1+\l'}{1-\l'}\right)^2$$ and  let 
$$1-a(E_{1-\l'}) t + q t^2=(1-\a t)(1-\ol\a t).$$  
Then we have 
$$P_\l(t)=(1-qt)(1-\a^2t)(1-\ol\a^2t).$$
\end{thm}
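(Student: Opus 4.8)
The plan is to reduce the statement to the single power-sum identity $F_n(\l)=q^n+\a^{2n}+\ol\a^{2n}$ for all $n\ge 1$; by the definition of $P_\l$ in terms of the $F_n(\l)$ this exhibits $q,\a^2,\ol\a^2$ as the reciprocal roots of $P_\l$, which is the assertion. Moreover it suffices to treat $n=1$: the whole argument below takes place over $\k$ with the characters $\s,\phi,\e$ and the fixed element $\l'\in\k$, and the relation defining $\l'$ is an identity in $\k$, so replacing $\k$ by $\k_n$ (under which $q\mapsto q^n$ and the Frobenius numbers $\a,\ol\a$ of $E_{1-\l'}$ become $\a^n,\ol\a^n$) reproduces the case $n$ verbatim, since $F_n(\l)$ is literally the same function computed over $\k_n$. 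As $\s^2=\phi$ we have $\s^3=\ol\s$, so $F_1(\l)=\FF{\s,\phi,\ol\s}{\e,\e}{\l^{-4}}$.

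First I would feed this ${}_3F_2$ into Whipple's quadratic transformation (Theorem \ref{quad4}) with the choice $\a=\s$ and $\b=\g=\phi$. Then $\a^2=\phi$, $\a^2\ol\b=\a^2\ol\g=\e$, $\a^2\ol{\b\g}=\phi$ and $\ol\a^2=\phi$, so the hypotheses $(\a^2+\b+\g,\e)=(\a^2,\b\g)=0$ hold, the constant is $C=g^\0(\e)^2/g(\phi)^2=q$, and the right-hand numerator is $\{\s,\ol\s,\phi\}$. Writing the transformation's variable as $x$, it reads
$$\FF{\phi,\phi,\phi}{\e,\e}{-x}-\d(1-x)\,q=\phi(1+x)\,\FF{\s,\phi,\ol\s}{\e,\e}{1-\left(\frac{1-x}{1+x}\right)^2}.$$
I would then specialize $x=-\l'$. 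The defining relation of $\l'$ gives $\bigl(\tfrac{1-x}{1+x}\bigr)^2=\bigl(\tfrac{1+\l'}{1-\l'}\bigr)^2=1-\l^{-4}$, so the right-hand argument becomes $\l^{-4}$ and the right-hand side equals $\phi(1-\l')\,F_1(\l)$; the left-hand side equals $\FF{\phi,\phi,\phi}{\e,\e}{\l'}-\d(1+\l')\,q$.

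The concluding step is to evaluate the left-hand side. Because $\l^4\ne1$, the relation forces $\l'\notin\{0,1,-1\}$ (for instance $\l'=-1$ would give $1-\l^{-4}=0$), so $x=-\l'\ne\pm1$ and $1-\l'\in\k\setminus\{0,1\}$; hence Theorem \ref{quad4} is applicable, the additive term $\d(1+\l')$ vanishes, and $\phi(1-\l')\ne0$. By Corollary \ref{c2.1} (i), $\FF{\phi,\phi,\phi}{\e,\e}{\l'}=b(\l')$, and by the computation in the proof of Theorem \ref{t7.1} (Clausen's formula, Theorem \ref{clausen}, together with Proposition \ref{p7.1}) one has $b(\l')=\phi(1-\l')\bigl(a(E_{1-\l'})^2-q\bigr)$. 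Cancelling the nonzero scalar $\phi(1-\l')$ yields
$$F_1(\l)=a(E_{1-\l'})^2-q=(\a+\ol\a)^2-q=q+\a^2+\ol\a^2.$$
Passing to $\k_n$ as above gives $F_n(\l)=q^n+\a^{2n}+\ol\a^{2n}$, whence $P_\l(t)=(1-qt)(1-\a^2t)(1-\ol\a^2t)$.

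I expect the main difficulty to be structural rather than analytic: finding the one substitution that makes everything collapse. The key is recognizing that the parameter choice $(\a,\b,\g)=(\s,\phi,\phi)$ makes Theorem \ref{quad4} carry the Dwork function $\FF{\s,\phi,\ol\s}{\e,\e}{\cdot}$ on one side into the K3 point-count function $b=\FF{\phi,\phi,\phi}{\e,\e}{\cdot}$ of Theorem \ref{t7.1} on the other, and that the sign choice $x=-\l'$ (rather than $+\l'$) is what both matches the argument relation $1-\bigl(\tfrac{1-x}{1+x}\bigr)^2=\l^{-4}$ and forces the twisting factors $\phi(1\pm\cdot)$ to cancel, so that --- unlike in Theorem \ref{t7.1} --- no quadratic twist survives in $P_\l$. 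A secondary point to verify carefully is that the degenerate loci of the three invoked results (Theorem \ref{quad4}, Proposition \ref{p7.1}, Theorem \ref{t7.1}) coincide exactly with the values already excluded by $\l^4\ne1$ and by $1-\l^{-4}\in(\k^*)^2$, so that every formula used is valid on the whole admissible range.
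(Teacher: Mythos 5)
Your proposal is correct and follows essentially the same route as the paper: the paper's proof is precisely the specialization of Theorem \ref{quad4} (with $(\a,\b,\g)=(\s,\phi,\phi)$, variable $-\l'$) giving $F_1(\l)=\phi(1-\l')\,\FF{\phi,\phi,\phi}{\e,\e}{\l'}$, followed by the Clausen-formula computation of $b(\l')$ from the proof of Theorem \ref{t7.1} and the same base-change argument over $\k_n$. Your additional verifications (that $C=q$ since $\phi(-1)=1$, that $\l'\notin\{0,\pm1\}$ so the $\d$-terms vanish and all cited results apply) are accurate and merely make explicit what the paper leaves implicit.
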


\begin{proof}
By Theorem \ref{quad4} and the proof of Theorem \ref{t7.1}, we have 
$$\FF{\s,\s^2,\s^3}{\e,\e}{\l^{-4}}=\phi(1-\l') \FF{\phi,\phi,\phi}{\e,\e}{\l'}
=\a^2+\ol\a^2+q,$$
and the result follows similarly as before. 
\end{proof}

\begin{rmk}
In fact, $P_\l(t)$ is the characteristic polynomial of the Frobenius acting on the $3$-dimensional space mentioned in Remark \ref{r.dwork} (see \cite{nakagawa}).  
When $\k=\F_p$, Asakura \cite{asakura} obtained a more general result on $P_\l(t)$ 
without assuming $4 \mid p-1$ nor $1-\l^{-4} \in (\F_p^*)^2$, 
by studying the rigid cohomology of the family $D_\l$. 
Both $X_\l$ and $D_\l$ are known to be geometrically isogenous in the sense of Inose--Shioda to the Kummer surface associated to the self-product of  an elliptic curve. 
\end{rmk}

\appendix 

\section{A proof of the multiplication formula for Gauss sums}

Here we give an elementary proof of Theorem \ref{d-h}, using a geometric construction of Terasoma in his proof of the same theorem \cite[Theorem 3]{terasoma}.
While he uses $l$-adic cohomology, we count the number of rational points,  
the two objects being related by the Lefschetz trace formula. 

Let $X$ be a variety over $\k$ equipped with a left action of a finite group $G$, 
and suppose that the quotient variety $G\backslash X$ exists. 
Fix an algebraic closure $\ol\k$ of $\k$. 
Let $F$ denote the $q$th power Frobenius acting on $X=X(\ol\k)$. 
For each $g\in G$, put 
$$\L(X,g)=\#\{x \in X \mid Fx=gx\}.$$
For a character $\chi$ (of a $\C$-linear representation) of $G$, put 
$$N(X,\chi)= \frac{1}{\# G} \sum_{g\in G} \chi(g) \L(X,g).$$
We have the following functorialities. 

\begin{lem}[cf. {\cite[2.3]{serre}}]\label{la.1}\ 
\begin{enumerate}
\item
If $H \subset G$ is a normal subgroup and $\chi$ is a character of $G/H$, then 
$$N(X,\chi|_G) = N(H\backslash X, \chi).$$ 
\item
If $H \subset G$ is a subgroup and $\chi$ is a character of $H$, then 
$$N(X,\chi)= N(X, \Ind_H^G \chi).$$
\end{enumerate}
\end{lem}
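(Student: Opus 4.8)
The plan is to reduce both statements to two elementary facts about the counting function $\L(X,g)$. The foundational observation is that, since $G$ acts on $X$ by $\k$-morphisms, its action commutes with the Frobenius $F$; that is, $Fg=gF$ on $X(\ol\k)$ for every $g\in G$. From this I would first deduce that $\L(X,-)$ is a class function: for $s,g\in G$ the map $x\mapsto sx$ is a bijection from $\{x: Fx=s^{-1}gs\,x\}$ onto $\{y: Fy=gy\}$ (using $Fs^{-1}=s^{-1}F$), so $\L(X,s^{-1}gs)=\L(X,g)$.

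Granting this, I would prove (ii) directly from the Frobenius formula for the induced character,
$$(\Ind_H^G\chi)(g)=\frac{1}{\#H}\sum_{\substack{s\in G\\ s^{-1}gs\in H}}\chi(s^{-1}gs).$$
Substituting this into the definition of $N(X,\Ind_H^G\chi)$, replacing $\L(X,g)$ by $\L(X,s^{-1}gs)$ (legitimate by the class-function property), and then changing variables $h=s^{-1}gs$ collapses the outer sum over $s$ to a single factor $\#G$, leaving exactly $N(X,\chi)$. This step is purely formal once the class-function property is in hand.

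For (i) the crux will be the identity
$$\sum_{g\in g_0H}\L(X,g)=\#H\cdot\L(H\backslash X,\ol{g_0})$$
for each coset $g_0H$. To prove it I would expand the left side as a count of pairs $(x,h)\in X\times H$ with $Fx=g_0h\,x$; for fixed $x$ the inner count is $\#(H\cap\mathrm{Stab}_G(x))$ when $g_0^{-1}Fx$ lies in the orbit $Hx$ and $0$ otherwise. Using normality of $H$ (so that $Hg_0=g_0H$ and $g_0^{-1}h_0g_0\in H$) I would check that the condition ``$g_0^{-1}Fx\in Hx$'' depends only on the orbit $Hx$, and that the stabilizer order is constant along each orbit; summing $\#(H\cap\mathrm{Stab}_G(x))$ over the points of a qualifying orbit, which has size $\#H/\#(H\cap\mathrm{Stab}_G(x))$, then yields exactly $\#H$. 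Identifying the qualifying orbits with the fixed points of $\ol{g_0}$ on $H\backslash X$---for which I use that the quotient map is $F$-equivariant, so $F(Hx)=H(Fx)$, and that $\ol{g_0}\cdot Hx=Hg_0x$---completes the identity. Feeding it into the definition of $N(X,\chi|_G)$, grouping the sum over $G$ by cosets of $H$, and using $\#G/\#H=\#(G/H)$ produces $N(H\backslash X,\chi)$.

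The main obstacle I anticipate is not the algebra but the geometric bookkeeping on the quotient: one must know that the $\ol\k$-points of $H\backslash X$ are precisely the $H$-orbits on $X(\ol\k)$ and that the induced Frobenius on $H\backslash X$ corresponds to the orbitwise action of $F$. For a finite group acting on a quasi-projective variety this is standard, and it is exactly the place where points with nontrivial stabilizers must be tracked carefully---the orbit-counting identity above is arranged so that the stabilizer factors $\#(H\cap\mathrm{Stab}_G(x))$ cancel the varying orbit sizes, making the final count independent of whether the action is free.
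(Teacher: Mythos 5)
Your argument is correct, and in fact the paper offers no proof of this lemma at all---it is stated with only the citation to Serre, where essentially this same standard argument appears. Both halves check out: the class-function property of $\L(X,\cdot)$ (from $F$ commuting with the $G$-action) combined with the Frobenius formula for $\Ind_H^G\chi$ gives (ii) by a formal change of variables, and your coset identity $\sum_{g\in g_0H}\L(X,g)=\#H\cdot\L(H\backslash X,\ol{g_0})$ --- where normality is used exactly where you say, and the stabilizer factors cancel the orbit sizes --- gives (i), modulo the standard fact (correctly flagged) that $(H\backslash X)(\ol\k)$ is the set of $H$-orbits in $X(\ol\k)$ with $F$ acting orbitwise.
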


Now we start the proof. 
Since the statement is obvious if $\a^n=\e$, we suppose that $\a^n\ne\e$. 
Then, by Proposition \ref{jacobi} (iv), we are reduced to prove 
$$\a^n(n) j(\underbrace{\a,\dots, \a}_\text{$n$ times}) = \prod_{\n^n=\e,\n\ne \e} j(\a,\n).$$

The following varieties, maps among them and group actions are all defined over $\k$. 
Let $X$ be a twisted Fermat hypersurface of dimension $n-1$ defined by 
$$t_1^{q-1} + \cdots + t_n^{q-1}= n, \quad t_1\cdots t_n \ne 0.$$
Let $C$ be a Fermat quotient curve defined by 
$$x^{q-1}+y^n=1, \quad x \ne 0.$$
Let $S\subset \A^n$ be a hyperplane defined by 
$$s_1+\cdots+s_n=n, \quad s_1\cdots s_n \ne 0,$$
and let $T\to S$ be a covering defined by $t^{q-1}=s_1\cdots s_n$. 
Define a map $X \to T$ by 
$$s_j=t_j^{q-1} \quad (j=1,2,\dots, n), \quad t=\prod_{j=1}^n t_j.$$
In this appendix, let $\mu_n$ denote the group of $n$th roots of unity in $\k$. 
Fix a primitive root $\z\in\m_n$ and 
define a map $C^{n-1} \to T$  by 
$$s_j=\prod_{i=1}^{n-1} (1-\z^j y_i) \quad (j=1,2,\dots, n), \quad
t=\prod_{i=1}^{n-1} x_i,
$$
where $(x_i,y_i)$ denotes the coordinates of the $i$th component of $C^{n-1}$. 
Then, $X$, $T$, $C^{n-1}$ are all Galois over $S$, and we have natural identifications 
$$\Gal(X/S)=\m_{q-1}^n, \quad \Gal(T/S)=\m_{q-1}, \quad 
\Gal(C^{n-1}/S)=\m_{q-1}^{n-1} \rtimes S_{n-1}.$$
The restriction maps 
$\Gal(X/S)\to\Gal(T/S)$ and $\Gal(C^{n-1}/S)\to\Gal(T/S)$
are identified respectively with the multiplication $\m_{q-1}^n\to\m_{q-1}$ and 
the first projection followed by the  multiplication $\m_{q-1}^{n-1}\to\m_{q-1}$. 

\begin{rmk}
Terasoma \cite{terasoma} also constructs a common covering of $X$ and $C^{n-1}$ over $S$. 
Let $C' \to C$ be a covering given by 
$u_j^{q-1}=1-\z^j y$ ($j=1,\dots, n$), $\prod_{j=1}^n u_j = x$. 
Then, as well as the map $C'^{n-1}\to C^{n-1}$,  
the map $C'^{n-1} \to X$ is given by $t_j= \prod_{i=1}^{n-1} u_{i,j}$ ($j=1,\dots, n$), and $C'^{n-1}$ is Galois over $S$ with $\Gal(C'^{n-1}/S)=(\m_{q-1}^n)^{n-1} \rtimes S_{n-1}$. 
\end{rmk}

For the given $\a \in \ck =\wh{\mu_{q-1}}$, put 
$\a^{(n)}=\a|_{\m_{q-1}^n}$ and $\chi=\a|_{\m_{q-1}^{n-1}\rtimes S_{n-1}}$.  
Then by Lemma \ref{la.1} (i), 
$$N(X,\a^{(n)}) =N(T,\a)= N(C^{n-1}, \chi).$$
By Weil \cite{weil} (cf. \cite[(2.12)]{koblitz}), we have 
$$N(X,\a^{(n)}) = (-1)^{n-1}\a^n(n) j(\underbrace{\a,\dots, \a}_\text{$n$ times}).$$

Our task is to compute $N(C^{n-1}, \chi)$. 
Let $C_0\subset C$ be the subvariety defined by $y\ne 0$, and put $D=C\setminus C_0$. 
For $k \in\N$, put 
$$H_k=\m_{q-1}^k \rtimes S_k, \quad G_k=(\m_{q-1}\times \m_n)^k \rtimes S_k$$ 
($S_0=\{1\}$ by convention). 
An element of $G_k$ is written as $\x\y\s$ with $\x=(\x_i)_{i=1,\dots, k} \in \m_{q-1}^k$, $\y=(\y_i)_{i=1,\dots, k}\in\m_n^k$ and $\s\in S_k$. 
Then $G_k$ acts naturally on $C^k$, respecting $C_0^k$ and $D^k$. 
The support of $\s\in S_k$ is defined by $\supp(\s)=\{i=1,\dots, k \mid \s(i)\ne i\}$.  
For $\x \in \m_{q-1}^k$ and $\s\in S_k$, put  
$p(\x)=\prod_{i=1}^k \x_i$, $p_\s(\x)=\prod_{i\in \supp(\s)} \x_i$, 
and similarly $p(\y)$, $p_\s(\y)$ for $\y\in\m_n^k$. 
Define a character $\chi_k \in \wh{H_k}$ by 
$$\chi_k(\x\s)=\a(p(\x)).$$ 
Note that $\chi=\chi_{n-1}$. 
For $\s\in S_k$, we write its cycle decomposition (unique up to ordering) as 
$\s=\s_1\cdots \s_r$, 
where $\s_j$ is a cyclic permutation of length $l_j$ with $\sum_{j=1}^r l_j=k$, and 
$\supp(\s_j)$'s are all disjoint.

\begin{lem}\label{la.2}
Let $\x\y\s\in G_k$ and  $\s=\s_1\cdots \s_r$ be the cycle decomposition. Then 
$$\Ind^{G_k}_{H_k} \chi_k (\x\y\s)= \prod_{j=1}^r \sum_{\vp \in \wh\m_n} \a(p_{\s_j}(\x))\vp(p_{\s_j}(\y)).$$
\end{lem}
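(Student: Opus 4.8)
The plan is to evaluate the induced character directly from the standard formula for induced characters (see e.g. \cite{serre}), exploiting the semidirect structure $G_k=\m_n^k\rtimes H_k$. Since $\m_n^k$, embedded with trivial $\m_{q-1}$-component and identity permutation, is normal in $G_k$ with $G_k/\m_n^k\cong H_k$, every $g=\x\y\s$ factors uniquely as $\y\cdot(\x\s)$ with $\y\in\m_n^k$ and $\x\s\in H_k$, so the elements $\bu\in\m_n^k$ form a transversal of the left cosets $G_k/H_k$. Writing $\dot\chi_k$ for the extension of $\chi_k$ by zero outside $H_k$, this gives
$$\Ind^{G_k}_{H_k}\chi_k(\x\y\s)=\sum_{\bu\in\m_n^k}\dot\chi_k\bigl(\bu^{-1}(\x\y\s)\bu\bigr).$$

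First I would compute the conjugate in the semidirect product, obtaining
$$\bu^{-1}(\x\y\s)\bu=\bigl(\x,\ \bu^{-1}\y\,\s(\bu),\ \s\bigr),\qquad (\s(\bu))_i=\bu_{\s^{-1}(i)}.$$
This lies in $H_k$ exactly when its $\m_n^k$-component is trivial, i.e. when $\y_i=\bu_i\bu_{\s^{-1}(i)}^{-1}$ for every $i$, and whenever it does, $\dot\chi_k$ takes the value $\a(p(\x))$, independent of $\bu$. Thus the sum reduces to $\a(p(\x))$ times the number of $\bu\in\m_n^k$ solving this system.

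The key step is to count these $\bu$. Because $\s$ preserves each of its orbits, the system decouples over the cycles $\s_1,\dots,\s_r$: fixing a cycle $\s_j=(i_1\,i_2\,\cdots\,i_l)$ and reading the equations $\bu_{i_{m+1}}=\y_{i_{m+1}}\bu_{i_m}$ around the loop, one sees by telescoping that a solution on this orbit exists if and only if $p_{\s_j}(\y)=1$, in which case the base value $\bu_{i_1}$ is free and there are exactly $n$ solutions. Hence the total count is $\prod_{j=1}^r n\,\d(p_{\s_j}(\y))$, and factoring $\a(p(\x))=\prod_{j=1}^r\a(p_{\s_j}(\x))$ over the orbits yields
$$\Ind^{G_k}_{H_k}\chi_k(\x\y\s)=\prod_{j=1}^r n\,\a(p_{\s_j}(\x))\,\d(p_{\s_j}(\y)).$$
Finally, the orthogonality relation $\sum_{\vp\in\wh{\m_n}}\vp(g)=n\,\d(g)$ for $g\in\m_n$ rewrites each factor as $\sum_{\vp\in\wh{\m_n}}\a(p_{\s_j}(\x))\vp(p_{\s_j}(\y))$, giving the asserted product. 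I expect the main obstacle to be the per-cycle telescoping count, together with the bookkeeping that treats a fixed point as a length-$1$ cycle so that $p_{\s_j}(\x)=\x_i$ and $p_{\s_j}(\y)=\y_i$ there; the solvability of the loop condition precisely when the $\y$-product around the cycle is trivial, with its $n$-fold ambiguity, is exactly what produces the inner character sum over $\wh{\m_n}$.
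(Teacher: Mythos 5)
Your proposal is correct and follows essentially the same route as the paper: both evaluate $\Ind^{G_k}_{H_k}\chi_k$ via the transversal $\m_n^k$ of $G_k/H_k$, reduce to counting $\bu$ with $\bu^{-1}\y\,\s(\bu)=1$, observe that this system decouples over cycles and is solvable (with $n$-fold ambiguity) exactly when $p_{\s_j}(\y)=1$, and convert the count into the character sum $\sum_{\vp\in\wh{\m_n}}\vp(p_{\s_j}(\y))$ by orthogonality. You simply spell out the conjugation and the telescoping count more explicitly than the paper does, and your remark that a fixed point must be treated as a length-$1$ cycle with singleton support is a correct reading of what the statement requires.
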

\begin{proof} 
Since $\m_n^k$ represents $G_k/H_k$, we have by definition
$$\Ind^{G_k}_{H_k} \chi_k (\x\y\s)= \sum_{\y'\in\m_n^k, \y'^{-1}\y\s(\y')=1} \chi_k(\x).$$
The condition $\y'^{-1}\y\s(\y')=1$ is satisfied only when $p_{\s_j}(\y)=1$ for all $j$, 
and then the number of such $\y'$'s  is $n^r$. 
Therefore, for any $\y$, the number of $\y'$'s satisfying the condition is $\prod_{j=1}^r \sum_{\vp\in\wh{\m_n}} \vp(p_{\s_j}(\y))$. Since $\chi_k(\x)=\prod_j \a(p_{\s_j}(\x))$, the statement follows. 
\end{proof}

For an integer $l\ge 1$, let $\k_l$ denote the degree $l$ extension of $\k$ contained in $\ol\k$, and let 
$N_l\colon \k_l^* \to \k^*$ denote the norm map. 

\begin{lem}\label{la.3}
Let $\x\y\s\in G_k$ and  $\s=\s_1\cdots \s_r$ be the cycle decomposition. Then 
\begin{align*}
&\L(C_0^k,\x\y\s)= 
((q-1)n)^r 
\\&\times \prod_{j=1}^r \#\left\{(u,v) \in (\k_{l_j}^*)^2 \bigm| u+v=1, 
N_{l_j}(u)=p_{\s_j}(\x), N_{l_j}(v)^\frac{q-1}{n}=p_{\s_j}(\y) \right\}.
\end{align*}
\end{lem}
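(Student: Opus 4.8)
The plan is to compute $\L(C_0^k,\x\y\s)$ straight from the definition, turning it into a count of solutions of a system of equations over $\ol\k$ and then matching that count with the promised one over $\k_{l_j}$. First I would spell out the action: writing a point as $P=((x_1,y_1),\dots,(x_k,y_k))\in C_0^k$, the element $\x\y\s$ sends $P$ to the point whose $i$th coordinate is $(\x_i x_{\s^{-1}(i)},\y_i y_{\s^{-1}(i)})$, while $FP$ has $i$th coordinate $(x_i^q,y_i^q)$. Thus $FP=\x\y\s\,P$ is equivalent to the system
$$x_i^q=\x_i x_{\s^{-1}(i)},\qquad y_i^q=\y_i y_{\s^{-1}(i)}\qquad (i=1,\dots,k).$$
Since each equation couples only coordinates lying in a common cycle of $\s$, the system splits as a disjoint union over the cycles $\s_1,\dots,\s_r$, so $\L(C_0^k,\x\y\s)=\prod_{j=1}^r\L(C_0^{l_j},\x^{(j)}\y^{(j)}\s_j)$, where $\x^{(j)},\y^{(j)}$ are the restrictions to $\supp(\s_j)$. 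Because $p_{\s_j}(\x)$ is a product over the whole support of the cycle, it is insensitive to the cycle's orientation, so the answer does not depend on the precise semidirect-product convention, and it suffices to treat a single $l$-cycle, say $\s=(1\,2\,\cdots\,l)$.

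For a single cycle I would show that a solution is determined by its first coordinate $(x_1,y_1)$: the equation $x_1^q=\x_1 x_l$ forces $x_l=\x_1^{-1}x_1^q$, and running backwards around the cycle fixes every $x_i$, and likewise every $y_i$. Iterating the relation and using $\x_i^q=\x_i$, $\y_i^q=\y_i$ (valid since $\m_{q-1}$ and $\m_n$ lie in $\k^*$) yields $x_1^{q^l}=p_\s(\x)\,x_1$ and $y_1^{q^l}=p_\s(\y)\,y_1$, so the recursion closes up precisely when $x_1^{q^l-1}=p_\s(\x)$ and $y_1^{q^l-1}=p_\s(\y)$. One then checks that the coordinates so produced automatically lie on the curve: $(x_l^{q-1}+y_l^n)=(x_1^{q-1}+y_1^n)^q$ by additivity of Frobenius together with $\x_1^{q-1}=\y_1^n=1$, so the equation $x_i^{q-1}+y_i^n=1$ propagates around the cycle from $i=1$ rather than needing to be imposed at each index. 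Hence solutions in $C_0^l$ correspond bijectively to pairs $(w,z)=(x_1,y_1)\in C_0(\ol\k)$ with $w^{q^l-1}=p_\s(\x)$ and $z^{q^l-1}=p_\s(\y)$.

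Finally I would set $u=w^{q-1}$ and $v=z^n$, so that $u+v=1$ is exactly the curve equation. A short exponent computation shows $u,v\in\k_l^*$ (since $u^{q^l-1}=p_\s(\x)^{q-1}=1$ and $v^{q^l-1}=p_\s(\y)^n=1$), that $N_l(u)=u^{(q^l-1)/(q-1)}=p_\s(\x)$, and that $N_l(v)^{(q-1)/n}=z^{q^l-1}=p_\s(\y)$, where the exponent $(q-1)/n$ is an integer because $n\mid q-1$. Conversely every such $(u,v)$ arises: its fibre consists of all $(w,z)$ with $w^{q-1}=u$ and $z^n=v$, of which there are exactly $q-1$ and $n$ respectively in $\ol\k^*$, and for any such $(w,z)$ the closure conditions $w^{q^l-1}=p_\s(\x)$, $z^{q^l-1}=p_\s(\y)$ hold automatically by the same exponent identities. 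This gives $\L(C_0^l,\x\y\s)=(q-1)n\cdot\#\{(u,v)\}$, and multiplying over the $r$ cycles yields the stated formula. I expect the only delicate point to be the exponent bookkeeping — arranging that the norm conditions come out exactly as written, with $(q-1)/n$ an honest integer — together with the verification that the curve equation is preserved by the recursion so that $C_0$-membership need only be checked at the single index $i=1$.
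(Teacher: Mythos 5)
Your argument is correct and follows the same route as the paper's: reduce to a single cycle, observe that the fixed-point condition forces $x_1^{q^l-1}=p_\s(\x)$, $y_1^{q^l-1}=p_\s(\y)$, substitute $u=x_1^{q-1}$, $v=y_1^n$ to land in $\k_l^*$ with the stated norm conditions, and count the $(q-1)n$ points in each fibre. You merely spell out in more detail what the paper leaves implicit (the coordinate form of the action, the propagation of the curve equation around the cycle, and the exponent bookkeeping), all of which checks out.
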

\begin{proof}
It reduces to the case $r=1$. Let $(x_i,y_i)_{i=1,\dots, k} \in C_0^k$. 
Then $\x\y\s (x_i,y_i)_i=F(x_i,y_i)_i$ happens only when 
$F^k(x_1,y_1)=(p(\x)x_1,p(\y)y_1)$, i.e. $x_1^{q^k-1}=p(\x)$, $y_1^{q^k-1}=p(\y)$. 
If we put $u=x_1^{q-1}$, $v=y_1^n$, then $u, v \in \k_{l_j}^*$, 
$u+v=1$ and the condition above becomes  
$N_k(u)=p(\x)$, $N_k(v)^\frac{q-1}{n}=p(\y)$. 
To each $(u,v)$ as above correspond $(q-1)n$ points $(x_1,y_1)$, hence the lemma.  
\end{proof}

\begin{ppn}\label{pa.1}We have
$$N(C_0^k,\chi_k)=\frac{(-1)^k}{k!} \sum_{\n_1,\dots, \n_k} 
\prod_{i=1}^k j(\a,\n_i),$$
where the sum is taken over all distinct $\n_1,\dots, \n_k\in\ck$ with $\n_1^n=\cdots=\n_k^n=\e$.  
\end{ppn}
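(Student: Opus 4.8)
The plan is to compute $N(C_0^k,\chi_k)$ by first replacing $\chi_k$ with its induction to $G_k$ and then exploiting that both the induced character (Lemma \ref{la.2}) and the fixed-point count (Lemma \ref{la.3}) factor through the cycle decomposition of the permutation part. Concretely, I would begin with Lemma \ref{la.1} (ii):
$$N(C_0^k,\chi_k)=N(C_0^k,\Ind_{H_k}^{G_k}\chi_k)=\frac{1}{\#G_k}\sum_{\x\y\s\in G_k}\Ind_{H_k}^{G_k}\chi_k(\x\y\s)\,\L(C_0^k,\x\y\s),$$
where $\#G_k=((q-1)n)^k\,k!$. Substituting Lemmas \ref{la.2} and \ref{la.3}, the summand is a product over the cycles $\s_1,\dots,\s_r$ of $\s$, each factor depending only on the coordinates $(\x_i,\y_i)$ with $i$ in the orbit of that cycle. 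Since these orbits partition $\{1,\dots,k\}$, for fixed $\s$ the sum over $\x\in\m_{q-1}^k$ and $\y\in\m_n^k$ factors as a product of per-cycle sums.

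Next I would evaluate the per-cycle sum $\Theta_l$ attached to a cycle of length $l$. As the orbit coordinates vary freely, the products $p_{\s_j}(\x)$ and $p_{\s_j}(\y)$ become equidistributed over $\k^*$ and $\m_n$ (each value attained $(q-1)^{l-1}$, resp.\ $n^{l-1}$, times), so the counting in Lemma \ref{la.3} lets me replace the sum over the values $(p_{\s_j}(\x),p_{\s_j}(\y))$ by a single sum over pairs $(u,v)\in(\k_l^*)^2$ with $u+v=1$. The character sum $\sum_{\vp\in\wh{\m_n}}\vp(\,\cdot\,)$ turns, via the isomorphism $\m_n\cong\k^*/(\k^*)^n$, into $\sum_{\n^n=\e}$ of the lift $\n\circ N_l$; after collecting the factors of $(q-1)n$ (which will cancel against $\#G_k$), I expect
$$\Theta_l=(q-1)^ln^l\sum_{\n^n=\e}\;\sum_{\substack{u+v=1\\ u,v\in\k_l^*}}\a(N_l(u))\,\n(N_l(v))=-(q-1)^ln^l\sum_{\n^n=\e}j(\a,\n)^l.$$
The final equality is the Hasse--Davenport \emph{lifting} relation $g_{\k_l}(\vp\circ N_l)=g_\k(\vp)^l$ applied to the $\k_l$-Jacobi sum $j_{\k_l}(\a\circ N_l,\n\circ N_l)$; the standing assumption $\a^n\ne\e$ forces $\a\n\ne\e$, so no degenerate Gauss-sum correction arises. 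I would emphasize that this lifting relation is logically independent of the multiplication formula (Theorem \ref{d-h}) under proof, so there is no circularity.

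Finally I would reassemble the cycles. Writing $p_l:=\sum_{\n^n=\e}j(\a,\n)^l$, the above gives $\prod_j\Theta_{l_j}=(-1)^{r}(q-1)^kn^k\prod_j p_{l_j}$, and hence
$$N(C_0^k,\chi_k)=\frac{1}{k!}\sum_{\s\in S_k}\prod_{\text{cycles of }\s}(-p_{l})=\frac{1}{k!}\sum_{\s\in S_k}(-1)^{r(\s)}\prod_{\text{cycles of }\s}p_{l},$$
where $l$ denotes the length of the relevant cycle and $r(\s)$ the number of cycles. Grouping permutations by cycle type and invoking the Newton-type identity expressing the $k$-th elementary symmetric polynomial $e_k$ through the power sums $p_l$, this equals $(-1)^k e_k\bigl(\{j(\a,\n)\}_{\n^n=\e}\bigr)$. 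Expanding $e_k$ as the sum over $k$-element subsets of the $n$ characters $\{\n:\n^n=\e\}$ produces precisely $\frac{(-1)^k}{k!}\sum\prod_{i=1}^kj(\a,\n_i)$ over distinct tuples, which is the claim. I expect the middle step to be the main obstacle: correctly bookkeeping the multiplicities and the $\m_n$-duality when collapsing the point count into $\k_l$-Jacobi sums, and applying the lifting relation with the right sign; by contrast the closing combinatorics (signed cycle sum $\to$ elementary symmetric polynomial $\to$ distinct tuples) is routine once the per-cycle value $-p_l$ is established.
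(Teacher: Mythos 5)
Your proposal is correct and follows essentially the same route as the paper: induce $\chi_k$ to $G_k$ via Lemma \ref{la.1} (ii), factor the sum over cycles using Lemmas \ref{la.2} and \ref{la.3}, collapse each cycle to a Jacobi sum over $\k_{l_j}$ by the equidistribution of $p_{\s_j}(\x)$, $p_{\s_j}(\y)$, and apply the Davenport--Hasse lifting relation $j(\a\circ N_l,\n\circ N_l)=j(\a,\n)^l$ (which, as you rightly note, is independent of the multiplication formula being proved). The only cosmetic difference is at the end, where you invoke the Newton-type expansion of $e_k$ in power sums while the paper proves the equivalent statement directly by exchanging sums over $\s$-admissible tuples and using $\sum_{\s\in S_l}(-1)^{r(\s)}=0$ for $l\ge 2$.
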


\begin{proof}
By Lemma \ref{la.1} (ii), we have $N(C_0^k,\chi_k)=N(C_0^k, \Ind_{H_k}^{G_k} \chi_k)$. 
First, fix $\s=\s_1\cdots \s_r \in S_k$.  By Lemmas \ref{la.2} and \ref{la.3}, 
\begin{align*}
&\sum_{\x,\y} \Ind_{H_k}^{G_k} \chi_k(\x\y\s) \L(C_0^k,\x\y\s)
\\&=((q-1)n)^r \prod_j \sum_{\vp} ((q-1)n)^{l_j-1}
\sum_{u_j,v_j \in \k_{l_j}^*, u_j+v_j=1} \a(N_{l_j}(u))\vp (N_{l_j}(v)^\frac{q-1}{n}).
\end{align*}
Note that, for each $\x_0\in \m_{q-1}$, the number of $\x\in \m_{q-1}^{l_j}$ such that  
$p(\x)=\x_0$ is $(q-1)^{l_j-1}$, and similarly for $\y\in\m_n^{l_j}$. 
We identify $\vp\in\wh{\mu_n}$ with $\n\in\ck$ satisfying $\n^n=\e$ by 
$\n(v)=\vp(v^\frac{q-1}{n})$. Then, the last sum is written as $-j(\a\circ N_{l_j}, \n\circ N_{l_j})$, the Jacobi sum over $\k_{l_j}$. 
We have another well-known formula of Davenport--Hasse \cite{d-h} (cf. \cite[(5)]{weil})
$$j(\a\circ N_{l_j}, \n\circ N_{l_j})=j(\a,\n)^{l_j}.$$
Hence it follows 
$$\frac{1}{((q-1)n)^k} \sum_{\x,\y} \Ind_{H_k}^{G_k} \chi_k(\x\y\s) \L(C_0^k,\x\y\s)
= (-1)^r  \prod_{j=1}^r \sum_{\n \in \ck, \n^n=\e} j(\a,\n)^{l_j}.$$
Let us say that a $k$-tuple $(\n_1,\dots, \n_k)$ is $\s$-admissible if for each  $j=1,\dots, r$, $\n_i$'s agree  for all $i\in \supp(\s_j)$. 
Then the right-hand side is written as 
$$(-1)^r \sum_{(\n_1,\dots, \n_k): \text{$\s$-admissible}} \prod_{i=1}^k j(\a,\n_i).$$
Now we let $\s$ vary and write $r=r(\s)$. Then, 
$$N(C_0^k, \Ind_{H_k}^{G_k}\chi)
=\frac{1}{k!} \sum_{\n_1,\dots, \n_k}
\sum_{\s} (-1)^{r(\s)}\prod_{i=1}^k j(\a,\n_i),
$$
where the last sum is taken over $\s$ for which $(\n_1,\dots, \n_k)$ is $\s$-admissible. 
This sum vanishes unless $\n_1,\dots, \n_k$ are all distinct, since 
$$\sum_{\s \in S_l} (-1)^{r(\s)} = (-1)^l \sum_{\s \in S_l} \operatorname{sgn} \s= 0\quad (l \ge 2).$$
Hence the proposition is proved. 
\end{proof}

\begin{ppn}\label{pa.2}
For any $k\ge 0$, we have $N(D^k,\chi_k)=1.$
\end{ppn}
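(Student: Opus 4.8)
The plan is to evaluate $N(D^k,\chi_k)$ straight from the definition, exploiting that $D$ is a set of $\k$-rational points on which the Frobenius is inert. Setting $y=0$ in the equation $x^{q-1}+y^n=1$ of $C$ gives $x^{q-1}=1$ with $x\ne 0$, so $D(\ol\k)=\m_{q-1}=\k^*$ consists of $q-1$ points, each fixed by $F$. Hence $F$ acts trivially on $D^k$, and for $\x\s\in H_k$ the defining condition $FP=(\x\s)P$ of $\L(D^k,\x\s)$ collapses to $P=(\x\s)P$. In other words, identifying $D$ with $\k^*$, the quantity $\L(D^k,\x\s)$ is just the number of $(x_i)\in(\k^*)^k$ fixed by $\x\s$, where $\x\in\m_{q-1}^k$ scales the coordinates and $\s\in S_k$ permutes them. (The $\m_n$-factor of $G_k$ acts trivially on $D$ because $y=0$, so only $H_k$ is relevant here.)

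First I would count these fixed points by cycle type. Writing the cycle decomposition $\s=\s_1\cdots\s_r$ with $\sum_j l_j=k$, a tuple is fixed exactly when on each cycle the coordinates satisfy $x_i=\x_i x_{\s^{-1}(i)}$. Composing once around a cycle $\s_j$ of length $l_j$ yields $x_i=p_{\s_j}(\x)\,x_i$, so a nonzero solution exists iff $p_{\s_j}(\x)=1$, in which case the whole cycle is determined by a single free coordinate in $\k^*$, and otherwise there is none. Therefore
\begin{equation*}
\L(D^k,\x\s)=(q-1)^r\prod_{j=1}^r \mathbf 1\bigl(p_{\s_j}(\x)=1\bigr),
\end{equation*}
the case $l_j=1$ being included correctly (a fixed index forces $\x_i=1$).

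The character weight then collapses. Since the cycle supports partition $\{1,\dots,k\}$ we have $p(\x)=\prod_{j=1}^r p_{\s_j}(\x)$, so any $\x$ surviving all the constraints $p_{\s_j}(\x)=1$ has $p(\x)=1$, whence $\chi_k(\x\s)=\a(p(\x))=1$. The number of such $\x$ is $\prod_{j}(q-1)^{l_j-1}=(q-1)^{k-r}$, so for every fixed $\s$,
\begin{equation*}
\sum_{\x\in\m_{q-1}^k}\chi_k(\x\s)\,\L(D^k,\x\s)=(q-1)^r(q-1)^{k-r}=(q-1)^k,
\end{equation*}
independent of $\s$. Summing over the $k!$ permutations and dividing by $\#H_k=(q-1)^k\,k!$ gives $N(D^k,\chi_k)=1$ (with $k=0$ trivial).

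The computation is short, so I do not expect a serious obstacle; the only point demanding care is pinning down the semidirect-product action convention (and matching the cycle-support bookkeeping used in Lemmas \ref{la.2}–\ref{la.3}). Once that is fixed, the fixed-point count factorizes over cycles and the surviving tuples automatically satisfy $p(\x)=1$, making the $\x$-sum blind to $\s$ and forcing the normalized total to equal $1$.
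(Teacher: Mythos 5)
Your proof is correct and follows essentially the same route as the paper's: both observe that $F$ fixes $D^k$ pointwise, compute $\L(D^k,\x\s)=(q-1)^{r}$ exactly when $p_{\s_j}(\x)=1$ on every cycle (and $0$ otherwise), note that the $(q-1)^{k-r}$ surviving $\x$ all have $\chi_k(\x\s)=1$, and divide the resulting $k!\,(q-1)^k$ by $\#H_k$. Your extra remark handling the length-one cycles (a fixed index forces $\x_i=1$) is a welcome clarification but does not change the argument.
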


\begin{proof}
Since $D(\ol\k)=\{(x,0)\mid x\in\k^*\}$, it is fixed by $F$. 
For any $\x\s\in H_k$ with $\s=\s_1\cdots \s_r$ as before, 
$\L(D^k,\x\s)=(q-1)^{r(\s)}$ 
if $p_{\s_j}(\x)=1$ for all $j=1,\dots, r$, and $\L(D^k,\x\s)=0$ otherwise. 
The number of $\x$'s such that $p_{\s_j}(\x)=1$ for all $j$ is $\prod_j (q-1)^{l_j-1}$, 
and for such $\x$, we have $\chi(\x\s)=\prod_j \a(p_{\s_j}(\xi))=1$. 
Hence 
$N(D^k,\chi_k)=(\#H_k)^{-1} \sum_{\s\in S_k} (q-1)^k =1.$
\end{proof}

Now, let $(C^{n-1})_k \subset C^{n-1}$ denote the $S_{n-1}$-orbit of 
$D^k \times {C_0}^{n-1-k}$ ($k=0,\dots, n-1$). 
Then,  
\begin{align*}
N((C^{n-1})_k,\chi) &
= \frac{1}{\# H_{n-1}} \binom{n-1}{k} \sum_{\x_1\s_1\in H_k, \x_2\s_2\in H_{n-1-k}} 
\\
&\phantom{AAAAAAA}  \chi_k(\x_1) \chi_{n-1-k}(\x_2) \L(D^k,\x_1\s_1)\L(C_0^{n-1-k},\x_2\s_2) 
\\& = N(D^k,\chi_k) N(C_0^{n-1-k},\chi_{n-1-k}). 
\end{align*}
By Propositions \ref{pa.1} and \ref{pa.2}, noting $j(\a,\e)=1$, it follows 
$$N(C^{n-1},\chi)
=\sum_{k=0}^{n-1} N((C^{n-1})_k,\chi)
=(-1)^{n-1} \prod_{\n^n=\e, \n\ne\e} j(\a,\n).$$  
Hence the theorem is proved. 

\section*{Acknowledgements}
The author would like to thank Ryojun Ito and Akio Nakagawa for helpful comments. 
This work is supported by JSPS Grant-in-Aid for Scientific Research: 18K03234.



\begin{thebibliography}{AA}

\bibitem{aop} S. Ahlgren, K. Ono and D. Penniston, Zeta functions of an infinite family of K3 surfaces, Amer. J. Math. {\bf 124} (2002), 353--368. 

\bibitem{asakura} M. Asakura, Zeta functions of certain K3 families: application of the formula of Clausen, arXiv:2109.05699v1, 2021. 

\bibitem{bailey2} W. N. Bailey, Products of generalized hypergeometric functions, Proc. London Math. Soc. Ser. 2, {\bf 28} (1928), 242--254. 

\bibitem{bailey} W. N. Bailey, {\it Generalized Hypergeometric Series}, Cambridge Univ. Press, 1935. 

\bibitem{bst} R. Barman, N. Saikia and M. Tripathi, Appell's hypergeometric series over finite fields, Int. J. Number Theory {\bf 16} (2020), no. 4, 673--692. 

\bibitem{berndtetal} B. C. Berndt, R. J. Evans and K. S. Williams, {\it Gauss and Jacobi sums}, Wiley-Interscience, New York, 1998.

\bibitem{d-h} H. Davenport and H. Hasse, Die Nullstellen der Kongruenzzetafunktionen in gewissen zyklischen F\"allen, J. Reine Angew. Math. {\bf  172} (1935), 151--182. 

\bibitem{erdelyi}  A. Erd\'elyi et al. ed., {\it Higher transcendental functions, Vol. 1}, McGrow-Hill, New York, 1953. 

\bibitem{evans2} R. Evans and J. Greene, 
Clausen's theorem and hypergeometric functions over finite fields, 
Finite Fields and Their Applications {\bf 15} (2009), 97--109. 

\bibitem{evans} R. Evans and J. Greene, A quadratic hypergeometric $_2F_1$ transformation over finite fields, Proc. Amer. Math. Soc. {\bf 145} (2017), 1071--1076. 

\bibitem{fuselieretal}
J. Fuselier, L. Long, R. Ramakrishna, H. Swisher and F.-T. Tu, {\it Hypergeometric functions over finite fields}, 
Mem. Amer. Math. Soc. {\bf 280} (2022), no. 1382. 

\bibitem{gauss} C. F. Gauss, 
Determinatio seriei nostrae per aequationem differentialem secundi ordinis; 
in: {\it C. F. Gauss, Werke, Band III}, K\"oniglichen Gesellschaft der Wissenschaften, G\"ottingen, 1876, 207--230. 

\bibitem{greene} J. Greene, Hypergeometric functions over finite fields, Trans. Amer. Math. Soc., {\bf 301} (1987), 77--101.

\bibitem{helversen-pasotto} A. Helversen-Pasotto, L'identit\'e de Barnes pour les corps finis, S\'em. Delange-Pisot-Poitou, Th\'eorie des nombres, tome 19, n\degree 1 (1977/78), exp. n\degree 22, p.1--12, 1978. 

\bibitem{katz2} N. M. Katz, {\it Gauss Sums, Kloosterman sums, and Monodromy Groups}, Annals of Math. Studies {\bf 116}, Princeton, 1988.

\bibitem{katz} N. M. Katz, {\it Exponential Sums and Differential Equations}, Annals of Math. Studies {\bf 124}, Princeton, 1990.

\bibitem{koblitz} N. Koblitz, The number of points on certain families of hypersurfaces over finite fields, Compositio Math. {\bf 48} (1983), 3--23.

\bibitem{mccarthy} D. McCarthy, Transformations of well-poised hypergeometric functions over finite fields, Finite Fields and Their Applications {\bf 18} (2012), 1133--1147. 

\bibitem{nakagawa} A. Nakagawa, Artin $L$-functions of diagonal hypersurfaces and generalized hypergeometric functions over finite fields, arXiv:2111.15054. 

\bibitem{otsubo2} N. Otsubo, A new approach to hypergeometric transformation formulas, Ramanujan J. {\bf 55} (2021), 793--816. 

\bibitem{serre} J.-P. Serre, Zeta and $L$ functions; in: O. F. G. Schilling ed., {\it  Arithmetical Algebraic Geometry (Proc. Conf. Purdue Univ., 1963)}, 82--92, Harper \& Row, New York, 1965. 

\bibitem{silverman} J. H. Silverman, {\it The Arithmetic of Elliptic Curves}, GTM {\bf 106}, Springer-Verlag, New York, 1986. 

\bibitem{slater} L. J. Slater, {\it Generalized Hypergeometric Functions}, Cambridge Univ. Press, 1966

\bibitem{terasoma} T. Terasoma, Multiplication formula for hypergeometric functions; in: F. Hazama ed., {\it Algebraic cycles and related topics (Kitasakado, 1994)}, 83--91, World Sci. Publ., River Edge, NJ, 1995.

\bibitem{weil} A. Weil, Numbers of solutions of equations in finite fields, Bull. Amer. Math. Soc., {\bf 55} (1949), 497--508. 

\bibitem{whipple3} F. J. W. Whipple, A group of generalized hypergeometric series: relations between 120 allied series of the type ${}_3F_2\left[{a,b,c \atop \phantom{a,} e, f}\right]$, Proc. London Math. Soc. Ser. 2, {\bf 23} (1925), 104--114. 

\bibitem{whipple} F. J. W. Whipple, Some transformations of generalized hypergeometric series, Proc. London Math. Soc. Ser. 2, {\bf 26} (1927), 257--272. 
\end{thebibliography}
\end{document}